\newtheorem{thm}{Theorem}[section]
\newtheorem{dfn}[thm]{Definition}
\newtheorem{lem}[thm]{Lemma}
\newtheorem{prop}[thm]{Proposition}
\newtheorem{cor}[thm]{Corollary}
\newtheorem{rem}[thm]{Remark}
\begin{document}
\title[Genus of Vertex Algebras and Mass Formula]{Genus of Vertex Algebras and Mass Formula}
\author{YUTO MORIWAKI}

\begin{center}
{\LARGE \bf Genus of vertex algebras and mass formula
}
 \par \bigskip

\renewcommand*{\thefootnote}{\fnsymbol{footnote}}
{\normalsize
Yuto Moriwaki \footnote{email: \texttt{yuto.moriwaki@ipmu.jp}}
}
\par \bigskip
{\footnotesize Graduate School of Mathematical Science, The University of Tokyo, 3-8-1
Komaba, Meguro-ku, Tokyo 153-8914, Japan}

\par \bigskip
\end{center}

\newcommand{\Z}{\mathbb{Z}}
\newcommand{\1}{\bm{1}}
\newcommand{\C}{\mathbb{C}}
\newcommand{\al}{\alpha}
\newcommand{\be}{\beta}
\newcommand{\ga}{\gamma}
\newcommand{\Hv}{M_H (1,0)}
\newcommand{\Ht}{\tilde{H}}
\newcommand{\Lt}{\tilde{L}}
\newcommand{\Aut}{\mathrm{Aut}\,}

\newcommand{\Hom}{\mathrm{Hom}\,}
\newcommand{\VH}{\underline{\text{VH pair}}}
\newcommand{\AH}{\underline{\text{AH pair}}}
\newcommand{\lat}{\mathrm{lat}}
\newcommand{\mass}{\mathrm{mass}}

\newcommand{\gen}{\mathrm{genus}}

\newcommand{\Wt}{\tilde{W}}
\newcommand{\Vt}{{\tilde{V}}}
\newcommand{\Mt}{{\tilde{M}}}
\newcommand{\genus}{{{I\hspace{-.1em}I}}}
\newcommand{\tw}{{{I\hspace{-.1em}I}_{1,1}}}
\newcommand{\six}{{{I\hspace{-.1em}I}_{25,1}}}
\newcommand{\End}{\mathrm{End}}
\newcommand{\Sumz}{\bigoplus_{n \in \mathbb{Z}}}
\newcommand{\Sumn}{\bigoplus_{n \geq 0}}
\newcommand{\fora}{\text{ for any }}

\vspace*{8mm}

\noindent
\textbf{Abstract.}
We introduce the notion of a genus and its mass for vertex algebras. For lattice vertex algebras,
their genera are the same as those of lattices, which play an important role in the classification of lattices.
We derive a formula relating the mass for vertex algebras to that for lattices,
and then give a new characterization of some holomorphic vertex operator algebras.
\vspace*{8mm}

\section*{Introduction}
%
%
Vertex operator algebras (VOAs) are algebraic structures that play an important role in two-dimensional conformal field theory.
The classification of conformal field theories is an extremely interesting
problem, of importance in mathematics, statistical mechanics, and string theory;
Mathematically, it is related to the classification of vertex operator algebras,
which is a main theme of this work. An important class of vertex operator algebras can be constructed from positive-definite even integral lattices, called {\it lattice vertex algebras} 
\cite{Bo1,LL,FLM}.
In this paper, we propose a new method to construct and classify vertex algebras by
using a method developed in the study of lattices.

A lattice is a finite rank free abelian group  equipped with a $\Z$-valued symmetric bilinear form. Two such lattices are equivalent (or {\it in the same genus}) if they are isomorphic over the ring of $p$-adic integers $\Z_p$ for each prime $p$ and also equivalent over the real number field $\mathbb{R}$.
Lattices in the same genus are not always isomorphic over $\Z$.
For example, there are two non-isomorphic rank 16 positive-definite even unimodular lattices $D_{16}^+$ and $E_8\oplus E_8$ which are in the same genus. {\it The mass of the genus of a lattice} $L$ is defined by
$\mass(L)=\sum_{M}1/{|\Aut M|},$ where the sum is over all isomorphism classes of lattices in the genus.
The mass can be calculated by the Smith-Minkowski-Siegel mass formula (see for example \cite{CS,Ki}). The concept of genera of lattices and the mass formula are important for the classification of lattices. In fact, rank $24$ positive-definite even unimodular lattices, called Niemeier lattices, can be classified by using the mass formula.

The following result is important to define the notion of genus of vertex algebras; Two integral lattices $L_1$ and $L_2$ are in the same genus if and only if $L_1 \oplus \tw$ and $L_2 \oplus \tw$ are isomorphic,
where $\tw$ is the unique even unimodular lattice of signature $(1,1)$ (\cite{CS}).
Motivated by this fact, we would like to define that two vertex algebras $V_1$ and $V_2$ are in the same genus if $V_1 \otimes V_\tw$ and 
$V_2 \otimes V_\tw$ are isomorphic.
Little is known about the structure of $V\otimes V_\tw$, 
because we do not have many tools to analyze $\Z$-graded vertex algebras whose homogeneous subspaces are infinite dimensional.
%
We overcome this difficulty by using a Heisenberg vertex subalgebra as an
additional structure.
To be more precise, we consider a pair of a vertex algebra $V$ and its Heisenberg vertex subalgebra $H$, called a VH pair $(V,H)$, and define the notions of genus and mass of VH pairs, and prove
a formula relating the mass of VH pairs and that of lattices, which we call here a mass formula for vertex algebras (Theorem \ref{mass_vertex}).
The important point to note here is
that if VH pairs $(V,H_V)$ and $(W,H_W)$ are in the same genus, then 
the module categories $\underline{V\text{-mod}}$ and $\underline{W \text{-mod}}$
are equivalent (Theorem \ref{coset_equivalence}). Hence, if $V$ has a good representation theory (e.g., completely reducible), then $W$ also has a good representation theory.
This result suggests that the genus of VH pairs can be used to construct
good vertex algebras from a good vertex algebra.

The tensor product functor $- \otimes V_\tw$ is first considered by R. Borcherds and is
an important step in his proof of the monstrous moonshine conjecture (see \cite{Bo2}).
It is also used by G. H{\"o}hn and N. Scheithauer in their constructions of holomorphic VOAs \cite{HS1}.
In the process of the constructions, they show that some non-isomorphic $17$ holomorphic VOAs of central charge $24$ are isomorphic to each other after taking the tensor product $-\otimes V_\tw$, that is, in the same genus in our sense \cite{HS1}.
They also study other examples of holomorphic VOAs with non-trivial genus (see \cite{H2}, \cite{HS2}),
which motivates us to define the genus.

We remark that G. H{\"o}hn gives another definition of a genus of vertex operator algebras based on the representation theory of VOAs (more precisely, modular tensor category) \cite{H1}.
All holomorphic VOAs of central charge $24$ are in the same genus in their definition,
whereas they are not so in our sense.
Following the pattern suggested by the equivalence of categories in Theorem \ref{coset_equivalence}, we believe
that if VOAs are in the same genus in our sense, then
they are in the same genus in the sense of \cite{H1}.

\vspace{3mm}

{\it Organization of the paper}\quad
Let us explain our basic idea and the contents of this paper.
It is worth pointing out that the construction of lattice vertex algebras from
even lattices, $L \mapsto V_L$, is not a functor from the category of lattices to
the category of vertex algebras (see Remark \ref{lattice_functor}).
We focus on a twisted group algebra $\C\{L\}$ considered in \cite{FLM} which
plays an important role in the construction of lattice vertex algebras.
In Section \ref{sec_AH}, we generalize the twisted group algebra
and introduce the notion of an AH pair $(A,H)$,
which is a pair of an associative algebra $A$ satisfying some properties and a finite dimensional vector space $H$ equipped with a bilinear form (see Section \ref{AH}).
Then, we construct a functor from the category of AH pairs
to the category of VH pairs,
$\mathcal{V}: \AH \rightarrow \VH,\;(A,H) \mapsto (V_{A,H},H)$.
In the special case where an AH pair is the twisted group algebra 
$(\C\{L\},\C\otimes_\Z L)$ constructed from an even lattice $L$,
its image by this functor coincides with the lattice vertex algebra $V_L$.
We call a class of AH pairs which axiomatizes twisted group algebras a {\it lattice pair},
and 
the vertex algebra associated with a lattice pair a {\it $H$-lattice vertex algebra.} Lattice pairs are classified by using the cohomology of an abelian group.

In Section \ref{sec_VH}, we construct a right adjoint functor of $\mathcal{V}$, denoted by $\Omega:\VH \rightarrow \AH,\; (V,H)\mapsto (\Omega_{V,H},H)$ and, in particular, construct an associative algebra from
a vertex algebra (Theorem \ref{adjoint}).
In the case that a VH pair $(V,H)$ is a simple as a vertex algebra,
the structure of the AH pair $(\Omega_{V,H},H)$ is studied by using the results in the previous section. We prove that there exists a
maximal $H$-lattice vertex subalgebra (Theorem \ref{maximal_lattice}),
which implies we can classify $H$-lattice vertex subalgebras in a VH pair $(V,H)$.
This maximal algebra is described by a lattice $L_{V,H} \subset H$, called a maximal lattice of the good VH pair $(V,H)$.


In Section \ref{sec_genus}, we define the notion of a genus of VH pairs.
We remark that the category of VH pairs (resp. AH pairs) naturally admits
a monoidal category structure
and any lattice vertex algebra is naturally a VH pair.
Two VH pairs $(V,H_V)$ and $(W,H_W)$ are said to be in the same genus if the
VH pairs $(V\otimes V_\tw,H_V\oplus H_\tw)$ and $(W \otimes V_\tw, H_W\oplus H_\tw)$ are isomorphic.
Roughly speaking, classifying VH pairs in the genus of $(V,H_V)$ is equivalent 
to classifying subalgebras in $V\otimes V_\tw$ which are isomorphic to $V_\tw$
and compatible with the VH pair structure;
We prove that such subalgebras are completely described by
the maximal lattice $L_{V\otimes V_\tw} \subset H_V \oplus H_\tw$.
In this way the classification of genera of VH pairs is reduced to the study of maximal lattices.
For a VH pair $(V,H_V)$,
a subgroup of the automorphism group of the lattice, $G_{V,H_V} \subset \Aut L_{V,H_V}$,
is defined from the automorphism group of a vertex algebra.
The difference between a genus of lattices and a genus of VH pairs is measured
by this subgroup $G_{V,H_V}$.
The mass of the VH pairs in the genus which contains $(V,H_V)$ is defined by $\mass(V,H_V)=\sum_{(W,H_W)} \frac{1}{|G_{W,H_W}|}$,
where the sum is over all the VH pairs $(W,H_W)$ in the genus,
which is finite if $L_{V,H_V}$ is positive-definite and the index of groups
$[\Aut L_{V,H_V}\oplus \tw: G_{V\otimes V_\tw,H_v\oplus H_\tw}]$ is finite.
In this case, we prove that the ratio of the masses $\mass(V,H_V)/ \mass(L_{V,H_V})$ is equal to
the index of groups $[\Aut L_{V,H_V}\oplus \tw: G_{V\otimes V_\tw,H_v\oplus H_\tw}]$ (Theorem \ref{mass_vertex}).

In the final section, a conformal vertex algebra $(V,\omega)$ with a Heisenberg vertex subalgebra $H$ is studied.
This algebra is graded by the action of the Virasoro algebra and the Heisenberg Lie algebra,
$V=\bigoplus_{\al \in H, n \in \Z} V_n^\al$.
We assume here technical conditions, e.g., $V_n^\al=0$ if $n<\frac{(\al,\al)}{2}$ (see Definition \ref{Cartan}),
which are satisfied in interesting cases, e.g., extensions of 
rational affine vertex algebras.
Those conformal vertex algebras are closed under the tensor product, which means that it is possible to study their genus.
The main results of this section are (Theorem \ref{Cartan_max}, Theorem \ref{Cartan_Lie})
\begin{enumerate}
\item
$\dim V_n^\al =0, 1$ if $n \geq \frac{(\al,\al)}{2} > n-1$;
\item
The maximal lattice $L_{V,H}$ is equal to $\{\al \in H\;|\; V_{\frac{(\al,\al)}{2}}^\al \neq 0 \}$;
\item
If $V_1^\al \neq 0$ for $(\al,\al)>0$,
then $V_1^\al$ and $V_1^{-\al}$ generate the simple affine vertex algebra $L_{\mathrm{sl}_2}(k,0)$
with $k=\frac{2}{(\al,\al)} \in \Z_{>0}$.
\end{enumerate}
The results suggest that a vector in $V_n^\al$ with $n \geq \frac{(\al,\al)}{2} > n-1$ has interesting structure,
which is beyond the scope of this paper.
As an application, we study the genus of some holomorphic VOAs.
A {\it holomorphic VOA} (or more generally a {\it holomorphic conformal vertex algebra}) is a VOA such that any module is completely reducible and it has a unique irreducible module.
The holomorphic VOA $V_{E_{8,2}B_{8,1}}^{hol}$ constructed in \cite{LS2} is uniquely characterized
as a holomorphic VOA of central charge $24$ whose Lie algebra, the degree one subspace of the VOA, is
the semisimple Lie algebra $\mathfrak{g}_{E_8}\oplus \mathfrak{g}_{B_8}$ \cite{LS}.
Let $H$ be a Cartan subalgebra of the degree one subspace. Then, the maximal lattice $L_{V_{E_{8,2}B_{8,1}}^{hol},H}$
is $\sqrt{2}E_8\oplus D_8$,  and we determine the genus of this VOA, which gives another proof of the above mentioned result of \cite{HS1}.
Furthermore, by using the characterization,
we prove that if a holomorphic conformal vertex algebra of central charge $26$ satisfies
our condition and its maximal lattice is $\sqrt{2}E_8\oplus D_8\oplus \tw$,
then it is isomorphic to $V_{E_{8,2}B_{8,1}}^{hol}\otimes V_\tw$ (Theorem \ref{char_18}).

%



%

\tableofcontents

\section{Preliminaries}
We denote the sets of all integers, positive integers,
real numbers and complex numbers by $\Z$, $\Z_{>0}$, $\mathbb{R}$ and $\C$ respectively.
This section provides definitions and notations we need for what follows.
Most of the contents in subsection \ref{subsec_vertex} and \ref{subsec_lattice} are taken from the literature \cite{Bo2,LL,Li,FHL,FLM,CS}.
Throughout this paper, we will work over the field $\C$ of complex numbers.
\subsection{Vertex algebras and their modules} \label{subsec_vertex}
%
Following \cite{Bo2}, a vertex algebra is a $\C$-vector space
$V$ equipped with a linear map 
\begin{align}
Y(-,z):V \rightarrow \End(V)[[z^{\pm}]],\;
a\mapsto Y(a,z)=\sum_{n\in\Z}a(n)z^{-n-1} \nonumber
\end{align}
and a non-zero vector $\1 \in V$ satisfying the following axioms:
\begin{enumerate}
\item[V1)]
For any $a,b \in V$, there exists $N \in \Z$ such that $a(n)b=0$ for any $n \geq N$;
\item[V2)]
For any $a \in V$,
\[
  a(n)\1 = \begin{cases}
    0, & (n \geq 0), \\
    a, & (n=-1),
  \end{cases}
\] holds;

\item[V3)]
Borcherds identity:
For any $a,b \in V$ and $p,q,r \in \Z$,
\begin{equation}
\sum_{i=0}^\infty{p\choose i}(a(r+i)b)(p+q-i)
=\sum_{i=0}^\infty (-1)^i \binom{r}{i}\Bigl(a(p+r-i)b(q+i)-(-1)^r b(q+r-i)a(p+i)\Bigr) \nonumber
\label{borcherds}
\end{equation}
holds.
\end{enumerate}

For $a,b \in V$, $a(n)b$ is called the $n$-th product of $a$ and $b$.
Let $T_V$ (or simply $T$) denote the endomorphism of $V$ defined
by $T_V a = a(-2)\1$ for $a \in V$.

The following properties follow directly from the axioms of a vertex algebra (see for example \cite{LL}):
\begin{enumerate}

\item
$Y(\1,z)=\mathrm{id}_V \label{vac_id};$
\item
For any $a,b \in V$ and $n \in \Z$, $T\1=0$ and
\begin{align}
(Ta)(n)&=-na(n-1), \nonumber \\
T(a(n)b)&=(Ta)(n)b+a(n)Tb \nonumber ;
\end{align}

\item
Skew-symmetry:
For any $a,b \in V$ and $n \in \Z$,

$$a(n)b=\sum_{i \geq 0}(-1)^{n+1+i}\frac{T^i}{i!}(b(n+i)a); \label{skew-symmetry}$$
\item Associativity:
For any $a,b \in V$ and $n,m \in \Z$,
$$(a(n)b)(m)=\sum_{i \geq 0}\binom{n}{i}(-1)^i(a(n-i)b(m+i)-(-1)^nb(n+m-i)a(i)); \label{eq_associative}$$

\item Commutativity:
For any $a,b \in V$ and $n,m \in \Z$,
$$[a(n),b(m)]=\sum_{i \geq 0}\binom{n}{i} (a(i)b)(n+m-i). \label{commutative}$$
\end{enumerate}
We denote by \underline{$V$-mod} the category of $V$-modules.

\subsection{Lattice} \label{subsec_lattice}
A {\it lattice} of rank $n \in \mathbb{N}$ is a rank $n$ free abelian group $L$
equipped with a $\mathbb{Z}$-valued symmetric bilinear form
$$( \,\;,\; ):L \times L \rightarrow \mathbb{Q}.$$
A lattice $L$ is said to be {\it even} if
$$(\al,\al) \in 2\Z \fora \al \in L,$$
{\it integral} if
$$(\al,\be) \in \Z \fora \al ,\be \in L,$$
and {\it positive-definite} if
$$(\al,\al)>0 \fora \al \in L\setminus \{0\}.$$
For an integral lattice $L$ and a unital commutative ring $R$,
we extend the bilinear form $(\;\,,\;)$ bilinearly to $L\otimes_{\Z} R$
and $L$ is said to be {\it non-degenerate} if the bilinear form on $L\otimes_\Z \mathbb{R}$
is non-degenerate.
The dual of $L$ is the set 
$$L^\vee=\{\al \in L\otimes_{\Z}\mathbb{R} \;|\;(\al,L)\subset \Z \}.$$
The integral lattice $L$ is said to be unimodular if
$L= L^\vee$.

Two integral lattices $L$ and $M$ are said to be equivalent or {\it in the same genus} if their base changes are isomorphic as lattices:
$$L\otimes_{\Z}\mathbb{R}\simeq M\otimes_{\Z}\mathbb{R},\quad L\otimes_{\Z}\Z_p\simeq M\otimes_{\Z}\Z_p,$$
for all the prime integers $p$,
where $\Z_p$ is the ring of $p$-adic integers.
Denote by $\gen(L)$ the genus of lattices which contains $L$.
If $L$ is positive-definite, then a mass of its genus $\mass(L)\in \mathbb{Q} $ is defined by
\begin{align}\label{eq: mass formula for lattices}
\mass(L) = \sum_{L' \in \gen(L)} \frac{1}{|\Aut L'|},
\end{align}
where $\Aut L$ is the automorphism group of $L$.
The Smith-Minkowski-Siegel mass formula is a formula which computes $\mass(L)$ (see \cite{CS,Ki}).
Lattices over $\mathbb{R}$ are completely determined by the signature.
Similarly, lattices over $\Z_p$ are determined by some invariant, called $p$-adic signatures
(If $p=2$, we have to consider another invariant, called oddity).
Conventionally, a genus of lattices is denoted by using those invariants,
e.g., the genus of $\sqrt{2}E_8 D_8$ is denoted by $\genus_{16,0}(2_\genus^{+10})$ (see \cite{CS} for the precise definition).

\section{AH pairs and vertex algebras} \label{sec_AH}
%
%
In this section, we generalize the famous construction of vertex algebras from even lattices. 

In Section \ref{cohomology}, we recall some fundamental results 
from the cohomology of abelian groups.
Much of the material in this subsection is based on \cite[Chapter 5]{FLM}. We focus on a twisted group algebras considered in \cite{FLM} and generalize their
result.
In Section \ref{AH}, we introduce the concept of an AH pair which is a generalization of a twisted group algebra, while in Section \ref{AH_vertex}, we construct a vertex algebra associated with an AH pair generalizing the construction of lattice vertex algebras.

\subsection{Twisted group algebras}\label{cohomology}


Let $A$ be an abelian group.
A (normalized) 2-cocycle of $A$ with coefficients in 
$\C^\times$ is a map $f: A\times A \rightarrow \C^\times$ such that:
\begin{enumerate}
\item
$f(0,a)=f(a,0)=1$
\item
$f(a,b)f(a+b,c)=f(a,b+c)f(b,c).$
\end{enumerate}
We denote by $Z^2(A,\C^\times)$ the set of the 2-cocyles of $A$ with coefficients in 
$\C^\times$.
For a map $h:A \rightarrow \C^\times$, the map
$$dh: A \times A \rightarrow \C^\times,\; (a,b)\mapsto h(a+b)h(a)^{-1}h(b)^{-1}$$ is a 2-cocycle, called a coboundary.
The subset of  $Z^2(A,\C^\times)$ consisted of coboundaries is denoted by 
$B^2(A,\C^\times)$, which is a subgroup of $Z^2(A,\C^\times)$. Set $H^2(A,\C^\times)=Z^2(A,\C^\times)/B^2(A,\C^\times)$.
An alternative bilinear form on an abelian group $A$ is
a map $c:A \times A \rightarrow \C^\times$  such that:
\begin{enumerate}
\item
$c(0,a)=c(a,0)=1$ for any $a \in A$;
\item
$c(a+b,c)=c(a,c)c(b,c)$
and $c(a,b+c)=c(a,b)c(a,c)$
 for any $a,b,c \in A$;
\item
$c(a,a)=1$ for any $a \in A$.
\end{enumerate}
The set of alternative bilinear forms on $A$ is denoted by 
$Alt^2(A)$.
For a 2-cocycle $f \in Z^2(A,\C^\times)$,
define the map $c_f:A\times A \rightarrow \C^\times$
 by setting $c_f(a,b)=f(a,b)f(b,a)^{-1}$.
It is easy to prove that the map $c_f$ is an alternative bilinear form on $A$,
which is called a {\it commutator map} \cite{FLM}.
Hence, we have a map $Z^2(A,\C^\times) \rightarrow Alt^2(A)$,
which induces
$c: H^2(A,\C^\times) \rightarrow Alt^2(A)$.
\begin{lem}
\label{vanish}
The map $c: H^2(A,\C^\times) \rightarrow Alt^2(A)$ is injective.
\end{lem}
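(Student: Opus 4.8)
The plan is to prove injectivity directly by showing that any $2$-cocycle lying in the kernel is a coboundary. First I would unwind the definitions. By construction $c_f(a,b)=f(a,b)f(b,a)^{-1}$, and since $c\colon H^2(A,\C^\times)\to Alt^2(A)$ is induced by $f\mapsto c_f$, a class $[f]$ lies in $\ker c$ exactly when $c_f$ is the trivial alternating form, i.e. when $f(a,b)=f(b,a)$ for all $a,b\in A$. So the lemma is equivalent to the assertion: \emph{every symmetric $2$-cocycle $f\in Z^2(A,\C^\times)$ is a coboundary}. This is what I would prove.

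To do so I would pass to the central extension attached to $f$. Let $\tilde{A}_f=\C^\times\times A$ with multiplication $(\lambda,a)(\mu,b)=(\lambda\mu\,f(a,b),\,a+b)$; the cocycle identity (2) gives associativity, the normalization $f(0,a)=f(a,0)=1$ gives the unit $(1,0)$, and one checks inverses exist, so $\tilde{A}_f$ is a group in which $\C^\times\times\{0\}$ is a central subgroup with $\tilde{A}_f/(\C^\times\times\{0\})\cong A$. A short computation shows the commutator of $(\lambda,a)$ and $(\mu,b)$ in $\tilde{A}_f$ equals $(c_f(a,b),0)$, so $f$ symmetric is equivalent to $\tilde{A}_f$ being abelian. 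Assume now $f$ is symmetric. Then $0\to\C^\times\to\tilde{A}_f\xrightarrow{\pi}A\to 0$ is a short exact sequence of abelian groups; since $\C^\times$ is divisible it is an injective $\Z$-module, hence $\mathrm{Ext}^1_{\Z}(A,\C^\times)=0$ and the sequence splits by a group homomorphism $s\colon A\to\tilde{A}_f$ with $\pi\circ s=\mathrm{id}_A$. Writing $s(a)=(\psi(a),a)$ for a function $\psi\colon A\to\C^\times$, the identity $s(a)s(b)=s(a+b)$ reads $\psi(a)\psi(b)f(a,b)=\psi(a+b)$, that is $f(a,b)=\psi(a+b)\psi(a)^{-1}\psi(b)^{-1}=d\psi(a,b)$. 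Hence $f=d\psi\in B^2(A,\C^\times)$ and $[f]=0$, proving injectivity.

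The only nontrivial input — and the step I expect a referee to scrutinize — is the splitting of the extension, i.e. the fact that $\C^\times$ is an injective object in the category of abelian groups (equivalently $\mathrm{Ext}^1_{\Z}(A,\C^\times)=0$), which rests solely on $\C^\times$ being divisible. I note that in the case actually needed for the rest of the paper, where $A$ is a lattice and hence a free abelian group of finite rank, this is unnecessary: $A$ is then projective, so the sequence $0\to\C^\times\to\tilde{A}_f\to A\to 0$ splits automatically, and one obtains $\psi$ and the identity $f=d\psi$ exactly as above. Either way the lemma follows.
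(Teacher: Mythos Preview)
Your proof is correct and follows essentially the same approach as the paper: pass to the central extension associated to $f$, observe that $c_f=1$ forces the extension to be abelian, and then use that $\C^\times$ is injective (divisible) to split the sequence and exhibit $f$ as a coboundary. Your version is simply more explicit in writing out the extension, the commutator, and the map $\psi$ coming from the splitting.
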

\begin{proof}
For any 2-cocycle $f \in Z^2(A,\C^\times)$, there exists a central extension
of $A$ by $\C^\times$,
$$1\rightarrow \C^\times \rightarrow \tilde{A} \rightarrow A \rightarrow 1$$
and $f$ is a coboundary if and only if the extension splits (see \cite{FLM}).
If $c_f=1$, that is, $f(a,b)=f(b,a)$, then $\tilde{A}$ is an abelian group and the above sequence is an exact sequence in the category of abelian groups.
Since $\C^\times$ is injective, the exact sequence splits.
\end{proof}
According to \cite{FLM}, we have the following:
\begin{prop}[{\cite[Proposition 5.2.3]{FLM}}]
\label{twist_free}
If $A$ is a free abelian group of finite rank,
then the map $c:H^2(A,\C^\times) \rightarrow Alt^2(A)$ is an isomorphism.
\end{prop}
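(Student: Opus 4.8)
The plan is to prove surjectivity of $c$ (injectivity is Lemma~\ref{vanish}), i.e.\ given an alternating bilinear form $c_0 \in Alt^2(A)$ on a free abelian group $A = \Z^n$, construct a $2$-cocycle $f \in Z^2(A,\C^\times)$ with commutator map $c_f = c_0$. The natural approach is to exploit the freeness of $A$: choose a $\Z$-basis $e_1,\dots,e_n$ of $A$, and define $f$ on basis elements so that the desired antisymmetry is forced, then extend. Concretely, I would set $f(e_i,e_j) = c_0(e_i,e_j)$ for $i > j$ and $f(e_i,e_j) = 1$ for $i \le j$ (a ``triangular'' choice), and then define $f$ on a general pair of elements $a = \sum a_i e_i$, $b = \sum b_j e_j$ by the product formula
\begin{align}
f(a,b) = \prod_{i > j} c_0(e_i,e_j)^{a_i b_j}. \nonumber
\end{align}
This is manifestly bimultiplicative (bilinear in the exponents $a_i, b_j$), which makes the cocycle identity $f(a,b)f(a+b,c) = f(a,b+c)f(b,c)$ a routine check — indeed any bimultiplicative map is automatically a $2$-cocycle, and normalization $f(0,a)=f(a,0)=1$ is clear.

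The key remaining step is to compute the commutator map of this $f$ and verify it equals $c_0$. Using bimultiplicativity of both $f$ and $c_0$, it suffices to check on basis elements:
\begin{align}
c_f(e_i,e_j) = f(e_i,e_j) f(e_j,e_i)^{-1}. \nonumber
\end{align}
For $i > j$ this is $c_0(e_i,e_j) \cdot 1 = c_0(e_i,e_j)$; for $i < j$ it is $1 \cdot c_0(e_j,e_i)^{-1} = c_0(e_i,e_j)$ by antisymmetry of $c_0$ (which follows from $c_0(a,a)=1$ and bilinearity); and for $i = j$ both sides are $1$. Since $c_f$ and $c_0$ are both alternating bilinear forms agreeing on a basis, they agree everywhere, so $c_f = c_0$. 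Hence $c$ is surjective, and combined with Lemma~\ref{vanish} it is an isomorphism.

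I expect the only real subtlety — hardly an obstacle — is bookkeeping: making sure the triangular convention is consistent and that the passage from ``agrees on basis pairs'' to ``agrees as forms'' is justified by bimultiplicativity in \emph{both} arguments. One should also note that this argument genuinely uses that $A$ is free of finite rank (so that a basis exists and the products are finite); for general abelian groups surjectivity can fail, which is consistent with the hypothesis in the statement. Since the paper attributes this to \cite[Proposition 5.2.3]{FLM}, I would either reproduce this short construction or simply cite it; the construction above is essentially the one in \cite{FLM}, phrased via bimultiplicative maps.
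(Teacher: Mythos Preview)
Your proof is correct; the paper does not give its own proof of this proposition but simply cites \cite[Proposition 5.2.3]{FLM}, and the explicit triangular bimultiplicative cocycle you construct is exactly the argument given there. As you note at the end, citing the reference would already suffice in context.
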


{\it A twisted group algebra of an abelian group} $A$ is
an $A$-graded unital associative $\C$-algebra $R=\bigoplus_{a\in A}R_a$ satisfying the following conditions:
\begin{enumerate}
\item
$1 \in R_0$;
\item
$\dim_\C R_a =1$;
\item
$e_a e_b \neq 0$ for any $e_a \in R_a\setminus \{0\}$ and $e_b \in R_b\setminus \{0\}$.
\end{enumerate}
Twisted group algebras $R,S$ of $A$ are isomorphic if 
there is a $\C$-algebra isomorphism $h:R \rightarrow S$ which preserves the $A$-grading, that is, $h(R_a) = S_a$ for any $a \in A$.

Let $R$ be a twisted group algebra of $A$.
Let us choose a nonzero element $e_a$ of $R_a$ for all $a \in A$.
Then, define the map $f:A\times A \rightarrow \C^\times$
by $e_a e_b=f(a,b)e_{a+b}$.
Clearly, the cohomology class $f \in H^2(A,\C^\times)$ is independent of the choice of
$\{e_a \in R_a\}_{a\in A}$.
Furthermore, the cohomology class of $f$ uniquely determines the isomorphism class.
The alternative bilinear form $c_f \in Alt^2(A)$ is called {\it an associated commutator map 
of the twisted group algebra}, denoted by $c_R$.
Hence, we have:
\begin{lem}
\label{twisted_classify}
There exists a bijection between the isomorphism classes of twisted group algebras of an abelian group $A$
and $H^2(A,\C^\times)$.
\end{lem}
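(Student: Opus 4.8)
The plan is to establish the bijection by composing two maps already essentially constructed in the preceding text. First I would observe that the association $R \mapsto c_R$ is well-defined on isomorphism classes: given an isomorphism $h: R \to S$ of twisted group algebras, choosing nonzero $e_a \in R_a$ gives nonzero $h(e_a) \in S_a$, and if $e_ae_b = f(a,b)e_{a+b}$ then $h(e_a)h(e_b) = f(a,b)h(e_{a+b})$, so $R$ and $S$ determine the same class $f \in H^2(A,\C^\times)$, hence the same commutator map. This shows $R \mapsto [f]$ descends to a well-defined map from isomorphism classes to $H^2(A,\C^\times)$, and the text has already noted that the cohomology class is independent of the choice of $\{e_a\}$ and uniquely determines the isomorphism class — so this map is injective. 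For surjectivity, given any $f \in Z^2(A,\C^\times)$ one builds the twisted group algebra $R^f = \bigoplus_{a \in A}\C e_a$ with multiplication $e_a e_b = f(a,b)e_{a+b}$; the cocycle conditions give associativity and the fact that $1 = e_0$ is a unit, and $\dim_\C R^f_a = 1$ with $e_a e_b = f(a,b)e_{a+b} \neq 0$ since $f$ is $\C^\times$-valued. Thus every class in $H^2(A,\C^\times)$ arises, giving a bijection between isomorphism classes of twisted group algebras and $H^2(A,\C^\times)$.

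The second ingredient is to identify this with $Alt^2(A)$ via the commutator map. By Lemma \ref{vanish}, the induced map $c: H^2(A,\C^\times) \to Alt^2(A)$ is injective, so $R \mapsto c_R$ is injective on isomorphism classes. When $A$ is a free abelian group of finite rank, Proposition \ref{twist_free} gives that $c$ is an isomorphism, so in that case $R \mapsto c_R$ is itself a bijection onto $Alt^2(A)$. For a general abelian group $A$ one only claims the bijection with $H^2(A,\C^\times)$ stated in the lemma; the passage to $Alt^2(A)$ is the content of Proposition \ref{twist_free} and is not part of the assertion to be proved here.

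The key steps, in order: (1) verify $R \mapsto c_R$ (equivalently $R \mapsto [f_R] \in H^2(A,\C^\times)$) is well-defined on isomorphism classes using functoriality of the cocycle under graded algebra isomorphisms; (2) invoke the already-stated facts that $[f_R]$ is independent of the choice of representatives $e_a$ and that it determines the isomorphism class, yielding injectivity; (3) construct $R^f$ from an arbitrary cocycle $f$ and check the twisted group algebra axioms, yielding surjectivity; (4) conclude the bijection. I expect the main obstacle to be purely bookkeeping rather than conceptual: namely making precise, in step (2), that two twisted group algebras with the same class $[f] \in H^2(A,\C^\times)$ are genuinely isomorphic — one must take a coboundary $f_R/f_S = dh$ and use it to rescale the basis $e_a \mapsto h(a)e_a$ to produce the graded algebra isomorphism. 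Since the surrounding text has already asserted this uniqueness, the lemma's proof can legitimately be brief, essentially pointing to the construction of $R^f$ for surjectivity and to the preceding discussion for injectivity.
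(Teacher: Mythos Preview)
Your proposal is correct and matches the paper's approach: the paper gives no separate proof, simply writing ``Hence, we have:'' after the paragraph asserting that the cohomology class is independent of the choice of $\{e_a\}$ and uniquely determines the isomorphism class, so your fleshing-out of well-definedness, injectivity (via the coboundary rescaling $e_a \mapsto h(a)e_a$), and surjectivity (via the explicit $R^f$) is exactly the content the paper leaves implicit. Your second paragraph on $Alt^2(A)$ is, as you yourself note, not part of what the lemma asserts and can be omitted.
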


Let $L$ be an even lattice.
Define the alternative bilinear form on $L$ by $c_L(\al,\be)=(-1)^{(\al,\be)}$ for
$\al,\be \in L$.
By Proposition \ref{twist_free},
there exists a 2-cocycle $f \in Z^2(L,\C^\times)$
such that the associated commutator map $c_f$ is coincides with $c_L$.
The corresponding twisted group algebra is denoted by $\C\{L\}$,
which plays an important role in the construction of 
the lattice vertex algebra attached to $L$.

Hereafter, we generalize this construction to some abelian group $L$, called an even $H$- lattice, which is not always free.
Let $H$ be a finite-dimensional vector space over $\mathbb{C}$ equipped with a non-degenerate symmetric bilinear form $(-,-)$.
{\it An even $H$-lattice} is a subgroup $L \subset H$ such that
$(\al,\be) \in \Z$ and $(\al,\al)\in 2\Z$ for any $\al,\be \in L$,
denoted by $(L,H)$.
Define an alternative bilinear form on $L$ by
$$c_L(\al,\be)=(-1)^{(\al,\be)}$$
for $\al,\be\in L$.

Let $(L,H)$ be an even $H$-lattice.
Since the abelian group $L$ is not always free, we cannot directly apply Proposition \ref{twist_free} to the alternative bilinear form $c_L$.
We can, however, prove that there exists a 2-cocycle $f_L \in Z^2(L,\C^\times)$
such that the associated alternative bilinear form $c_{f_L}$ coincides with $c_L$. 
We construct $f_L$ by pulling back a cocycle on a quotient lattice of $L$, where the kernel of the quotient map is the radical of the bilinear form on $L$.

Let $E$ be the $\C$-vector subspace of $H$ spanned by $L$.
Set $E^\perp=\{v \in E \;|\; (v,E)=0 \}$.
Let $\pi: E \rightarrow E/E^\perp$ be the canonical projection
and $\bar{L}$ the image of $L$ under $\pi$.  
Then, the bilinear form on $H$ induces a bilinear form on $E/E^\perp$ and $\bar{L}$
and the bilinear form on $E/E^\perp$ is non-degenerate.
Since $\bar{L}$ is an even $E/E^\perp$-lattice and
spans $E/E^\perp$,
$\bar{L}$ is a free abelian group of rank $\dim_\C E/E^\perp$.
By Proposition \ref{twist_free}, there exits a 2-cocycle $g:\bar{L}\times \bar{L} \rightarrow \C^\times$
such that $g(\bar{a},\bar{b})g(\bar{b},\bar{a})^{-1}=(-1)^{(\bar{a},\bar{b})}$ for any
$\bar{a},\bar{b} \in \bar{L}$.
Then, the 2-cocycle $f_L$ defined by $f_L(a,b)=g(\pi(a),\pi(b))$ for $a,b \in L$
satisfies $c_{f_L}=c_L$.
The twisted group algebra constructed by the 2-cocycle $f_L \in Z^2(L,\C^\times)$
is denoted by $A_{L,H}$,
which is independent of the choice of the 2-cocycle $f_L \in Z^2(L,\C^\times)$.
Hence, we have:
\begin{prop}
\label{existence_twist}
For even $H$-lattice $(L,H)$, there exits a unique twisted group algebra $A_{L,H}$
such that the associated commutator map is $c_L$.
\end{prop}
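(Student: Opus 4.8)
The plan is to establish existence and uniqueness separately, the existence having in effect already been carried out in the paragraph preceding the statement, so the main task is to package it cleanly and then argue uniqueness. For existence, I would take the vector space $E = \C L \subset H$, set $E^\perp = \{v \in E \mid (v,E) = 0\}$, and note that $E^\perp$ is exactly the radical of the form restricted to $E$. Let $\pi : E \to E/E^\perp$ be the projection and $\bar L = \pi(L)$. The key observations are: (i) the induced form on $E/E^\perp$ is non-degenerate by construction; (ii) $\bar L$ is an even $E/E^\perp$-lattice spanning $E/E^\perp$, hence a \emph{free} abelian group of rank $\dim_\C E/E^\perp$, since a subgroup of a finite-dimensional real vector space that is discrete — and a finitely generated subgroup over which a non-degenerate form is integral is discrete — is free; (iii) by Proposition \ref{twist_free} applied to the free abelian group $\bar L$, there is $g \in Z^2(\bar L, \C^\times)$ with $c_g = c_{\bar L}$, i.e.\ $g(\bar a,\bar b)g(\bar b,\bar a)^{-1} = (-1)^{(\bar a,\bar b)}$. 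Pulling back, $f_L(a,b) := g(\pi(a),\pi(b))$ is a $2$-cocycle on $L$ (cocycle identities are preserved under pullback along a group homomorphism), and since $(\pi(a),\pi(b)) = (a,b)$ we get $c_{f_L} = c_L$. Then $A_{L,H}$ is defined as the twisted group algebra associated with the cohomology class $[f_L] \in H^2(L,\C^\times)$ via Lemma \ref{twisted_classify}.

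For uniqueness, I would invoke the commutative square relating $H^2$ and $\mathrm{Alt}^2$: by Lemma \ref{twisted_classify} the isomorphism class of a twisted group algebra of $L$ corresponds bijectively to a class in $H^2(L,\C^\times)$, and two classes have the same associated commutator map iff their images under $c : H^2(L,\C^\times) \to \mathrm{Alt}^2(L)$ agree. By Lemma \ref{vanish}, this map $c$ is \emph{injective}. Therefore there is at most one cohomology class in $H^2(L,\C^\times)$ with commutator map $c_L$, hence at most one isomorphism class of twisted group algebra of $L$ with associated commutator map $c_L$. Combined with existence, this gives the unique $A_{L,H}$. I should also note that the construction of $f_L$ a priori depends on the choices of $g$ and of the splitting data in $\pi$, but the uniqueness argument shows $[f_L]$ is independent of all these choices, which is exactly the assertion that $A_{L,H}$ is well-defined.

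The one genuinely non-formal point — the step I expect to need the most care — is the freeness of $\bar L$ (point (ii)): one must check that $\bar L$, though defined as a quotient of a possibly non-free group $L$, is actually free of finite rank. The cleanest argument is that the form on $E/E^\perp$ is non-degenerate and $\Z$-valued on $\bar L$, so $\bar L$ embeds into its dual lattice $\bar L^\vee$, which is a finitely generated free abelian group (being contained in a lattice dual to a spanning set); hence $\bar L$ is free of finite rank. Alternatively, since $\bar L$ spans the finite-dimensional space $E/E^\perp$ over $\C$ and the form is non-degenerate and integral on $\bar L$, the group $\bar L$ is discrete in the real span $E/E^\perp \otimes_\C \mathbb{R}$ viewed appropriately, hence free abelian of rank $\dim_\C E/E^\perp$. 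Everything else — that pullback of a cocycle is a cocycle, that the commutator map pulls back correctly, that injectivity of $c$ forces uniqueness — is routine and can be dispatched in a line each. So the proof is essentially: \emph{construct $f_L$ by pullback from the free quotient $\bar L$ using Proposition \ref{twist_free}; conclude uniqueness from the injectivity in Lemma \ref{vanish} together with the classification in Lemma \ref{twisted_classify}.}
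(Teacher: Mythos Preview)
Your proposal is correct and follows essentially the same route as the paper: existence by pulling back a cocycle from the free quotient $\bar L$ via Proposition~\ref{twist_free}, and uniqueness from the injectivity of $c$ in Lemma~\ref{vanish} combined with Lemma~\ref{twisted_classify}. Your treatment is in fact more careful than the paper's on the freeness of $\bar L$, which the paper simply asserts.
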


\begin{rem}
By the construction, if $L$ is non-degenerate, then $A_{L,H}$ is isomorphic to
the twisted group algebra $\C\{L\}$.
\end{rem}

\subsection{AH pairs}\label{AH}
Let $H$ be a finite-dimensional vector space over $\mathbb{C}$ equipped with a non-degenerate symmetric bilinear form $(-,-)$ and $A$ an associative algebra over $\mathbb{C}$ with a non-zero unit element $1$. 
Assume that $A$ is graded by $H$ as $A=\bigoplus_{\alpha\in H}A^\alpha$. 

We will say that such a pair $(A,H)$ is an {\it AH pair} if the following conditions are satisfied:
{\leftmargini2.8em
\begin{enumerate}
\itemsep1ex\parskip0ex
\item[AH1)]
$1 \in A^0$ and $A^\alpha A^\beta\subset A^{\alpha+\beta}$ for any $\alpha,\beta\in H$ ;
\item[AH2)]
$A^\alpha A^\beta\ne 0$ implies $(\alpha,\beta)\in \mathbb{Z}$;
\item[AH3)]
For $v\in A^\alpha,w\in A^\beta$, $vw=(-1)^{(\alpha,\beta)}wv$. 
\end{enumerate}}

For AH pairs $(A,H_A)$ and $(B,H_B)$,
a homomorphism of AH pairs is a pair $(f,f')$ of maps $f:A \longrightarrow B$ and $f':H_A\longrightarrow H_B$ such that $f$ is an algebra homomorphism and $f'$ an isometry such that $f(A^\alpha)\subset B^{f'(\alpha)}$ for all $\alpha\in H_A$.
We denote by $\AH$ the category of AH pairs.
For an AH pair $(A,H)$, we set
\begin{align*}
M_{A,H}&=\{\alpha\in H\,|\,A^\alpha\ne 0\}.
\end{align*}

A {\it good AH pair} is an AH pair $(A,H)$ such that:
{\leftmargini2.5em
\begin{enumerate}\label{GAH}
\item[GAH1)]
$A^0$ is a one-dimensional vector space with the basis $\{1\}$;
\item[GAH2)]
$vw \neq 0$ for any $\al,\be \in M_{A,H}$, $v \in A^\al \setminus \{0\}$ and $w\in A^\be \setminus \{0\}$.
\end{enumerate}}\noindent
By isolating the leading terms with respect to some linear order on $H$, we have:
\begin{prop}
For a good AH pair $(A,H)$, $0$ is the only zero divisor in $A$.
\end{prop}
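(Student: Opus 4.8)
The plan is to fix a linear order on $H$ compatible with the group structure and to argue by looking at leading terms. Concretely, since $M_{A,H} \subset H$ is a subset of a finite-dimensional $\C$-vector space, I would choose an embedding of the subgroup $\langle M_{A,H}\rangle$ generated by $M_{A,H}$ into $\mathbb{R}^n$ (it is a finitely generated abelian group, but we only need a total order on the $H$-grading that appears in a given pair of elements, so one can even work inside the finite set of $H$-degrees involved) and pull back the lexicographic order, obtaining a total order $\preceq$ on the relevant degrees that is translation-invariant: $\al \prec \be$ implies $\al+\ga \prec \be+\ga$. For a nonzero $a \in A$, write $a = \sum_{\al} a^\al$ with $a^\al \in A^\al$ and only finitely many terms nonzero; define the \emph{leading degree} $\mathrm{lt}(a)$ to be the $\preceq$-maximal $\al$ with $a^\al \neq 0$, and the \emph{leading term} to be $a^{\mathrm{lt}(a)}$.

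The key step is then the following: if $a, b \in A$ are both nonzero, I claim $ab \neq 0$, and in fact $\mathrm{lt}(ab) = \mathrm{lt}(a) + \mathrm{lt}(b)$ with leading term equal to the product of the leading terms. Indeed, by AH1) the component of $ab$ in degree $\mathrm{lt}(a)+\mathrm{lt}(b)$ receives contributions only from pairs $(\al,\be)$ with $\al + \be = \mathrm{lt}(a)+\mathrm{lt}(b)$ and $a^\al, b^\be \neq 0$; by maximality of the leading degrees and translation-invariance of $\preceq$, the only such pair is $(\mathrm{lt}(a), \mathrm{lt}(b))$. Hence the degree-$(\mathrm{lt}(a)+\mathrm{lt}(b))$ component of $ab$ is exactly $a^{\mathrm{lt}(a)} b^{\mathrm{lt}(b)}$, which lies in $A^{\mathrm{lt}(a)}A^{\mathrm{lt}(b)}$. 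Since $A^0$ is one-dimensional spanned by $1$ (GAH1), both $\mathrm{lt}(a)$ and $\mathrm{lt}(b)$ lie in $M_{A,H}$, and $a^{\mathrm{lt}(a)}, b^{\mathrm{lt}(b)}$ are nonzero elements of $A^{\mathrm{lt}(a)}$ and $A^{\mathrm{lt}(b)}$ respectively; so by GAH2) their product is nonzero. Therefore $ab$ has a nonzero homogeneous component, so $ab \neq 0$, and $0$ is the only zero divisor in $A$.

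The only real subtlety — and the point I would be careful about — is the existence of the translation-invariant total order. This is standard: the subgroup of $H$ generated by the finitely many degrees occurring in $a$ and $b$ is a finitely generated torsion-free abelian group (it sits inside the $\C$-vector space $H$), hence free of finite rank, hence orderable by choosing a basis and taking the lexicographic order; and any such order is automatically translation-invariant. One should note this argument does not need a global order on all of $H$ — it suffices to order, for each fixed pair $a,b$, the finite set of degrees involved — which sidesteps any set-theoretic worry about ordering an uncountable group. With the ordering in hand, the rest is the short leading-term computation above, so I do not anticipate further obstacles.
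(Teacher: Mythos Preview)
Your proposal is correct and is precisely the leading-term argument the paper has in mind (the paper's proof is the single line ``By isolating the leading terms with respect to some linear order on $H$''). One small cosmetic point: the appeal to GAH1 is unnecessary---that $\mathrm{lt}(a),\mathrm{lt}(b)\in M_{A,H}$ follows immediately from $a^{\mathrm{lt}(a)},b^{\mathrm{lt}(b)}\neq 0$ and the definition of $M_{A,H}$---but this does not affect the validity of the argument.
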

Let $(A,H)$ be a good AH pair.
Then, (GAH1) and (GAH2) directly imply $M_{A,H}$ is a submonoid of $H$.
Furthermore, (AH2) and (AH3) imply $(\al, \be) \in \Z$, $(\al,\al) \in 2\Z$ for any $\al,\be\in M_{A,H}$.

A {\it lattice pair} is a good AH pair $(A,H)$ such that:
\begin{enumerate}
\item[LP)]
$M_{A,H}$ is an abelian group, that is, $-\al\in M_{A,H}$ for any $\al \in M_{A,H}$.
\end{enumerate}

\begin{lem}
\label{lattice_pair_twisted}
If $(A,H)$ is a lattice pair,
then $A$ is a twisted group algebra of the abelian group $M_{A,H}$ whose commutator map is defined by $M_{A,H}\times M_{A,H} \rightarrow \C^\times, (\al,\be) \mapsto (-1)^{(\al,\be)}$.
\end{lem}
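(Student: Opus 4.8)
The plan is to verify the three defining conditions of a twisted group algebra directly from the hypothesis that $(A,H)$ is a lattice pair, and then identify its commutator map. Recall that $A = \bigoplus_{\al \in H} A^\al$ is $H$-graded; restricting the grading to the submonoid $M_{A,H}$ (which is in fact a group by (LP)) gives $A = \bigoplus_{\al \in M_{A,H}} A^\al$, since $A^\al = 0$ for $\al \notin M_{A,H}$ by definition of $M_{A,H}$. First I would check condition (1): $1 \in R_0$, i.e.\ $1 \in A^0$, which is immediate from (AH1). Next, condition (2): $\dim_\C A^\al = 1$ for every $\al \in M_{A,H}$. For $\al = 0$ this is (GAH1). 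For general $\al \in M_{A,H}$, since $M_{A,H}$ is a group, $-\al \in M_{A,H}$; pick $0 \neq v \in A^\al$ and $0 \neq w \in A^{-\al}$, and by (GAH2) we have $vw \in A^0 \setminus \{0\}$, so $vw = \lambda 1$ with $\lambda \in \C^\times$. If $v' \in A^\al$ is arbitrary, then $v'w \in A^0 = \C 1$, so $v'w = \mu 1$ for some $\mu \in \C$; then $(v' - \mu\lambda^{-1} v)w = 0$, and since $A$ has no nonzero zero divisors (the Proposition just above, applied to the good AH pair $(A,H)$) and $w \neq 0$, we conclude $v' = \mu\lambda^{-1} v$. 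Hence $A^\al = \C v$ is one-dimensional.

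Condition (3) — that $e_\al e_\be \neq 0$ for nonzero $e_\al \in A^\al$, $e_\be \in A^\be$ — is exactly (GAH2) (and it also follows from the no-zero-divisors Proposition). So $A$ is a twisted group algebra of $M_{A,H}$. It remains to compute the associated commutator map: choosing nonzero $e_\al \in A^\al$ for each $\al \in M_{A,H}$, we have $e_\al e_\be = f(\al,\be) e_{\al+\be}$ with $f(\al,\be) \in \C^\times$ (nonzero by (GAH2), and in $A^{\al+\be}$ by (AH1)), and likewise $e_\be e_\al = f(\be,\al) e_{\al+\be}$. By (AH3), $e_\al e_\be = (-1)^{(\al,\be)} e_\be e_\al$, so $f(\al,\be) = (-1)^{(\al,\be)} f(\be,\al)$, i.e.\ the commutator map $c_f(\al,\be) = f(\al,\be) f(\be,\al)^{-1} = (-1)^{(\al,\be)}$ (using $(\al,\be) \in \Z$, which holds by (AH2) since $A^\al A^\be \neq 0$, or by the remark that $(\al,\be)\in\Z$ for all $\al,\be \in M_{A,H}$). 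This is precisely the claimed commutator map.

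I do not anticipate a serious obstacle here: the lemma is essentially an unwinding of definitions, and the only mildly delicate point is the one-dimensionality of $A^\al$, which is where the cancellation argument via the absence of nonzero zero divisors is needed — and that in turn relies on the preceding Proposition, whose proof is indicated to proceed by isolating leading terms with respect to a linear order on $H$. One should take a little care that the grading genuinely restricts to $M_{A,H}$ and that the product stays within $M_{A,H}$, but this is guaranteed because $M_{A,H}$ is a submonoid (indeed a subgroup) of $H$, as observed right after the definition of a good AH pair.
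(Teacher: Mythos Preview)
Your proof is correct and follows essentially the same approach as the paper: both reduce to showing $\dim A^\al = 1$ via the existence of a nonzero element in $A^{-\al}$, and the remaining conditions are immediate from the axioms. The paper's one-dimensionality argument is marginally more direct—after normalizing $ab=1$ it uses associativity to write $a' = (ab)a' = a(ba') \in \C a$—whereas you invoke the no-zero-divisors Proposition for cancellation, but this is a minor stylistic variation rather than a genuinely different route.
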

\begin{proof}
It suffices to show that $\dim A_\al=1$ for any $\al \in M_{A,H}$.
Let $\al \in M_{A,H}$.
By (LP), $A^{-\al} \neq 0$.
Let $a,a'$ be non-zero vectors in $A^\al$ and $b$ a non-zero vector in $A^{-\al}$.
By Definition (GAH1) and (GAH2),
$ab \in A^0=\C \1$ is non zero. We may assume that $ab=1$.
Then, $a'=(a\cdot b)\cdot a'=a\cdot (b\cdot a') \in \C a$. Hence, $A^\al=\C a$.
\end{proof}

By Lemma \ref{vanish} and Lemma \ref{twisted_classify}, Proposition \ref{existence_twist}, Lemma \ref{lattice_pair_twisted},
we have the following classification result of lattice pairs:
\begin{prop}
\label{lattice_pair_invariant}
For any even $H$-lattice $(L,H)$,
there exists a unique lattice pair $(A_{L,H},H)$ such that $M_{A_{L,H},H}=L$.
Furthermore, the lattice pairs $(A,H)$ and $(A',H')$ are isomorphic iff
there exists an isometry $f:H\rightarrow H'$ such
that $f(M_{A,H})=M_{A',H'}$.
\end{prop}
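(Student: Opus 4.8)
The plan is to assemble the statement from the pieces already in place, treating existence and uniqueness of the lattice pair separately from the classification of isomorphisms. For the first sentence, fix an even $H$-lattice $(L,H)$. By Proposition \ref{existence_twist} there is a twisted group algebra $A_{L,H}$ of the abelian group $L$ whose associated commutator map is $c_L(\al,\be)=(-1)^{(\al,\be)}$. I would set $A_{L,H}$ with its $L$-grading, and then extend this to an $H$-grading by declaring $A_{L,H}^\al=0$ for $\al\in H\setminus L$; this is harmless because $L\subset H$ and the grading group for a twisted group algebra is exactly $L$. One then checks (AH1)--(AH3): (AH1) is the grading axiom for a twisted group algebra together with $1\in A_0$; (AH2) holds because $L$ is an even $H$-lattice, so $(\al,\be)\in\Z$ whenever $A^\al A^\be\neq 0$ (which forces $\al,\be\in L$); (AH3) is precisely the statement that the commutator map is $c_L$, i.e. $vw=c_L(\al,\be)wv=(-1)^{(\al,\be)}wv$. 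The good-AH-pair axioms (GAH1), (GAH2) and the lattice-pair axiom (LP) then follow: (GAH1) is $\dim_\C R_0=1$, (GAH2) is condition (3) in the definition of twisted group algebra (products of nonzero homogeneous elements are nonzero), and (LP) holds since $L$ is a group, so $M_{A_{L,H},H}=L$ is closed under negation. Thus $(A_{L,H},H)$ is a lattice pair with $M_{A_{L,H},H}=L$.

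For uniqueness, suppose $(A,H)$ is any lattice pair with $M_{A,H}=L$. By Lemma \ref{lattice_pair_twisted}, $A$ is a twisted group algebra of the abelian group $L$ whose commutator map is $(\al,\be)\mapsto(-1)^{(\al,\be)}=c_L(\al,\be)$. By Lemma \ref{twisted_classify} the isomorphism class of a twisted group algebra of $L$ is determined by its class in $H^2(L,\C^\times)$, and by Lemma \ref{vanish} the map $c:H^2(L,\C^\times)\to Alt^2(L)$ is injective, so the commutator map $c_L$ already pins down the cohomology class and hence the isomorphism class of $A$ as a twisted group algebra of $L$. Consequently $A$ is isomorphic to $A_{L,H}$ by a graded algebra isomorphism, i.e. an isomorphism respecting the $L$-grading; since both are concentrated in degrees lying in $L\subset H$, this is the same as an isomorphism of AH pairs over the identity of $H$. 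This gives the uniqueness of $(A_{L,H},H)$.

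For the classification of isomorphisms, let $(A,H)$ and $(A',H')$ be lattice pairs, and write $L=M_{A,H}$, $L'=M_{A',H'}$, which are even $H$- and $H'$-lattices respectively. If $(g,g'):(A,H)\to(A',H')$ is an isomorphism of AH pairs, then $g'$ is an isometry $H\to H'$ and $g(A^\al)=A'^{g'(\al)}$, so $g'$ maps $L$ bijectively onto $L'$; hence the ``only if'' direction. Conversely, given an isometry $f:H\to H'$ with $f(L)=L'$, the two lattice pairs $(A,H)$ and $(A',H')$, transported along $f$, are both lattice pairs over $H'$ with the same monoid $L'$ and the same commutator map (since $f$ is an isometry, $(-1)^{(\al,\be)}=(-1)^{(f\al,f\be)}$); by the uniqueness just proved they are isomorphic over $\mathrm{id}_{H'}$, and composing with $f$ yields an isomorphism $(A,H)\simeq(A',H')$ of AH pairs.

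The only genuinely delicate point is the passage from ``isomorphic as twisted group algebras of $L$'' (a graded-algebra statement) to ``isomorphic as AH pairs'' (which also carries the datum of $H$ and the bilinear form): I need to be careful that the $H$-grading on a lattice pair is determined by its $L$-grading together with the inclusion $L\hookrightarrow H$, so that a graded-algebra isomorphism of twisted group algebras automatically upgrades to an AH-pair isomorphism over the identity of $H$, and that the even $H$-lattice structure on $L$ (not just the abelian group $L$) is what is being preserved. Everything else is bookkeeping: verifying the AH and good-AH axioms, and chasing gradings through the isometry $f$. The substantive input has all been done already, in Lemma \ref{vanish}, Lemma \ref{twisted_classify}, Proposition \ref{existence_twist}, and Lemma \ref{lattice_pair_twisted}.
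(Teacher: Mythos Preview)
Your proof is correct and follows exactly the approach the paper intends: the paper's own ``proof'' is just the sentence invoking Lemma~\ref{vanish}, Lemma~\ref{twisted_classify}, Proposition~\ref{existence_twist}, and Lemma~\ref{lattice_pair_twisted}, and you have carefully spelled out how these four ingredients combine to give existence, uniqueness, and the isomorphism criterion. Your care about the passage from graded-algebra isomorphism to AH-pair isomorphism is appropriate and correctly handled.
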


We end this subsection by defining a maximal lattice pair of a good AH pair.
Let $(A,H)$ be a good AH-pair.
For a submonoid $N \subset M_{A,H}$,
set $A_N=\bigoplus_{\al \in N}A^\al$.
Then, we have:
\begin{lem}
\label{submonoid}
For a submonoid $N \subset M_{A,H}$,
$(A_N,H)$ is a subalgebra of $(A,H)$ as an AH pair.
\end{lem}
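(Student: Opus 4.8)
The plan is to verify directly that $(A_N,H)$ satisfies the AH‑pair axioms and that the inclusion $A_N \hookrightarrow A$ together with $\mathrm{id}_H$ is a homomorphism of AH pairs. First I would record the grading: since $A_N=\bigoplus_{\al\in N}A^\al$ is a sum of homogeneous components of $A$, it is automatically $H$‑graded with $(A_N)^\al = A^\al$ if $\al\in N$ and $0$ otherwise. The only thing to check to call $A_N$ a subalgebra is that it is closed under multiplication and contains the unit.

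Next I would check closure. For $\al,\be\in N$ we have $A^\al A^\be \subset A^{\al+\be}$ by (AH1), and $\al+\be\in N$ because $N$ is a submonoid; hence $(A_N)^\al (A_N)^\be \subset (A_N)^{\al+\be}$, so $A_N A_N \subset A_N$. Since $N$ is a submonoid it contains $0\in H$, and by (GAH1) $A^0=\C\1$, so $\1\in A_N$; thus $A_N$ is a unital associative subalgebra of $A$. It is $H$‑graded with $1\in (A_N)^0$ and $(A_N)^\al (A_N)^\be\subset (A_N)^{\al+\be}$, giving (AH1) for the pair $(A_N,H)$.

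Then I would inherit (AH2) and (AH3) verbatim from $(A,H)$: for $v\in (A_N)^\al=A^\al$ and $w\in (A_N)^\be=A^\be$, the relation $(A_N)^\al(A_N)^\be\ne 0$ implies $A^\al A^\be\ne 0$, so $(\al,\be)\in\Z$; and $vw=(-1)^{(\al,\be)}wv$ holds because it already holds in $A$. The bilinear form on $H$ is unchanged and non‑degenerate. Hence $(A_N,H)$ is an AH pair. Finally, the pair $(\iota,\mathrm{id}_H)$ where $\iota:A_N\hookrightarrow A$ is the inclusion is clearly an algebra homomorphism, $\mathrm{id}_H$ is an isometry, and $\iota((A_N)^\al)=A^\al\subset A^{\mathrm{id}_H(\al)}$ for all $\al$, so it is a homomorphism of AH pairs realizing $(A_N,H)$ as a subalgebra of $(A,H)$ in the sense defined above.

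There is essentially no obstacle here; the statement is a bookkeeping verification, and the only point that genuinely uses the hypotheses is the submonoid condition on $N$ (closure under addition and containing $0$), together with (GAH1) to identify $A^0$ with $\C\1$. I would present it as a short direct check rather than invoking anything deeper.
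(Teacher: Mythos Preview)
Your proof is correct and is exactly the direct verification the paper has in mind; the paper states the lemma without proof, treating it as immediate from the definitions. One tiny remark: to get $\1\in A_N$ you only need $1\in A^0$, which is part of (AH1), not the stronger (GAH1); so the argument works for any AH pair, not just good ones, though in context $(A,H)$ is assumed good anyway.
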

Set 
\begin{align}
L_{A,H}=\{\alpha\in H\,|\,A^\alpha, A^{-\al}\ne 0\}=M_{A,H} \cap (-M_{A,H}) . \label{eq_lattice}
\end{align}
We denote $A_{L_{A,H}}$ by $A^{\lat}$.
Then, $(A^{\lat},H)$ is a lattice pair,
which is maximal in the following sense:
\begin{prop}
\label{lattice_universal}
Let $(B,H_B),(A,H_A)$ be good AH pairs and $(f,f'):(B,H_B) \rightarrow (A,H_A)$
 an AH pair homomorphism.
If $(B,H_B)$ is a lattice pair, then $f$ is injective and $f(B) \subset A^\lat$.
In particular, there is a natural bijection 
between $\Hom_{\text{AH pair}}((B,H_B),(A,H_A))$ and $\Hom_{\text{AH pair}}((B,H_B),(A^\lat,H_A))$.
\end{prop}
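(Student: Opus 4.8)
The plan is to use that a lattice pair, being a twisted group algebra of $M_{B,H_B}$ by Lemma \ref{lattice_pair_twisted}, has every nonzero homogeneous element invertible up to a scalar, and to transport this invertibility through the algebra homomorphism $f$.

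First I would set $L = M_{B,H_B}$; by hypothesis $(B,H_B)$ is a lattice pair, so $L$ is an abelian group and, by Lemma \ref{lattice_pair_twisted}, $B = \bigoplus_{\al \in L} B^\al$ with $\dim_\C B^\al = 1$ for every $\al \in L$. Fix a nonzero $e_\al \in B^\al$ for each $\al \in L$. Since $-\al \in L$ and $(B,H_B)$ is good, (GAH1) and (GAH2) give $e_\al e_{-\al} \in B^0 \setminus \{0\} = \C^\times 1$, so after rescaling $e_{-\al}$ we may assume $e_\al e_{-\al} = 1$. Applying $f$ yields $f(e_\al)f(e_{-\al}) = f(1) = 1 \neq 0$, hence $f(e_\al) \neq 0$; thus $f$ is injective on each one-dimensional space $B^\al$. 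Also $f'$ is injective, because the form on $H_B$ is non-degenerate and $f'$ is an isometry: $f'(v) = 0$ forces $(v,H_B) = (f'(v),f'(H_B)) = 0$, so $v = 0$. Consequently, for distinct $\al,\be \in L$ the homogeneous components $A^{f'(\al)}$ and $A^{f'(\be)}$ of $A$ are distinct, the sum $\sum_{\al \in L} A^{f'(\al)}$ is direct, and since $f(B^\al) \subset A^{f'(\al)}$ with $f$ injective on each $B^\al$, the map $f$ is injective on $B = \bigoplus_{\al \in L} B^\al$.

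For the inclusion $f(B) \subset A^\lat$, note that $f(B^\al) \neq 0$ gives $A^{f'(\al)} \neq 0$, hence $f'(\al) \in M_{A,H_A}$; applying the same to $-\al$ gives $-f'(\al) = f'(-\al) \in M_{A,H_A}$ as well. Therefore $f'(\al) \in M_{A,H_A} \cap (-M_{A,H_A}) = L_{A,H_A}$ for every $\al \in L$, so $f(B^\al) \subset A^{f'(\al)} \subset A^\lat$ and hence $f(B) \subset A^\lat$.

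Finally, $(A^\lat,H_A) = (A_{L_{A,H_A}},H_A)$ is a subalgebra of $(A,H_A)$ as an AH pair by Lemma \ref{submonoid} (and is in fact a lattice pair). Composition with the inclusion $\iota:(A^\lat,H_A) \hookrightarrow (A,H_A)$ defines a map $\iota_*:\Hom_{\text{AH pair}}((B,H_B),(A^\lat,H_A)) \to \Hom_{\text{AH pair}}((B,H_B),(A,H_A))$; it is injective since $\iota$ is injective on both components, and surjective by the part just proved, because any AH-pair homomorphism out of the lattice pair $(B,H_B)$ lands in $A^\lat$ and so corestricts (with unchanged $H$-component) to a homomorphism into $(A^\lat,H_A)$. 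This is the asserted bijection, natural in $(A,H_A)$ since it is induced by $\iota$. I do not anticipate a real obstacle; the only point needing care is that $f$ cannot annihilate a nonzero homogeneous element of $B$, which is exactly where the twisted-group-algebra structure of the lattice pair is used, together with the minor bookkeeping of combining componentwise injectivity of $f$ with injectivity of $f'$ to get global injectivity.
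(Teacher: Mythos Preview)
Your proof is correct and follows essentially the same approach as the paper: both arguments hinge on the fact that in a lattice pair every nonzero homogeneous element is invertible (coming from Lemma \ref{lattice_pair_twisted}), so $f$ cannot annihilate it, and the containment $f(B)\subset A^{\lat}$ is obtained exactly as you do. The paper phrases injectivity slightly differently, observing that $\ker f$ is an $H_B$-graded ideal and hence zero; your version is a bit more explicit in checking that $f'$ is injective (so distinct $B^\al$ land in distinct $A^{f'(\al)}$), which is precisely what is needed to know the kernel is graded.
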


\begin{proof}
Since $f$ is an AH pair homomorphism, $\ker f$ is an $H_B$-graded ideal.
By the proof of Lemma \ref{lattice_pair_twisted}, every nonzero element in $B^{\al}$ is
invertible for any $\al \in M_{B,H_B}$. Hence, $\ker f=0$.
For $\al \in M_{B,H}$, we have $0 \neq f(B^\al) \subset A^{f'(\al)}$ and $0 \neq f(B^{-\al}) \subset A^{-f'(\al)}$. 
Thus, $f'(\al) \in L_{A,H_A}$ and $f(B) \subset A^\lat$.
\end{proof}

\subsection{Functor $\mathcal{V}$}\label{AH_vertex}
Let $H$ be a vector subspace of a vertex algebra $V$. 
We will say that $H$ is a {\it Heisenberg subspace}\/ of $V$ if the following conditions are satisfied:
{\leftmargini2.7em
\begin{enumerate}
\item[HS1)]
$h(1)h'\in \mathbb{C}\mathbf{1}$ for any $h,h'\in H$;
\item[HS2)]
$h(n)h'=0$ if $n\geq 2$ for any $h,h'\in H$;
\item[HS3)]
The bilinear form $(-,-)$ on $H$ defined by $h(1)h'=(h,h')\mathbf{1}$ for $h,h'\in H$ is non-degenerate.
\end{enumerate}}%
\noindent
We will call such a pair $(V,H)$ a {\it VH pair}. 
For VH pairs $(V,H_V)$ and $(W,H_W)$, a VH pair homomorphism from $(V,H_V)$ to $(W,H_W)$ is a vertex algebra homomorphism $f: V \rightarrow W$ such that $f(H_V)=H_W$. 
Since $f(h(1)h')=f(h)(1)f(h')$ for any $h,h' \in H_V$ and the bilinear form on $H_V$ is non-degenerate, $f |_{H_V}: H_V \rightarrow H_W$ is an isometric isomorphism of vector spaces. 
A vertex subalgebra $W$ of a VH pair $(V,H)$ is said to be a {\it subVH pair} if
$H$ is a subset of $W$.
$\VH$ is a category whose objects are VH pairs and morphisms are VH pair homomorphisms.

Let $(A,H)$ be an AH pair and consider the Heisenberg vertex algebra $M_H(0)$ associated with $(H,(-,-))$.
Let us identify the degree $1$ subspace of $M_H(0)$ with $H$. 
Then, the vertex algebra $M_H(0)$ is generated by $H$ and the actions of $h(n)$ for $h\in H$ and $n\in \mathbb{Z}$ give rise to an action of the Heisenberg Lie algebra $\widehat{H}$ on $M_H(0)$. 

Consider the vector space 
$$
V_{A,H}
=M_H(0)\otimes A
=\bigoplus_{\alpha\in H}M_H(0)\otimes A^{\alpha}.
$$
We let $\widehat{H}$ act on this space by setting, for $h\in H$, $v \in M_H(0)$ and $a\in A^\al$, 
$$
h(n)(v\otimes a)=
\begin{cases}
(h,\alpha) v\otimes a,&n=0,\cr
(h(n)v)\otimes a,&n\ne 0.
\end{cases}
$$
Let $\al \in H$ and $a\in A^\al$.
Denote by $l_a \in \End \;A$ the left multiplication by $a$
and define $ l_a z^\al : A \rightarrow A[z^\pm]$ by $l_a z^\al \cdot b= z^{(\al,\be)}ab$ for $b \in A^\be$,
where we used that $ab \neq 0$ implies $(\al,\be) \in \Z$.
Then, set 
$$
Y(\mathbf{1}\otimes a,z)=
\exp\biggl(\sum_{n\geq 1}\frac{\alpha(-n)}{n}z^{n}\biggr)\exp\biggl(\sum_{n\geq 1}\frac{\alpha(n)}{-n}z^{-n}\biggr)\otimes l_a z^{\alpha} \in \End V_{A,H}[[z^\pm]].
$$
The series $Y(h\otimes 1,z)=\sum_{n \in \Z} h(n) z^{-n-1}$ with $h\in H$ and $Y(1\otimes a,z)$ with $a\in A$ form a set of mutually local fields on $V_{A,H}$ and generate a structure of a vertex algebra on it.
Since $M_H(0)$ is a vertex subalgebra of $V_{A,H}$, $(V_{A,H},H)$ is canonically a VH pair.

In the case that $A=\mathbb{C}\{L\}$ is the twisted group algebra associated with an even lattice $L$ and $H=\mathbb{C}\otimes_{\mathbb{Z}}L$, the vertex algebra $V_{A,H}$ is nothing else but the lattice vertex algebra $V_L$.
\begin{prop}
\label{V_functor}
The above construction gives a functor from the category of AH pairs
to the category of VH pairs.
\end{prop}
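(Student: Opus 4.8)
The plan is to verify that the assignment $(A,H)\mapsto (V_{A,H},H)$, already established to be well-defined on objects by the preceding construction, sends AH pair homomorphisms to VH pair homomorphisms in a way compatible with composition and identities. So let $(f,f'):(A,H_A)\to(B,H_B)$ be a homomorphism of AH pairs. First I would define the candidate morphism $\mathcal{V}(f,f'):V_{A,H_A}\to V_{B,H_B}$. Since $f'$ is an isometry, it induces a homomorphism of Heisenberg Lie algebras $\widehat{H_A}\to\widehat{H_B}$ and hence a vertex algebra homomorphism $M_{f'}:M_{H_A}(0)\to M_{H_B}(0)$ which is the identity on vacuum and sends $h(-n_1)\cdots h(-n_k)\mathbf 1 \mapsto f'(h_1)(-n_1)\cdots f'(h_k)(-n_k)\mathbf 1$. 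I then set $\mathcal{V}(f,f') = M_{f'}\otimes f$ on $V_{A,H_A}=M_{H_A}(0)\otimes A$, using that $f(A^\alpha)\subset B^{f'(\alpha)}$ so that the $H$-grading is respected; in particular $\mathbf 1\otimes 1 \mapsto \mathbf 1\otimes 1$, and the map restricts to $f'$ on the Heisenberg subspace $H_A$, as required for a VH pair homomorphism.

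The substance of the proof is checking that $\mathcal{V}(f,f')$ is a vertex algebra homomorphism, i.e. intertwines all $n$-th products. Because the fields $Y(h\otimes 1,z)$ for $h\in H_A$ and $Y(\mathbf 1\otimes a,z)$ for $a\in A$ generate the vertex algebra structure on $V_{A,H_A}$, and a linear map sending a generating set of fields to a local generating set compatibly is automatically a vertex algebra homomorphism (via the reconstruction/uniqueness theorem for vertex algebras generated by local fields), it suffices to verify two things: (i) $\mathcal{V}(f,f')\,Y(h\otimes 1,z) = Y(f'(h)\otimes 1,z)\,\mathcal{V}(f,f')$ for $h\in H_A$, which is immediate from the definition of the $\widehat{H}$-action and the fact that $M_{f'}$ intertwines the Heisenberg actions; and (ii) $\mathcal{V}(f,f')\,Y(\mathbf 1\otimes a,z) = Y(\mathbf 1\otimes f(a),z)\,\mathcal{V}(f,f')$ for $a\in A^\alpha$. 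For (ii) I would compare the two sides factor by factor in the explicit formula
$$
Y(\mathbf 1\otimes a,z)=\exp\Bigl(\sum_{n\geq 1}\frac{\alpha(-n)}{n}z^n\Bigr)\exp\Bigl(\sum_{n\geq 1}\frac{\alpha(n)}{-n}z^{-n}\Bigr)\otimes l_a z^\alpha .
$$
The exponential factors on the $V_{B,H_B}$ side involve $f'(\alpha)(\pm n)$, and since $M_{f'}$ intertwines $\alpha(\pm n)$ with $f'(\alpha)(\pm n)$, the Heisenberg parts match. For the group-algebra part, $f\circ l_a = l_{f(a)}\circ f$ because $f$ is an algebra homomorphism, and $f(l_a z^\alpha\cdot b) = f(z^{(\alpha,\beta)}ab) = z^{(f'(\alpha),f'(\beta))} f(a)f(b) = l_{f(a)}z^{f'(\alpha)}\cdot f(b)$ using that $f'$ is an isometry and $f(A^\beta)\subset B^{f'(\beta)}$; so the exponent $z^\alpha$ is correctly transported to $z^{f'(\alpha)}$. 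Hence the two fields match under $\mathcal{V}(f,f')$.

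Finally, functoriality is formal: $\mathcal{V}(\mathrm{id},\mathrm{id}) = \mathrm{id}$ since $M_{\mathrm{id}}\otimes \mathrm{id} = \mathrm{id}$, and $\mathcal{V}(g,g')\circ\mathcal{V}(f,f') = (M_{g'}\circ M_{f'})\otimes(g\circ f) = M_{(g'\circ f')}\otimes (g\circ f) = \mathcal{V}((g,g')\circ(f,f'))$, using that $M_{(-)}$ is itself functorial on isometries (which follows from the functoriality of the Heisenberg vertex algebra construction $H\mapsto M_H(0)$, or directly from the monomial description of $M_{f'}$). I expect the only real obstacle to be the bookkeeping in step (ii) — making sure the power-of-$z$ twist $l_a z^\alpha$ transforms correctly — but this reduces to the isometry property of $f'$ together with the grading-compatibility $f(A^\alpha)\subset B^{f'(\alpha)}$, so no genuinely new idea is needed beyond what is already in the object-level construction.
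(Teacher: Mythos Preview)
Your proposal is correct and follows essentially the same route as the paper: define the map as $M_{f'}\otimes f$, reduce to checking intertwining with the generating fields $Y(h\otimes 1,z)$ and $Y(\mathbf 1\otimes a,z)$ via a ``generators suffice'' criterion (the paper cites \cite[Proposition 5.7.9]{LL}, which is exactly your reconstruction/uniqueness argument), and verify the explicit vertex operator formula factor by factor using that $f'$ is an isometry and $f$ is a graded algebra map. The paper omits the explicit check of composition and identities that you include, but otherwise the arguments coincide.
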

We denote by $\mathcal{V}$ this functor,
thus, $\mathcal{V}(A,H)$ is a VH pair $(V_{A,H},H)$.

In order to prove the above proposition, we need the following result from \cite[Proposition 5.7.9]{LL}:
\begin{prop}[\cite{LL}]
\label{vertex_homomorphism}
Let $f$ be a linear map from a vertex algebra $V$ to a vertex algebra $W$ such
that $f(\1)=\1$ and such that
$$f(s(n)v)=f(s)(n)f(v) \text{ for any}\; s \in S, v \in V \text{ and}\; n \in \Z,$$
where $S$ is a given generating subset of $V$.
Then, $f$ is a vertex algebra homomorphism.
\end{prop}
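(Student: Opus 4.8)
The plan is to isolate the set of elements on which $f$ already behaves like a homomorphism and to show that this set exhausts $V$. Concretely, I would introduce
$$
T=\{a\in V\;|\; f(a(n)v)=f(a)(n)f(v)\ \text{for all}\ v\in V\ \text{and}\ n\in\Z\}.
$$
Since $f$ is linear and the product $a(n)v$ is linear in $a$, the set $T$ is a linear subspace of $V$. The strategy is then to prove that $T$ is a vertex subalgebra containing $S$: once this is done, $T$ contains the vertex subalgebra generated by $S$, which is all of $V$ because $S$ generates $V$, so $T=V$. Specializing the defining condition to $v=b$ then gives $f(a(n)b)=f(a)(n)f(b)$ for all $a,b\in V$ and $n\in\Z$, which together with $f(\1)=\1$ is precisely the assertion that $f$ is a vertex algebra homomorphism.

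First I would dispose of the two easy memberships. The hypothesis gives $S\subset T$ directly. For the vacuum, the identity $Y(\1,z)=\mathrm{id}_V$ yields $\1(n)v=\delta_{n,-1}v$ in both $V$ and $W$, so $f(\1(n)v)=\delta_{n,-1}f(v)=f(\1)(n)f(v)$ using $f(\1)=\1$; hence $\1\in T$. It remains only to show that $T$ is closed under all products.

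This closure is the heart of the argument, and the step I expect to require the most bookkeeping. Given $a,b\in T$, I must show $a(m)b\in T$ for every $m\in\Z$; so fix $n\in\Z$ and $v\in V$ and expand $(a(m)b)(n)v$ using the associativity relation
$$
(a(m)b)(n)=\sum_{i\geq 0}\binom{m}{i}(-1)^i\Bigl(a(m-i)b(n+i)-(-1)^m b(m+n-i)a(i)\Bigr),
$$
which is a finite sum when applied to the fixed vector $v$, by the truncation axiom (V1). Applying $f$ term by term and invoking $a,b\in T$ twice in each summand---once for the outer mode and once for the inner mode---I obtain
$$
f\bigl((a(m)b)(n)v\bigr)=\sum_{i\geq 0}\binom{m}{i}(-1)^i\Bigl(f(a)(m-i)f(b)(n+i)-(-1)^m f(b)(m+n-i)f(a)(i)\Bigr)f(v).
$$
The right-hand side is exactly the associativity relation in $W$ applied to $f(a),f(b)$ and the vector $f(v)$, hence equals $\bigl(f(a)(m)f(b)\bigr)(n)f(v)$. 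Finally, since $a\in T$ one has $f(a(m)b)=f(a)(m)f(b)$, so $f\bigl((a(m)b)(n)v\bigr)=f(a(m)b)(n)f(v)$, i.e.\ $a(m)b\in T$. This proves $T$ is a vertex subalgebra, completing the proof. The only points genuinely needing attention are the finiteness of both sums, guaranteed by (V1) in $V$ and in $W$, and the careful nesting of the two applications of the $T$-condition in each term.
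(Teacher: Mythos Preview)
Your argument is correct and is essentially the standard proof of this result (as given in \cite[Proposition 5.7.9]{LL}). Note, however, that the paper does not supply its own proof of this proposition: it is quoted from \cite{LL} as an auxiliary tool, so there is nothing in the paper to compare your approach against beyond the citation itself.
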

\begin{proof}[proof of Proposition \ref{V_functor}]
Let $(A,H_A)$ and $(B,H_B)$ be AH pairs and 
$(f,f'):(A,H_A) \rightarrow (B,H_B)$ an AH pair homomorphism.
It is clear that there is a unique 
 $\C$-linear map $F: V_{A,H_A} \rightarrow V_{B,H_B}$
such that:
\begin{enumerate}
\item
$F(h(n)-)=f'(h)(n)F(-)$ for any $h \in H_A$;
\item
$F(\1\otimes a)=\1\otimes f(a)$ for any $a \in A$.
\end{enumerate}
It is easy to prove that the restriction of $F$ gives an isomorphism from $M_{H_A}(0)$ to
$M_{H_B}(0)$ and satisfies 
$F(v\otimes a)=F(v) \otimes f(a)$ for any $v\in M_{H_A}(0)$ and $a \in A$.
It suffices to show that $F$ is a vertex algebra homomorphism.
We will apply Proposition \ref{vertex_homomorphism} with $S=H_A \oplus A$.
Let $a\in A^\al$ and $b \in A^\be$ for $\al,\be \in H_A$ and $v \in M_{H_A}(0)$.
Since 
\begin{align*}
F(Y(\1\otimes a,z) v\otimes b) \\
&=z^{(\alpha,\beta)}
F(\exp\biggl(-\sum_{n\leq -1}\frac{\alpha(n)}{n}z^{-n}\biggr)\exp\biggl(-\sum_{n\geq 1}\frac{\alpha(n)}{n}z^{-n}\biggr) v\otimes ab) \\
&=z^{(f'(\al),f'(\be))}\exp\biggl(-\sum_{n\leq -1}\frac{f'(\alpha)(n)}{n}z^{-n}\biggr)\exp\biggl(-\sum_{n\geq 1}\frac{f'(\alpha)(n)}{n}z^{-n}\biggr)
f(a)F(v\otimes b) \\
&= Y(\1\otimes f(a),z)F(v\otimes b),
\end{align*}
$F$ is a vertex algebra homomorphism.
\end{proof}

For an even $H$-lattice $(L,H)$, the VH pair $V_{{A_{L,H},H}}$ is
called {\it $H$-lattice vertex algebra}.
We denote it by $V_{L,H}$.

\begin{rem}
\label{lattice_functor}
The construction of the lattice vertex algebra $V_L$ from an even lattice $L$ does not form a functor,
since there is no natural homomorphism from the automorphism group of the lattice $L$ to the automorphism group of the twisted group algebra $\C\{L\}$ (The automorphism group of the twisted group algebra is an extension of the automorphism group of the lattice, but it is not necessarily split).
This is one reason we introduce AH pairs.
\end{rem}

\section{VH pairs and Associative Algebras}\label{sec_VH}
In the previous section, we constructed a functor 
$\mathcal{V}:\AH \rightarrow \VH$.
In this section, we construct a right adjoint functor $\Omega:\VH \rightarrow \AH$ of $\mathcal{V}$ (see Theorem \ref{adjoint}).
That is, we construct a `universal' associative algebra from a VH pair. Section \ref{sec_vacuum_algebra} is devoted to constructing the functor $\Omega$,
while in \ref{sec_adjoint}, we prove that $\Omega$ and $\mathcal{V}$ form an adjoint pair.
In subsection \ref{sec_good_VH}, we combine the results in this section and previous section and classify $H$-lattice vertex subalgebras of a VH pair.
%
\subsection{The functor $\Omega$}\label{sec_vacuum_algebra}
Let $(V,H)$ be a VH pair and $\omega_{H}$ be the canonical conformal vector of $M_H(0)$ given by the Sugawara construction,
that is, $\omega_H=
\frac{1}{2}\sum_i h_i(-1)h^i$, where $\{h_i\}_i$ is a basis of $H$ and $\{h^i\}_i$ is the dual basis of $\{h_i\}_i$ with respect to the bilinear form on $H$.
For $\alpha\in H$, we let $\Omega_{V,H}^\alpha$ be the set of all vectors $v\in V$ such that:
\begin{enumerate}
\item[VS1)] $T_Vv=\omega_H(0)v$.
\item[VS2)] $h(n)v=0$ for any $h\in H$ and $n\geq 1$.
\item[VS3)] $h(0)v=(h,\alpha)v$ for any $h\in H$. 
\end{enumerate}
Here, $T_V$ is the canonical derivation of $V$ defined by $T_V v=v(-2)\mathbf{1}$.
Set 
$$
\Omega_{V,H}=\bigoplus_{\alpha\in H}\Omega_{V,H}^{\alpha}.
$$
A vector of a Heisenberg module which satisfies the condition (VS2) and (VS3)
is called an {\it $H$-vacuum vector}.
\begin{rem}
\label{rem_ker}
Since $\omega_H(0)-T_V$ is a derivation on $V$,
$\ker(\omega_H(0)-T_V)$ is a vertex subalgebra of $V$.
Furthermore, since $H \subset \ker(\omega_H(0)-T_V)$,
$(\ker(\omega_H(0)-T_V),H)$ is a VH pair
and the map $(V,H) \mapsto (\ker(\omega_H(0)-T_V),H)$ defines 
a functor from $\underline{\text{VH pair}}$ to itself.
\end{rem}

\begin{lem}
\label{vac_omega}
For $v \in \Omega_{V,H}^\al$, $\omega_{H}(0)v=\al(-1)v$ and $\omega_{H}(1)v=\frac{(\al,\al)}{2}v$. Furthermore, $\Omega_{V,H}^0=\ker T_V$.
\end{lem}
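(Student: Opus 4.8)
The plan is to compute the action of the Heisenberg conformal vector $\omega_H$ on an $H$-vacuum vector of charge $\al$ directly from the Sugawara formula $\omega_H = \tfrac12 \sum_i h_i(-1)h^i$, using the vertex-algebra identities listed in the preliminaries (skew-symmetry, commutativity, and the properties of $T$). First I would record what $\omega_H(0)$ and $\omega_H(1)$ do: by commutativity, for $a = h_i(-1)h^i$ and any $v$, the operators $a(n)$ can be written in terms of the modes $h_i(m)$ and $h^i(m)$ acting on $v$. Concretely, $\omega_H(0) = \tfrac12 \sum_i \sum_{j\ge 0}\binom{-1}{j}(h_i(j)h^i)(-1-j+?)$ — rather than push the general commutativity formula, it is cleaner to use the standard Heisenberg fact that on a module, $\omega_H(0) = L_H(-1)$ and $\omega_H(1) = L_H(0)$ where $L_H(n) = \tfrac12 \sum_i \sum_{m\in\Z} {:}h_i(m)h^i(n-m){:}$, normal-ordered so that annihilation modes ($h(k)$ with $k\ge 1$) sit on the right.

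Next I would apply (VS2) and (VS3). For $v \in \Omega_{V,H}^\al$ we have $h(k)v = 0$ for all $k \ge 1$ and $h(0)v = (h,\al)v$. In $L_H(0)v = \tfrac12\sum_i\sum_m {:}h_i(m)h^i(-m){:}\,v$, every term with a positive-index mode on the right kills $v$, and the only surviving contributions come from terms where both indices are $\le 0$ in a way that actually contributes; after normal-ordering only the $m=0$ term survives, giving $L_H(0)v = \tfrac12\sum_i h_i(0)h^i(0)v = \tfrac12\sum_i (h_i,\al)(h^i,\al)v = \tfrac12(\al,\al)v$, using $\sum_i (h_i,\al)h^i = \al$ since $\{h_i\},\{h^i\}$ are dual bases. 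Similarly $L_H(-1)v = \tfrac12\sum_i\sum_m {:}h_i(m)h^i(-1-m){:}\,v$; the only surviving term is the one pairing the charge-zero mode with $h^i(-1)$ (and symmetrically), yielding $L_H(-1)v = \sum_i (h_i,\al)h^i(-1)v = \al(-1)v$. This gives the two displayed formulas $\omega_H(0)v = \al(-1)v$ and $\omega_H(1)v = \tfrac{(\al,\al)}2 v$.

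Finally, for $\Omega_{V,H}^0 = \ker T_V$: if $\al = 0$ then by (VS1) $T_V v = \omega_H(0)v = 0(-1)v = 0$, so $\Omega_{V,H}^0 \subset \ker T_V$; conversely, for $v \in \ker T_V$, one checks $v$ satisfies (VS1) trivially ($T_V v = 0 = \omega_H(0)v$ needs $\omega_H(0)v = 0$), (VS2), and (VS3) with $\al = 0$ — the point being that $T_V v = 0$ together with $(Ta)(n) = -na(n-1)$ and $T(h(n)v) = (Th)(n)v + h(n)Tv = -n h(n-1)v + h(n)Tv$ forces, by induction on $n$ going down, that $h(n)v$ is determined and in fact $h(n)v = 0$ for $n \ge 1$ and $h(0)v = 0$; more directly, $0 = T_V(h(-1)v) - \ldots$, or simplest: $h(n)v$ for $n\ge 0$ lies in $\C\1$ or lower charge components, and translation-invariance pins these down. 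The cleanest route is to note $\ker T_V \subset \ker \omega_H(0)$ is not automatic, so I would instead argue: the Heisenberg subalgebra $M_H(0)$ acts on $V$, $\ker T_V$ is preserved appropriately, and any $T_V$-invariant vector is automatically an $H$-vacuum vector of charge $0$ because the modes $h(n)$, $n\ge 1$, lower the (generalized) $L_H(0)$-eigenvalue while $T_V$-invariance forces that eigenvalue to be zero.

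The main obstacle I anticipate is the bookkeeping in the normal-ordered Sugawara sums — making sure that on a general $V$-module (not just a highest-weight Heisenberg module) the identities $\omega_H(0) = L_H(-1)$, $\omega_H(1) = L_H(0)$ hold with the correct normal ordering, and that no extra terms survive when acting on an $H$-vacuum vector. This is routine but must be done carefully; everything else is a short computation with dual bases. For the last claim $\Omega_{V,H}^0 = \ker T_V$, the subtlety is the reverse inclusion, and I would handle it by showing directly that $T_V v = 0$ implies $h(n)v = 0$ for $n \ge 1$ (using $T(h(n)v) = -nh(n-1)v$ repeatedly, starting from the truncation axiom V1 that $h(n)v = 0$ for $n \gg 0$) and $h(0)v = 0$ (from $h(0)v = -T(h(1)? )$-type relations or from the non-degeneracy of the form together with $h(0)$ being a derivation annihilating $\1$ and commuting with $T_V$).
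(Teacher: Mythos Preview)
Your computations of $\omega_H(0)v$ and $\omega_H(1)v$ match the paper's: both expand the Sugawara operator via the associativity formula for $(h_i(-1)h^i)(n)$, kill terms using (VS2)--(VS3), and collapse the dual-basis sum to $\al$. The forward inclusion $\Omega_{V,H}^0 \subset \ker T_V$ is also the same.

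For the reverse inclusion $\ker T_V \subset \Omega_{V,H}^0$ you take a different route from the paper. The paper observes in one line that $v \in \ker T_V$ implies $a(n)v = 0$ for \emph{every} $a \in V$ and $n \geq 0$: indeed $(Tv)(m) = -mv(m-1)$ forces $v(m) = 0$ for all $m \neq -1$, whence skew-symmetry $a(n)v = \sum_{i\geq 0}(-1)^{n+1+i}\tfrac{T^i}{i!}\,v(n+i)a$ vanishes for $n \geq 0$. This simultaneously verifies (VS2), (VS3), and (VS1) (take $a = h$ and $a = \omega_H$), and conceptually identifies $\ker T_V$ as the center of the vertex algebra. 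Your downward induction on Heisenberg modes --- from $h(n)v = 0$ and $Tv = 0$ deduce $0 = T(h(n)v) = -nh(n-1)v$, hence $h(n-1)v = 0$ for $n \neq 0$, starting from (V1) and descending to $n=1$ to get $h(0)v=0$, then invoking the already-proved first formula with $\al = 0$ to obtain (VS1) --- is also correct, but Heisenberg-specific and longer. Commit to that induction cleanly and drop the alternative half-sketches (the ``$L_H(0)$-eigenvalue'' argument and the ``lies in $\C\1$'' remark are not justified at this point and are not needed).
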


\begin{proof}
Let $\{h_i\}_i$ be a basis of $H$ and $\{h^i\}_i$ the dual basis with respect to the bilinear form on $H$. Then,
$\omega_{H}(0)v=\frac{1}{2} \sum_i (h_i(-1)h^i)(0)v=
\frac{1}{2}\sum_i \sum_{n \geq 0} (h_i(-1-n)h^i(n) + h^i(-n-1)h_i(n))v
=\frac{1}{2} \sum_i (h^i,\al) h_i(-1)v+(h_i,\al) h^i(-1)v=\al(-1)v.$
Similarly,
$\omega_{H}(1)v=\frac{1}{2} \sum_i (h_i(-1)h^i)(1)v=
\frac{1}{2}\sum_i \sum_{n \geq 0} (h_i(-1-n)h^i(n+1) + h^i(-n)h_i(n))v
=\frac{1}{2} \sum_i (h^i,\al)(h_i, \al) v=\frac{(\al, \al)}{2}v.$
If $v \in \Omega_{V,H}^0$, then $T_V v=\omega_{H}(0)v=0$.
If $v \in \ker T_V$, then $a(n)v=0$ for any $n \geq 0$ and $a \in V$, which implies $v \in \Omega_{V,H}^0$.
Thus, $\Omega_{V,H}^0=\ker T_V$.
\end{proof}

The following simple observation is fundamental:
\begin{lem}
\label{product}
Let  $v \in \Omega_{V,H}^\al$ and $w \in \Omega_{V,H}^\beta$.
If $Y(v,z)w \neq 0$, then
$(\al,\be) \in \Z$ and $v(-(\al,\be)+i)w =0$ for any $i \geq 0$ and
 $0 \neq v(-(\al,\be)-1)w=(-1)^{(\al,\be)}w(-(\al, \be)-1)v \in \Omega_{V,H}^{\al+\be}$. In particular, if 
$(\al,\be) \notin \Z$, then $Y(v,z)w=0$.
\end{lem}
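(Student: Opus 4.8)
The plan is to extract the asserted relations from the three structural axioms already at hand: skew-symmetry, the fact that $\omega_H(0)$ acts as $\al(-1)$ on $\Omega_{V,H}^\al$ (Lemma \ref{vac_omega}), and the Heisenberg commutation relations HS1)--HS3) together with the weight conditions VS2)--VS3) defining $\Omega_{V,H}^\al$. First I would compute, for $v \in \Omega_{V,H}^\al$ and $w \in \Omega_{V,H}^\be$, the action of $h(0)$ on $v(n)w$ for $h \in H$: since $h(0)$ is a derivation of all $n$-th products and kills $H$, one gets $h(0)(v(n)w) = (h,\al+\be)\,v(n)w$, so every nonzero $v(n)w$ lies in the $(\al+\be)$-eigenspace. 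Next, to locate which products vanish, I would feed $a = v$, $b = w$ into the commutator formula (commutativity) or directly examine $h(1)(v(n)w)$ using $h(1)v = 0$ (from VS2) and $[h(1), v(n)] = \sum_{i\ge 0}\binom{1}{i}(h(i)v)(n+1-i) = (h(0)v)(n+1) = (h,\al)\,v(n+1)$; this gives a recursion forcing $v(n)w$ into $H$-vacuum vectors only for the appropriate $n$, and more importantly it is the engine that pins down the leading term.

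The cleanest route to the integrality and the leading-exponent claim is to use the grading by $\omega_H(0) = T_V$-weight on $\Omega$-vectors. From Lemma \ref{vac_omega}, $\omega_H(1)$ acts as $\tfrac{(\al,\al)}{2}$ on $\Omega_{V,H}^\al$ and as $\tfrac{(\be,\be)}{2}$ on $\Omega_{V,H}^\be$; so I would track how $\omega_H(1)$ acts on $v(n)w$ via its commutation with $v(n)$. Since $v(-1)\mathbf 1 = v$ and $\omega_H(1)v = \tfrac{(\al,\al)}{2}v$, the operator $v(n)$ shifts $\omega_H(0)$-weight by a controlled amount; combined with the explicit vertex-operator formula $Y(\mathbf 1 \otimes a, z)$-style $z^{(\al,\be)}$ leading power inherited through the functor $\mathcal V$ — or, intrinsically, through the locality/associativity relations — one sees that $Y(v,z)w$ has a pole of order exactly $(\al,\be)$ when it is nonzero, i.e. $v((\al,\be)-1-i+\dots)$... more precisely $v(-(\al,\be)-1)w$ is the bottom nonzero term and $v(-(\al,\be)+i)w = 0$ for $i \ge 0$. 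Integrality of $(\al,\be)$ then follows because all weight shifts are by integers, so a nonzero $Y(v,z)w \in V[[z^\pm]]$ can only have integer-power poles, forcing $(\al,\be) \in \Z$; contrapositively, $(\al,\be)\notin\Z$ forces $Y(v,z)w = 0$.

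Finally, the skew-symmetry formula $v(n)w = \sum_{i\ge 0}(-1)^{n+1+i}\tfrac{T^i}{i!}(w(n+i)v)$ applied at $n = -(\al,\be)-1$ gives $v(-(\al,\be)-1)w = (-1)^{-(\al,\be)}\,w(-(\al,\be)-1)v + \sum_{i\ge 1}(\pm)\tfrac{T^i}{i!}(w(-(\al,\be)-1+i)v)$; but $w(-(\al,\be)-1+i)v = w(-(\be,\al)+(i-1))v = 0$ for $i \ge 1$ by the vanishing already established (with the roles of $v,w$ swapped, using $(\be,\al)=(\al,\be)$), so only the $i=0$ term survives and $v(-(\al,\be)-1)w = (-1)^{(\al,\be)}w(-(\al,\be)-1)v$. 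That this common value lies in $\Omega_{V,H}^{\al+\be}$ follows from the $h(0)$-eigenvalue computation above together with checking $h(n)$-annihilation for $n\ge 1$ (from the commutation relation $[h(n),v(m)]$ expanding in $(h(i)v)(\cdot)$ with $h(i)v=0$ for $i\ge1$ and $h(0)v$ proportional to $v$, plus $h(n)w=0$) and $T_V = \omega_H(0)$ acting correctly (from Remark \ref{rem_ker}, since $\Omega$-vectors lie in $\ker(\omega_H(0)-T_V)$, which is a subalgebra). The main obstacle is getting the leading-pole-order statement $v(-(\al,\be)+i)w = 0$ for $i\ge 0$ airtight without circularity: I expect to need the explicit $z^{(\al,\be)}$-behaviour, which is most transparently imported by realizing both $v$ and $w$ inside a copy of $V_{A,H}$ via the functor $\mathcal V$ (or by a direct induction on $\omega_H(0)$-weight using the commutator formula for $[\al(0), v(n)]$). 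Everything else is a routine unwinding of the derivation property and skew-symmetry.
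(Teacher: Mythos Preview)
Your toolkit matches the paper's exactly: Heisenberg commutators to check the $h(n)$-action, Lemma~\ref{vac_omega} for the $\omega_H(1)$-eigenvalue, and skew-symmetry for the sign. The gap is organizational, and it shows up precisely in your ``main obstacle'' paragraph. You suspect you need to import the $z^{(\al,\be)}$ leading power from the functor $\mathcal V$ --- but that would be circular (there is no reason a general VH pair embeds in some $V_{A,H}$, and in any case Lemma~\ref{product} is logically prior to Lemma~\ref{sub_Cartan}). Your alternative ``induction on $\omega_H(0)$-weight'' is not spelled out and it is not clear what would make it terminate.

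The paper's fix is simple and removes the obstacle entirely. Do not try to analyze all $v(n)w$ at once; instead let $k$ be the \emph{largest} integer with $v(k)w\neq 0$ (axiom V1 guarantees such $k$ exists). Your own commutator computation $[h(n),v(k)]=(h,\al)v(n+k)$ now gives $h(n)v(k)w=(h,\al)v(n+k)w+v(k)h(n)w=0$ for $n\ge 1$, and similarly $h(0)v(k)w=(h,\al+\be)v(k)w$; together with $\omega_H(0)$ being a derivation equal to $T_V$ on both $v$ and $w$, this already shows $v(k)w\in\Omega_{V,H}^{\al+\be}$. Now --- and only now --- Lemma~\ref{vac_omega} applies to $v(k)w$ and gives $\omega_H(1)v(k)w=\tfrac{(\al+\be,\al+\be)}{2}v(k)w$. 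Computing the same quantity via the commutator $[\omega_H(1),v(k)]=(\omega_H(0)v)(k+1)+(\omega_H(1)v)(k)$, with $(T_Vv)(k+1)w=-(k+1)v(k)w$ (using $v(k+1)w=0$), yields $\omega_H(1)v(k)w=\bigl(\tfrac{(\al,\al)}{2}+\tfrac{(\be,\be)}{2}-k-1\bigr)v(k)w$. Equating the two forces $k=-(\al,\be)-1$, so $(\al,\be)\in\Z$ and the vanishing $v(-(\al,\be)+i)w=0$ for $i\ge 0$ drops out immediately. Your skew-symmetry argument for the sign $(-1)^{(\al,\be)}$ is then correct as written. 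In short: put ``take the top term and prove it lies in $\Omega^{\al+\be}$'' \emph{before} the $\omega_H(1)$ computation, and the obstacle you flagged disappears.
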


\begin{proof}
Let $k$ be an integer such that $v(k)w \neq 0$ and $v(k+i)w=0$ for any $i >0$.
We claim that $v(k)w \in \Omega_V^{\al+\beta}$.
Since $[h(n),v(k)]=\sum_{i \geq 0}\binom{n}{i}(h(i)v)(n+k-i)=(h,\al)v(n+k)$,
we have $h(n)v(k)w=0$ and $h(0)v(k)w=(h,\al+\be)v(k)w$ for any $h \in H$ and $n \geq 1$.
Furthermore, $\omega_{H}(0)v(k)w=(\omega_{H}(0)v)(k)w+v(k)\omega_{H}(0)w=(T_Vv)(k)w+v(k)T_Vw=T_V(v(k)w)$, which implies $v(k)w \in \Omega_{V,H}^{\al+\be}$.
By Lemma \ref{vac_omega}, $\omega_{H}(1)v(k)w = \frac{(\al+\be,\al+\be)}{2}v(k)w$. 
Since $\omega_{H}(1)v(k)w=[\omega_{H}(1), v(k)]w + v(k)\omega_{H}(1)w
=\sum_{i \geq 0} \binom{1}{i}(\omega_{H}(i)v)(k+1-i)w+v(k)\omega_{H}(1)w
=((\al,\al)/2+(\be,\be)/2)v(k)w+ (T_Vv)(k+1)w=
((\al,\al)/2+(\be,\be)/2-k-1)v(k)w$,
we have $k=-(\al,\be)-1$. Hence, $(\al,\be) \in \Z$.
By applying skew-symmetry, $v(-(\al,\be)-1)w=
\sum_{i \geq 0}(-1)^{i+(\al,\be)} T_V^i/i! w(-(\al,\be)-1+i)v=(-1)^{(\al,\be)}w(-(\al,\be)-1)v$.
\end{proof}

We may equip $\Omega_{V,H}$ with a structure of an associative algebra in the following way: For $v\in \Omega_{V,H}^{\alpha}$ and $w\in \Omega_{V,H}^{\beta}$, define the product $vw$ by
$$
vw=\begin{cases}
v(-(\alpha,\beta)-1)w,&\hbox{if $Y(v,z)w\ne 0$},\cr
\hfil 0,&\hbox{otherwise}.\cr
\end{cases}
$$

In order to show that the product is associative,
we need the following lemma.
\begin{lem}
\label{associative}
Let $a,b,c \in V$, $p,q,r \in \Z$ satisfy $a(r+i)b=0$, $b(q+i)c=0$ and $a(p-1+i)c=0$ for any $i \geq 1$. Then, $(a(r)b)(p+q)c=a(r+p)(b(q)c)$.
\end{lem}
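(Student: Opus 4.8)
The plan is to derive this identity directly from the Borcherds identity (V3) by exploiting the vanishing hypotheses to truncate the infinite sums to a single surviving term. The Borcherds identity with the triple $(a,b,c)$ and integer parameters (to be chosen) reads
\[
\sum_{i=0}^\infty \binom{p'}{i}(a(r'+i)b)(p'+q'-i)c
=\sum_{i=0}^\infty (-1)^i\binom{r'}{i}\bigl(a(p'+r'-i)b(q'+i)c-(-1)^{r'}b(q'+r'-i)a(p'+i)c\bigr),
\]
and the first step is to pick $r'=r$, $q'=q$, and $p'$ so that $p'+q'=p+q$ on the left (so $p'=p$), and simultaneously so that the exponents appearing on the right line up with the hypotheses. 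With $p'=p$, $q'=q$, $r'=r$, the left-hand side is $\sum_{i\ge 0}\binom{p}{i}(a(r+i)b)(p+q-i)c$; by the hypothesis $a(r+i)b=0$ for $i\ge 1$, only the $i=0$ term survives, giving exactly $(a(r)b)(p+q)c$, the left side of what we want.

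Next I would analyze the right-hand side term by term. In the first family $\sum_i(-1)^i\binom{r}{i}a(p+r-i)b(q+i)c$, the factor $b(q+i)c$ vanishes for $i\ge 1$ by hypothesis, so only $i=0$ survives, contributing $a(p+r)\,b(q)c$ — which is the right side of the desired identity. So it remains to show the second family $\sum_i(-1)^i\binom{r}{i}(-1)^{r}\,b(q+r-i)a(p+i)c$ vanishes entirely. Here I would reindex: the hypothesis $a(p-1+i)c=0$ for $i\ge 1$ says $a(p+j)c=0$ for $j\ge 0$, i.e. $a(p+i)c=0$ for every $i\ge 0$. Since the summation index $i$ runs over $i\ge 0$, every term $b(q+r-i)a(p+i)c$ has $a(p+i)c=0$, so the whole second family is zero. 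Combining, Borcherds gives $(a(r)b)(p+q)c = a(p+r)(b(q)c)$, which is the claim (matching the paper's statement $(a(r)b)(p+q)c=a(r+p)(b(q)c)$).

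The one subtlety worth checking carefully — and the place where I expect the only real friction — is the handling of the binomial coefficients when $r$ is a negative integer, since then $\binom{r}{i}$ is nonzero for all $i\ge 0$ and the sums are genuinely infinite; one must make sure the truncation arguments above still apply termwise (they do, since they rest only on the vanishing of the vertex-operator products, not on finiteness of the binomial sums) and that the implicit convergence built into axiom (V1)/(V3) is respected. I would also note that no positivity or ordering assumption on the parameters $p,q,r$ themselves is needed: the statement is purely formal once the three vanishing hypotheses are in place. Thus the proof is essentially a bookkeeping exercise: instantiate Borcherds at $(p,q,r)$, kill all but one term on the left via $a(r+i)b=0$ ($i\ge1$), kill all but one term in the first right-hand family via $b(q+i)c=0$ ($i\ge1$), and kill the entire second right-hand family via $a(p+i)c=0$ ($i\ge0$, equivalently $a(p-1+i)c=0$ for $i\ge1$).
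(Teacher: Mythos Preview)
Your proof is correct and follows essentially the same approach as the paper: instantiate the Borcherds identity at $(p,q,r)$ and use the three vanishing hypotheses to reduce each of the three sums to (respectively) a single term, a single term, and zero. The paper's proof is the same computation stated in one line, without your additional remarks on negative $r$ and infinite binomial sums.
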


\begin{proof}
Applying the Borcherds identity, we have
$(a(r)b)(p+q))c=
\sum_{i \geq 0} \binom{p}{i} (a(r+i)b)(p+q-i)c
=\sum_{i \geq 0} (-1)^i \binom{r}{i}(a(r+p-i)b(q+i)c-(-1)^r b(r+q-i)a(p+i)c)
= a(r+p)b(q)c$.
\end{proof}

\begin{lem}
$(\Omega_{V,H},H)$ is an AH pair.
\end{lem}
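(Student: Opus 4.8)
The plan is to verify the three AH pair axioms (AH1)--(AH3) for the pair $(\Omega_{V,H},H)$, where $H$ is the Heisenberg subspace viewed inside $\Omega_{V,H}$ via $H = \Omega_{V,H}^{\text{deg }1}$-type vectors (indeed $h \in \Omega_{V,H}^h$ for $h \in H$, since $h(n)h'=0$ for $n\ge 1$, $h(0)h'=(h,h')\1$, and $T_Vh=h(-2)\1=\omega_H(0)h$), together with associativity of the product. The grading is the tautological $\Omega_{V,H} = \bigoplus_{\alpha\in H}\Omega_{V,H}^\alpha$, and $H$ is nondegenerate by (HS3), so these are legitimate data for an AH pair once the algebra structure is established.

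First I would record the easy facts. By Lemma \ref{vac_omega}, $\Omega_{V,H}^0 = \ker T_V \ni \1$, so $\1 \in \Omega_{V,H}^0$, giving the unit condition. Lemma \ref{product} does almost all the remaining work: it shows that for $v \in \Omega_{V,H}^\alpha$, $w \in \Omega_{V,H}^\beta$ with $Y(v,z)w \ne 0$ one has $(\alpha,\beta)\in\Z$ and $vw = v(-(\alpha,\beta)-1)w \in \Omega_{V,H}^{\alpha+\beta}$, which is exactly (AH1) $A^\alpha A^\beta \subset A^{\alpha+\beta}$, and that $Y(v,z)w=0$ whenever $(\alpha,\beta)\notin\Z$, which gives (AH2) (if $vw\ne 0$ then $Y(v,z)w\ne 0$, hence $(\alpha,\beta)\in\Z$). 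For (AH3), Lemma \ref{product} also gives $v(-(\alpha,\beta)-1)w = (-1)^{(\alpha,\beta)}w(-(\alpha,\beta)-1)v$, so $vw = (-1)^{(\alpha,\beta)}wv$ when $Y(v,z)w\ne0$; and when $Y(v,z)w=0$ one checks $Y(w,z)v=0$ too (by skew-symmetry, or directly from the symmetry in Lemma \ref{product}), so both products vanish and (AH3) holds trivially. One should also note $\1\cdot v = v$ and $v\cdot\1=v$: since $Y(\1,z)=\mathrm{id}$ and $(\alpha,0)=0$, $\1(-1)v=v$, and using skew-symmetry $v(-1)\1 = v$ as well.

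The substantive point is associativity of the product, i.e. $(uv)w = u(vw)$ for $u\in\Omega^\alpha$, $v\in\Omega^\beta$, $w\in\Omega^\gamma$. Here I would invoke Lemma \ref{associative} with $r = -(\alpha,\beta)-1$, $q=-(\beta,\gamma)-1$, $p = -(\alpha,\gamma)$. The hypotheses $a(r+i)b=0$, $b(q+i)c=0$ for $i\ge1$ are precisely the vanishing statements from Lemma \ref{product} applied to the pairs $(u,v)$ and $(v,w)$; the hypothesis $a(p-1+i)c = u(-(\alpha,\gamma)-1+i)w = 0$ for $i\ge1$ is again from Lemma \ref{product} applied to $(u,w)$. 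Lemma \ref{associative} then yields $(u(r)v)(p+q)w = u(r+p)(v(q)w)$, and since $r+p = -(\alpha,\beta)-(\alpha,\gamma)-1 = -(\alpha,\beta+\gamma)-1$ and $p+q = -(\alpha,\gamma)-(\beta,\gamma)-1 = -(\alpha+\beta,\gamma)-1$, this reads exactly $(uv)w = u(vw)$ with the products as defined — provided all the relevant $Y(-,z)(-)$ are nonzero so the ``otherwise'' branch does not intervene.

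The main obstacle I anticipate is exactly this bookkeeping of the zero cases: one must show that if, say, $Y(u,z)v = 0$ then $(uv)w = 0 = u(vw)$ independently of whether $Y(v,z)w$ vanishes, and symmetrically; more generally that the two sides are simultaneously zero or simultaneously given by the Borcherds-identity computation. The cleanest route is to observe that, by Lemma \ref{product} and the associativity formula in the preliminaries, $Y(u,z)v = 0$ forces $u(n)v = 0$ for all $n$, hence $(u(n)v)(m) = 0$ for all $n,m$, and then the associativity relation $(u(n)v)(m) = \sum_{i\ge0}\binom{n}{i}(-1)^i(u(n-i)v(m+i) - (-1)^n v(n+m-i)u(i))$ shows the corresponding composite of modes annihilates $w$; combined with the degree constraint (the only mode that can be nonzero on $\Omega$-vectors is the $-(\cdot,\cdot)-1$ one, by the $\omega_H(1)$-eigenvalue argument in Lemma \ref{product}) this pins down that $u(vw)$ lands in the zero branch as well. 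Once the vanishing cases are dispatched, the generic case is the one-line consequence of Lemma \ref{associative} above, and the proof is complete.
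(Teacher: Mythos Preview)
Your approach is essentially the paper's: invoke Lemma \ref{product} for (AH1)--(AH3) and the unit, then apply Lemma \ref{associative} with the same triple $(p,q,r)=(-(\alpha,\gamma),-(\beta,\gamma)-1,-(\alpha,\beta)-1)$ for associativity. Two small corrections. First, your parenthetical claim that $h\in\Omega_{V,H}^h$ is false and irrelevant: for $h\neq 0$ one has $h'(1)h=(h',h)\1$, which is not always zero, so (VS2) fails (and $h'(0)h=0$, not $(h',h)h$); in any case an AH pair only uses $H$ as a grading space with a form, not as a subspace of the algebra. Second, your zero-case bookkeeping is more laborious than necessary. The paper simply splits on integrality: if one of $(\alpha,\beta),(\beta,\gamma),(\alpha,\gamma)$ is non-integral, Lemma \ref{product} forces both $(ab)c$ and $a(bc)$ to vanish; if all three are integers, then the formula $vw=v(-(\alpha,\beta)-1)w$ holds whether or not $Y(v,z)w$ vanishes (it returns $0$ when it does), so the hypotheses of Lemma \ref{associative} are satisfied in every case and no further branching is needed.
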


\begin{proof}
Let $a \in \Omega_{V,H}^\al$, $b \in \Omega_{V,H}^\be$ and $c \in \Omega_{V,H}^\gamma$ for $\al, \be,\gamma \in H$. 
By Lemma \ref{product}, it remains to show that the product is associative, i.e., $(ab)c=a(bc)$.
Suppose that one of $(\al,\be),(\be,\gamma),(\al,\gamma)$ is not integer.
By Lemma \ref{product}, it is easy to show that $(ab)c=a(bc)=0$.
Thus, we may assume that $(\al,\be),(\be,\gamma),(\al,\gamma) \in \Z$.
Then, $(ab)c=(a(-(\al,\be)-1)b)(-(\al+\be,\gamma)-1)c$ and $a(bc)=a(-(\al,\be+\gamma)-1)(b(-(\be,\gamma)-1)c)$.
By applying Lemma \ref{associative} with $(p,q,r)=(-(\al,\gamma), -(\be,\gamma)-1, -(\al,\be)-1)$, we have $(ab)c=a(bc)$.
\end{proof}

\begin{prop}
\label{func_omega}
The map $\Omega: \underline{\text{VH pair}  }  \rightarrow \underline{\text{AH pair}}
,\; (V,H) \mapsto (\Omega_{V,H},H)$ is a functor.
\end{prop}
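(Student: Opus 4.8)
The plan is to verify the two things a functor must do: act on objects and act on morphisms, compatibly with composition and identities. On objects the work is already done, since the preceding lemmas show that $(\Omega_{V,H},H)$ is an AH pair for every VH pair $(V,H)$. So the substance of the proof is the construction of $\Omega(f)$ for a VH pair homomorphism $f\colon (V,H_V)\to(W,H_W)$, together with functoriality. The natural candidate is simply the restriction of $f$ to $\Omega_{V,H_V}$, paired with the isometry $f|_{H_V}\colon H_V\to H_W$.

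First I would check that $f$ maps $\Omega_{V,H_V}$ into $\Omega_{W,H_W}$ and respects the $H$-grading. The key point is that a VH pair homomorphism restricts to an isometric isomorphism $H_V\to H_W$, hence sends the Sugawara vector $\omega_{H_V}$ to $\omega_{H_W}$ (a dual basis is carried to a dual basis). Since $f$ intertwines all $n$-th products, for $v\in\Omega_{V,H_V}^\alpha$ one gets $T_W f(v)=f(T_V v)=f(\omega_{H_V}(0)v)=\omega_{H_W}(0)f(v)$, verifying (VS1); likewise $h'(n)f(v)=f((f|_{H_V}^{-1}h')(n)v)$ gives (VS2) and, for $n=0$, (VS3) with weight $f|_{H_V}(\alpha)$. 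Thus $f(\Omega_{V,H_V}^\alpha)\subset\Omega_{W,H_W}^{f|_{H_V}(\alpha)}$, so $(f|_{\Omega_{V,H_V}},f|_{H_V})$ is a graded pair of the right shape, and $f|_{H_V}$ is an isometry.

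Next I would check that this restricted map is an algebra homomorphism for the product defined just above, i.e. that it sends $vw$ to $f(v)f(w)$. The product on $\Omega$ is $v(-(\alpha,\beta)-1)w$ when $Y(v,z)w\neq0$ and $0$ otherwise; since $f$ intertwines $n$-th products, $f(v(-(\alpha,\beta)-1)w)=f(v)(-(\alpha,\beta)-1)f(w)$, and because $f|_{H_V}$ is an isometry the exponent $-(\alpha,\beta)-1$ equals $-(f|_{H_V}(\alpha),f|_{H_V}(\beta))-1$, the exponent used to define $f(v)f(w)$. The only subtlety is matching the vanishing/non-vanishing alternative: one must see that $Y(v,z)w=0$ forces $f(v)f(w)=0$. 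If $Y(v,z)w=0$ then $v(n)w=0$ for all $n$, so $f(v)(n)f(w)=f(v(n)w)=0$ for all $n$, hence $Y(f(v),z)f(w)=0$ and the product is $0$ by definition; conversely if $Y(v,z)w\neq0$ then by Lemma \ref{product} the single relevant product $v(-(\alpha,\beta)-1)w$ is nonzero, and $f$ restricted to $\Omega$ is injective on each nonzero element it needs to be (this injectivity is automatic for the cases at hand, but in any event we only need that the two definitions agree, which they do term by term). Finally, functoriality is immediate: the restriction of $\mathrm{id}_V$ is $\mathrm{id}_{\Omega_{V,H}}$, and restriction commutes with composition.

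I do not expect a genuine obstacle here; this is a bookkeeping verification. The one place requiring a little care is the clause-matching in the product-preservation step, i.e. confirming that the ``otherwise'' branch is respected on both sides — but as indicated this follows because $f$ intertwines all the $n$-th products, so $Y(v,z)w=0$ is equivalent to $f(v)(n)f(w)=0$ for all $n$ whenever $f$ is injective on the relevant subspace, and in the degenerate case where $f$ kills $v$ or $w$ both sides are trivially $0$.
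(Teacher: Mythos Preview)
Your approach is correct and matches the paper's: verify that $f$ carries $\Omega_{V,H_V}^\alpha$ into $\Omega_{W,H_W}^{f'(\alpha)}$ using $f(\omega_{H_V})=\omega_{H_W}$ and the isometry $f'=f|_{H_V}$, after which the paper simply declares ``the rest is obvious.'' Your extra care about clause-matching in the product is harmless but unnecessary: since $f(v(-(\alpha,\beta)-1)w)=f(v)(-(\alpha,\beta)-1)f(w)$ always holds and the right side equals $f(v)f(w)$ in either branch of the definition, no injectivity hypothesis is needed.
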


\begin{proof}
Let $(V,H_V)$ and $(W,H_W)$ be VH pairs and $f:(V,H_V) \rightarrow (W,H_W)$ a VH pair homomorphism.
Set $f'= f|_{H_V} : H_V \rightarrow H_W$.
Let $e_\al \in \Omega_{V,H_V}^{\al}$.
For $h \in H_W$, since $h(n)f(e_\al)=f(f'^{-1}(h))(n)f(e_\al)
=f(f'^{-1}(h)(n)e_\al)$, we have $h(n)f(e_\al)=0$ and 
$h(0)f(e_\al)= (f'^{-1}(h), \al)f(e_\al)$ for any $n \geq 1$.
Since $f'$ is an orthogonal transformation, $(f'^{-1}(h), \al)=(h, f'(\al))$.
Since $f(\omega_{H_V})=\omega_{H_W}$,
$\omega_{H_W}(0)f(e_\al)=f(\omega_{H_V}(0)e_\al)=f(e_\al(-2)\1)=f(e_\al)(-2)\1$.
Hence, $f(\Omega_V^{\al}) \subset \Omega_W^{f'(\al)}$.
The rest is obvious.
\end{proof}
We will often abbreviate the AH pair (and the functor) $(\Omega_{V,H},H)$ as $\Omega(V,H)$.

Since $\Omega$ is a functor, we have:
\begin{lem}
\label{omega_auto}
Let $f \in \Aut V$ such that $f(H) =H$. Then, $f$ induces an automorphism of the AH pair $\Omega(V,{H})$.
\end{lem}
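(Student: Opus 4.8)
The plan is to obtain this as an immediate consequence of the functoriality of $\Omega$ (Proposition \ref{func_omega}), the only genuine task being to recognize $f$ as a morphism in $\VH$. First I would check that an automorphism $f \in \Aut V$ with $f(H) = H$ is the same data as a VH pair automorphism of $(V,H)$. Since $f$ is a vertex algebra homomorphism it fixes the vacuum, so for $h,h' \in H$ one has $(f(h),f(h'))\1 = f(h)(1)f(h') = f\bigl(h(1)h'\bigr) = (h,h')\1$, hence $f|_H$ preserves the bilinear form defined by (HS3); and $f|_H$ is bijective because $f(H) = H$. Thus $f\colon (V,H) \to (V,H)$, and likewise $f^{-1}$, are morphisms in $\VH$.

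Next I would apply the functor $\Omega$. By Proposition \ref{func_omega}, $\Omega(f) = \bigl(f|_{\Omega_{V,H}},\, f|_H\bigr)$ is a homomorphism of AH pairs from $\Omega(V,H)$ to itself; in particular the proof of that proposition gives $f\bigl(\Omega_{V,H}^{\alpha}\bigr) \subset \Omega_{V,H}^{f|_H(\alpha)}$, so $f$ does carry $\Omega_{V,H}$ into itself. To see that $\Omega(f)$ is an automorphism rather than merely an endomorphism, I would invoke functoriality once more: $\Omega(f^{-1}) \circ \Omega(f) = \Omega(f^{-1} \circ f) = \Omega(\mathrm{id}_{(V,H)}) = \mathrm{id}_{\Omega(V,H)}$, and similarly in the other order, so $\Omega(f)$ is invertible in $\AH$. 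This gives the induced automorphism of the AH pair $\Omega(V,H)$.

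There is essentially no obstacle here: all of the content is packaged in Proposition \ref{func_omega}, and the single thing one verifies by hand — that $f|_H$ is an isometric automorphism of $H$ — follows at once from $f(\1) = \1$ together with the defining property (HS3) of a Heisenberg subspace.
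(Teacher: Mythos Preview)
Your proposal is correct and follows exactly the paper's approach: the lemma is stated immediately after Proposition \ref{func_omega} with the single line ``Since $\Omega$ is a functor, we have:'', and your argument simply unpacks this functoriality statement. The verification that $f|_H$ is an isometry is already recorded in the paper's definition of VH pair homomorphisms, so your proof is a faithful (and slightly more detailed) rendering of what the paper intends.
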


Let $(V,H_V)$ and $(W,H_W)$ be VH pairs.
Then, $V\otimes W$ is a vertex algebra, and $H_V \oplus H_W$ is a Heisenberg subspace of $V \otimes W$.
Hence, we have:
\begin{lem}
\label{VH_monoidal}
The category of VH pairs and the category of AH pairs are strict symmetric monoidal categories.
\end{lem}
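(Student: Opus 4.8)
The plan is to put the monoidal product on each category as the underlying tensor product of vector spaces and then check that the defining axioms, the grading, and the Heisenberg/$H$-data are all inherited. I will treat the AH pair case first, since it is elementary. Given AH pairs $(A,H_A)$ and $(B,H_B)$, I take $A\otimes B$ to be the \emph{ordinary} tensor product of associative algebras, graded by $H_A\oplus H_B$ via $(A\otimes B)^{(\al,\be)}=A^\al\otimes B^\be$, and equip $H_A\oplus H_B$ with the orthogonal direct sum of the two bilinear forms. Then (AH1) is immediate; (AH2) holds because $(A^\al\otimes B^\be)(A^{\al'}\otimes B^{\be'})\ne 0$ forces $A^\al A^{\al'}\ne 0$ and $B^\be B^{\be'}\ne 0$, hence $(\al,\al')\in\Z$ and $(\be,\be')\in\Z$, so $((\al,\be),(\al',\be'))=(\al,\al')+(\be,\be')\in\Z$; and (AH3) holds because commuting $v\otimes w$ past $v'\otimes w'$ in the ordinary tensor product multiplies the two signs from (AH3) in $A$ and in $B$, giving exactly $(-1)^{(\al,\al')}(-1)^{(\be,\be')}=(-1)^{((\al,\be),(\al',\be'))}$. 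Non-degeneracy of the form on $H_A\oplus H_B$ is clear, and the unit object is $(\C,0)$.

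For VH pairs, the tensor product $V\otimes W$ of vertex algebras is again a vertex algebra in the standard way, with $Y_{V\otimes W}(v\otimes w,z)=Y_V(v,z)\otimes Y_W(w,z)$, and this is already recorded just above the statement. It remains only to check that $H_V\oplus H_W$, embedded in $V\otimes W$ as $(H_V\otimes\1)\oplus(\1\otimes H_W)$, is a Heisenberg subspace. For $h,h'\in H_V$ one has $(h\otimes\1)(n)(h'\otimes\1)=(h(n)h')\otimes\1$, while for $h\in H_V$, $k\in H_W$ one has $(h\otimes\1)(n)(\1\otimes k)=(h(n)\1)\otimes k$, which vanishes for $n\ge 0$ by (V2); the symmetric statements hold with $V$ and $W$ exchanged. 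Hence (HS1) and (HS2) reduce to the corresponding properties of $H_V$ in $V$ and of $H_W$ in $W$, the induced form on $H_V\oplus H_W$ is the orthogonal direct sum of the two non-degenerate forms and so is non-degenerate, giving (HS3). The unit is the trivial vertex algebra $\C\1$ with zero Heisenberg subspace.

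Finally, I will install the symmetric monoidal structure using the standard constraints inherited from the category of complex vector spaces: the associativity isomorphisms $(U\otimes V)\otimes W\cong U\otimes(V\otimes W)$ and the braiding $v\otimes w\mapsto w\otimes v$. In each of the two settings the braiding is an isomorphism of the relevant algebraic structure — for vertex algebras it intertwines $Y_V(v,z)\otimes Y_W(w,z)$ with $Y_W(w,z)\otimes Y_V(v,z)$, with no sign since these are ordinary (not super) vertex algebras, and for AH pairs it is an algebra isomorphism — and it carries $H_V\oplus H_W$ isometrically onto $H_W\oplus H_V$; likewise the associativity maps are morphisms in each category. The pentagon, hexagon, and symmetry axioms then hold because they already hold at the level of vector spaces. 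One may either read \emph{strict} in the customary informal sense in which the tensor product of vector spaces is treated as strictly associative and unital, or pass to the Mac Lane strictification.

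The only points calling for any care — and thus the sole "obstacle" — are the sign bookkeeping in (AH3) under the ordinary (rather than super) tensor product of algebras, and the observation that the mixed products $(h\otimes\1)(n)(\1\otimes k)$ vanish for $n\ge 0$, so that no spurious pairing between the two Heisenberg subspaces is created; everything else is formal.
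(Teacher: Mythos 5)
Your proposal is correct and follows essentially the same route as the paper, which simply records that $V\otimes W$ is a vertex algebra with Heisenberg subspace $H_V\oplus H_W$ (and implicitly the ordinary tensor product of algebras for AH pairs) and states the lemma without further argument. Your sign check for (AH3) under the ordinary tensor product and the vanishing of the mixed products $(h\otimes\1)(n)(\1\otimes k)$ for $n\ge 0$ are exactly the details the paper leaves implicit, and they are verified correctly.
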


It is easy to confirm that the functor $\mathcal{V}: \underline{\text{AH pair}} \rightarrow \underline{\text{VH pair}}$ is a monoidal functor, that is, $\mathcal{V}((A,H_A)\otimes (B,H_B))$ is naturally isomorphic to $\mathcal{V}(A,H_A)\otimes \mathcal{V}(B,H_B)$.
However,  $\Omega$ is not strict monoidal functor,
since $\ker(\omega_H(0)-T)$ does not preserve the tensor product (see Remark \ref{rem_ker}).

\begin{lem}
\label{omega_tensor}
Let $(V,H_V)$ and $(W,H_W)$ be VH pairs.
If $\omega_{H_W}(0)=T_W$,
then $\Omega(V \otimes W,H_V \oplus H_W)$ is isomorphic to 
$\Omega(V,H_V) \otimes \Omega(W,H_W)$ as an AH-pair.
\end{lem}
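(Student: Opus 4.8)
The plan is to exhibit an explicit candidate isomorphism and check it respects the $H$-grading and the algebra structure, using the hypothesis $\omega_{H_W}(0) = T_W$ to pin down the relevant $H$-vacuum conditions. First observe that under the hypothesis, Lemma~\ref{vac_omega} (applied with $H = H_W$ inside $W$) gives $\Omega_{W,H_W}^0 = \ker T_W$, but more importantly we should compare the defining conditions (VS1)--(VS3) for the pair $(V\otimes W, H_V\oplus H_W)$ with those for the two factors. The conformal vector of $M_{H_V\oplus H_W}(0)$ decomposes as $\omega_{H_V} + \omega_{H_W}$ (the Sugawara construction is additive over an orthogonal direct sum of Heisenberg spaces), and for $\gamma = \alpha\oplus\beta \in H_V\oplus H_W$ the conditions (VS2), (VS3) clearly decouple into the corresponding conditions for $\alpha$ on the $V$-factor and $\beta$ on the $W$-factor. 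The condition (VS1), $T_{V\otimes W}(x) = (\omega_{H_V}+\omega_{H_W})(0)x$ with $T_{V\otimes W} = T_V\otimes 1 + 1\otimes T_W$, does \emph{not} decouple in general — this is exactly the failure of strict monoidality noted in Remark~\ref{rem_ker} — but here $\omega_{H_W}(0) = T_W$ holds globally on $W$, so the $W$-part of (VS1) is automatic and the condition collapses to $T_V\otimes 1$ acting as $\omega_{H_V}(0)\otimes 1$, i.e. to (VS1) for the $V$-factor alone.

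Concretely, I would argue that $\Omega_{V\otimes W, H_V\oplus H_W}^{\alpha\oplus\beta} = \Omega_{V,H_V}^\alpha \otimes \Omega_{W,H_W}^\beta$ as subspaces of $V\otimes W$, where the right-hand side uses that every $v\otimes w$ with $v\in\Omega_{V,H_V}^\alpha$, $w\in\Omega_{W,H_W}^\beta$ satisfies all three conditions — (VS2), (VS3) factor-wise, and (VS1) because $T_V v = \omega_{H_V}(0)v$ and $T_W w = \omega_{H_W}(0)w$ (the latter using the hypothesis, since $w\in\Omega_{W,H_W}^\beta$ means $T_W w = \omega_{H_W}(0)w$ automatically, but also $\omega_{H_W}(0)w = T_W w$ trivially). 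For the reverse inclusion, take $x \in \Omega_{V\otimes W}^{\alpha\oplus\beta}$, write $x = \sum_j v_j\otimes w_j$ with $\{w_j\}$ linearly independent; the $H_W$-conditions (VS2),(VS3) force each $w_j \in \Omega_{W,H_W}^\beta$ (here one uses that the $H_W$-action on $V\otimes W$ is $1\otimes(-)$ on the $W$-slot), and symmetrically, after also invoking (VS1) in the form just discussed, each $v_j \in \Omega_{V,H_V}^\alpha$. Summing over $\alpha\oplus\beta$ gives the vector-space isomorphism $\Omega(V\otimes W) \cong \Omega(V)\otimes\Omega(W)$, and it is $H$-graded by construction.

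It remains to check this identification is an algebra homomorphism, i.e. that the product on $\Omega_{V\otimes W}$ defined via $n$-th products in $V\otimes W$ matches the tensor-product algebra structure on $\Omega_V\otimes\Omega_W$. For $v\otimes w$ and $v'\otimes w'$ with $v\in\Omega_V^\alpha$, $w\in\Omega_W^{\beta'}$, $v'\in\Omega_V^{\alpha'}$, $w'\in\Omega_W^{\beta''}$ (renaming to avoid clash), the key point is the $n$-th product formula in a tensor product of vertex algebras: $(v\otimes w)(n)(v'\otimes w') = \sum_{i\in\Z}\big(v(i)v'\big)\otimes\big(w(n-i-1)w'\big)$. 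By Lemma~\ref{product} applied in $V$ and in $W$ separately, $Y(v,z)v'$ is supported in degree $\le -(\alpha,\alpha')-1$ with leading term at exactly $-(\alpha,\alpha')-1$ (if nonzero), and similarly for $w,w'$; combined with the fact that on $V\otimes W$ the relevant pairing is $(\alpha\oplus\beta', \alpha'\oplus\beta'') = (\alpha,\alpha') + (\beta',\beta'')$, the leading ("defining") term of $Y(v\otimes w, z)(v'\otimes w')$ sits in degree $-(\alpha,\alpha')-1-(\beta',\beta'')-1 = -\big((\alpha,\alpha')+(\beta',\beta'')\big)-1$, which is precisely the degree selecting the product in $\Omega_{V\otimes W}$, and the coefficient there is $\big(v(-(\alpha,\alpha')-1)v'\big)\otimes\big(w(-(\beta',\beta'')-1)w'\big) = (vv')\otimes(ww')$. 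One also checks the vanishing cases match: $Y(v\otimes w,z)(v'\otimes w') = 0$ iff $Y(v,z)v' = 0$ or $Y(w,z)w' = 0$, which is exactly when $(vv')\otimes(ww') = 0$ in $\Omega_V\otimes\Omega_W$. Functoriality / naturality of the isomorphism then follows from the functoriality of $\Omega$ (Proposition~\ref{func_omega}) and of $\otimes$.

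The main obstacle I anticipate is the reverse inclusion in the vector-space identification: showing that an $H$-vacuum vector in $V\otimes W$ of definite $H_V\oplus H_W$-weight \emph{decomposes} as a tensor of $H$-vacuum vectors in each factor. The subtlety is that a priori (VS1) mixes the two tensor slots via $T_V\otimes 1 + 1\otimes T_W$, so one cannot separate variables until the hypothesis $\omega_{H_W}(0) = T_W$ is used to neutralize the $W$-slot; getting the bookkeeping right — in particular ensuring that after fixing the $w_j$ to be $H_W$-vacuum of weight $\beta$, the surviving conditions on the $v_j$ are exactly (VS1)--(VS3) in $V$ — is where the argument has to be done carefully rather than by a one-line symmetry appeal.
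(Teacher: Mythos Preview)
Your approach is essentially the paper's: both prove the vector-space equality $\Omega_{V\otimes W}^{\alpha\oplus\beta}=\Omega_V^\alpha\otimes\Omega_W^\beta$ by expanding an element in simple tensors and using the hypothesis $\omega_{H_W}(0)=T_W$ to decouple condition (VS1) into the $V$-factor alone. One small bookkeeping slip: with only $\{w_j\}$ assumed linearly independent, the $H_W$-conditions $\sum_j v_j\otimes h'(n)w_j=0$ do \emph{not} yield $h'(n)w_j=0$ --- that needs $\{v_j\}$ independent; the paper avoids this by taking both families linearly independent from the outset (the standard minimal-rank tensor decomposition), and you should do the same. Your explicit verification of the algebra product via the tensor $n$-th product formula and Lemma~\ref{product} is a welcome addition that the paper leaves implicit.
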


\begin{proof}
It is clear that $\Omega_{V,H_V} \otimes \Omega_{W,H_W} \subset \Omega_{V\otimes W,H_V\oplus H_W}$.
Let $\sum_i v_i \otimes w_i \in \Omega_{V\otimes W,H_V\oplus H_W}$.
We can assume that $\{v_i\}_i$ and $\{w_i\}_i$ are both linearly independent.
For $h \in H_V$ and $n \geq 1$, $h(n)\sum_i v_i\otimes w_i=
\sum_i h(n)v_i \otimes w_i=0$. Since $w_i$ are linearly independent, $h(n)v_i=0$ for any $i$. Similarly, $h'(n)w_i=0$ for any $i$ and $h' \in H_W$.
Since $\omega_{H_V \oplus H_W}=\omega_{H_V} \otimes \1 + \1\otimes \omega_{H_W}$, we have
$\omega_{H_V}(0) \sum_i v_i\otimes w_i =
(\omega_{H_V \oplus H_W}(0)-\omega_{H_W}(0)) \sum_i v_i\otimes w_i
=(T_{V\otimes W}-1\otimes T_W) \sum_i v_i\otimes w_i
=(T_V \otimes 1) \sum_i v_i\otimes w_i$, where we used $\omega_{H_W}(0)=T_W$ and 
$\sum_i v_i\otimes w_i \in \Omega_{V\otimes W,H_V\oplus H_W}$.
Hence, $\omega_{H_V}(0)v_i=T_V v_i$, which implies that $v_i \in \Omega_{V,H_V}, w_i \in \Omega_{W,H_W}$.
\end{proof}

%


\subsection{Adjoint functor}\label{sec_adjoint}
In this subsection, we will prove the following theorem:
\begin{thm}
\label{adjoint}
The functor $\Omega: \VH  \rightarrow \AH$ is a right adjoint to the functor $\mathcal{V}: \AH \rightarrow \VH$.
\end{thm}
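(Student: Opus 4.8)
The plan is to construct the unit and counit of the adjunction and verify the triangle identities, or equivalently to exhibit a natural bijection
\[
\Hom_{\VH}(\mathcal{V}(A,H_A),(W,H_W)) \;\cong\; \Hom_{\AH}((A,H_A),\Omega(W,H_W))
\]
natural in both arguments. I would work with the hom-set formulation. Starting from a VH pair homomorphism $F:\mathcal{V}(A,H_A)=V_{A,H_A}\to W$, note that $F$ restricts to an isometry $f'=F|_{H_A}:H_A\to H_W$, and that for each $a\in A^\al$ the vector $\1\otimes a\in V_{A,H_A}$ satisfies (VS1)–(VS3) with respect to $H_A$: indeed $h(n)(\1\otimes a)=0$ for $n\geq 1$, $h(0)(\1\otimes a)=(h,\al)(\1\otimes a)$, and $\omega_{H_A}(0)(\1\otimes a)=\al(-1)(\1\otimes a)=(\1\otimes a)(-2)\1=T(\1\otimes a)$ (using Lemma \ref{vac_omega} and the explicit vertex operators). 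Since $F$ is a vertex algebra homomorphism sending $H_A$ isometrically to $H_W$, it sends $\omega_{H_A}$ to $\omega_{H_W}$, so $F(\1\otimes a)\in\Omega_{W,H_W}^{f'(\al)}$. Thus $a\mapsto F(\1\otimes a)$ together with $f'$ gives a map $\Phi(F):(A,H_A)\to\Omega(W,H_W)$; that it is an algebra homomorphism follows because the product on $\Omega$ is defined via the $(-(\al,\be)-1)$-st product, which $F$ preserves, and because in $V_{A,H_A}$ one computes $(\1\otimes a)(-(\al,\be)-1)(\1\otimes b)=\1\otimes ab$ directly from the formula for $Y(\1\otimes a,z)$ (the exponential factors contribute nothing to the relevant coefficient, and $l_a z^\al$ contributes $z^{(\al,\be)}ab$).

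Conversely, given an AH pair homomorphism $(f,f'):(A,H_A)\to\Omega(W,H_W)$, I would build a VH pair homomorphism $\Psi(f,f'):V_{A,H_A}\to W$. On the Heisenberg part use $f'$ to get $M_{H_A}(0)\to M_{H_W}(0)\subset W$; then set $\Psi(f,f')(v\otimes a)$ to be the image of $v$ under this Heisenberg map "applied to" $f(a)\in\Omega_{W,H_W}^{f'(\al)}\subset W$ — concretely, if $v=h_1(-n_1)\cdots h_k(-n_k)\1$ then send $v\otimes a$ to $f'(h_1)(-n_1)\cdots f'(h_k)(-n_k)f(a)$. One must check this is well-defined (the Heisenberg PBW structure makes it so) and then that it is a vertex algebra homomorphism; here I would invoke Proposition \ref{vertex_homomorphism} with the generating set $S=H_A\oplus(\1\otimes A)$, exactly as in the proof of Proposition \ref{V_functor}, reducing everything to checking $\Psi(Y(\1\otimes a,z)v\otimes b)=Y(\Psi(\1\otimes a),z)\Psi(v\otimes b)$. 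The key computational input is that $Y(f(a),z)$ acting on $W$ reproduces, on $H_W$-vacuum vectors, the shape $\exp(\cdots)\exp(\cdots)\otimes l_{f(a)}z^{f'(\al)}$ — i.e. the "vertex operator of a vacuum vector" in any VH pair has the same normal-ordered exponential form as in the lattice construction. This is the heart of the matter and I expect it to be the main obstacle: one needs commutator formulas $[h(n),f(a)(m)]=(h,f'(\al))f(a)(n+m)$ for $n\geq 0$ (from Lemma \ref{product}-type computations, since $h(i)f(a)=0$ for $i\geq 1$ and $h(0)f(a)=(h,f'(\al))f(a)$), together with $T_W$-conjugation, to pin down $Y(f(a),z)$ on the subspace generated by $H_W$-vacuum vectors; the relation $f(ab)=f(a)f(b)$ in $\Omega$ then matches the $z^{(f'(\al),f'(\be))}$-coefficient.

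Finally I would check that $\Phi$ and $\Psi$ are mutually inverse and natural. That $\Phi\circ\Psi=\mathrm{id}$ is immediate since $\Psi(f,f')(\1\otimes a)=f(a)$ and $\Psi(f,f')|_{H_A}=f'$. That $\Psi\circ\Phi=\mathrm{id}$ follows because both sides are vertex algebra homomorphisms out of $V_{A,H_A}$ agreeing on the generating set $H_A\oplus(\1\otimes A)$, hence equal by Proposition \ref{vertex_homomorphism}. Naturality in $(A,H_A)$ and in $(W,H_W)$ is a routine diagram chase using functoriality of $\mathcal V$ (Proposition \ref{V_functor}) and of $\Omega$ (Proposition \ref{func_omega}). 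Alternatively, and perhaps more cleanly for the write-up, I would phrase the unit $\eta_{(A,H)}:(A,H)\to\Omega\mathcal V(A,H)$ as $a\mapsto\1\otimes a$ and the counit $\varepsilon_{(W,H)}:\mathcal V\Omega(W,H)\to W$ as the vertex algebra homomorphism extending the inclusion $\Omega_{W,H}\hookrightarrow W$ (well-defined by the universal-looking argument via Proposition \ref{vertex_homomorphism}), and then verify the two triangle identities $\varepsilon_{\mathcal V(A,H)}\circ\mathcal V(\eta_{(A,H)})=\mathrm{id}$ and $\Omega(\varepsilon_{(W,H)})\circ\eta_{\Omega(W,H)}=\mathrm{id}$, each of which again reduces to an identity of maps agreeing on generators.
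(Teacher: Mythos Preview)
Your proposal is correct and follows essentially the same strategy as the paper: the paper constructs the counit $\mathcal V\Omega(W,H_W)\hookrightarrow W$ by showing that the vertex subalgebra of $W$ generated by $H_W$ and $\Omega_{W,H_W}$ is isomorphic to $V_{\Omega_{W,H_W},H_W}$ (Lemmas~\ref{str_w} and~\ref{sub_Cartan}), observes that the unit is an isomorphism (Lemma~\ref{AH_ker}), and then combines these to get the hom-set bijection (Proposition~\ref{omega_factor} and Lemma~\ref{omega_bijection}); your unit/counit formulation at the end is exactly this, and your direct $\Phi,\Psi$ construction just inlines the same content.

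The one place where the paper is more explicit than your sketch is the ``heart of the matter'' you flagged: to show that $Y(f(a),z)$ acts on vacuum vectors with the expected lattice-type shape, the paper establishes the recursive identity
\[
e_\al(-(\al,\be)-k-1)e_\be \;=\; \tfrac{1}{k}\bigl(\omega_{H}(0)-\be(-1)\bigr)\,e_\al(-(\al,\be)-k)e_\be
\]
(equation~(\ref{lattice_identity})), which inductively rewrites every product $e_\al(n)e_\be$ in terms of Heisenberg operators acting on the $\Omega$-product $e_\al\cdot e_\be$. Your commutator-plus-$T_W$ approach will produce the same identity, but it is worth isolating this formula in the write-up rather than leaving it implicit.
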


The following observation is important:
\begin{lem}
\label{AH_ker}
Let $(A,H)$ be an AH pair. Then, the following conditions hold for the VH pair $(V_{A,H},H)$:
\begin{enumerate}
\item
$T_{V_{A,H}}=\omega_H(0)$;
\item
The AH pair $\Omega({V_{A,H},H})$ is isomorphic to $(A,H)$;
\item
The vertex algebra $V_{A,H}$ is generated by the subspaces $H$ and $\Omega_{V_{A,H},H}=A$ as
a vertex algebra.
\end{enumerate}
In particular, the composite functor $\Omega \circ \mathcal{V}: \AH \rightarrow \AH$ is
 isomorphic to the identity functor.
\end{lem}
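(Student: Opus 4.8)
The plan is to verify the three numbered assertions in turn, since the final ``in particular'' claim follows formally: statement (2) says $\Omega \circ \mathcal{V}$ sends $(A,H)$ to something isomorphic to $(A,H)$, and naturality of this isomorphism in $(A,H)$ is immediate from the explicit description of both functors on morphisms (given in the proof of Proposition \ref{V_functor} and Proposition \ref{func_omega}), so I would state that the isomorphisms constructed below assemble into a natural isomorphism of functors.

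\textbf{Statement (1).} I would compute $T_{V_{A,H}}$ directly from its definition $T_{V_{A,H}}x = x(-2)\mathbf{1}$. It suffices to check $T_{V_{A,H}} = \omega_H(0)$ on a generating set, namely on $H$ and on $\mathbf{1}\otimes a$ for $a \in A$, and then to observe that both $T_{V_{A,H}}$ and $\omega_H(0)$ are derivations of $V_{A,H}$ (the latter because $\omega_H$ is a conformal vector of the subalgebra $M_H(0)$, so $\omega_H(0)$ acts as $L(-1)$ there and satisfies the derivation property on all of $V_{A,H}$), whence their difference is a derivation vanishing on generators, hence zero. On $h \in H$ one has $T_{V_{A,H}} h = h(-2)\mathbf{1}$, which equals $\omega_H(0)h$ since $M_H(0)$ is a conformal vertex algebra with $L(-1)=T$ and $h$ sits in its weight-one space. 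On $\mathbf{1}\otimes a$ with $a \in A^\alpha$, I extract the relevant coefficient from the vertex operator formula for $Y(\mathbf{1}\otimes a,z)$: the $z^{1}$-term of the product $\exp(\sum_{n\ge1}\frac{\alpha(-n)}{n}z^n)\exp(\cdots)\otimes l_a z^\alpha$ applied to $\mathbf{1}$ gives $\alpha(-1)\otimes a = \alpha(-1)(\mathbf{1}\otimes a)$, and by Lemma \ref{vac_omega} (applied in the already-established VH pair $(V_{A,H},H)$, once one knows $\mathbf{1}\otimes a \in \Omega^\alpha_{V_{A,H},H}$, which is checked below) $\omega_H(0)(\mathbf{1}\otimes a) = \alpha(-1)(\mathbf{1}\otimes a)$ as well. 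To keep the argument non-circular I would first record the elementary fact that $\mathbf{1}\otimes a$ satisfies (VS2) and (VS3) directly from the definition of the $\widehat H$-action on $V_{A,H}$, then invoke Lemma \ref{vac_omega}'s computation of $\omega_H(0)$ on such vectors.

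\textbf{Statements (2) and (3).} The heart is identifying $\Omega_{V_{A,H},H}$. Since $V_{A,H} = \bigoplus_\alpha M_H(0)\otimes A^\alpha$ and the $\widehat H$-action is the standard Fock-space action tensored with the grading action, the $H$-vacuum vectors — those killed by all $h(n)$, $n\ge1$, and eigenvectors for $h(0)$ — are exactly $\bigoplus_\alpha (\mathbb{C}\mathbf{1})\otimes A^\alpha = \mathbf{1}\otimes A$, because in each Fock module $M_H(0)$ the vectors annihilated by all positive modes form precisely the one-dimensional vacuum line (a standard PBW/grading argument for the Heisenberg algebra). Condition (VS1), $T_{V_{A,H}}v = \omega_H(0)v$, is then automatic for \emph{every} $v$ by statement (1), so $\Omega^\alpha_{V_{A,H},H} = \mathbf{1}\otimes A^\alpha$ and $\Omega_{V_{A,H},H} = \mathbf{1}\otimes A \cong A$ as vector spaces. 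To see this is an isomorphism of AH pairs I must match the products: for $a\in A^\alpha$, $b\in A^\beta$, the product in $\Omega_{V_{A,H},H}$ is $(\mathbf{1}\otimes a)(-(\alpha,\beta)-1)(\mathbf{1}\otimes b)$ when this is nonzero. Reading off the coefficient of $z^{(\alpha,\beta)-(\alpha,\beta)-1+1}$... more precisely, from $Y(\mathbf{1}\otimes a,z)(\mathbf{1}\otimes b) = z^{(\alpha,\beta)}\exp(\sum_{n\ge1}\frac{\alpha(-n)}{n}z^n)\cdots(\mathbf{1}\otimes ab)$, the coefficient of $z^{(\alpha,\beta)}$ (i.e.\ the $(-(\alpha,\beta)-1)$-st product) is exactly $\mathbf{1}\otimes ab$; and $Y(\mathbf{1}\otimes a,z)(\mathbf{1}\otimes b)=0$ precisely when $ab=0$ (equivalently, by (AH2), when one cannot even have $(\alpha,\beta)\in\mathbb{Z}$, or $ab$ vanishes in $A$). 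Hence the product in $\Omega_{V_{A,H},H}$ corresponds under $\mathbf{1}\otimes a\mapsto a$ to the product in $A$, and the grading and the form on $H$ are visibly preserved, giving (2). Finally (3): $H$ together with $\mathbf{1}\otimes A$ generate $V_{A,H}$ because applying the modes $h(-n)$ ($n\ge1$) of elements of $H$ to the vectors $\mathbf{1}\otimes a$ produces all of $M_H(0)\otimes A^\alpha$ — this is just the statement that $M_H(0)$ is generated by $H$ as a vertex algebra (so $M_H(0)\otimes a$ lies in the subalgebra generated), and $a$ ranges over a basis of each $A^\alpha$.

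\textbf{Main obstacle.} The only genuinely delicate point is the identification of $H$-vacuum vectors with $\mathbf{1}\otimes A$: one must argue carefully that within each Fock module $M_H(0)$ over the (abelian, infinite-dimensional) Heisenberg Lie algebra $\widehat H$, the space of vectors annihilated by all positive modes is one-dimensional. I would handle this via the standard grading of $M_H(0)$ by conformal weight (the $\omega_H(0)$-eigenvalue, or equivalently total degree in the creation operators): a nonzero vector of minimal weight in any nonzero submodule must be a vacuum vector, and conversely a vacuum vector generates a submodule isomorphic to $M_H(0)$ itself, forcing weight $0$ and hence proportionality to $\mathbf{1}$. Everything else is bookkeeping with the explicit vertex-operator formula and the derivation trick already used repeatedly in the paper.
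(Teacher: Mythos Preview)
The paper states this lemma without proof, calling it an ``observation'' and moving directly on to the construction of the natural transformation $\mathcal{V}\circ\Omega\Rightarrow\mathrm{id}$. Your argument is correct and supplies exactly the details the paper leaves implicit; in particular, your identification of the $H$-vacuum vectors in $V_{A,H}$ with $\mathbf{1}\otimes A$, the check that the $(-(\alpha,\beta)-1)$-st product in $\Omega_{V_{A,H},H}$ recovers the product in $A$, and the handling of the potential circularity in invoking Lemma~\ref{vac_omega} (noting that its computation of $\omega_H(0)v$ uses only (VS2) and (VS3)) are all sound.

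Two minor remarks. First, your derivation argument for (1) tacitly relies on (3) to conclude that two derivations agreeing on generators coincide; since (3) depends only on the linear structure $V_{A,H}=M_H(0)\otimes A$ and the Heisenberg action, not on (1) or (2), it would be cleanest to establish (3) first. Second, in the ``mutually local fields'' construction of $V_{A,H}$ the paper invokes, the translation operator is in practice \emph{defined} as $\omega_H(0)$ (or the equivalent explicit formula on PBW monomials) as part of verifying the hypotheses of the reconstruction theorem, so (1) is arguably built into the construction rather than a separate fact to verify --- which is presumably why the paper treats the whole lemma as an observation.
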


Hereafter, we will construct a natural transformation  
from $\mathcal{V} \circ \Omega: \underline{\text{VH pair}} \rightarrow \underline{\text{VH pair}}$ to the identity functor and prove the above Theorem.
We start from the observations in Lemma \ref{AH_ker}.
Let $(V,H_V)$ be a VH pair
and $(A,H_A)$ an AH pair and $f:\mathcal{V}(A,H_A) \rightarrow (V,H_V)$ a VH pair homomorphism.
Then, by Lemma \ref{AH_ker} and $f(\omega_{H_A})=\omega_{H_V}$, 
the image of $f$ satisfies the condition (VS1) and is generated by $H_V$ and the subset of $H_V$-vacuum vectors as a vertex algebra.
So, let $W$ be the vertex subalgebra of $V$ generated by $H_V$ and $\Omega_{V,H_V}$.
Then, there is a natural isomorphism
 $$\Hom_{\VH}((\mathcal{V}_{A,H_A},H_A),(V,H_V)) \rightarrow
\Hom_{\VH}( (\mathcal{V}_{A,H_A},H_A), (W,H_V)).$$
Hence, the proof of Theorem \ref{adjoint} is divided into two parts.
First, we prove that the subalgebra $W$ is isomorphic to
$V_{\Omega_{V,H_V},H_V}=\mathcal{V}\circ \Omega (V,H_V)$ as a VH pair,
which gives us a natural transformation $\mathcal{V}\circ \Omega (V,H_V) \hookrightarrow (V,H_V)$.
Second, we prove that there is a natural isomorphism
$$\Hom_{\AH}((A,H_A),(B,H_B)) \rightarrow \Hom_{\VH}(\mathcal{V}(A,H_A),
\mathcal{V}({B,H_B})),$$
for AH pairs $(A,H_A)$ and $(B,H_B)$.

Let us prove the first step.
Since $M_{H_V}(0)$ is a subalgebra of $W$, $W$ is an $M_{H_V}(0)$-module.
\begin{lem}
\label{str_w}
The subspace $W \subset V$ is isomorphic to $M_{H_V}(0)\otimes_{\mathbb{C}} \Omega_{V,H_V}$
as an $M_{H_V}(0)$-module.
\end{lem}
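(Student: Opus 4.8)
The plan is to show that $W$, the vertex subalgebra of $V$ generated by $H_V$ and $\Omega_{V,H_V}$, decomposes as a tensor product of the Heisenberg part and the vacuum part. First I would fix a basis $\{e_\al\}$ of each weight space $\Omega_{V,H_V}^\al$ (ranging over $\al \in M_{\Omega_{V,H_V},H_V}$) and consider the $M_{H_V}(0)$-submodule of $V$ generated by all these $e_\al$'s. By (VS2) and (VS3), each $e_\al$ is an $H_V$-vacuum vector, so by the standard theory of Heisenberg (Fock) modules the submodule $M_{H_V}(0)\cdot e_\al$ is a Fock module with lowest weight $\al$; the key technical input is that such a Fock module is freely generated, i.e. the map $M_{H_V}(0)\otimes \C e_\al \to V$, $v\otimes e_\al \mapsto (\text{action of } v)e_\al$, is injective. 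This gives a surjection $M_{H_V}(0)\otimes_\C \Omega_{V,H_V} \twoheadrightarrow W'$ onto the $M_{H_V}(0)$-submodule $W'$ spanned by these Fock modules.

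The next step is to argue that $W' = W$, i.e. that $W'$ is already closed under all the vertex operations, not merely under the Heisenberg action, so that it coincides with the vertex subalgebra generated by $H_V$ and $\Omega_{V,H_V}$. For this I would use Lemma~\ref{product}: for $v\in\Omega_{V,H_V}^\al$ and $w\in\Omega_{V,H_V}^\be$ the only possibly-nonzero products $v(n)w$ occur for $n \geq -(\al,\be)-1$, and $v(-(\al,\be)-1)w$ again lies in $\Omega_{V,H_V}^{\al+\be}$ while the higher products are obtained from $v(-(\al,\be)-1)w$ and lower ones by the action of $T_V = \omega_{H_V}(0)$ together with Heisenberg modes (this is the content of associativity/commutativity formulas applied to vacuum vectors). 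Hence all iterated $n$-th products of elements of $H_V\cup\Omega_{V,H_V}$ stay inside $W'$, so $W' $ is a vertex subalgebra containing the generators, forcing $W'=W$. Comparing with the explicit construction of $\mathcal{V}\circ\Omega(V,H_V) = V_{\Omega_{V,H_V},H_V} = M_{H_V}(0)\otimes \Omega_{V,H_V}$ in Section~\ref{AH_vertex}, the map above is exactly the candidate VH pair homomorphism, and it is surjective onto $W$.

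The remaining point is injectivity of $M_{H_V}(0)\otimes_\C \Omega_{V,H_V} \to W$. Since distinct $e_\al, e_{\al'}$ with $\al\neq\al'$ generate Fock modules lying in different $H_V(0)$-eigenspaces, the images of the various $M_{H_V}(0)\cdot e_\al$ are linearly independent as soon as the $\al$'s are distinct; and within a fixed $\al$, if $\dim\Omega_{V,H_V}^\al > 1$, one still needs that the Fock modules generated by a basis of $\Omega_{V,H_V}^\al$ are in direct sum inside $V$. This follows because a vector in the intersection would be a nonzero $H_V$-vacuum vector expressible as $\sum_j v_j\otimes e_{\al,j}$ with the $v_j \in M_{H_V}(0)$ of positive conformal weight annihilating it down to weight-zero relations among the $e_{\al,j}$, contradicting their linear independence — more cleanly, one applies the argument of Lemma~\ref{omega_tensor}'s proof (projecting onto the lowest Heisenberg weight, i.e. using that $h(n)$ for $n\geq 1$ kills all $e_{\al,j}$) to conclude the $v_j\otimes e_{\al,j}$ must all vanish.

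\textbf{Main obstacle.} The essential difficulty is establishing the freeness of the Heisenberg action on the $H_V$-vacuum vectors, i.e.\ that no nontrivial relation $\sum_j (\text{Heisenberg mode word})\cdot e_{\al,j} = 0$ holds in $V$; this is where one genuinely uses that $H_V$ is a \emph{non-degenerate} Heisenberg subspace (axiom HS3), so that $M_{H_V}(0)$ acts on $V$ through a true action of the Heisenberg Lie algebra $\widehat{H_V}$ with central element acting as the identity, and every such module is a direct sum of Fock modules over its space of vacuum vectors. Once this structural fact about Heisenberg modules is in hand, identifying $W$ with $M_{H_V}(0)\otimes \Omega_{V,H_V}$ and matching it to $\mathcal{V}\circ\Omega(V,H_V)$ is a formal consequence of Lemma~\ref{product} and the construction in Section~\ref{AH_vertex}.
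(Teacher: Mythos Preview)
Your plan is correct and follows the paper's approach: form the $M_{H_V}(0)$-submodule $W'$ generated by $\Omega_{V,H_V}$, identify it with $M_{H_V}(0)\otimes\Omega_{V,H_V}$ via Heisenberg representation theory (the paper cites \cite[Theorem~1.7.3]{FLM} for this, which dispatches your entire injectivity discussion in one stroke), and then show $W'$ is closed under vertex products so that $W'=W$.

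Two corrections. First, the inequality is reversed: Lemma~\ref{product} gives $v(n)w=0$ for $n\geq -(\al,\be)$, so the possibly nonzero products are those with $n\leq -(\al,\be)-1$. Second, the step you only gesture at---that $e_\al(-(\al,\be)-k-1)e_\be\in W'$ for $k\geq 1$---is where the actual computation lies, and it does not follow from ``associativity/commutativity formulas'' in any direct way. The paper derives the explicit recursion
\[
e_\al(-(\al,\be)-k-1)e_\be \;=\; \tfrac{1}{k}\bigl(\omega_{H_V}(0)-\be(-1)\bigr)\,e_\al(-(\al,\be)-k)e_\be
\]
by commuting $\be(-1)$ past $e_\al(-(\al,\be)-k)$ and using $\omega_{H_V}(0)e_\be=\be(-1)e_\be$ (Lemma~\ref{vac_omega}) together with $[\omega_{H_V}(0),e_\al(m)]=(T_Ve_\al)(m)=-m\,e_\al(m-1)$. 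Since $\omega_{H_V}(0)$ and $\be(-1)$ both preserve $W'$, induction on $k$ finishes the argument. Note also that ``$T_V=\omega_{H_V}(0)$'' holds only on $\Omega_{V,H_V}$, not on all of $V$, so be careful how you invoke it.
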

\begin{proof}
Let $W'$ be an $M_{H_V}(0)$-submodule of $V$ generated by $\Omega_{V,H_V}$.
Then, according to the representation theory of Heisenberg Lie algebras (see \cite[Theorem 1.7.3]{FLM}), $W' \cong M_{H_V}(0) \otimes_{\mathbb{C}} \Omega_{V,H_V}$ as an $M_{H_V}(0)$-module.
Hence, it suffices to show that $W'$ is closed under the products of the vertex algebra.
Let $v,w \in W'$. We may assume that $v=h_1(i_1)\ldots h_k(i_k)e_\al$ and 
$w=h'_1(j_1)\ldots h'_l(j_l)e_\be$ where $h_i, h'_j \in H_V$ and $e_\al \in \Omega_{V,H_V}^\al$, $e_\be \in \Omega_{V,H_V}^\be$.
Since $(h(n)v)(m)w=\sum_{i \geq 0} (-1)^i \binom{n}{i} h(n-i)v(m+i)w-(-1)^n v(n+m-i)h(i)w$,
we may assume that $v=e_\al \in \Omega_{V,H_V}^\al$.
Since $e_\al(n)h(m)w=[e_\al(n), h(m)]w + h(m)e_\al(n)w
=-(h,\al)e_\al(n+m)w + h(m)e_\al(n)w$, we may assume that $w = e_\be \in \Omega_{V,H_V}^\be$.
If $Y(e_\al,z)e_\be=0$, then there is nothing to prove. Assume that  $Y(e_\al,z)e_\be \neq 0$.
Then, by Lemma \ref{product}, $e_\al(-(\al,\be)+k)e_\be =0$ for any $k \geq 0$ and 
$e_\al(-(\al,\be)-1)e_\be \in \Omega_V^{\al+\be}$.
We will show that $e_\al (-(\al,\be)-k)e_\be \in W'$ for $k \geq 2$.
By Lemma \ref{vac_omega}, $\omega_{H_V}(0)e_\be=\be(-1)e_\be$.
Then, $\be(-1)e_\al(-(\al,\be)-k)e_\be=[\be(-1), e_\al(-(\al,\be)-k)]e_\be+e_\al(-(\al,\be)-k)\be(-1)e_\be
=(\al,\be)e_\al(-(\al,\be)-k-1)e_\be + e_\al(-(\al,\be)-k)\omega_{H_V}(0)e_\be
=\omega_H(0)e_\al(-(\al,\be)-k)e_\be -k e_\al(-(\al,\be)-k-1)e_\be$.
Since $k \geq 1$, we have 
\begin{align}
e_\al(-(\al,\be)-k-1)e_\be &= \frac{1}{k}(\omega_{H_V}(0)-\be(-1))e_\al(-(\al,\be)-k)e_\be \label{lattice_identity} \\
&=\frac{1}{k!}(\omega_{H_V}(0)-\be(-1))^k e_\al(-(\al,\be)-1)e_\be.\nonumber 
\end{align}
Since $\omega_{H_V}(0)W' \subset W'$, we have $e_\al (-(\al,\be)-k-1)e_\be \in W'$.
%
\end{proof}

\begin{rem}
Equation (\ref{lattice_identity}) was proved by \cite{Roi} for lattice vertex algebras.
\end{rem}

\begin{lem}
\label{sub_Cartan}
The subalgebra $W$ is isomorphic to 
$\mathcal{V}\circ \Omega (V,H_V)$ as a VH pair. 
\end{lem}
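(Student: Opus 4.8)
The plan is to build an explicit VH pair isomorphism $\mathcal{V}\circ\Omega(V,H_V)=(V_{\Omega_{V,H_V},H_V},H_V)\xrightarrow{\ \sim\ }(W,H_V)$ and then invoke Lemma \ref{str_w} to see that it is a linear isomorphism. First I would recall that by construction $V_{\Omega_{V,H_V},H_V}=M_{H_V}(0)\otimes\Omega_{V,H_V}$ as a vector space, with the Heisenberg action on the first tensor factor and the generating fields $Y(h\otimes 1,z)$ and $Y(\1\otimes e_\al,z)$. I would define a $\C$-linear map $\Phi:V_{\Omega_{V,H_V},H_V}\to W$ on generators by $\Phi(h\otimes\1)=h$ for $h\in H_V$ and $\Phi(\1\otimes e_\al)=e_\al$ for $e_\al\in\Omega_{V,H_V}^\al$, extended by $\Phi(h(n)u)=h(n)\Phi(u)$; since $M_{H_V}(0)\otimes\Omega_{V,H_V}$ is a free $M_{H_V}(0)$-module on $\Omega_{V,H_V}$, this determines $\Phi$ uniquely and it is well defined.

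Next I would check $\Phi$ is a vertex algebra homomorphism using Proposition \ref{vertex_homomorphism} with the generating set $S=H_V\oplus\Omega_{V,H_V}$: we need $\Phi(s(n)u)=\Phi(s)(n)\Phi(u)$ for $s\in S$. For $s=h\in H_V$ this is exactly the defining property of $\Phi$ (the Heisenberg relations are respected on both sides because $H_V$ acts the same way on $W$ and on $M_{H_V}(0)\otimes\Omega_{V,H_V}$, as $W\cong M_{H_V}(0)\otimes\Omega_{V,H_V}$ as Heisenberg modules by Lemma \ref{str_w}). For $s=e_\al\in\Omega_{V,H_V}^\al$ the key point is to compare, inside $W$, the vertex operator $Y(e_\al,z)$ acting on a general element $h_1(i_1)\cdots h_k(i_k)e_\be$ with the operator $Y(\1\otimes e_\al,z)$ acting on $h_1(i_1)\cdots h_k(i_k)(\1\otimes e_\be)$ in $V_{\Omega_{V,H_V},H_V}$. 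Both sides can be reduced, by repeatedly commuting the Heisenberg modes $h(m)$ past $e_\al(n)$ (the commutator $[h(m),e_\al(n)]=(h,\al)e_\al(m+n)$ is linear over the target and matches in both algebras), to the single statement about $e_\al(n)e_\be$. There, for $n\ge -(\al,\be)$ the products are governed by Lemma \ref{product} (they vanish for $n\geq -(\al,\be)$, i.e.\ the $z$-expansion starts at $z^{(\al,\be)}$), for $n=-(\al,\be)-1$ we have $e_\al(-(\al,\be)-1)e_\be=e_\al e_\be$ by definition of the product on $\Omega_{V,H_V}$, and for $n=-(\al,\be)-1-k$ with $k\geq 1$ the recursion \eqref{lattice_identity} from the proof of Lemma \ref{str_w} expresses $e_\al(-(\al,\be)-1-k)e_\be=\tfrac1{k!}(\omega_{H_V}(0)-\be(-1))^k\,e_\al e_\be$; this is precisely the coefficient extracted from $\exp\bigl(\sum_{n\geq 1}\tfrac{\al(-n)}{n}z^n\bigr)\exp\bigl(\sum_{n\geq 1}\tfrac{\al(n)}{-n}z^{-n}\bigr)\otimes l_{e_\al}z^\al$ applied to $\1\otimes e_\be$, using $\omega_{H_V}(0)=\al(-1)$ on weight vectors together with $\be(-1)$ bookkeeping. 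Hence $\Phi$ intertwines the defining fields and is a vertex algebra homomorphism; it clearly restricts to the identity on $H_V$, so it is a VH pair homomorphism.

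Finally, $\Phi$ is surjective since its image contains $H_V$ and $\Omega_{V,H_V}$, which generate $W$ by definition of $W$; and it is injective because, combined with Lemma \ref{str_w}, both source and target are isomorphic to $M_{H_V}(0)\otimes\Omega_{V,H_V}$ as $M_{H_V}(0)$-modules and $\Phi$ is the identity on $\Omega_{V,H_V}$ and Heisenberg-equivariant, hence a bijection on the underlying free modules. Therefore $W\cong\mathcal{V}\circ\Omega(V,H_V)$ as a VH pair.

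I expect the main obstacle to be the bookkeeping in the second paragraph: verifying cleanly that the normally ordered exponential defining $Y(\1\otimes e_\al,z)$ in $V_{\Omega_{V,H_V},H_V}$ produces exactly the same vectors $e_\al(n)e_\be$ (for all $n$, including the negative tail handled by \eqref{lattice_identity}) that the genuine vertex operator $Y(e_\al,z)$ produces inside $W\subset V$, and then bootstrapping from the generating case $e_\al(n)e_\be$ to arbitrary elements $h_1(i_1)\cdots e_\be$ by the commutator identities. Once the identity \eqref{lattice_identity}, Lemma \ref{product}, and the explicit form of the fields are in hand, this is a finite computation, but it is the step where care is genuinely needed; everything else (well-definedness, surjectivity, injectivity via Lemma \ref{str_w}, compatibility with $H_V$) is formal.
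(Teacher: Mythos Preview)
Your argument is correct and matches the paper's proof, which likewise takes the $M_{H_V}(0)$-module isomorphism from Lemma~\ref{str_w} and applies Proposition~\ref{vertex_homomorphism} with $S=H_V\oplus\Omega_{V,H_V}$, reducing the check for $s\in\Omega_{V,H_V}$ to the same commutator and recursion computations as in Lemma~\ref{str_w}. One small clarification for the bookkeeping you flag: rather than ``$\omega_{H_V}(0)=\al(-1)$ on weight vectors,'' the clean way to close the loop is to note that the recursion \eqref{lattice_identity} holds verbatim in $V_{\Omega_{V,H_V},H_V}$ as well (its derivation uses only Heisenberg commutators and $\omega_H(0)e_\be=\be(-1)e_\be$), and since $\Phi$ intertwines all Heisenberg modes it intertwines both $\omega_{H_V}(0)$ and $\be(-1)$, so induction on $k$ gives the desired match.
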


\begin{proof}
By Lemma \ref{str_w}, we have an isomorphism 
$f: W \rightarrow V_{\Omega_{V,H_V},H_V}= M_{H_V}(0) \otimes \Omega_{V,H_V}$
as an $M_{H_V}(0)$-module.
We will apply Proposition \ref{vertex_homomorphism} with $S=H_V \oplus \Omega_{V,H_V}$.
It suffices to show that $f(a(n)v)=f(a)(n)f(v)$ for any $a \in \Omega_{V,H_V}$ and $v \in W$ and $n \in \Z$, which can be proved by the similar arguments as Lemma \ref{str_w}.
\end{proof}

As discussed above, the subalgebra $V_{\Omega_{V,H_V},H_V} \subset V$ is universal as follows:
\begin{prop}
\label{omega_factor}
Let $(A,H_A)$ be an AH pair and $f: (V_{A,H_A},H_A) \rightarrow (V,H_V)$ be a VH pair homomorphism.
Then, 
 $f(V_{A,H_A}) \subset V_{\Omega_{V,H_V},H_V}$.
In particular, there is a natural bijection,
$$\mathrm{Hom}_{\VH}((\mathcal{V}({A,H_A}), (V,H_V))\cong \mathrm{Hom}_{\VH}(\mathcal{V}(A,H_A), 
\mathcal{V}\circ\Omega(V,H_V)).$$
\end{prop}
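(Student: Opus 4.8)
The plan is to show that any VH pair homomorphism $f:(V_{A,H_A},H_A)\to(V,H_V)$ automatically lands inside the distinguished subalgebra $V_{\Omega_{V,H_V},H_V}\subset V$, after which the ``in particular'' clause is just a restriction-of-codomain bijection. First I would unwind what the image of $f$ must satisfy. By Lemma \ref{AH_ker}(1), $T_{V_{A,H_A}}=\omega_{H_A}(0)$ on the source, and since $f$ is a vertex algebra homomorphism it commutes with the canonical derivations, $f\circ T_{V_{A,H_A}}=T_V\circ f$. Because $f|_{H_A}:H_A\to H_V$ is an isometry, it sends the Sugawara vector to the Sugawara vector, $f(\omega_{H_A})=\omega_{H_V}$, hence $f(\omega_{H_A}(0)a)=\omega_{H_V}(0)f(a)$ for all $a$. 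Combining these, every vector in $f(V_{A,H_A})$ lies in $\ker(\omega_{H_V}(0)-T_V)$, i.e.\ satisfies condition (VS1).

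Next I would identify the generators. By Lemma \ref{AH_ker}(3), $V_{A,H_A}$ is generated as a vertex algebra by $H_A$ and $\Omega_{V_{A,H_A},H_A}\cong A$ (via the isomorphism of Lemma \ref{AH_ker}(2)). So $f(V_{A,H_A})$ is generated by $f(H_A)=H_V$ together with $f(A)$. Now I claim $f(A)\subset \Omega_{V,H_V}$: an element $e_\alpha\in\Omega^\alpha_{V_{A,H_A},H_A}$ satisfies (VS1), (VS2), (VS3) in the source, and these three conditions are preserved by $f$ — (VS1) was just handled; for (VS2) and (VS3) we use $h(n)f(e_\alpha)=f((f|_{H_A})^{-1}(h)(n)e_\alpha)$ for $h\in H_V$, exactly as in the proof of Proposition \ref{func_omega}, together with the isometry identity $((f|_{H_A})^{-1}(h),\alpha)=(h,(f|_{H_A})(\alpha))$. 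Hence $f(e_\alpha)\in\Omega^{f(\alpha)}_{V,H_V}$, so $f(A)\subset\Omega_{V,H_V}$.

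Therefore $f(V_{A,H_A})$ is a vertex subalgebra of $V$ generated by $H_V$ and a subset of $\Omega_{V,H_V}$, so it is contained in the vertex subalgebra $W$ generated by all of $H_V$ and $\Omega_{V,H_V}$, which by Lemma \ref{sub_Cartan} is exactly $V_{\Omega_{V,H_V},H_V}=\mathcal{V}\circ\Omega(V,H_V)$. This proves $f(V_{A,H_A})\subset V_{\Omega_{V,H_V},H_V}$. For the bijection: any $g\in\mathrm{Hom}_{\VH}(\mathcal{V}(A,H_A),\mathcal{V}\circ\Omega(V,H_V))$ composes with the inclusion $\iota:V_{\Omega_{V,H_V},H_V}\hookrightarrow V$ (which is a VH pair homomorphism by Lemma \ref{sub_Cartan}) to give $\iota\circ g\in\mathrm{Hom}_{\VH}(\mathcal{V}(A,H_A),(V,H_V))$; conversely the containment just proved shows every $f$ on the right factors uniquely through $\iota$ since $\iota$ is injective. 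These assignments are mutually inverse and natural in $(V,H_V)$.

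The main obstacle is the bookkeeping in the middle step — verifying that the $H_V$-vacuum conditions (VS1)–(VS3) really do transfer across $f$ for every $\alpha$, and in particular that the grading shift is by $f(\alpha)$ and not something else; this is where the isometry hypothesis on $f|_{H_A}$ is essential, but the computation is routine and parallels Proposition \ref{func_omega}. Everything else is assembling already-established facts (Lemma \ref{AH_ker}, Lemma \ref{sub_Cartan}) and the formal restriction-of-codomain argument for the adjunction bijection.
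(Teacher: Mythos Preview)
Your proof is correct and follows essentially the same approach as the paper. The paper's argument (given in the discussion preceding the proposition rather than in a formal proof block) likewise uses Lemma~\ref{AH_ker} together with $f(\omega_{H_A})=\omega_{H_V}$ to conclude that the image of $f$ satisfies (VS1) and is generated by $H_V$ and a subset of $H_V$-vacuum vectors, then invokes Lemma~\ref{sub_Cartan} to identify the subalgebra so generated with $V_{\Omega_{V,H_V},H_V}$; your write-up simply spells out the (VS1)--(VS3) verification and the restriction-of-codomain bijection more explicitly.
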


Let us prove the second claim.
Let $(B,H_B)$ be an AH pair.
Since $\mathcal{V}$ and $\Omega$ are functors, we have natural maps
$$\phi:\mathrm{Hom}_{\AH}((A,H_A),(B,H_B)) \rightarrow \mathrm{Hom}_{\VH}(\mathcal{V}(A,H_A),\mathcal{V}(B,H_B))$$
and
$$\psi:
\mathrm{Hom}_{\VH}(\mathcal{V}(A,H_A),\mathcal{V}(B,H_B)) \rightarrow 
\mathrm{Hom}_{\AH}(\Omega \circ \mathcal{V}(A,H_A),\Omega \circ \mathcal{V}(B,H_B)).
$$

By Lemma \ref{AH_ker}, 
$$\mathrm{Hom}_{\AH}(\Omega \circ \mathcal{V}(A,H_A),\Omega \circ \mathcal{V}(B,H_B)) \cong
\mathrm{Hom}_{\AH}((A,H_A),(B,H_B)).
$$
It is easy to prove that $\psi$ is an inverse of $\phi$.
Hence, we have:
\begin{lem}
\label{omega_bijection}
The map $\phi$
 is a bijection.
\end{lem}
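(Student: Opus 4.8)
The plan is to show that $\psi$, composed with the bijection $\Hom_{\AH}(\Omega\circ\mathcal{V}(A,H_A),\Omega\circ\mathcal{V}(B,H_B))\cong\Hom_{\AH}((A,H_A),(B,H_B))$ of Lemma~\ref{AH_ker}, is a two-sided inverse of $\phi$, which makes $\phi$ a bijection. Both this composite and $\phi$ are assembled purely from the functoriality of $\mathcal{V}$ and $\Omega$ and from the natural isomorphism $\eta\colon\mathrm{id}_{\AH}\xrightarrow{\ \sim\ }\Omega\circ\mathcal{V}$ furnished by Lemma~\ref{AH_ker}, under which $(A,H)$ is identified with $(\Omega_{V_{A,H},H},H)$ via $a\in A^{\al}\mapsto\1\otimes a$. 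With this description, $\psi\circ\phi=\mathrm{id}$ is immediate from naturality of $\eta$: for an AH pair homomorphism $g\colon(A,H_A)\to(B,H_B)$ one has $\Omega(\mathcal{V}(g))\circ\eta_{(A,H_A)}=\eta_{(B,H_B)}\circ g$, and transporting along $\eta$ turns this into $\psi(\phi(g))=g$.

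The substantive part is the identity $\phi\circ\psi=\mathrm{id}$. Fix a VH pair homomorphism $f\colon\mathcal{V}(A,H_A)\to\mathcal{V}(B,H_B)$ and let $g=\psi(f)\colon(A,H_A)\to(B,H_B)$ be the associated AH pair homomorphism; by Proposition~\ref{func_omega} the map $f$ carries $\Omega_{V_{A,H_A},H_A}$ into $\Omega_{V_{B,H_B},H_B}$, and under the identifications of Lemma~\ref{AH_ker} this restriction is precisely $g$, so that $\1\otimes g(a)=f(\1\otimes a)$ for all $a\in A$ and $g|_{H_A}=f|_{H_A}$. I must show $\mathcal{V}(g)=f$. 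By Lemma~\ref{AH_ker}(3) the vertex algebra $V_{A,H_A}$ is generated by $H_A$ together with $\Omega_{V_{A,H_A},H_A}=\1\otimes A$, so by Proposition~\ref{vertex_homomorphism} it suffices to verify that $\mathcal{V}(g)$ and $f$ agree on these two subspaces. On $H_A$ both restrict to the isometry $f|_{H_A}$, by the definition of $\mathcal{V}$ on morphisms and of $g$; on $\1\otimes a$ we have $\mathcal{V}(g)(\1\otimes a)=\1\otimes g(a)=f(\1\otimes a)$, again by construction. Hence $\mathcal{V}(g)=f$, i.e.\ $\phi(\psi(f))=f$.

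The only point demanding care is the bookkeeping hidden in the definition of $\psi$: one must check that $\eta$ is compatible with the Heisenberg subspaces and with the $H$-gradings, so that conjugation by $\eta$ genuinely lands in $\Hom_{\AH}((A,H_A),(B,H_B))$ and the explicit description of $g$ above is legitimate; this is where Proposition~\ref{func_omega} (that $\Omega$ is a functor, hence $f$ respects the $\Omega$-subspaces and their gradings) is used. Once these identifications are pinned down, both composition checks are formal, the second one resting entirely on the generation statement of Lemma~\ref{AH_ker}(3) and the morphism criterion of Proposition~\ref{vertex_homomorphism}.
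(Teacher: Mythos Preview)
Your proof is correct and follows precisely the paper's approach, which merely asserts that $\psi$ (composed with the identification of Lemma~\ref{AH_ker}) is an inverse of $\phi$; you have supplied the details the paper omits. One small wrinkle: Proposition~\ref{vertex_homomorphism} as stated establishes that a given linear map \emph{is} a homomorphism rather than that two homomorphisms agreeing on a generating set coincide, but the latter fact---which is what you actually need for $\phi\circ\psi=\mathrm{id}$---is immediate since the equalizer of two vertex algebra homomorphisms is a vertex subalgebra containing $S=H_A\oplus A$.
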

Theorem \ref{adjoint} follows immediately from the combination of Proposition \ref{omega_factor} and Lemma \ref{omega_bijection}.

\subsection{Good VH pairs and Maximal lattices}\label{sec_good_VH}
We will say that a VH pair $(V,H)$ is a {\it good VH pair}\/
if the AH pair $\Omega(V,H)$ is a good AH pair,
i.e., the following conditions are satisfied:
{\leftmargini1.5em
\begin{enumerate}
\item[1)]
$\Omega_{V,H}^0=\mathbb{C}\1$.
\item[2)]
$vw \neq 0$ for all $\alpha,\beta\in H$ and $v \in \Omega_{V,H}^{\alpha}\setminus \{0\}$ and
$w \in \Omega_{V,H}^{\beta}\setminus \{0\}$. 
\end{enumerate}}

The following proposition follows from \cite[Proposition 11.9]{DL}:
\begin{prop}
\label{simple_vertex}
If $V$ is a simple vertex algebra, then for any nonzero elements $a,b \in V$, $Y(a,z)b \neq 0$, i.e., there exists an integer $n \in \Z$ such that $a(n)b \neq 0$.
\end{prop}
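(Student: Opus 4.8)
The plan is to deduce this from \cite[Proposition 11.9]{DL}, which addresses precisely the statement that in a simple vertex algebra the vertex operator map $Y(-,z)$ has no kernel on nonzero pairs; concretely, one introduces the \emph{annihilator} $I = \{ a \in V \mid Y(a,z)b = 0 \}$ for a fixed nonzero $b$, and shows it is a proper two-sided ideal, so it must vanish by simplicity. First I would recall that $I$ is easily seen to be a left ideal: if $a \in I$ then by the associativity formula (the identity labelled ``Associativity'' in the preliminaries) $(c(n)a)(m)b$ is a $\mathbb{Z}$-linear combination of terms $c(n-i)a(m+i)b$ and $b(n+m-i)a(i)b$, all of which vanish since $a(k)b = 0$ for every $k$; hence $c(n)a \in I$ for all $c \in V$, $n \in \mathbb{Z}$.

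Next I would use skew-symmetry to promote this to a two-sided ideal, or more efficiently invoke commutativity: the bracket formula $[a(n), c(m)] = \sum_{i \geq 0} \binom{n}{i} (a(i)c)(n+m-i)$ combined with the left-ideal property lets one move $a$ past arbitrary modes. Concretely, to show $a(n)c \in I$ for $a \in I$, apply $Y(a(n)c, z)$ to $b$ via the associativity formula again: $(a(n)c)(m)b$ is a combination of $a(n-i)(c(m+i)b)$ and $c(n+m-i)(a(i)b)$; the second family vanishes outright since $a \in I$, while the first family lies in $I$ because each $c(m+i)b \in V$ and we have just shown $I$ is stable under all left multiplications $a(k)(-)$. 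Thus $I$ is stable under all $n$-th products on both sides and is a vertex algebra ideal.

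The remaining point is that $I \neq V$: indeed $\mathbf{1} \notin I$, since $Y(\mathbf{1}, z) = \mathrm{id}_V$ gives $\mathbf{1}(-1)b = b \neq 0$. By simplicity of $V$ the only ideals are $0$ and $V$, so $I = 0$, which is exactly the claim: for every nonzero $a \in V$, $Y(a,z)b \neq 0$, i.e. some $a(n)b \neq 0$. I would then note (optionally) that one can instead read this off from the cited Dong--Lepowsky result directly without repeating the argument. The only mild subtlety — the step I would flag as requiring care rather than being hard — is the bookkeeping in the associativity identity to confirm that all the infinite sums are actually finite (guaranteed by axiom V1) and that ``ideal'' in the vertex-algebra sense is the right notion matching the definition of simplicity used in \cite{DL}; everything else is a direct unwinding of the Borcherds identity.
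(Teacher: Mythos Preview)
Your overall strategy is the standard one and is exactly what \cite[Proposition 11.9]{DL} does (the paper itself gives no proof, only the citation): fix $b\neq 0$, set $I=\{a\in V:Y(a,z)b=0\}$, show $I$ is a proper ideal, and invoke simplicity. However, your verification that $I$ is an ideal has a real gap. In the left-ideal step you miswrote the associativity expansion: for $(c(n)a)(m)b$ the two families are $c(n-i)a(m+i)b$ and $a(n+m-i)c(i)b$, not ``$b(n+m-i)a(i)b$''. The first family does vanish since $a(m+i)b=0$, but the second does not: $a\in I$ says only $a(k)b=0$, not $a(k)\bigl(c(i)b\bigr)=0$. The same problem recurs in your right-ideal step, where the surviving terms are $a(n-i)c(m+i)b$; your justification (``$I$ is stable under all left multiplications $a(k)(-)$'') conflates the left-ideal property $c(k)I\subset I$ with the much stronger and unproven statement $a(k)V=0$. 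So the iterate formula alone does not close the argument.

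The missing idea is locality together with the fact that $(z-w)$ is not a zero-divisor on the relevant space of series. From $(z-w)^N[Y(a,z),Y(c,w)]=0$ one gets $(z-w)^N Y(a,z)Y(c,w)b=(z-w)^N Y(c,w)Y(a,z)b=0$; since $Y(a,z)Y(c,w)b$ lies in $V((z))((w))$, where $(z-w)$ is invertible, this forces $Y(a,z)Y(c,w)b=0$, i.e.\ $a(k)c(m)b=0$ for all $k,m$. Equivalently, $D(a)=\{v:Y(a,z)v=0\}$ is a $V$-submodule; it contains $b\neq 0$ but not $\mathbf{1}$ (as $a(-1)\mathbf{1}=a\neq 0$), so simplicity gives the contradiction. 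Once you insert this step your outline becomes a correct proof.
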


By the above proposition and Lemma \ref{product} and Lemma \ref{vac_omega}, we have the following lemma:
\begin{lem}
\label{omega_simple}
For a VH pair $(V,H_V)$, if $V$ is simple and $\mathrm{Ker}\,T_V = \C\1$, then 
$(V,H_V)$ is a good VH pair.
\end{lem}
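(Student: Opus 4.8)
The plan is to verify the two conditions (1) and (2) in the definition of a good VH pair directly, using simplicity of $V$ and the hypothesis $\ker T_V = \C\1$. For condition (1), I would argue as follows: by Lemma \ref{vac_omega} we have $\Omega_{V,H_V}^0 = \ker T_V$, and by hypothesis $\ker T_V = \C\1$, so $\Omega_{V,H_V}^0 = \C\1$ is immediate. This is the easy half and essentially a one-line deduction.

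For condition (2), let $\al,\be \in H_V$ and take nonzero $v \in \Omega_{V,H_V}^\al$ and $w \in \Omega_{V,H_V}^\be$; I must show $vw \neq 0$ in the AH pair $\Omega(V,H_V)$. Recall that by definition the product $vw$ is $v(-(\al,\be)-1)w$ when $Y(v,z)w \neq 0$, and $0$ otherwise. So it suffices to show $Y(v,z)w \neq 0$, i.e., that there is some integer $n$ with $v(n)w \neq 0$. But this is exactly what Proposition \ref{simple_vertex} gives: since $V$ is simple and $v,w$ are nonzero elements of $V$, we have $Y(v,z)w \neq 0$. Then by Lemma \ref{product}, since $Y(v,z)w \neq 0$ we automatically get $(\al,\be) \in \Z$ and $0 \neq v(-(\al,\be)-1)w \in \Omega_{V,H_V}^{\al+\be}$, so $vw = v(-(\al,\be)-1)w \neq 0$. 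This closes condition (2).

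Combining the two, $\Omega(V,H_V)$ satisfies (GAH1) and (GAH2), hence is a good AH pair, which is by definition the statement that $(V,H_V)$ is a good VH pair. I do not anticipate a genuine obstacle here: the lemma is really just an assembly of Proposition \ref{simple_vertex} (simplicity forces nonvanishing of the vertex operator), Lemma \ref{product} (nonvanishing of $Y(v,z)w$ upgrades to the precise nonvanishing statement about the relevant coefficient, lands in the right graded piece, and forces integrality of the pairing), and Lemma \ref{vac_omega} (identifying $\Omega^0$ with $\ker T_V$). The only point requiring a moment's care is making sure that the product defined on $\Omega_{V,H_V}$ via $v(-(\al,\be)-1)w$ is precisely the coefficient that Lemma \ref{product} shows to be nonzero — which it is, by construction of the product.
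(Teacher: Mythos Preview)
Your proposal is correct and follows exactly the approach the paper indicates: the paper simply states that the lemma follows from Proposition \ref{simple_vertex}, Lemma \ref{product}, and Lemma \ref{vac_omega}, and your argument spells out precisely this deduction.
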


The kernel of $T_V$ is called the center of $V$ and it is well-known that $\mathrm{Ker}\,T_V = \C\1$
if $V$ is simple and has countable dimension over $\C$ (see for example, \cite[Section 6]{Ma}).

Hereafter, we will study good VH pairs.
\begin{lem}
\label{tensor_lattice}
For an even H-lattice $(L,H)$ and a good VH pair $(V,H_V)$,
$(V \otimes V_{L,H}, H_V \oplus H)$ is a good VH pair.
Furthermore, $\Omega(V \otimes V_{L,H}, H_V \oplus H)$ is isomorphic to
$\Omega(V,H_V) \otimes (A_{L,H},H)$ as an AH pair.
\end{lem}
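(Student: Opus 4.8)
The plan is to combine the structural results already established, reducing everything to the two previously proven facts: Lemma \ref{AH_ker} (which identifies $\Omega(\mathcal{V}(A,H)) \cong (A,H)$ and shows $T_{V_{A,H}} = \omega_H(0)$) and Lemma \ref{omega_tensor} (which computes $\Omega$ of a tensor product when the second factor satisfies $\omega_{H_W}(0) = T_W$). First I would apply Lemma \ref{AH_ker} to the AH pair $(A_{L,H},H)$: the $H$-lattice vertex algebra $V_{L,H} = \mathcal{V}(A_{L,H},H)$ satisfies $T_{V_{L,H}} = \omega_H(0)$, and $\Omega(V_{L,H},H) \cong (A_{L,H},H)$ as an AH pair. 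This is exactly the hypothesis needed to invoke Lemma \ref{omega_tensor} with $(W,H_W) = (V_{L,H},H)$, which immediately gives
$$
\Omega(V\otimes V_{L,H}, H_V\oplus H) \;\cong\; \Omega(V,H_V)\otimes \Omega(V_{L,H},H) \;\cong\; \Omega(V,H_V)\otimes (A_{L,H},H)
$$
as AH pairs, settling the second assertion.

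It then remains to show $(V\otimes V_{L,H}, H_V\oplus H)$ is a good VH pair, i.e., that the AH pair $\Omega(V,H_V)\otimes (A_{L,H},H)$ satisfies (GAH1) and (GAH2). For (GAH1): the degree-$0$ component of the tensor product is $\Omega_{V,H_V}^{0}\otimes A_{L,H}^{0} = \mathbb{C}\mathbf{1}\otimes\mathbb{C}\mathbf{1}$, using that $(V,H_V)$ is a good VH pair (so $\Omega_{V,H_V}^{0}=\mathbb{C}\mathbf{1}$) and that $A_{L,H}^{0}=\mathbb{C}1$ by (GAH1) for the lattice pair $(A_{L,H},H)$ — one must note that the $H_V\oplus H$-degree-$0$ part is precisely the tensor of the two degree-$0$ parts, which holds because the grading group decomposes as a direct sum. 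For (GAH2): take homogeneous nonzero $v\otimes a \in \Omega_{V,H_V}^{\alpha}\otimes A_{L,H}^{\mu}$ and $w\otimes b \in \Omega_{V,H_V}^{\beta}\otimes A_{L,H}^{\nu}$; in the tensor product algebra the product is $(vw)\otimes(ab)$, which is nonzero because $vw\neq 0$ by (GAH2) for the good VH pair $(V,H_V)$ and $ab\neq 0$ since $A_{L,H}$ is a lattice pair hence a twisted group algebra (Lemma \ref{lattice_pair_twisted}), in which all homogeneous components are one-dimensional and products of nonzero homogeneous elements are nonzero.

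I do not expect a serious obstacle here — the lemma is essentially a bookkeeping corollary of Lemma \ref{omega_tensor} plus the definition of a good AH pair. The only point requiring a little care is verifying that the tensor-product AH pair structure on $\Omega(V,H_V)\otimes(A_{L,H},H)$ is the one coming from the monoidal structure of Lemma \ref{VH_monoidal}, so that its $(H_V\oplus H)$-grading, its unit, and its multiplication are the componentwise ones; once that identification is in hand, both (GAH1) and (GAH2) follow by the short arguments above. One should also record explicitly that $H_V\oplus H$ is a Heisenberg subspace of $V\otimes V_{L,H}$ (as already observed before Lemma \ref{VH_monoidal}), so that $(V\otimes V_{L,H},H_V\oplus H)$ is genuinely a VH pair before asking whether it is good.
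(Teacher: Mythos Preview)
Your proposal is correct and follows essentially the same route as the paper: invoke Lemma \ref{AH_ker} to get $T_{V_{L,H}}=\omega_H(0)$ and $\Omega(V_{L,H},H)\cong(A_{L,H},H)$, apply Lemma \ref{omega_tensor} for the tensor decomposition of $\Omega$, and then verify (GAH1) and (GAH2) componentwise using that $A_{L,H}^\mu$ is one-dimensional. The paper's proof is nearly identical, with the only cosmetic difference being that it phrases the reduction to simple tensors for (GAH2) by first noting $\dim A_{L,H}^\beta=1$ explicitly before writing a general homogeneous element as $a\otimes b$.
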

\begin{proof}
For an $H$-lattice vertex algebra $V_{L,H}$, we have $\omega_{H}(0)=T_{V_{L,H}}$.
Hence, by Lemma \ref{omega_tensor},
we have $\Omega_{V\otimes V_{L,H}} \cong \Omega_V \otimes \Omega_{V_{L,H}}$.
Since $V$ and $V_{L,H}$ are good VH pairs,
$\Omega_{V\otimes V_{L,H}}^0
=\Omega_V^0 \otimes \Omega_{V_{L,H}}^0=\C\1\otimes \1$.
Let $v \in \Omega_{V\otimes V_{L,H}}^{\al+\be}$ and 
$v' \in \Omega_{V\otimes V_{L,H}}^{\al'+\be'}$ be nonzero elements, where $\al,\al' \in H_V$ and $\be, \be' \in H$.
Since $V_{L,H}$ is an $H$-lattice vertex algebra, 
$\dim \Omega_{V_{L,H}}^{\be}=\dim A_{L,H}^{\be}=1$ for any $\beta \in L$.
Hence, we may assume that $v=a \otimes b$ and $v'=a'\otimes b'$, where
$a \in \Omega_V^\al$ and $a' \in \Omega_V^{\al'}$, 
$b \in A_{L,H}^\be$, $b' \in A_{L,H}^{\be'}$.
Then, $vw=aa'\otimes bb' \neq 0$.
Hence, $(V \otimes V_{L,H}, H_V \oplus H)$ is a good VH pair.
\end{proof}

The category of good VH pairs (resp. good AH pairs) are full subcategory of 
$\VH$ (resp. $\AH$) whose objects are good VH pairs (reps. good AH pairs).
The following proposition follows from the Theorem \ref{adjoint}:
\begin{cor}
The adjoint functors in Theorem \ref{adjoint} induce adjoint functors between
 $\underline{\text{good VH pair}}$ and $\underline{\text{good AH pair}}$.
\end{cor}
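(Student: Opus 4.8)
The plan is to check that each of the two functors restricts to the relevant full subcategory, and then to invoke the standard fact that an adjunction transports to full subcategories that are preserved by both functors.

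First I would verify that $\mathcal{V}$ carries good AH pairs to good VH pairs. Let $(A,H)$ be a good AH pair. By Lemma \ref{AH_ker}(2), the AH pair $\Omega(\mathcal{V}(A,H))=\Omega(V_{A,H},H)$ is isomorphic to $(A,H)$, hence is again a good AH pair; but by the very definition of a good VH pair this says exactly that $\mathcal{V}(A,H)=(V_{A,H},H)$ is a good VH pair. Conversely, $\Omega$ carries good VH pairs to good AH pairs essentially by definition, since $(V,H)$ being a good VH pair means precisely that $\Omega(V,H)$ is a good AH pair. Thus $\mathcal{V}$ and $\Omega$ restrict to functors $\mathcal{V}_0:\underline{\text{good AH pair}}\rightarrow\underline{\text{good VH pair}}$ and $\Omega_0:\underline{\text{good VH pair}}\rightarrow\underline{\text{good AH pair}}$.

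Next I would establish the adjunction $\mathcal{V}_0\dashv\Omega_0$. Since $\underline{\text{good AH pair}}$ and $\underline{\text{good VH pair}}$ are full subcategories of $\AH$ and $\VH$, for a good AH pair $(A,H)$ and a good VH pair $(V,H_V)$ one has equalities of Hom-sets $\mathrm{Hom}_{\underline{\text{good VH pair}}}(\mathcal{V}_0(A,H),(V,H_V))=\mathrm{Hom}_{\VH}(\mathcal{V}(A,H),(V,H_V))$ and $\mathrm{Hom}_{\underline{\text{good AH pair}}}((A,H),\Omega_0(V,H_V))=\mathrm{Hom}_{\AH}((A,H),\Omega(V,H_V))$. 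Composing these with the natural bijection of Theorem \ref{adjoint} yields a bijection $\mathrm{Hom}_{\underline{\text{good VH pair}}}(\mathcal{V}_0(A,H),(V,H_V))\cong\mathrm{Hom}_{\underline{\text{good AH pair}}}((A,H),\Omega_0(V,H_V))$, natural in both variables because naturality is tested on the underlying Hom-sets, where it is already known. Equivalently, one checks that the unit and counit of the adjunction of Theorem \ref{adjoint} restrict: for $(A,H)$ good the unit $(A,H)\rightarrow\Omega\mathcal{V}(A,H)$ is a morphism of good AH pairs (both objects are good), for $(V,H_V)$ good the counit $\mathcal{V}\Omega(V,H_V)\rightarrow(V,H_V)$ is a morphism of good VH pairs (both objects are good by the first step), and the triangle identities persist since they hold in the ambient categories.

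There is essentially no obstacle beyond bookkeeping; the only point that uses the content of the earlier sections is the implication "$(A,H)$ good $\Rightarrow$ $\mathcal{V}(A,H)$ a good VH pair", which rests on Lemma \ref{AH_ker}(2), while everything else is formal from the fullness of the two subcategories together with the adjunction already proved in Theorem \ref{adjoint}.
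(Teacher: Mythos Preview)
Your proposal is correct and matches the paper's approach: the paper gives no explicit proof, merely stating that the corollary follows from Theorem \ref{adjoint}, and your argument spells out precisely the formal restriction-to-full-subcategories reasoning that is implicit there. The one substantive point, that $\mathcal{V}$ sends good AH pairs to good VH pairs via Lemma \ref{AH_ker}(2), is exactly what is needed.
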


Let $(V,H_V)$ be a good VH pair.
Then, by Proposition \ref{lattice_universal}, $\Omega(V,H_V)^\lat$ is a lattice pair.
Denote the even H-lattice $(L_{\Omega(V,H_V)},H_V)$ by $L_{V,H_V}$, which we call a {\it maximal lattice of the
good VH pair} $(V,H_V)$ (see Definition \ref{eq_lattice}).
By Proposition \ref{lattice_pair_invariant}, the AH pair $\Omega(V,H_V)^\lat$ is isomorphic to 
the AH pair $(A_{L_{V,H_V}, H_V},H_V)$.
By Lemma \ref{sub_Cartan}, the $H$-lattice vertex algebra
$V_{L_{V,H_V},H_V}=\mathcal{V}(\Omega(V,H_V)^\lat)$ is a subVH pair of $(V,H_V)$,
which is a maximal $H$-lattice vertex algebra as follows:
\begin{thm}
\label{maximal_lattice}
Let $(V,H)$ be a good VH pair.
Then, the subVH pair $\mathcal{V}(\Omega(V,H_V)^\lat)$ can be characterized by the following properties:
\begin{enumerate}
\item
$\mathcal{V}(\Omega(V,H_V)^\lat)$ is an $H$-lattice vertex algebra.
\item
For any even H-lattice $(L,H)$ and any VH pair homomorphism $f:(V_{L,H},H)\rightarrow (V,H_V)$, the image of $f$ is in
$\mathcal{V}(\Omega(V,H_V)^\lat)$.
\end{enumerate}
Furthermore, there is a natural bijection between
$\Hom_{\VH}((V_{L,H},H),(V,H_V))$ and \\
$\Hom_{\AH}((A_{L,H},H), (A_{L_{V,H_V},H_V})).$
\end{thm}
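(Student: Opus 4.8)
The plan is to derive the statement formally from the adjunction of Theorem \ref{adjoint}, the universal property of the lattice part of a good AH pair (Proposition \ref{lattice_universal}), and the classification of lattice pairs (Proposition \ref{lattice_pair_invariant}); no new computation with vertex operators is needed beyond what is already in Section \ref{sec_VH}.

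First I would record property (1) together with the fact that $\mathcal{V}(\Omega(V,H_V)^\lat)$ is genuinely a subVH pair of $(V,H_V)$. By definition $\Omega(V,H_V)^\lat=\bigoplus_{\al\in L_{V,H_V}}\Omega_{V,H_V}^{\al}$, which is a subalgebra of the AH pair $\Omega(V,H_V)$ by Lemma \ref{submonoid}; hence $\mathcal{V}(\Omega(V,H_V)^\lat)$ is a subVH pair of $\mathcal{V}\circ\Omega(V,H_V)$, and the latter is realized by Lemma \ref{sub_Cartan} as the subalgebra $W\subset V$ generated by $H_V$ and $\Omega_{V,H_V}$. This exhibits $\mathcal{V}(\Omega(V,H_V)^\lat)$ as a subVH pair of $(V,H_V)$. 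By Proposition \ref{lattice_universal} the pair $(\Omega(V,H_V)^\lat,H_V)$ is a lattice pair, and by Proposition \ref{lattice_pair_invariant} it is isomorphic to $(A_{L_{V,H_V},H_V},H_V)$; applying the functor $\mathcal{V}$ gives $\mathcal{V}(\Omega(V,H_V)^\lat)\cong V_{L_{V,H_V},H_V}$, which is an $H$-lattice vertex algebra by definition. This is property (1).

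Next I would prove the natural bijection and deduce property (2). Using $V_{L,H}=\mathcal{V}(A_{L,H},H)$ and the adjunction of Theorem \ref{adjoint},
\[
\Hom_{\VH}((V_{L,H},H),(V,H_V))\cong \Hom_{\AH}((A_{L,H},H),\Omega(V,H_V)).
\]
Since $(A_{L,H},H)$ is a lattice pair, Proposition \ref{lattice_universal} gives a natural bijection of the right-hand side with $\Hom_{\AH}((A_{L,H},H),\Omega(V,H_V)^\lat)$, and Proposition \ref{lattice_pair_invariant} identifies this with $\Hom_{\AH}((A_{L,H},H),(A_{L_{V,H_V},H_V},H_V))$; composing these gives the asserted natural bijection. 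For property (2), take a VH pair homomorphism $f\colon (V_{L,H},H)\to(V,H_V)$. By Proposition \ref{omega_factor} its image already lies in $\mathcal{V}\circ\Omega(V,H_V)=W$. The AH pair homomorphism $g\colon (A_{L,H},H)\to\Omega(V,H_V)$ corresponding to $f$ under the adjunction factors through the inclusion $(\Omega(V,H_V)^\lat,H_V)\hookrightarrow\Omega(V,H_V)$ by Proposition \ref{lattice_universal}; applying $\mathcal{V}$ to this factorization and composing with the counit of $\mathcal{V}\dashv\Omega$, which by Lemma \ref{sub_Cartan} is the inclusion $W\hookrightarrow V$, shows that $f$ factors through the subVH pair $\mathcal{V}(\Omega(V,H_V)^\lat)$, i.e.\ $f(V_{L,H})\subset \mathcal{V}(\Omega(V,H_V)^\lat)$ as subsets of $V$.

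Finally, for the uniqueness implicit in ``can be characterized by'': if $W'\subset V$ is any subVH pair satisfying (1) and (2), then as an $H$-lattice vertex algebra $W'\cong V_{L',H_V}$ for some even $H_V$-lattice $L'\subset H_V$, so the inclusion $W'\hookrightarrow V$ is a VH pair homomorphism from $(V_{L',H_V},H_V)$; property (2) of $\mathcal{V}(\Omega(V,H_V)^\lat)$, established above, then forces $W'\subset \mathcal{V}(\Omega(V,H_V)^\lat)$, while property (2) of $W'$ applied to the inclusion of $\mathcal{V}(\Omega(V,H_V)^\lat)$ gives the reverse inclusion, so the two coincide. The step I expect to be most delicate is the bookkeeping in property (2): one must verify that the abstract adjunction is compatible with the concrete realizations — that the counit is the honest inclusion $W\hookrightarrow V$ of Lemma \ref{sub_Cartan}, and that $\mathcal{V}$ applied to the subalgebra inclusion $\Omega(V,H_V)^\lat\hookrightarrow\Omega(V,H_V)$ is the honest sub-VH-pair inclusion inside $W$ — so that ``$f$ factors abstractly'' translates into the set-theoretic containment inside $V$. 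Once these identifications are made explicit, the rest is purely formal.
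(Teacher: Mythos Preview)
Your proposal is correct and follows essentially the same route as the paper: the adjunction of Theorem \ref{adjoint} combined with Proposition \ref{lattice_universal} yields the chain of bijections, and Proposition \ref{lattice_pair_invariant} identifies $\Omega(V,H_V)^\lat$ with $(A_{L_{V,H_V},H_V},H_V)$. The paper's proof is terser---it writes only the chain of Hom-isomorphisms and leaves properties (1), (2), and the uniqueness of the characterization to the surrounding discussion---whereas you spell out each of these explicitly, including the verification that the abstract factorization translates into a set-theoretic containment inside $V$; this extra care is justified and does not constitute a different approach.
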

\begin{proof}
By Theorem \ref{adjoint} and Proposition \ref{lattice_universal},
\begin{align*}
\Hom_{\VH}((V_{L,H},H),(V,H_V))&\cong \Hom_{\underline{\text{good AH pair}}}((A_{L,H},H), \Omega(V,H_V)) \\
&\cong \Hom_{\underline{\text{good AH pair}}}((A_{L,H},H), \Omega(V,H_V)^\lat) \\
&\cong \Hom_{\VH}((V_{L,H},H), \mathcal{V}(\Omega(V,H_V)^\lat)).
\end{align*}
\end{proof}
By Theorem \ref{maximal_lattice} and Lemma \ref{tensor_lattice},
we have:
\begin{cor}
\label{tensor_max}
For a good VH pair $(V,H_V)$ and an even H-lattice $(L,H)$,
the maximal lattice of the good VH pair $(V\otimes V_{L,H}, H_V\oplus H)$ is
$(L_{V,H_V}\oplus L, H_V\oplus H)$.
\end{cor}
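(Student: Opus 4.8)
The plan is to turn the statement into a short computation in the category of AH pairs. By definition (see Definition~\ref{eq_lattice}), the maximal lattice of a good VH pair $(W,H_W)$ is $L_{W,H_W}=L_{\Omega(W,H_W)}$, where for an AH pair $(A,H)$ we have $L_{A,H}=M_{A,H}\cap(-M_{A,H})$. Lemma~\ref{tensor_lattice} supplies two inputs: first, $(V\otimes V_{L,H},H_V\oplus H)$ is again a good VH pair, so its maximal lattice is well defined; second, $\Omega(V\otimes V_{L,H},H_V\oplus H)$ is isomorphic, as an $(H_V\oplus H)$-graded AH pair, to $\Omega(V,H_V)\otimes(A_{L,H},H)$. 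Hence it suffices to prove the purely algebraic identity $L_{A\otimes B,\,H_A\oplus H_B}=L_{A,H_A}\oplus L_{B,H_B}$ for AH pairs $(A,H_A)$ and $(B,H_B)$, and then to apply it with $(A,H_A)=\Omega(V,H_V)$ and $(B,H_B)=(A_{L,H},H)$.

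For the AH-pair identity I would argue as follows. The tensor product carries the grading $(A\otimes B)^{(\alpha,\beta)}=A^{\alpha}\otimes_{\C}B^{\beta}$, and over $\C$ this is nonzero exactly when both $A^{\alpha}\ne 0$ and $B^{\beta}\ne 0$; therefore $M_{A\otimes B}=M_{A,H_A}\oplus M_{B,H_B}$ as a subset of $H_A\oplus H_B$, and consequently
$$L_{A\otimes B}=M_{A\otimes B}\cap(-M_{A\otimes B})=\bigl(M_{A,H_A}\cap(-M_{A,H_A})\bigr)\oplus\bigl(M_{B,H_B}\cap(-M_{B,H_B})\bigr)=L_{A,H_A}\oplus L_{B,H_B}.$$
Now specialize: $L_{\Omega(V,H_V)}=L_{V,H_V}$ by the definition of the maximal lattice, while $(A_{L,H},H)$ is a lattice pair with $M_{A_{L,H},H}=L$ by Proposition~\ref{lattice_pair_invariant}, so $L_{A_{L,H},H}=L\cap(-L)=L$ since $L$ is a subgroup of $H$. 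Combining these with Lemma~\ref{tensor_lattice} gives $L_{V\otimes V_{L,H},H_V\oplus H}=L_{\Omega(V\otimes V_{L,H},H_V\oplus H)}=L_{V,H_V}\oplus L$; and since the bilinear form on $H_V\oplus H$ restricts to the orthogonal sum of the forms on $H_V$ and $H$, the even $H$-lattice structures agree as well, which is exactly the assertion.

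I do not expect a genuine obstacle: the entire content is carried by Lemma~\ref{tensor_lattice}, and the only point requiring attention is that the isomorphism it provides respects the $(H_V\oplus H)$-grading, so that the support of $\Omega(V\otimes V_{L,H},H_V\oplus H)$ splits literally as the product $M_{\Omega(V,H_V)}\times L$ inside $H_V\oplus H$. As a cross-check one could instead establish the two inclusions separately: since $\mathcal{V}$ is a monoidal functor, $V_{L_{V,H_V},H_V}\otimes V_{L,H}\cong V_{L_{V,H_V}\oplus L,\,H_V\oplus H}$ embeds as a subVH pair of $V\otimes V_{L,H}$, so the universal property in Theorem~\ref{maximal_lattice} forces $L_{V,H_V}\oplus L\subseteq L_{V\otimes V_{L,H},H_V\oplus H}$, while the reverse inclusion again follows from the tensor decomposition of $\Omega$ in Lemma~\ref{tensor_lattice}.
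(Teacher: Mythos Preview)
Your argument is correct and is essentially the same as the paper's: the paper simply records the corollary as an immediate consequence of Theorem~\ref{maximal_lattice} and Lemma~\ref{tensor_lattice}, and what you have written is precisely the unpacking of that one-line citation, namely passing to $\Omega$ via Lemma~\ref{tensor_lattice} and then reading off $L_{A\otimes B}=L_A\oplus L_B$ from the tensor grading.
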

We end this section by showing the existence of lattice vertex algebras in a good VH pair.
Let $(V,H)$ be a good VH pair and $(L_{V,H},H)$ be the maximal lattice.

\begin{lem}
\label{existence}
If a subgroup $M \subset L_{V,H}$ spans a non-degenerate subspace of $H$,
then there exists a vertex subalgebra of $V$ which is isomorphic to the lattice vertex algebra $V_M$.
\end{lem}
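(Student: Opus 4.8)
\emph{Proof idea for Lemma \ref{existence}.} The plan is to build a copy of the genuine lattice vertex algebra $V_M$ inside the $H$-lattice vertex algebra $V_{M,H}$, which is itself known to embed in $V$. First note that $M$ is an even lattice in the usual sense: being a subgroup of $L_{V,H}\subset H$ it is torsion-free, and $(\al,\be)\in\Z$, $(\al,\al)\in 2\Z$ for $\al,\be\in M$. Writing $E$ for the (non-degenerate, by hypothesis) subspace of $H$ spanned by $M$ and choosing a basis $\al_1,\dots,\al_d\in M$ of $E$, the map $\be\mapsto((\be,\al_1),\dots,(\be,\al_d))$ embeds $M$ into $\Z^d$, since its kernel is $\{\be\in M\subset E\,|\,(\be,E)=0\}=0$; hence $M$ is free of rank $d=\dim_\C E$, so $(M,(-,-)|_M)$ is a non-degenerate even lattice with $\C\otimes_\Z M\cong E$, and $\C\{M\}$ and $V_M=\mathcal{V}(\C\{M\},E)$ make sense.

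Next I would locate $V_{M,H}$ inside $V$. Since $M$ is a subgroup, hence a submonoid, of $M_{\Omega(V,H)^{\lat},H}=L_{V,H}$, Lemma \ref{submonoid} shows that $(\Omega(V,H)^{\lat})_M=\bigoplus_{\al\in M}(\Omega(V,H)^{\lat})^\al$ is an AH subpair of $\Omega(V,H)^{\lat}$; it is a lattice pair with associated monoid $M$, so by Proposition \ref{lattice_pair_invariant} it is isomorphic to $(A_{M,H},H)$. Applying the functor $\mathcal{V}$ to this inclusion — on objects $\mathcal{V}(B,H)=M_H(0)\otimes B$, so $\mathcal{V}$ carries an AH subpair inclusion to a vertex subalgebra inclusion — shows that $V_{M,H}=\mathcal{V}(A_{M,H},H)$ is a subVH pair of $\mathcal{V}(\Omega(V,H)^{\lat})$, and the latter is a subVH pair of $V$ by Lemma \ref{sub_Cartan} (as noted just before Theorem \ref{maximal_lattice}). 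Hence $V_{M,H}$ is a vertex subalgebra of $V$.

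It remains to exhibit $V_M$ inside $V_{M,H}$. Regrading $A_{M,H}$ by $E$ — legitimate, since its support $M$ lies in $E$ and the form on $E$ is non-degenerate — yields a lattice pair $(A_{M,H},E)$ with maximal lattice $(M,E)$, which by Proposition \ref{lattice_pair_invariant} and the Remark following Proposition \ref{existence_twist} is isomorphic to $(\C\{M\},E)$; thus $\mathcal{V}(A_{M,H},E)\cong V_M$. The pair $(\mathrm{id}_{A_{M,H}},\iota)$, with $\iota\colon E\hookrightarrow H$ the isometric inclusion, is an AH pair homomorphism $(A_{M,H},E)\to(A_{M,H},H)$; applying $\mathcal{V}$ and unwinding the construction in the proof of Proposition \ref{V_functor}, the induced map $V_M\cong\mathcal{V}(A_{M,H},E)\to V_{M,H}$ equals $\varphi\otimes\mathrm{id}_{A_{M,H}}$, where $\varphi\colon M_E(0)\hookrightarrow M_H(0)$ is the canonical embedding of Heisenberg vertex algebras, and is in particular injective. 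Composing $V_M\hookrightarrow V_{M,H}\hookrightarrow V$ gives the desired vertex subalgebra.

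The step needing the most care is the identification of $\mathcal{V}(\mathrm{id}_{A_{M,H}},\iota)$ with $\varphi\otimes\mathrm{id}$, hence its injectivity: this follows directly from the explicit formula for $\mathcal{V}$ on morphisms in the proof of Proposition \ref{V_functor}, using that a field $Y(\1\otimes a,z)$ with $a\in A_{M,H}^\al$ and $\al\in M\subset E$ involves only the Heisenberg modes $\al(n)$ — which $\varphi$ carries into $M_H(0)$ — together with $l_az^\al$ on the algebra factor, and that $M_E(0)\hookrightarrow M_H(0)$ is injective on the underlying symmetric algebras. The remaining steps are routine bookkeeping with the functors $\mathcal{V}$ and $\Omega$ and with Proposition \ref{lattice_pair_invariant}.
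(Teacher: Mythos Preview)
Your argument is correct and follows the same overall strategy as the paper: first embed $V_{M,H}$ into $V$ via $\mathcal{V}$ applied to the AH subpair $(\Omega(V,H)^{\lat})_M\cong (A_{M,H},H)$, then locate $V_M$ inside $V_{M,H}$. The paper carries out the second step differently and more directly: using the orthogonal splitting $H=E\oplus H'$ with $H'=\{h\in H\mid (h,M)=0\}$, it observes that $V_{\C\{M\},H}=M_H(0)\otimes\C\{M\}\cong (M_{H'}(0)\otimes M_E(0))\otimes\C\{M\}\cong M_{H'}(0)\otimes V_M$, so one gets not merely an embedding but the full tensor decomposition of $V_{M,H}$. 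Your functorial route via the pair $(\mathrm{id}_{A_{M,H}},\iota)$ recovers the same embedding $\varphi\otimes\mathrm{id}$, but note a small wrinkle: in the paper's conventions a VH pair morphism must satisfy $f(H_V)=H_W$, and by parallelism ``isometry'' in the AH definition is meant as an isometric isomorphism, so $(\mathrm{id},\iota)$ with $E\subsetneq H$ is not literally a morphism in $\AH$, and $\mathcal{V}(\mathrm{id},\iota)$ is not a VH pair map. Your fallback to the explicit construction in the proof of Proposition~\ref{V_functor} is the right fix---that recipe produces an injective vertex algebra homomorphism regardless---but the paper's orthogonal decomposition sidesteps this technicality entirely and yields the stronger statement $V_{M,H}\cong M_{H'}(0)\otimes V_M$.
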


\begin{proof}
It is clear that $M$ is a free abelian group of finite rank.
By Lemma \ref{submonoid}, $(\Omega(V,H)^\lat)_M \cong (A_{L_{V,H},H})_M$ is isomorphic to
the twisted group algebra $\C\{M\}$ associated with the even lattice $M$.
Set $H'=\{h \in H\;|\; (h,\al)=0 \fora \al \in M \}$.
Then, the vertex algebra $V_{\C\{M\},H}$ is isomorphic to $M_{H'}(0)\otimes V_M$ as a VH pair.
Thus, $V_M$ is a subalgebra of $V$.
\end{proof}

\section{Genera of VH pairs and Mass Formula} \label{sec_genus}
In this section, we introduce the notion of a \emph{genus} of VH pairs and prove a Mass formula (Theorem \ref{mass_vertex}) which is an analogous result of that for lattices.

Recall that two lattices $L_1$ and $L_2$ are said to be equivalent or in the same genus if their base changes are isomorphic as lattices:
$$L_1\otimes_{\Z}\mathbb{R}\simeq L_2\otimes_{\Z}\mathbb{R},\quad L_1\otimes_{\Z}\Z_p\simeq L_2\otimes_{\Z}\Z_p,$$
for all the prime integers $p$. 
Consider the unique even unimodular lattice $\tw$ of signature $(1,1)$. 
The proof of the following lemma can be found in \cite{KP}:
\begin{lem}
\label{def_genus}
The lattices $L_1$ and $L_2$ are in the same genus if and only if 
$$L_1 \otimes \tw\simeq L_2 \otimes \tw$$ 
as lattices.
\end{lem}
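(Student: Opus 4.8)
The plan is to prove the two implications separately, treating the ``if'' direction first since it is the easier one. Suppose $L_1 \oplus \tw \cong L_2 \oplus \tw$ as $\Z$-lattices, where $\oplus$ denotes orthogonal direct sum. Base-changing this isomorphism to $\mathbb{R}$ and to $\Z_p$ for every prime $p$ gives $(L_1 \otimes_\Z \mathbb{R}) \oplus (\tw \otimes_\Z \mathbb{R}) \cong (L_2 \otimes_\Z \mathbb{R}) \oplus (\tw \otimes_\Z \mathbb{R})$ and likewise over each $\Z_p$. Since $\tw$ is unimodular and, after base change, a hyperbolic plane, one may cancel it: over $\mathbb{R}$ this is Sylvester's law of inertia, and over $\Z_p$ it is the cancellability of an isotropic unimodular lattice (the only mild point being $p=2$, where one needs that the even hyperbolic plane is cancellable; see \cite{CS} or a standard reference on $p$-adic lattices). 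Cancelling $\tw$ yields $L_1 \otimes_\Z \mathbb{R} \cong L_2 \otimes_\Z \mathbb{R}$ and $L_1 \otimes_\Z \Z_p \cong L_2 \otimes_\Z \Z_p$ for all $p$, i.e.\ $L_1$ and $L_2$ are in the same genus.

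For the ``only if'' direction, assume $L_1$ and $L_2$ are in the same genus. First, genus membership is stable under orthogonal sum with a fixed lattice: taking the orthogonal sum of the local isomorphisms $L_1 \otimes_\Z \Z_p \cong L_2 \otimes_\Z \Z_p$ (and the real one) with $\tw$ shows that $L_1 \oplus \tw$ and $L_2 \oplus \tw$ lie in the same genus. It therefore suffices to show that the genus of $L_i \oplus \tw$ consists of a single isomorphism class. If $L_i = 0$ this is immediate from the uniqueness of $\tw$ as the even unimodular lattice of signature $(1,1)$. Otherwise $L_i \oplus \tw$ is an indefinite lattice of rank $\geq 3$ that contains $\tw$ as an orthogonal direct summand, and here one invokes the theorem of Eichler (and Kneser) on indefinite quadratic forms: an indefinite $\Z$-lattice of rank $\geq 3$ is the unique lattice in its spinor genus, and a lattice containing a hyperbolic plane as an orthogonal summand has, at every place, surjective spinor norm on its local orthogonal group (realized by Eichler transvections supported on that plane together with suitable reflections), so its spinor genus equals its genus. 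Hence the genus of $L_i \oplus \tw$ is a single class, and $L_1 \oplus \tw \cong L_2 \oplus \tw$.

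I expect the ``if'' direction to be routine modulo quoting $p$-adic Witt cancellation, with the $p=2$ case of cancelling the even hyperbolic plane being the only spot requiring care. The substantive content is the ``only if'' direction, and the main obstacle there is the strong-approximation input: one needs Eichler's identification of class and spinor genus for indefinite forms of rank $\geq 3$ together with the spinor-norm computation showing that a lattice with a hyperbolic summand has trivial spinor-genus obstruction, so that genus $=$ spinor genus $=$ class. Both ingredients are classical, and a self-contained account would follow \cite{CS} (Chapter 15) or Kneser's treatment --- which is why the lemma is quoted here with a reference rather than reproved from scratch.
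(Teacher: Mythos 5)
Your proposal is correct: the ``if'' direction by Witt/local cancellation of the unimodular hyperbolic plane over $\mathbb{R}$ and each $\Z_p$, and the ``only if'' direction by stability of the genus under $\oplus\,\tw$ together with Eichler's theorem that an indefinite lattice of rank $\geq 3$ with a hyperbolic-plane summand (hence full local spinor norms) is alone in its genus, is exactly the classical argument. The paper itself gives no proof and simply cites Kneser--Puppe \cite{KP}, so your sketch is in substance the same proof that the cited reference (and \cite{CS}, Chapter 15) supplies.
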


We will define an equivalence relation on VH pairs that is analogous to the equivalence relation on lattices given in this lemma.
For the rest of this paper, we always assume VH pairs to be good and denote a 
VH pair by $V$ instead of $(V,H_V)$ and the associated AH pair
 by $\Omega_V$ instead of $(\Omega_V^{H_V},H_V)$ for simplicity.
Note that tensor products of good VH pairs and $H$-lattice vertex algebras
are again good VH pairs by Lemma \ref{tensor_lattice}.

Let $V_\tw$ denote the lattice vertex algebra associated with the lattice $\tw$
and consider it as a VH pair $(V_\tw,H_\tw)$ with $H_\tw=\C\otimes_\Z \tw$.
Two (good)  VH pairs $V$ and $W$ are said to be \emph{equivalent} if there exists an isomorphism
$$f:\ V\otimes V_\tw\simeq W\otimes V_\tw$$
of VH pairs, that is, a vertex algebra isomorphism $f$ which satisfies 
$$f(H_{V\otimes V_\tw})=H_{V\otimes V_\tw}.$$
We call an equivalence class of VH pairs, a \emph{genus} of VH pairs, and denote by $\gen(V)$ the equivalence class of the VH pair $V$.

Before going into details, we briefly describe how to use the methods developed in the previous section to study the genera.
Assume that $V\otimes V_\tw \cong W\otimes V_\tw$ as a VH pair,
 that is, they are in the same genus of VH pairs.
We observe that $V$ is obtained as a coset of $V\otimes V_\tw$ by $V_\tw$,
that is, $V \cong \mathrm{Coset}_{V\otimes V_\tw}(V_\tw)$, and so is $W$ (see Section \ref{sec_coset}).
Furthermore, it is clear that the subVH pair $(M_{H_V}(0)\otimes V_\tw,H_V\oplus H_\tw) \subset (V\otimes V_\tw,H_V\oplus H_\tw)$ is an $H$-lattice vertex algebra,
whose maximal lattice is $(\tw, H_V\oplus H_\tw)$.
Conversely, in Section \ref{sec_coset}, we will show that
for each subVH pair of $V\otimes V_\tw$ which is isomorphic to the $H$-lattice vertex algebra
$M_{H_V}(0)\otimes V_\tw$,
we can obtain a VH pair $W$ such that $W\otimes V_\tw \cong V \otimes V_\tw$ by using the coset construction.
Thus, in order to determine $\gen(V)$, it suffices to classify the subVH pairs which are isomorphic to the $H$-lattice vertex algebra $M_{H_V}(0)\otimes V_\tw$.
This is possible because we can classify all $H$-lattice vertex subalgebras of a VH pair by Lemma \ref{lattice_correspondence}.
%

Section \ref{sec_coset} is devoted to studying the coset constructions of vertex algebras and VH pairs.
In section \ref{sec_mass}, a mass formula will be proved.


\subsection{Coset constructions} \label{sec_coset}
%
Let $V$ and $W$ be vertex algebras.
For each $V$-module $M$, the tensor product $W \otimes M$ becomes a $W\otimes V$-module. 
We denote by $T_W$ the functor which assigns the $W \otimes V$-module $W \otimes M$ to each $V$-module $M$:
$$
T_{W}:\underline{V \text{-mod}} \rightarrow \underline{W\otimes V\text{-mod}},\ \ M\mapsto W \otimes M.
$$
For a vertex algebra $\Vt$ and a vertex subalgebra $W$ of $\Vt$ and a $\Vt$-module $\Mt$, 
the subspace 
$$C_\Mt(W)=\{v\in \Mt\;|\; w(n)v=0\text{ for any}\; w \in W \text{ and }\; n \geq 0   \}$$
is called a {\it coset} (or commutant).
If $\Mt=\Vt$, viewed as a $\Vt$-module, then $C_{\Vt}(W)$ is a vertex subalgebra of $\Vt$.
It is clear that $C_{\Mt}(W) $ is a $C_\Vt(W)$-module for any $\Vt$-module $\Mt$.
Consider the functor $R_W$ which assigns the $C_{\Vt}(W)$-module $C_{\Mt}(W)$ to each $\Vt$-module $\Mt$,
$$R_W: \underline{\Vt\text{-mod}} \rightarrow \underline{C_{\Vt}(W) \text{-mod}},\; \Mt \mapsto C_\Mt(W).$$
In this subsection, we consider the composition of the coset functor $R_W$ and the tensor product functor $T_W$.
We first consider the case of $R_W \circ T_W$.
If $C_W(W)=\C\1$, then $R_W \circ T_W(M) = C_{W\otimes M}(W)=\C\1\otimes M$ for any $V$-module $M$.
Hence, we have:
\begin{lem}
\label{tensor_natural}
If $C_W(W)=\C\1$, then the composite functor $R_W \circ T_W :\underline{V\text{-mod}} \rightarrow \underline{V\text{-mod}}$
is naturally isomorphic to the identity functor.
\end{lem}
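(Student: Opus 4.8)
The plan is to upgrade the pointwise identity $C_{W\otimes M}(W)=\C\1\otimes M$ noted just before the statement into a natural isomorphism of functors, so the work is essentially bookkeeping. Throughout, write $W\otimes\C\1\subset\Vt:=W\otimes V$ for the copy of $W$ with respect to which all the cosets in $R_W$ are taken, and recall the tensor-product vertex-operator formula $Y_{W\otimes M}(a\otimes b,z)=Y_W(a,z)\otimes Y_M(b,z)$.

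First I would record the module-theoretic fact that $C_{W\otimes M}(W\otimes\C\1)=C_W(W)\otimes M$ for every $V$-module $M$: since $(w\otimes\1)(n)(u\otimes m)=(w(n)u)\otimes m$, expanding an element of $W\otimes M$ as $\sum_i u_i\otimes m_i$ with $\{m_i\}$ linearly independent shows that the vanishing of $(w\otimes\1)(n)(\sum_i u_i\otimes m_i)$ for all $w\in W$ and $n\geq 0$ is equivalent to $u_i\in C_W(W)$ for every $i$. Taking $M=V$ gives $C_\Vt(W\otimes\C\1)=C_W(W)\otimes V=\C\1\otimes V$ under the hypothesis, a vertex subalgebra of $\Vt$ isomorphic to $V$ via $\1\otimes v\mapsto v$ (again by the tensor-product formula); this is the identification that lets us read the codomain of $R_W\circ T_W$ as $\underline{V\text{-mod}}$. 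Using the hypothesis $C_W(W)=\C\1$ once more, the first displayed identity becomes $R_W\circ T_W(M)=\C\1\otimes M$.

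It then remains to check two routine points. The linear bijection $\eta_M\colon\C\1\otimes M\to M$, $\1\otimes m\mapsto m$, intertwines the $C_\Vt(W\otimes\C\1)$-action (transported along $\C\1\otimes V\cong V$) with the original $V$-action, because $Y_{W\otimes M}(\1\otimes v,z)(\1\otimes m)=\1\otimes Y_M(v,z)m$; and for a morphism $\phi\colon M\to N$ in $\underline{V\text{-mod}}$ the functor $T_W$ produces $\mathrm{id}_W\otimes\phi$, whose restriction to the cosets is $\1\otimes m\mapsto\1\otimes\phi(m)$, so that $\eta_N\circ(R_W\circ T_W)(\phi)=\phi\circ\eta_M$. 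Hence $\eta=\{\eta_M\}_M$ is the desired natural isomorphism. I expect no real obstacle here: the only places needing a word of care are the linear-independence argument identifying the coset $C_{W\otimes M}(W\otimes\C\1)$ with $C_W(W)\otimes M$, and the verification that the $C_\Vt(W\otimes\C\1)$-action transported along $\C\1\otimes V\cong V$ is literally the given $V$-action — both immediate consequences of $Y_{W\otimes M}(a\otimes b,z)=Y_W(a,z)\otimes Y_M(b,z)$.
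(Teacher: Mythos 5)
Your proposal is correct and follows exactly the paper's route: the paper's entire proof is the observation immediately preceding the lemma that $R_W\circ T_W(M)=C_{W\otimes M}(W)=\C\1\otimes M$, and you have simply filled in the routine details (the linear-independence argument, the identification $C_{W\otimes V}(W)\cong V$, and naturality) that the paper leaves implicit.
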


\begin{rem}
We note that $C_W(W)$ is the center of $W$ and the assumption $C_W(W)=\C\1$ is satisfied for almost all vertex algebras.
In particular, if $W$ is a conformal vertex algebra with the conformal vector $\omega$, then $T_W=\omega(0)$
and thus $C_W(W)=\ker T_W$.
\end{rem}

We now consider the case of $T_W \circ R_W$.
Let $\Vt$ be a vertex algebra and $W$ a vertex subalgebra.
Set $W'=C_{\Vt}(W)$.
Since the vertex subalgebras $W$ and $W'$ commute with each other (in $\Vt$),
by the universality of the tensor product in the category of vertex algebras,
there is a vertex algebra homomorphism $i_W: W\otimes W' \rightarrow \Vt,\; (w,w') \mapsto w(-1)w'$, which
implies that $\Mt$ is a $W \otimes W'$-module.
Consider this pullback functor,
$${i_W}^*: \underline{\Vt \text{-module}} \rightarrow \underline{W\otimes W' \text{-module}},\;
 \Mt \mapsto \Mt.$$

We let $f_{\tilde{M}}$ denote the linear map $W\otimes C_{\tilde{M}}(W) \rightarrow \tilde{M}$ defined by setting $f_{\tilde{M}}(w\otimes m) = w(-1)m$ for $w\in W$ and $m \in  C_{\tilde{M}}(W)$.

\begin{lem}
\label{coset_natural}
The family of linear maps $f_{\tilde{M}}$ gives a natural transformation from $T_W \circ R_W$ to the pullback functor 
${i_W}^*$.
\end{lem}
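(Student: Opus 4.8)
The plan is to verify directly that the family $\{f_{\tilde{M}}\}$ consists of $W\otimes W'$-module homomorphisms and that it is natural in $\tilde{M}$. First I would fix a $\tilde{V}$-module $\tilde{M}$ and recall that $C_{\tilde{M}}(W)$ is a $W' = C_{\tilde{V}}(W)$-module, so that $T_W\circ R_W(\tilde{M}) = W\otimes C_{\tilde{M}}(W)$ is a $W\otimes W'$-module in the standard way. The map $f_{\tilde{M}}(w\otimes m) = w(-1)m$ is well-defined since $m\in\tilde{M}$ and the $(-1)$-action of $w\in W\subset \tilde{V}$ on the $\tilde{V}$-module $\tilde{M}$ makes sense. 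The target ${i_W}^*(\tilde{M})$ is $\tilde{M}$ itself, regarded as a $W\otimes W'$-module via the homomorphism $i_W:W\otimes W'\to\tilde{V}$, $(w,w')\mapsto w(-1)w'$.

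Next I would check that $f_{\tilde{M}}$ intertwines the actions of $W$ and of $W'$ separately; since these two subalgebras generate $W\otimes W'$ and commute, this suffices. For $W'$: if $w'\in W'$, $n\in\Z$, $w\in W$, $m\in C_{\tilde{M}}(W)$, one computes $f_{\tilde{M}}(w'(n)(w\otimes m)) = f_{\tilde{M}}(w\otimes w'(n)m) = w(-1)w'(n)m$; because $w$ and $w'$ commute in $\tilde{V}$ and $w(i)m = 0$ for $i\ge 0$ (as $m\in C_{\tilde{M}}(W)$), the commutator formula and associativity for the $\tilde{V}$-module $\tilde{M}$ give $w'(n)w(-1)m = w(-1)w'(n)m$, i.e. this equals $w'(n)f_{\tilde{M}}(w\otimes m)$, which is the $W'$-action on ${i_W}^*(\tilde M)$. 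For $W$: if $a\in W$, $n\in\Z$, then $f_{\tilde{M}}(a(n)(w\otimes m)) = f_{\tilde{M}}((a(n)w)\otimes m) = (a(n)w)(-1)m$, and by the associativity formula (the same one recorded as ``Associativity'' in the preliminaries, now applied in the $\tilde V$-module $\tilde M$) together with $w(i)m=0$ for $i\ge 0$, this equals $a(n)(w(-1)m) = a(n)f_{\tilde{M}}(w\otimes m)$, the $W$-action on ${i_W}^*(\tilde M)$. Finally, naturality: for a $\tilde{V}$-module homomorphism $g:\tilde{M}\to\tilde{N}$, the functor $T_W\circ R_W$ sends it to $\mathrm{id}_W\otimes (g|_{C_{\tilde{M}}(W)})$ and ${i_W}^*$ sends it to $g$ itself, and the square commutes since $g(w(-1)m) = w(-1)g(m)$ by $\tilde{V}$-module homomorphism and $g(C_{\tilde{M}}(W))\subset C_{\tilde{N}}(W)$.

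The only genuinely delicate point is the commutation identity $a(n)(w(-1)m) = (a(n)w)(-1)m$ for $a,w\in W$ and $m\in C_{\tilde{M}}(W)$ with $n\in\Z$: it is the associativity formula $(a(n)w)(-1) = \sum_{i\ge 0}\binom{n}{i}(-1)^i\bigl(a(n-i)w(-1+i) - (-1)^n w(n-1-i)a(i)\bigr)$ applied to $m$, where the vanishing $w(-1+i)m = 0$ for $i\ge 1$ and $a(i)m = 0$ for $i\ge 0$ collapses the right-hand side to $a(n)w(-1)m$. I would make sure this collapsing is stated cleanly, as it is the crux of why $f_{\tilde M}$ lands correctly in ${i_W}^*(\tilde M)$; the rest is bookkeeping. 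I expect no substantial obstacle beyond organizing these identities, since all the needed module-level versions of skew-symmetry, associativity and commutativity are exactly the axioms/consequences recorded in Section~1.
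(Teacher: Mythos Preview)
Your proof is correct and rests on the same key associativity collapse as the paper's: $(a(n)w)(-1)m = a(n)\,w(-1)m$ for $a,w\in W$ and $m\in C_{\tilde M}(W)$. The organization differs slightly. The paper checks the intertwining property directly for a general element $w_1\otimes w'\in W\otimes W'$ by writing $(w_1\otimes w')(k)=\sum_{l}w_1(l)\otimes w'(k-l-1)$ and then recombining the sum into $(w_1(-1)w')(k)$ via the associativity formula; you instead verify the intertwining for $W$ and $W'$ separately and appeal to the fact that these generate $W\otimes W'$. Your version is cleaner computationally, but it relies on the module analogue of Proposition~\ref{vertex_homomorphism}: that a linear map between $V$-modules intertwining a generating set is a $V$-module homomorphism. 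This is standard (the set of $v\in V$ for which $f(v(n)-)=v(n)f(-)$ is a vertex subalgebra by the associativity identity), but since the paper only states the algebra-homomorphism version, you should say one sentence justifying it.
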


\begin{proof}
It suffices to show that $f_{\tilde{M}}:W\otimes C_{\tilde{M}}(W) \rightarrow \tilde{M}$ is a $W \otimes W'$-module homomorphism (The naturality of $f_{\tilde{M}}$ is clear).
Let $w_1, w_2 \in W$ and $w' \in W'$, $m \in C_{\tilde{M}}(W)$ and $k \in \Z$.
It suffices to show that 
$f_{\tilde{M}}\bigl((w_1\otimes w')(k) (w_2\otimes m)\bigr)= (w_1\otimes w')(k) f_{\tilde{M}}(w_2\otimes m)$.
Since $[w_1(p),w'(j)]=[w_2(p),w'(j)]=0$ and $w_1(q)m=w_2(q)m=0$ for any $j, p\in \Z$ and $q \geq 0$,
we have 
\begin{align}
f_{\tilde{M}}(w_1(i)w_2\otimes w'(j)m)) &=(w_1(i)w_2)(-1)(w'(j)m) \nonumber \\ 
&=\sum_{l \geq 0}\binom{i}{l}(-1)^l \bigl(w_1(i-l)w_2(-1+l) - (-1)^i w_2(i-1-l)w_1(l)\bigr)w'(j)m \nonumber \\
&=w_1(i)w'(j)w_2(-1)m \nonumber
\end{align} for any $i,j \in \Z$.
Hence, 
\begin{align}
f_{\tilde{M}}\bigl((w_1\otimes w')(k) (w_2 \otimes m) \bigr)&=
\sum_{l \in \Z} f_{\tilde{M}}(w_1(l)w_2\otimes w'(k-l-1)m) \nonumber \\
&=\sum_{l \in \Z} w_1(l)w'(k-l-1)w_2(-1)m \nonumber \\
&=(\sum_{l \geq 0} w_1(-1-l)w'(k+l)+w'(k-l-1)w_1(l))w_2(-1)m    \nonumber \\
&=(w_1(-1)w')(k)f_\Mt(w_2 \otimes m)=(w_1\otimes w') (k) f_{\tilde{M}}(w_2\otimes m). \nonumber
\end{align}
\end{proof}

We will say that a vertex algebra $V$ is {\it completely reducible}\/ if every $V$-module is completely reducible. 
If a completely reducible vertex algebra $V$ has a unique simple $V$-module up to isomorphism, we will say that $V$ is {\it holomorphic}. 
Hereafter, we consider the functor $R_W$ and $T_W$ for a holomorphic vertex algebra $W$.
Let $W$ be a vertex subalgebra of $\Vt$ and $\Mt$ a $\Vt$-module. Suppose that $W$ be a holomorphic vertex algebra such that $C_W(W)=\C\1$.
Since $W$ is a holomorphic vertex algebra, as a $W$-module, $\Mt$ is a direct sum of copies of $W$.
Since $C_W(W)=\C\1$,
the natural transformation given in Lemma \ref{coset_natural},
\begin{align}
f_\Mt: W \otimes C_\Mt(W) \rightarrow \Mt, (w,m) \mapsto w(-1)m, \label{eq_split}
\end{align}
is an isomorphism.
In particular, $\Vt$ is isomorphic to $W \otimes C_\Vt(W)$ as a vertex algebra. 
Combining Lemma \ref{tensor_natural} and Lemma \ref{coset_natural} and the above discussion, we have:
\begin{thm}
\label{coset_equivalence}
Let $\Vt$ be a vertex algebra and $W$ be a vertex subalgebra of $V$ such that $C_W(W)=\C\1$.
If $W$ is holomorphic, then $\Vt$ is isomorphic to $W \otimes C_\Vt(W)$ as a vertex algebra
and the functors $T_W$ and $R_W$ are mutually inverse equivalences between
 \underline{\rm$\Vt$-mod} and \underline{\rm$C_\Vt(W)$-mod}.
\end{thm}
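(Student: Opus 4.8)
The plan is to assemble the statement from three ingredients already in hand: the splitting isomorphism $f_{\tilde M}$ of Lemma \ref{coset_natural} (which becomes an isomorphism when $W$ is holomorphic with trivial center), Lemma \ref{tensor_natural} ($R_W\circ T_W\simeq \mathrm{id}$), and the naturality established in Lemma \ref{coset_natural}. First I would record the vertex-algebra isomorphism $\tilde V\cong W\otimes C_{\tilde V}(W)$: viewing $\tilde V$ as a module over itself and hence, via $i_W$, as a $W\otimes W'$-module with $W'=C_{\tilde V}(W)$, the map $f_{\tilde V}\colon W\otimes C_{\tilde V}(W)\to \tilde V$ is a $W\otimes W'$-module homomorphism by Lemma \ref{coset_natural}; since $W$ is holomorphic, $\tilde V$ is a direct sum of copies of $W$ as a $W$-module, and since $C_W(W)=\C\1$ the only $W$-vacuum vectors in each copy form a line, so $f_{\tilde V}$ is bijective. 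One checks it sends $\1\otimes\1$ to $\1$ and intertwines the $W\otimes W'$ vertex operators with those of $\tilde V$, hence it is a vertex algebra isomorphism.

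Next I would prove the equivalence of module categories. Given the isomorphism $\tilde V\cong W\otimes W'$, a $\tilde V$-module is the same as a $W\otimes W'$-module. Applying the same argument as above to an arbitrary $\tilde V$-module $\tilde M$: because $W$ is holomorphic, $\tilde M$ is a direct sum of copies of the unique simple $W$-module $W$ itself, and Lemma \ref{coset_natural} gives that $f_{\tilde M}\colon W\otimes C_{\tilde M}(W)\to \tilde M$ is a $W\otimes W'$-module isomorphism. Thus $T_W\circ R_W$ is naturally isomorphic (via $f$) to the identity functor on \underline{$\tilde V$-mod}. In the other direction, $C_W(W)=\C\1$ lets us invoke Lemma \ref{tensor_natural} directly: $R_W\circ T_W\simeq \mathrm{id}_{\underline{W'\text{-mod}}}$. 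Hence $T_W$ and $R_W$ are mutually inverse equivalences between \underline{$\tilde V$-mod} and \underline{$W'$-mod}$=$\underline{$C_{\tilde V}(W)$-mod}.

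The main obstacle is the first part: verifying that the bijective $W\otimes W'$-module map $f_{\tilde V}$ is genuinely a vertex algebra isomorphism, i.e.\ that the $W\otimes W'$ vertex-algebra structure pulled back along $i_W$ really coincides with the source structure on $W\otimes C_{\tilde V}(W)$. This comes down to the fact that $i_W\colon W\otimes W'\to\tilde V$ is injective (so $\tilde V$ contains a copy of the tensor product and $f_{\tilde V}$ identifies $\tilde V$ with exactly that copy), which in turn follows from the $W$-module decomposition of $\tilde V$ together with $C_W(W)=\C\1$ forcing $C_{\tilde V}(W)$ to be precisely the multiplicity space. Once $i_W$ is seen to be an isomorphism, everything else is formal: the module-category statement is then just transport of structure combined with Lemma \ref{tensor_natural} and the naturality of $f$. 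I would also note explicitly that holomorphicity of $W$ is used twice (to get the free/semisimple $W$-module structure of $\tilde V$ and of an arbitrary $\tilde M$), and $C_W(W)=\C\1$ is used both to pin down the multiplicity spaces and to apply Lemma \ref{tensor_natural}.
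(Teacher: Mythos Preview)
Your proposal is correct and follows essentially the same approach as the paper: use holomorphicity of $W$ to decompose any $\tilde V$-module as a direct sum of copies of $W$, use $C_W(W)=\C\1$ to identify the multiplicity space with $C_{\tilde M}(W)$ so that $f_{\tilde M}$ is bijective, and combine Lemma~\ref{tensor_natural} with Lemma~\ref{coset_natural} to conclude the equivalence of categories. The ``main obstacle'' you flag is in fact a non-issue: by definition $f_{\tilde V}(w\otimes w')=w(-1)w'=i_W(w\otimes w')$, so $f_{\tilde V}=i_W$ as linear maps, and since $i_W$ is already a vertex algebra homomorphism, bijectivity of $f_{\tilde V}$ immediately makes it a vertex algebra isomorphism---no separate verification is needed.
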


\begin{cor}
If $W$ is holomorphic and $C_W(W)=\C\1$, then
a vertex algebra $V$ is simple (resp. completely reducible, holomorphic) if and only if $V\otimes W$ is simple
(resp. completely reducible, holomorphic).
\end{cor}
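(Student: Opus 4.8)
The plan is to deduce the corollary directly from Theorem \ref{coset_equivalence}. By that theorem, under the hypotheses $W$ holomorphic and $C_W(W)=\C\1$, the functors $T_W=-\otimes W$ (up to the flip $V\otimes W\cong W\otimes V$) and $R_W=C_{(-)}(W)$ are mutually inverse equivalences between $\underline{\rm V\text{-mod}}$ and $\underline{\rm (V\otimes W)\text{-mod}}$; moreover $V\otimes W$ itself corresponds to $V$ under these equivalences, since $R_W(V\otimes W)=C_{V\otimes W}(W)=V\otimes C_W(W)=\C\1\otimes V\cong V$ by Lemma \ref{tensor_natural} applied with the roles arranged so that $W$ is the holomorphic factor. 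The three properties in question — simplicity, complete reducibility, being holomorphic — are all properties of the module category together with the distinguished object (the vertex algebra as a module over itself), so each is preserved by an equivalence of categories that matches the regular objects.

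Concretely, first I would record that an equivalence $F\colon \underline{\rm V\text{-mod}}\to\underline{\rm (V\otimes W)\text{-mod}}$ sending the $V$-module $V$ to the $V\otimes W$-module $V\otimes W$ preserves and reflects: (i) simplicity of an object (a functor that is an equivalence preserves subobjects and hence simple objects in both directions); (ii) the property that every object is a direct sum of simple objects (equivalences preserve arbitrary direct sums and simple objects); (iii) the property "completely reducible with a unique simple object up to isomorphism'' — since $F$ is a bijection on isomorphism classes of simple objects. Then I would simply apply each of these to the statement that $V$ is simple (resp. completely reducible, holomorphic) as a vertex algebra, which by definition means $V$ is a simple (resp. the category $\underline{\rm V\text{-mod}}$ is semisimple, semisimple with one simple object) — and conclude the same for $V\otimes W$, and conversely.

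The only genuinely delicate point is matching conventions: Theorem \ref{coset_equivalence} is stated for a subalgebra $W\subset\Vt$, so to apply it here I would take $\Vt=V\otimes W$ with $W$ embedded as $\C\1\otimes W$; one checks $C_{\Vt}(W)=V\otimes C_W(W)=V$, so $R_W$ and $T_W$ give the equivalence $\underline{\rm (V\otimes W)\text{-mod}}\simeq\underline{\rm V\text{-mod}}$, and unwinding the definitions of $T_W$ and the natural isomorphism $f_{\tilde M}$ of Lemma \ref{coset_natural} shows the regular module $V\otimes W$ goes to the regular module $V$. I expect this bookkeeping — verifying that the equivalence carries the canonical module to the canonical module, so that "simple vertex algebra'' really does correspond under the functor — to be the main (though routine) obstacle; everything after that is a formal consequence of the categorical equivalence. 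Finally, the word "simple'' for a vertex algebra means it has no nonzero proper ideal, equivalently $V$ is irreducible as a $V$-module, so it too fits the pattern and is handled by the same argument.
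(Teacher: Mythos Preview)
Your proposal is correct and matches the paper's approach: the paper states the corollary without proof, as an immediate consequence of Theorem \ref{coset_equivalence}, and your argument simply spells out why the categorical equivalence (together with the identification $C_{V\otimes W}(W)\cong V$) transfers simplicity, complete reducibility, and the holomorphic property between $V$ and $V\otimes W$. The bookkeeping you flag---that the regular module goes to the regular module---is indeed the only point requiring a sentence of justification, and you handle it correctly.
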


\begin{rem}
\label{irreducible_tensor}
Let $M$ be an irreducible module of an associative $\C$-algebra $A$ and $N$ an
irreducible module of an associative $\C$-algebra $B$.
Then, it is well-known that $M\otimes N$ is an irreducible module of $A\otimes B$ if both $M$ and $N$ are finite dimensional, which is not true in general. For example,
let $A=B=M=N=\C(x)$. Then, the claim fails, since $A \otimes B$ is not a field.
\end{rem}

For our application, we are interested in a coset of a VH pair by a holomorphic lattice vertex algebra.
Let $(\Vt,H)$ be a good VH pair and $(L_{\Vt,H},H)$ be a maximal lattice.
Let $M$ be a sublattice of $L_{\Vt,H}$ such that $M$ is unimodular.
By Proposition \ref{existence},
the lattice vertex algebra $V_M$, which is holomorphic, is a vertex subalgebra of $\Vt$.
Let $H'$ be the canonical Heisenberg subspace of the lattice vertex algebra $V_M$
and set  $H''=\{h \in H\;|\;(h,h')=0 \text{ for any } h' \in H' \}$.
By Theorem \ref{coset_equivalence}, $\Vt \cong V_M \otimes R_{V_M}(\Vt)$.
Hence, we have:
\begin{lem}
$R_{V_M}(\Vt)=\{v \in \Vt \;|\; h(n)v=0 \text{ for any } n \geq 0 \text{ and } h \in H' \}$.
\end{lem}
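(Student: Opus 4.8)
The plan is to show that both sides of the claimed identity coincide with the commutant $C_\Vt(V_M)$, which by definition of the functor $R_{V_M}$ is precisely $R_{V_M}(\Vt)$. One inclusion is immediate: $H'$ is contained in $V_M$ (it is the weight-one part of the Heisenberg subalgebra $M_{H'}(0)\subset V_M$), so any $v$ with $w(n)v=0$ for all $w\in V_M$ and $n\geq 0$ in particular satisfies $h(n)v=0$ for all $h\in H'$ and $n\geq 0$. The substance of the lemma is the reverse inclusion.

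For the reverse inclusion I would first apply Theorem \ref{coset_equivalence}. Its hypotheses hold here: $V_M$ is holomorphic (as recorded just before the statement, via Proposition \ref{existence}), and $C_{V_M}(V_M)=\ker T_{V_M}=\C\1$ because $V_M$ is simple of countable dimension (using the remark after Lemma \ref{tensor_natural} together with the fact cited after Lemma \ref{omega_simple}). Hence $f_\Vt\colon V_M\otimes C_\Vt(V_M)\to\Vt$, $a\otimes b\mapsto a(-1)b$, is an isomorphism of vertex algebras, and under $f_\Vt$ the subalgebra $V_M$ corresponds to $V_M\otimes\C\1$; in particular each $h\in H'$ corresponds to $h\otimes\1$, and the operator $h(n)$ acts on $V_M\otimes C_\Vt(V_M)$ as $h(n)\otimes\mathrm{id}$.

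The key point is then that the commutant of $H'$ inside $V_M$ itself is trivial, $C_{V_M}(H')=\C\1$. Writing a general element of $V_M=\bigoplus_{\al\in M}M_{H'}(0)\otimes\C e_\al$ as $a=\sum_{\al\in M}a_\al$, the condition $h(0)a=0$ for all $h\in H'$ forces $(h,\al)=0$ for all $h\in H'$ whenever $a_\al\neq 0$; since $M$ is unimodular, hence non-degenerate, this leaves only $\al=0$, so $a\in M_{H'}(0)$. The remaining conditions $h(n)a=0$ for $h\in H'$ and $n\geq 1$ say that $a$ is annihilated by all positive modes of the Heisenberg Lie algebra $\widehat{H'}$, so by the structure of the Heisenberg Fock module (\cite[Theorem 1.7.3]{FLM}) we conclude $a\in\C\1$.

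Finally I would assemble the pieces: if $v\in\Vt$ satisfies $h(n)v=0$ for all $h\in H'$ and $n\geq 0$, write $v=\sum_j a_j\otimes b_j$ via $f_\Vt^{-1}$ with the $b_j\in C_\Vt(V_M)$ linearly independent; then $0=h(n)v=\sum_j(h(n)a_j)\otimes b_j$ gives $h(n)a_j=0$ for every $j$, whence $a_j\in C_{V_M}(H')=\C\1$, so $v\in\C\1\otimes C_\Vt(V_M)$, i.e. $v\in C_\Vt(V_M)=R_{V_M}(\Vt)$. The only step that is not pure bookkeeping is the identification $C_{V_M}(H')=\C\1$, which rests on the non-degeneracy of $M$ and the highest-weight structure of the Heisenberg Fock space; once that and the tensor factorization furnished by Theorem \ref{coset_equivalence} are in hand, everything else is routine.
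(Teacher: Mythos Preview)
Your proof is correct and follows essentially the same route as the paper: the paper simply records the tensor decomposition $\Vt\cong V_M\otimes R_{V_M}(\Vt)$ from Theorem \ref{coset_equivalence} and writes ``Hence, we have'' before stating the lemma, leaving the details implicit. You have supplied exactly those details—most notably the verification that $C_{V_M}(H')=\C\1$ via the non-degeneracy of $M$ and the Fock-space structure of $M_{H'}(0)$—which is the one substantive point the reader must fill in.
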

Since $M$ is non-degenerate, $H=H'\oplus H''$ and $H'' \subset R_{V_M}(\Vt)$. 
In particular, $(R_{V_M}(\Vt),H'')$ is canonically a VH pair.
Hence, we have:
\begin{lem}
\label{coset_lattice}
If $M$ is a unimodular sublattice of $L_{\Vt,H}$,
then $(\Vt,H_V)$ is isomorphic to $(V_M \otimes R_{V_M}(\Vt), H' \oplus H'')$ as a VH pair.
\end{lem}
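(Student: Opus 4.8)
The plan is to combine the isomorphism $\Vt \cong V_M \otimes R_{V_M}(\Vt)$ coming from Theorem \ref{coset_equivalence} with the explicit decomposition $H = H' \oplus H''$ and verify that this isomorphism matches the VH pair structures. First I would observe that since $M$ is a unimodular sublattice of the maximal lattice $L_{\Vt,H}$, Proposition \ref{existence} applies (a unimodular lattice is non-degenerate and free of finite rank), so the lattice vertex algebra $V_M$ sits inside $\Vt$ as a vertex subalgebra; its canonical Heisenberg subspace $H'$ is $\C \otimes_\Z M \subset H$. Because $M$ is non-degenerate, the restriction of the bilinear form to $H'$ is non-degenerate, hence $H = H' \oplus H''$ with $H'' = (H')^\perp$, and both summands are non-degenerate. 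Moreover $V_M$ is holomorphic and satisfies $C_{V_M}(V_M) = \C\1$ (its center is $\ker T_{V_M}$, which for a lattice vertex algebra is $\C\1$), so Theorem \ref{coset_equivalence} gives a vertex algebra isomorphism $\Vt \xrightarrow{\sim} V_M \otimes R_{V_M}(\Vt)$ via $v(-1)m \mapsto v \otimes m$.

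The remaining point is to promote this to an isomorphism of VH pairs, i.e.\ to check that the Heisenberg subspace $H$ of $\Vt$ is carried onto $H' \oplus H''$. For $h' \in H'$ we have $h' \in V_M \subset \Vt$, and under the isomorphism $h'$ corresponds to $h' \otimes \1$; for $h'' \in H''$, the preceding lemma identifies $R_{V_M}(\Vt)$ with $\{v \in \Vt \mid h(n)v = 0 \text{ for } n \geq 0,\ h \in H'\}$, and since $(h', h'') = 0$ for all $h' \in H'$ we get $h'' \in R_{V_M}(\Vt)$, corresponding to $\1 \otimes h''$. Thus $H = H' \oplus H''$ maps onto $H' \otimes \1 \oplus \1 \otimes H'' = H_{V_M \otimes R_{V_M}(\Vt)}$, which is exactly the condition for the vertex algebra isomorphism to be a VH pair isomorphism. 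One should also note $(R_{V_M}(\Vt), H'')$ is genuinely a VH pair: axioms HS1)--HS3) for $H''$ follow from those for $H$ restricted to the non-degenerate subspace $H''$, which was already recorded in the discussion preceding the statement.

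I do not expect any serious obstacle here; the work is essentially bookkeeping to confirm the abstract coset isomorphism respects all the extra structure. The one place that needs a little care is the claim $C_{V_M}(V_M) = \C\1$, which is needed to invoke Theorem \ref{coset_equivalence}: this holds because $V_M$ is a simple vertex algebra of countable dimension, so its center is $\ker T_{V_M} = \C\1$, or alternatively because $V_M$ is conformal and $\ker T_{V_M} = (V_M)_0 = \C\1$. Everything else is a direct consequence of Theorem \ref{coset_equivalence}, Proposition \ref{existence}, and the orthogonal decomposition $H = H' \oplus H''$.
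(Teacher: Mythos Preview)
Your proposal is correct and follows essentially the same approach as the paper: the paper's proof is the short discussion immediately preceding the lemma, which uses Proposition \ref{existence} to embed $V_M$, invokes Theorem \ref{coset_equivalence} (with $V_M$ holomorphic and $C_{V_M}(V_M)=\C\1$) for the vertex algebra isomorphism, and then records $H=H'\oplus H''$ with $H''\subset R_{V_M}(\Vt)$ to upgrade to a VH pair isomorphism. Your write-up is somewhat more explicit about the bookkeeping (in particular the verification that $H$ is carried onto $H'\oplus H''$ and that $C_{V_M}(V_M)=\C\1$), but the argument is the same.
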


Let $\tw$ be the unique even unimodular lattice of signature $(1,1)$;
that is, $\tw$ has a basis $z,w$ such that $(z,z)=(w,w)=0$ and $(z,w)=-1$.
Then, we have:
\begin{lem}
\label{lattice_correspondence}
For a good VH pair $\Vt$, there is a one-to-one correspondence between 
sublattices of $L_{\Vt}$ which are isomorphic to $\tw$ and
decompositions of $\Vt$ into tensor products $\Vt \cong V_\tw \otimes V'$ as VH pairs.
\end{lem}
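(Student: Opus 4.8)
The plan is to construct the bijection explicitly in both directions and check they are mutually inverse. Given a sublattice $N \subset L_{\Vt}$ isometric to $\tw$, the lattice $N$ is unimodular, so by Lemma \ref{coset_lattice} (applied with $M = N$) we obtain a decomposition $\Vt \cong V_N \otimes R_{V_N}(\Vt)$ as VH pairs, where $V_N \cong V_\tw$ is a lattice vertex algebra on $N$ and $R_{V_N}(\Vt)$ carries the complementary Heisenberg subspace $H'' = \{h \in H \mid (h, H') = 0\}$. This produces the desired tensor decomposition $\Vt \cong V_\tw \otimes V'$ with $V' = R_{V_N}(\Vt)$. Conversely, given a decomposition $\Vt \cong V_\tw \otimes V'$ as VH pairs, the Heisenberg subspace of the factor $V_\tw$ is a rank-two subspace of $H$, and inside it sits the copy of $\tw$ coming from the lattice vertex algebra structure; concretely, the image of the canonical lattice $\tw \subset H_\tw$ under the isomorphism $H_\tw \oplus H_{V'} \xrightarrow{\sim} H$ is a sublattice of $H$. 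One must check this sublattice actually lies in $L_{\Vt}$: since $V_\tw$ is a subVH pair of $\Vt$ isomorphic to the $H$-lattice vertex algebra on $\tw$, Theorem \ref{maximal_lattice} (the universal property of the maximal lattice, via the natural bijection with AH-pair homomorphisms) forces the image of $\tw$ to land in $L_{\Vt}$.

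\textbf{Key steps, in order.} First I would set up notation: fix the basis $z,w$ of $\tw$ with $(z,z)=(w,w)=0$, $(z,w)=-1$, and recall that $\tw$ is the unique even unimodular lattice of signature $(1,1)$, hence any sublattice of $L_{\Vt}$ isometric to it is automatically unimodular. Second, I would establish the map ``sublattice $\mapsto$ decomposition'' by invoking Lemma \ref{coset_lattice} verbatim. Third, for the reverse map, given $f : \Vt \xrightarrow{\sim} V_\tw \otimes V'$ of VH pairs, I would observe that $f^{-1}(H_\tw \otimes \C\1)$ is a two-dimensional subspace of $H$ and that $f^{-1}(V_\tw \otimes \C\1)$ is a subVH pair isomorphic to $V_{\tw, H_\tw}$; applying Theorem \ref{maximal_lattice}(2), its image lies in $\mathcal V(\Omega(\Vt)^\lat) = V_{L_{\Vt}, H}$, so the corresponding copy of $\tw$ sits inside $L_{\Vt}$. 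Fourth, I would check the two constructions are inverse to each other: starting from a sublattice $N$, running it through Lemma \ref{coset_lattice} and then extracting the $\tw$-summand returns $N$ because the Heisenberg subspace $H'$ of $V_N$ is exactly $N \otimes_\Z \C$ and the lattice vertex algebra structure on that subspace recovers $N$ as its maximal lattice; starting from a decomposition, the round trip returns an isomorphic decomposition because $R_{V_N}(\Vt) = C_{\Vt}(V_N)$ is canonically determined by $N$ (Theorem \ref{coset_equivalence}).

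\textbf{Main obstacle.} The delicate point is the reverse direction: showing that the $\tw$ inside a given abstract decomposition $\Vt \cong V_\tw \otimes V'$ really sits inside the maximal lattice $L_{\Vt} \subset H$, and not merely inside $H$. This is where the universal property (Theorem \ref{maximal_lattice}) does the work — one must verify that the inclusion $V_\tw \hookrightarrow \Vt$ induced by the decomposition is a VH pair homomorphism from the $H$-lattice vertex algebra $V_{\tw, H_\tw}$, so that its image is forced into $\mathcal V(\Omega(\Vt)^\lat)$; equivalently, one translates to AH pairs and uses Proposition \ref{lattice_universal}, which says any AH-pair map from a lattice pair factors through $\Omega(\Vt)^\lat$. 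A secondary bookkeeping issue is that "decompositions" should be understood up to the obvious equivalence (post-composing with automorphisms of $V_\tw$ fixing its Heisenberg subspace and lattice, i.e.\ with $\Aut V_\tw$); the correspondence is with such equivalence classes, and one should note that $V_\tw$ has a unique VH-pair structure so this ambiguity is harmless for counting sublattices.
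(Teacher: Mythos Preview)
Your proposal is correct and follows the same route the paper intends: the paper does not give an explicit proof, stating the lemma immediately after Lemma~\ref{coset_lattice} as a direct consequence, with the reverse direction implicitly resting on Corollary~\ref{tensor_max} (equivalently Theorem~\ref{maximal_lattice}), exactly as you spell out. Your elaboration of the inverse correspondence and the care about where the copy of $\tw$ lands in $L_{\Vt}$ is precisely the content the paper leaves implicit.
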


\subsection{Mass formulae for VH pairs}\label{sec_mass}
By Corollary \ref{tensor_max}, if $V$ and $W$ are in the same genus, then
$L_V\oplus \tw \cong L_W \oplus \tw$, where $L_V$ and $L_W$ are the maximal lattices of $V$ and $W$, respectively.
By Lemma \ref{def_genus}, $L_V$ and $L_W$ are in the same genus of lattices.

Let $f \in \Aut V$ such that $f(H_V)=H_V$. By Lemma \ref{omega_auto},
$f$ induces an automorphism on $\Omega_V$. Furthermore, $f$ induces an automorphism on the maximal lattice $L_V \subset H_V$. The image of such automorphisms forms a subgroup of $\Aut L_V$, denoted by
$G_V$.

For a lattice $L$, set $$\tilde{L}=L\oplus \tw$$ and  $$S_\tw(\Lt)=\{M \subset \Lt\;|\; M \text{ is a rank 2 sublattice which is isomorphic to }\;\tw  \}.$$ 
The group $\Aut \Lt$ naturally acts on $S_\tw(\Lt)$. Let us denote 
the set of orbits by $\Aut \Lt \backslash S_\tw(\Lt)$.
Then, there is a one-to-one correspondence between $\Aut \Lt \backslash S_\tw(\Lt)$ 
and the isomorphism classes in $\gen(L)$.

Since the maximal lattice of $\Vt=V\otimes V_\tw$ is $\widetilde{L_V}=L_V \oplus \tw$,
the group $G_{\Vt}$ acts on $S_{\tw} (\widetilde{L_V})$.
By analogy with the case of lattices, we have:
\begin{prop}
\label{coset_correspondence}
There is a one-to-one correspondence between $G_{\Vt} \backslash S_\tw(\widetilde{L_V})$ 
and the isomorphism classes in $\gen(V)$.
\end{prop}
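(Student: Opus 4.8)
The plan is to construct the bijection in both directions, using the coset machinery of Section~\ref{sec_coset} together with the maximal-lattice correspondence of Lemma~\ref{lattice_correspondence}. Write $\Vt = V\otimes V_\tw$, so that by Corollary~\ref{tensor_max} its maximal lattice is $\widetilde{L_V}=L_V\oplus\tw$. First I would attach to each element $M\in S_\tw(\widetilde{L_V})$ a VH pair in $\gen(V)$: since $M\simeq\tw$ is unimodular and sits inside $L_{\Vt}$, Lemma~\ref{lattice_correspondence} gives a decomposition $\Vt\cong V_M\otimes W_M$ of VH pairs, with $V_M\simeq V_\tw$; set $W_M:=R_{V_M}(\Vt)$. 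Then $W_M\otimes V_\tw\cong \Vt = V\otimes V_\tw$, so $W_M\in\gen(V)$. Conversely, given $W\in\gen(V)$, an isomorphism $f\colon V\otimes V_\tw\to W\otimes V_\tw$ of VH pairs transports the distinguished copy of $\tw$ inside $L_{W\otimes V_\tw}$ (namely the $\tw$-summand, which is a sublattice of $L_{W\otimes V_\tw}=L_W\oplus\tw$) to a sublattice of $\widetilde{L_V}$ isomorphic to $\tw$; this is the inverse assignment at the level of raw data.

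Next I would check the two assignments are mutually inverse up to the appropriate equivalences, which forces passage to orbits. On one side: starting from $M$, decomposing $\Vt\cong V_M\otimes W_M$ and then reading off the $\tw$-summand of $L_{\Vt}$ recovers $M$ itself (the coset construction is set up so that $L_{V_M}=M\subset L_{\Vt}$). On the other side: starting from $W\in\gen(V)$ with a chosen isomorphism $f$, forming $M_f\subset\widetilde{L_V}$ and then taking the coset $R_{V_{M_f}}(\Vt)$ returns a VH pair isomorphic to $W$, because Theorem~\ref{coset_equivalence} identifies $\Vt$ with $V_{M_f}\otimes R_{V_{M_f}}(\Vt)$ and $f$ identifies it with $V_\tw\otimes W$; matching the holomorphic tensor factors (again via $C_{V_\tw}(V_\tw)=\C\1$ and Theorem~\ref{coset_equivalence}) gives $R_{V_{M_f}}(\Vt)\cong W$ as VH pairs. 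Thus the construction $M\mapsto W_M$ descends to a well-defined map on $G_{\Vt}\backslash S_\tw(\widetilde{L_V})$, and it is a bijection onto isomorphism classes in $\gen(V)$.

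The point that needs the orbit quotient, and which I expect to be the main obstacle, is showing that $M$ and $M'$ in $S_\tw(\widetilde{L_V})$ yield isomorphic VH pairs $W_M\cong W_{M'}$ if and only if $M' = g\cdot M$ for some $g\in G_{\Vt}$. The ``if'' direction uses that an element of $G_{\Vt}$ lifts, by definition of $G_{\Vt}$ as the image of $\Aut(\Vt,H_{\Vt})\to\Aut L_{\Vt}$, to a VH-pair automorphism of $\Vt$ carrying $V_M$ onto $V_{M'}$ and hence $W_M$ onto $W_{M'}$. For ``only if'', an isomorphism $W_M\cong W_{M'}$ tensored with $\mathrm{id}_{V_\tw}$ gives a VH-pair automorphism of $\Vt$ (after identifying $\Vt\cong W_M\otimes V_\tw\cong W_{M'}\otimes V_\tw$), whose induced action on $L_{\Vt}=\widetilde{L_V}$ lies in $G_{\Vt}$ and sends $M$ to $M'$; the subtlety is that a priori the isomorphism $W_M\cong W_{M'}$ need not respect the chosen splittings, so I would argue that any VH-pair automorphism of $\Vt$ restricts, via $\Omega$ and Theorem~\ref{maximal_lattice}, to an automorphism of the maximal lattice, and that two unimodular $\tw$-sublattices giving isomorphic cosets differ by such an automorphism. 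Once this orbit-to-isomorphism-class dictionary is established, combined with the already-noted fact that $G_{\Vt}\backslash S_\tw(\widetilde{L_V})$ is nonempty and exhausts $\gen(V)$ via the coset construction, the proposition follows.
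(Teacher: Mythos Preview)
Your proposal is correct and follows essentially the same approach as the paper: build the map $M\mapsto W_M:=C_{\Vt}(V_M)$ via Lemma~\ref{lattice_correspondence}, show it lands in $\gen(V)$, and check it descends to a bijection on $G_{\Vt}$-orbits by tensoring a given VH-pair isomorphism $W_M\cong W_{M'}$ with a chosen isomorphism $V_M\cong V_{M'}$ to obtain an element of $\Aut(\Vt,H_{\Vt})$ carrying $M$ to $M'$. The ``subtlety'' you flag is not a real obstacle: the composite you construct is already a VH-pair automorphism of $\Vt$, so by definition its image in $\Aut\widetilde{L_V}$ lies in $G_{\Vt}$, and it sends $M$ to $M'$ simply because it carries the tensor factor $V_M$ onto $V_{M'}$; no further appeal to $\Omega$ or Theorem~\ref{maximal_lattice} is needed.
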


\begin{proof}
Let $S,S' \in S_\tw(\widetilde{L_V})$.
By Lemma \ref{lattice_correspondence}, there exist the corresponding lattice vertex subalgebras
$V_S \subset \Vt$ and $V_{S'} \subset \Vt$. Set $C_S=C_{\Vt}(V_S)$ and $C_{S'}=C_{\Vt}(V_{S'})$.
Then, $\Vt \cong V_S \otimes C_S$ as a VH pair.
Hence, $C_S \in \gen(V)$.

Suppose that there exists $f \in G_{\Vt}$ such that $f(S)=S'$.
Then, $f(V_S) = V_{S'}$ in $\Vt$. Thus, $f$ induces an isomorphism $C_S \rightarrow C_{S'}$.
Thus, we have a map $$G_{\Vt} \backslash S_\tw(\widetilde{L_V}) \rightarrow \{\text{the VH pair isomorphism classes in }\gen(V)\}, \; S \mapsto C_S.$$
Let $g: C_S \rightarrow C_{S'}$ be an isomorphism of VH pairs.
Take an isomorphism of VH pairs $i_0 :C_S \rightarrow C_{S'}$.
Then, $i_0 \otimes g$ is an isomorphism $V_S \otimes C_S \rightarrow V_{S'}\otimes C_{S'}$ as VH pairs.
Since both $V_S \otimes C_S$ and $V_{S'}\otimes C_{S'}$ are canonically isomorphic to $\tilde{V}$ as VH pairs,
the composite of them is an element of $G_{\Vt}$ that takes $S$ to $S'$.
%
Hence, the map is injective. The surjectivity of it follows from Lemma \ref{lattice_correspondence}.
\end{proof}

In general, $G_{\Vt}$ is a proper subgroup of $\Aut \Lt$.
Hence, it is possible that there are non-isomorphic vertex algebras in a genus whose maximal lattices are the same.
Let $S$ be a sublattice of $\Lt$.
Set $$S^\perp =\{\al \in \Lt \;|\; (\al,\be)=0 \text{ for any } \be \in S \},$$ which is a sublattice of $\Lt$,
and
set $$\mathrm{St}_{\Aut \Lt}(S)=\{f\in \Aut \Lt\;|\; f(S)=S \}.$$

\begin{lem}
\label{orbit_group}
Let $V$ be a good VH pair and $L_V$ be the maximal lattice.
Let $L_0 \in \gen(L_V)$ and choose $S_0 \in S_\tw(\widetilde{L_V})$
 with $S_0^\perp \cong L_0$. Then, there is a one-to-one correspondence between
the isomorphism classes of VH pairs in $\gen(V,H_V)$ whose maximal lattice is isomorphic to $L_0$ 
and the double coset $G_{\Vt} \backslash \Aut (\widetilde{L_V}) / \mathrm{St}_{\Aut \widetilde{L_V}}(S_0)$.
Furthermore, $\Aut \widetilde{L_V}=G_{\Vt}$ if and only if $|G_{\Vt} \backslash \Aut (\widetilde{L_V}) / \mathrm{St}_{\Aut \widetilde{L_V}}(S_0)  |=1$ and $\mathrm{St}_{\Aut \widetilde{L_V}}(S_0) \subset G_{\Vt}$.
In this case, there exists a one-to-one correspondence between $\gen(L_V)$
and $\gen(V,H_V)$. In particular, a VH pair in $\gen(V,H_V)$ is uniquely determined by its maximal lattice.
\end{lem}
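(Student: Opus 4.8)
The plan is to refine the bijection of Proposition~\ref{coset_correspondence} by tracking the maximal lattice of the coset attached to each $S\in S_\tw(\widetilde{L_V})$, and then to invoke the standard orbit/double-coset dictionary once transitivity of a suitable group action is established.

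First I would compute the maximal lattice of the coset. For $S\in S_\tw(\widetilde{L_V})$, write $C_S=C_{\Vt}(V_S)$; since $S\cong\tw$ is unimodular, Lemma~\ref{coset_lattice} gives $\Vt\cong V_S\otimes C_S$ as VH pairs, and Corollary~\ref{tensor_max} then gives that the maximal lattice of $V_S\otimes C_S$ is $S\oplus L_{C_S}$, an orthogonal direct sum inside $\widetilde{L_V}$. Since $L_{C_S}\subseteq S^\perp$ and $\widetilde{L_V}=S\oplus L_{C_S}$ with $S$ non-degenerate, we get $L_{C_S}=S^\perp$. Hence, under the correspondence $S\mapsto C_S$ of Proposition~\ref{coset_correspondence}, the isomorphism classes of VH pairs in $\gen(V)$ with maximal lattice isomorphic to $L_0$ correspond exactly to the $G_{\Vt}$-orbits of the set $\mathcal S_{L_0}=\{S\in S_\tw(\widetilde{L_V})\,|\,S^\perp\cong L_0\}$; the condition $S^\perp\cong L_0$ is $\Aut \widetilde{L_V}$-invariant, so this is well-defined on orbits. (Observe that $S^\perp\oplus\tw\cong S^\perp\oplus S=\widetilde{L_V}=L_V\oplus\tw$, so $S^\perp\in\gen(L_V)$ by Lemma~\ref{def_genus}; this is why only $L_0\in\gen(L_V)$ occur, and conversely for $L_0\in\gen(L_V)$ one has $L_0\oplus\tw\cong\widetilde{L_V}$, so $\mathcal S_{L_0}\neq\emptyset$ and a choice of $S_0$ as in the statement exists.)

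Next I would show that $\Aut \widetilde{L_V}$ acts transitively on $\mathcal S_{L_0}$: given $S,S'\in\mathcal S_{L_0}$, one has $\widetilde{L_V}=S\oplus S^\perp=S'\oplus (S')^\perp$ by unimodularity of $S,S'$, and choosing an isometry $S\to S'$ (both $\cong\tw$) together with an isometry $S^\perp\to (S')^\perp$ (both $\cong L_0$) and taking their direct sum yields an element of $\Aut \widetilde{L_V}$ sending $S$ to $S'$. For the fixed $S_0$ this gives $\mathcal S_{L_0}\cong\Aut \widetilde{L_V}/\mathrm{St}_{\Aut \widetilde{L_V}}(S_0)$ as $\Aut \widetilde{L_V}$-sets, whence the set of $G_{\Vt}$-orbits on $\mathcal S_{L_0}$ is the double coset set $G_{\Vt}\backslash\Aut \widetilde{L_V}/\mathrm{St}_{\Aut \widetilde{L_V}}(S_0)$. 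Combined with the previous paragraph, this is the first assertion.

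For the remaining assertions: if $\Aut \widetilde{L_V}=G_{\Vt}$, the double coset set has one element and $\mathrm{St}_{\Aut \widetilde{L_V}}(S_0)\subset\Aut \widetilde{L_V}=G_{\Vt}$; conversely $|G_{\Vt}\backslash\Aut \widetilde{L_V}/\mathrm{St}_{\Aut \widetilde{L_V}}(S_0)|=1$ means $\Aut \widetilde{L_V}=G_{\Vt}\cdot\mathrm{St}_{\Aut \widetilde{L_V}}(S_0)$, which together with $\mathrm{St}_{\Aut \widetilde{L_V}}(S_0)\subset G_{\Vt}$ gives $\Aut \widetilde{L_V}=G_{\Vt}$. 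In this case the first assertion shows that each $L_0\in\gen(L_V)$ is the maximal lattice of exactly one isomorphism class of VH pairs in $\gen(V)$; since every $W\in\gen(V)$ has $L_W\in\gen(L_V)$ by Corollary~\ref{tensor_max} and Lemma~\ref{def_genus}, the assignment $W\mapsto L_W$ is a bijection $\gen(V,H_V)\to\gen(L_V)$, and in particular a VH pair in $\gen(V,H_V)$ is determined up to isomorphism by its maximal lattice. I do not expect a genuinely hard step here; the one point requiring care is the identification $L_{C_S}=S^\perp$, i.e.\ threading the VH-pair and Heisenberg-subspace data correctly through Lemma~\ref{coset_lattice} and Corollary~\ref{tensor_max}, and the verification that ``$S^\perp\cong L_0$'' is precisely the fibre condition over $L_0$; everything else is routine double-coset bookkeeping.
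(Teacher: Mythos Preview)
Your proposal is correct and follows essentially the same approach as the paper's proof: restrict the bijection of Proposition~\ref{coset_correspondence} to the set $X=\{S\in S_\tw(\widetilde{L_V})\mid S^\perp\cong L_0\}$, use transitivity of $\Aut\widetilde{L_V}$ on $X$ to identify $G_{\Vt}\backslash X$ with the double coset, and deduce the ``furthermore'' from $G_{\Vt}\cdot\mathrm{St}_{\Aut\widetilde{L_V}}(S_0)=\Aut\widetilde{L_V}$. Your write-up is in fact more complete than the paper's, which asserts transitivity and the identification $L_{C_S}=S^\perp$ without justification and only sketches one direction of the equivalence; your explicit use of Lemma~\ref{coset_lattice} and Corollary~\ref{tensor_max} to pin down $L_{C_S}=S^\perp$ is exactly the right way to thread that point.
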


\begin{proof}
Set $X = \{S \in S_\tw(\widetilde{L_V}) \;|\; L_0 \cong S^\perp \}$.
Since $\Aut \widetilde{L_V}$ acts on $X$ transitively, $X \cong \Aut \widetilde{L_V} / \mathrm{St}_{\widetilde{L_V}}(S_0)$ as a $\Aut \widetilde{L_V}$-set.
Then, the number of the isomorphism class of VH pairs in $\gen(V)$ whose maximal lattice is isomorphic to $L_0$ is equal to 
$$ |G_{\Vt} \backslash X  |= |G_{\Vt} \backslash \Aut (\widetilde{L_V}) / \mathrm{St}_{\Aut \widetilde{L_V}}(S_0)  |.$$
If $|G_{\Vt} \backslash X  |=1$, then $G_{\Vt}\mathrm{St}_{\Aut \widetilde{L_V}}(S_0) = \Aut (\widetilde{L_V})$.
Thus, the assertion follows.
\end{proof}

We need the following lemmas:
\begin{lem}\label{group_stab_lattice}
Let $L_0 \in \gen(L_V)$ and $S_0 \in S_\tw(\widetilde{L_V})$
 with $S_0^\perp \cong L_0$.
Then, $\mathrm{St}_{\Aut \widetilde{L_V}}(S_0)$ is isomorphic
to $\Aut L_0 \times \Aut \tw$.
\end{lem}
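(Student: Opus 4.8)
The plan is to use the unimodularity of $\tw$ to realize $\widetilde{L_V}$ as an internal orthogonal direct sum $S_0 \perp S_0^\perp$, and then to observe that stabilizing $S_0$ inside $\Aut \widetilde{L_V}$ amounts exactly to acting independently on the two orthogonal summands $S_0$ and $S_0^\perp$.

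First I would record the elementary fact that a non-degenerate unimodular sublattice $S$ of an integral lattice $\Lambda$ is automatically an orthogonal direct summand: given $x \in \Lambda$, the linear functional $S \to \Z$, $s \mapsto (x,s)$, is represented — by unimodularity $S = S^\vee$ — by a unique $s_x \in S$, so that $x - s_x \in S^\perp$; this yields $\Lambda = S + S^\perp$, and $S \cap S^\perp = 0$ because $S$ is non-degenerate. Since $\tw$ is non-degenerate of signature $(1,1)$ and unimodular, the same holds for $S_0 \cong \tw$, hence $\widetilde{L_V} = S_0 \perp S_0^\perp$ as lattices, with $S_0 \cong \tw$ and $S_0^\perp \cong L_0$ by hypothesis.

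Next, for $f \in \Aut \widetilde{L_V}$ with $f(S_0) = S_0$, the fact that $f$ is an isometry gives $f(S_0^\perp) = f(S_0)^\perp = S_0^\perp$, so $f$ preserves both summands; since $\widetilde{L_V} = S_0 \oplus S_0^\perp$ as abelian groups, $f = f|_{S_0} \oplus f|_{S_0^\perp}$ with $f|_{S_0} \in \Aut S_0$ and $f|_{S_0^\perp} \in \Aut S_0^\perp$. Conversely, for any $g \in \Aut S_0$ and $h \in \Aut S_0^\perp$, the direct sum $g \oplus h$ acts on $S_0 \oplus S_0^\perp = \widetilde{L_V}$ and, the sum being orthogonal, is an isometry of $\widetilde{L_V}$ that stabilizes $S_0$. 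Therefore restriction $f \mapsto (f|_{S_0}, f|_{S_0^\perp})$ is a group isomorphism $\mathrm{St}_{\Aut \widetilde{L_V}}(S_0) \xrightarrow{\ \sim\ } \Aut S_0 \times \Aut S_0^\perp$, with inverse $(g,h) \mapsto g \oplus h$. Finally, the chosen isometries $S_0 \cong \tw$ and $S_0^\perp \cong L_0$ induce isomorphisms $\Aut S_0 \cong \Aut \tw$ and $\Aut S_0^\perp \cong \Aut L_0$, giving $\mathrm{St}_{\Aut \widetilde{L_V}}(S_0) \cong \Aut L_0 \times \Aut \tw$.

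The only mildly delicate point is the orthogonal splitting in the first step; everything afterwards is formal. But that splitting is just the standard ``a non-degenerate unimodular sublattice is a direct summand'' argument and uses nothing beyond $S_0 = S_0^\vee$ and non-degeneracy, so I expect no real obstacle here.
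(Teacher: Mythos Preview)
Your proof is correct and follows the same approach as the paper's: define the restriction map $\mathrm{St}_{\Aut \widetilde{L_V}}(S_0) \to \Aut S_0 \times \Aut S_0^\perp \cong \Aut \tw \times \Aut L_0$ and check it is an isomorphism. The paper's proof simply asserts that this map ``is clearly an isomorphism,'' whereas you spell out the reason --- the orthogonal splitting $\widetilde{L_V} = S_0 \perp S_0^\perp$ coming from the unimodularity of $S_0 \cong \tw$ --- which is exactly what is needed to see both injectivity and surjectivity.
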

\begin{proof}
Let $f \in \mathrm{St}_{\Aut \widetilde{L_V}}(S_0)$.
Since $f(S_0)=S_0$, we have $f(S_0^\perp)\subset S_0^{\perp}$. Hence, $f$ induces a lattice automorphism of
$S_0^\perp \cong L_0$.
Thus, we have a group homomorphism $\mathrm{St}_{\Aut \widetilde{L_V}}(S_0) \rightarrow \Aut L_0 \times \Aut \tw$,
which is clearly an isomorphism.
\end{proof}
\begin{lem}\label{group_stab}
Let $S \in S_\tw(\widetilde{L_V})$ and
$C_S \in \gen(V)$ be the corresponding VH pair constructed in Proposition \ref{coset_correspondence}.
Then, $\mathrm{St}_{\Aut \widetilde{L_V}}(S) \cap G_{\Vt}$
is isomorphic to $G_{C_S} \times \Aut \tw$.
\end{lem}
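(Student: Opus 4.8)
The plan is to realize both groups as subgroups of $\Aut \widetilde{L_V}$ and match them along the orthogonal splitting of $S$ and $S^\perp$. From the construction in Proposition \ref{coset_correspondence} (apply Lemma \ref{coset_lattice} to the unimodular sublattice $S\cong\tw$ of $\widetilde{L_V}=L_{\Vt}$), we have $\Vt\cong V_S\otimes C_S$ as VH pairs, with $H_{\Vt}=H'\oplus H''$, where $H'=\C\otimes_\Z S$ is the Heisenberg subspace of the lattice vertex algebra $V_S\cong V_\tw$ and $H''=(H')^\perp\cap H_{\Vt}$ that of $C_S$. Since $S$ is unimodular, $\widetilde{L_V}=S\oplus S^\perp$ as an orthogonal direct sum, and by Corollary \ref{tensor_max} the maximal lattice of $C_S$ is $L_{C_S}=S^\perp$. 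Exactly as in Lemma \ref{group_stab_lattice}, restriction to the two summands gives an isomorphism $\mathrm{St}_{\Aut \widetilde{L_V}}(S)\cong\Aut S\times\Aut S^\perp=\Aut\tw\times\Aut L_{C_S}$; write $g=g_1\oplus g_2$ for an element of this stabilizer. It therefore suffices to prove that $g_1\oplus g_2\in G_{\Vt}$ if and only if $g_2\in G_{C_S}$ (with no condition on $g_1$), for then $g_1\oplus g_2\mapsto(g_2,g_1)$ is the claimed isomorphism $\mathrm{St}_{\Aut \widetilde{L_V}}(S)\cap G_{\Vt}\cong G_{C_S}\times\Aut\tw$.

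For the implication $g_2\in G_{C_S}\Rightarrow g_1\oplus g_2\in G_{\Vt}$, I would pick an automorphism $\tilde f_2$ of the vertex algebra $C_S$ with $\tilde f_2(H'')=H''$ whose image in $\Aut L_{C_S}$ is $g_2$ (this exists by definition of $G_{C_S}$), together with an automorphism $\tilde f_1$ of $V_S$ with $\tilde f_1(H')=H'$ inducing the arbitrary $g_1\in\Aut S$ on the maximal lattice. The existence of $\tilde f_1$ is the one genuinely new point: the automorphisms of a lattice vertex algebra preserving its Heisenberg subspace surject onto the automorphisms of the underlying lattice. I would prove this by noting that an isometry $\sigma$ of an even $H$-lattice $(L,H)$ fixes the commutator map $(\al,\be)\mapsto(-1)^{(\al,\be)}$, so by Proposition \ref{twist_free} and Lemma \ref{vanish} the $\sigma$-pullback of the cocycle defining $A_{L,H}$ is cohomologous to the cocycle itself; this produces an AH-pair automorphism of $(A_{L,H},H)$ over $\sigma$, and the bijection $\phi$ of Lemma \ref{omega_bijection}, applied to a single object, turns it into a VH-pair automorphism of $V_{L,H}=\mathcal{V}(A_{L,H},H)$ which by Lemma \ref{AH_ker} induces $\sigma$ on the maximal lattice. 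Granting this, $\tilde f_1\otimes\tilde f_2$ is an automorphism of $\Vt=V_S\otimes C_S$ fixing $H_{\Vt}=H'\oplus H''$ whose induced action on $L_{\Vt}=S\oplus S^\perp$ is $g_1\oplus g_2$, so $g_1\oplus g_2\in G_{\Vt}$.

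For the converse, I would start with $g_1\oplus g_2\in\mathrm{St}_{\Aut \widetilde{L_V}}(S)\cap G_{\Vt}$ and a lift $\tilde f\in\Aut\Vt$ with $\tilde f(H_{\Vt})=H_{\Vt}$, and show that $\tilde f$ respects the factorization, i.e. $\tilde f(V_S)=V_S$ and $\tilde f(C_S)=C_S$. The cleanest route is Lemma \ref{lattice_correspondence}: the correspondence between sublattices of $L_{\Vt}$ isomorphic to $\tw$ and factorizations $\Vt\cong V_\tw\otimes V'$ is natural, hence equivariant for the group $\{\tilde f\in\Aut\Vt:\tilde f(H_{\Vt})=H_{\Vt}\}$; since $\tilde f$ acts on $L_{\Vt}$ as $g$ and $g(S)=S$, it fixes the factorization attached to $S$, which is $(V_S,C_S)$. (Alternatively, by Lemma \ref{omega_auto} the automorphism $\tilde f$ permutes the spaces $\Omega_{\Vt}^{\al}$ according to $g$, hence preserves $\bigoplus_{\al\in S}\Omega_{\Vt}^{\al}$; by the generation arguments in Lemmas \ref{str_w}--\ref{sub_Cartan} this subspace together with $H_{\Vt}$ generates the $H$-lattice subalgebra $M_{H''}(0)\otimes V_S$, which $\tilde f$ then preserves, and one separates $V_S$ off using $\tilde f(H')=H'$ and $\tilde f(H'')=H''$.) Once $\tilde f(V_S)=V_S$ and $\tilde f(C_S)=C_S$ are known, the standard fact that an automorphism of a tensor product of vertex algebras preserving both factors is the tensor product of its restrictions (because $a\otimes b=(a\otimes\1)(-1)(\1\otimes b)$) gives $\tilde f=\tilde f_1\otimes\tilde f_2$ with $\tilde f_1$ an automorphism of $V_S$ fixing $H'$ and $\tilde f_2$ an automorphism of $C_S$ fixing $H''$ (the invariance of $H'=\C\otimes_\Z S$ and of $H''=(H')^\perp$ being immediate from $\tilde f(H_{\Vt})=H_{\Vt}$ and $g(S)=S$); then $g_1$ is induced by $\tilde f_1$ and $g_2$ by $\tilde f_2$, so $g_2\in G_{C_S}$. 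This proves the equivalence, and hence the lemma.

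The step I expect to be the main obstacle is ``$\tilde f$ respects the factorization $\Vt=V_S\otimes C_S$'': one must upgrade the purely lattice-theoretic identity $g(S)=S$ to a statement about vertex subalgebras, which is exactly where the maximal-lattice machinery of Section \ref{sec_VH} and the coset dictionary of Section \ref{sec_coset} (especially Lemma \ref{lattice_correspondence}) do the work. The remaining ingredients---the orthogonal splitting for unimodular $S$, Corollary \ref{tensor_max}, and the surjectivity of the Heisenberg-preserving automorphisms of $V_\tw$ onto $\Aut\tw$---are routine consequences of results already established.
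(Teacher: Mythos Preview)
Your proof is correct and follows essentially the same route as the paper's: lift an element of the stabilizer to a VH automorphism of $\Vt$, show it preserves the factorization $\Vt\cong V_S\otimes C_S$ (the paper simply asserts $f'(V_S)=V_S$ and $f'(C_S)=C_{\Vt}(V_S)=C_S$, while you justify this via the equivariance of Lemma \ref{lattice_correspondence}), and then read off the induced automorphisms on the two factors. The paper then declares the resulting map $F_S:\mathrm{St}_{\Aut\widetilde{L_V}}(S)\cap G_{\Vt}\to G_{C_S}\times\Aut\tw$ to be ``clearly an isomorphism'', implicitly using $G_{V_S}\cong\Aut\tw$; you make both this surjectivity and the inverse construction $\tilde f_1\otimes\tilde f_2$ explicit, which is a genuine improvement in rigor but not a different idea.
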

\begin{proof}
Let $V_S$ be the lattice vertex algebra constructed in Lemma \ref{lattice_correspondence} associated with $S$
and $f \in \mathrm{St}_{\Aut \widetilde{L_V}}(S) \cap G_{\Vt}$.
Then, by the definition of $G_{\Vt}$,
 there exists $f' \in \Aut \Vt$ such that 
$f'(H_{\Vt})=H_{\Vt}$ and 
the induced automorphism of the maximal lattice $\widetilde{L_V}$ from $f'$ is equal to $f$.
Then, $f'(V_S) = V_S$ and $f'(C_S)=f'(C_{\Vt}(V_S))=C_{\Vt}(V_S)=C_S$. Furthermore, $f'$ induces VH pair automorphisms on $V_S$ and $C_S$.
Thus, $f'$ induces automorphisms on the maximal lattices $S$
and $S^\perp$, and the image in the lattice automorphism groups are in $G_{C_S}$ and $G_{V_S}\cong \Aut \tw$, respectively.
Hence, we have a group homomorphism 
$F_S:\mathrm{St}_{\Aut \widetilde{L_V}}(S) \cap G_{\Vt}
\rightarrow G_{C_S} \times \Aut \tw$,
which is clearly an isomorphism.
\end{proof}

Suppose that $L_V$ is positive-definite and $[\Aut (\widetilde{L_V}):G_{\Vt}]$ is finite.
Then, $G_V$ and $\Aut L_V$ are finite groups and $|G_{\Vt} \backslash \Aut (\widetilde{L_V}) / \mathrm{St}_{\Aut \widetilde{L_V}}(S_0)  |$ is finite for any $L_0 \in \gen(L_V)$. 
We define the mass of $\gen(V,H_V)$ to be
$$\mass(V,H_V) = \sum_{W \in \gen(V)} \frac{1}{|G_{W}|}.$$

By the following proposition, we can calculate the number of
isomorphism classes of VH pairs in a genus with a fixed maximal lattice:
\begin{prop}
\label{mass_fix}
Suppose that $L_V$ is positive-definite and $[\Aut (\widetilde{L_V}):G_{\Vt}]$ is finite. Then, for $L_0 \in \gen(L_V)$, the following equation holds:
$$\frac{[\Aut \widetilde{L_V} : G_{\Vt}]}{|\Aut L_0|} =\sum_{W \in \gen(V),\; L_W \cong L_0 } \frac{1}{|G_W|}.$$
\end{prop}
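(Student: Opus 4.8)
The plan is to count the isomorphism classes of VH pairs in $\gen(V)$ whose maximal lattice is isomorphic to $L_0$ by combining the orbit description in Proposition \ref{coset_correspondence} with the double coset description in Lemma \ref{orbit_group}, and then to pass from counting orbits to counting with weights $1/|G_W|$ using the orbit-stabilizer theorem together with the stabilizer computations in Lemma \ref{group_stab_lattice} and Lemma \ref{group_stab}. Concretely, fix $S_0 \in S_\tw(\widetilde{L_V})$ with $S_0^\perp \cong L_0$ and set $X = \{S \in S_\tw(\widetilde{L_V}) \mid S^\perp \cong L_0\}$. Since $\Aut \widetilde{L_V}$ acts transitively on $X$, we have a bijection $X \cong \Aut \widetilde{L_V}/\mathrm{St}_{\Aut \widetilde{L_V}}(S_0)$, and by Proposition \ref{coset_correspondence} the map $S \mapsto C_S$ induces a bijection between $G_{\Vt}\backslash X$ and the set of isomorphism classes of VH pairs $W \in \gen(V)$ with $L_W \cong L_0$.

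The key computation is then to rewrite the sum $\sum_{W \in \gen(V),\, L_W \cong L_0} 1/|G_W|$ as a sum over $G_{\Vt}$-orbits on $X$. For a VH pair $W = C_S$ appearing in this list, pick a representative $S \in X$. By Lemma \ref{group_stab}, the stabilizer $\mathrm{St}_{\Aut \widetilde{L_V}}(S) \cap G_{\Vt}$ — which is precisely the stabilizer of $S$ for the $G_{\Vt}$-action on $X$ — is isomorphic to $G_{C_S} \times \Aut \tw = G_W \times \Aut \tw$. By the orbit-stabilizer theorem, the size of the $G_{\Vt}$-orbit of $S$ is $|G_{\Vt}| / (|G_W|\,|\Aut \tw|)$. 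Summing over a set of orbit representatives and using $|X| = \sum_{\text{orbits}} |\text{orbit}|$, we get
\begin{align*}
|X| = \sum_{W \in \gen(V),\, L_W \cong L_0} \frac{|G_{\Vt}|}{|G_W|\,|\Aut \tw|},
\end{align*}
so that $\sum_{W} 1/|G_W| = |X|\,|\Aut \tw| / |G_{\Vt}|$.

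It remains to evaluate $|X|$. Again by orbit-stabilizer, now for the transitive $\Aut \widetilde{L_V}$-action on $X$ with stabilizer $\mathrm{St}_{\Aut \widetilde{L_V}}(S_0)$, we have $|X| = |\Aut \widetilde{L_V}| / |\mathrm{St}_{\Aut \widetilde{L_V}}(S_0)|$, and by Lemma \ref{group_stab_lattice} the latter stabilizer is isomorphic to $\Aut L_0 \times \Aut \tw$, whence $|X| = |\Aut \widetilde{L_V}| / (|\Aut L_0|\,|\Aut \tw|)$. Substituting,
\begin{align*}
\sum_{W \in \gen(V),\, L_W \cong L_0} \frac{1}{|G_W|} = \frac{|\Aut \widetilde{L_V}|\,|\Aut \tw|}{|\Aut L_0|\,|\Aut \tw|\,|G_{\Vt}|} = \frac{[\Aut \widetilde{L_V} : G_{\Vt}]}{|\Aut L_0|},
\end{align*}
which is the desired identity. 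Here the hypotheses that $L_V$ is positive-definite and $[\Aut \widetilde{L_V} : G_{\Vt}] < \infty$ guarantee that all the groups involved are finite (as noted just before Proposition \ref{mass_fix}), so every cardinality above is a well-defined positive integer and the sum on the right is finite. The main obstacle is bookkeeping: one must make sure that the two $\Aut \tw$ factors — the one coming from the stabilizer $\mathrm{St}_{\Aut \widetilde{L_V}}(S_0) \cong \Aut L_0 \times \Aut \tw$ in the computation of $|X|$, and the one coming from $\mathrm{St}_{\Aut \widetilde{L_V}}(S) \cap G_{\Vt} \cong G_W \times \Aut \tw$ in the weighted orbit count — are handled consistently so that they cancel, rather than introducing a spurious factor of $|\Aut \tw|$; the cleanest way to see this is the two displayed applications of orbit-stabilizer above, where the $\Aut\tw$ in $|X|$ and the $\Aut\tw$ in the orbit sizes enter with opposite effect.
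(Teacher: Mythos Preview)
Your argument has a genuine gap: you treat $|G_{\Vt}|$, $|\Aut\widetilde{L_V}|$, and $|X|$ as finite cardinalities, but they are not. The lattice $\widetilde{L_V}=L_V\oplus\tw$ is indefinite of signature $(\mathrm{rk}\,L_V+1,1)$, so for $\mathrm{rk}\,L_V\geq 1$ the group $\Aut\widetilde{L_V}$ is infinite, hence so is $G_{\Vt}$ (a finite-index subgroup) and so is $X\cong \Aut\widetilde{L_V}/\mathrm{St}_{\Aut\widetilde{L_V}}(S_0)$. You appeal to the sentence just before Proposition~\ref{mass_fix} as saying ``all the groups involved are finite'', but that sentence asserts only that $G_V$ and $\Aut L_V$ (for the positive-definite $L_V$) are finite and that the double coset set is finite; it says nothing about $G_{\Vt}$ or $\Aut\widetilde{L_V}$. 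Consequently the displayed identities $|X|=\sum|\text{orbit}|$ and $|\text{orbit}|=|G_{\Vt}|/(|G_W|\,|\Aut\tw|)$ are meaningless as written.

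The paper avoids this by never writing $|G_{\Vt}|$ or $|\Aut\widetilde{L_V}|$ separately. It uses the double coset index formula
\[
[\Aut\widetilde{L_V}:G_{\Vt}]=\sum_{g\in G_{\Vt}\backslash \Aut\widetilde{L_V}/\mathrm{St}_{\Aut\widetilde{L_V}}(S_0)}[\mathrm{St}_{\Aut\widetilde{L_V}}(S_0):\mathrm{St}_{\Aut\widetilde{L_V}}(S_0)\cap g^{-1}G_{\Vt}g],
\]
which is valid for any finite-index subgroup $G_{\Vt}\leq\Aut\widetilde{L_V}$ and any subgroup $\mathrm{St}_{\Aut\widetilde{L_V}}(S_0)$, with each summand automatically finite. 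Each index on the right is then identified, via Lemmas~\ref{group_stab_lattice} and~\ref{group_stab}, with $|\Aut L_0|/|G_{C_{gS_0}}|$, and dividing through by $|\Aut L_0|$ gives the result. Your computation can be salvaged along exactly these lines: replace the two separate orbit-stabilizer counts by this single index decomposition, so that only finite indices and the finite groups $\Aut L_0$, $G_W$, $\Aut\tw$ ever appear.
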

\begin{proof}
Take $S_0 \in S_\tw(\widetilde{L_V})$ with $S_0^\perp \cong L_0$. Then, by the elementary group theory,
\begin{align*}
[\Aut (\widetilde{L_V}):G_{\Vt}]
&=\sum_{g \in G_{\Vt} \backslash \Aut (\widetilde{L_V}) / \mathrm{St}_{\Aut \widetilde{L_V}}(S_0)}
[\mathrm{St}_{\Aut \widetilde{L_V}}(S_0):\mathrm{St}_{\Aut \widetilde{L_V}}(S_0) \cap g^{-1}G_{\Vt}g].
\end{align*}
Let $g \in \Aut (\widetilde{L_V})$
and $C_{gS_0}$ be the VH pair corresponding to $gS_0 \in  S_\tw(\widetilde{L_V})$.
Then by Lemma \ref{group_stab_lattice} and Lemma \ref{group_stab},
\begin{align*}
[\mathrm{St}_{\Aut \widetilde{L_V}}(S_0):\mathrm{St}_{\Aut \widetilde{L_V}}(S_0) \cap g^{-1}G_{\Vt}g]
&=[g \mathrm{St}_{\Aut \widetilde{L_V}}(S_0)g^{-1}:g\mathrm{St}_{\Aut \widetilde{L_V}}(S_0)g^{-1} \cap G_{\Vt}]\\
&=[\mathrm{St}_{\Aut \widetilde{L_V}}(gS_0):\mathrm{St}_{\Aut \widetilde{L_V}}(gS_0) \cap G_{\Vt}]\\
&=[\Aut (gS_0)^\perp \times \Aut \tw : G_{C_{gS_0}} \times \Aut \tw]\\
&=\frac{ |\Aut L_0| }{|G_{C_{gS_0}}|}.
\end{align*}
Hence, by Lemma \ref{orbit_group}, we have
\begin{align*}
\frac{[\Aut (\widetilde{L_V}):G_{\Vt}]}{|\Aut L_0|}
&=\sum_{g \in G_{\Vt} \backslash \Aut (\widetilde{L_V}) / \mathrm{St}_{\Aut \widetilde{L_V}}(S_0)}
\frac{ 1 }{|G_{C_{gS_0}}|}\\
&=\sum_{W \in \gen(V), L_W\cong L_0}
\frac{ 1 }{|G_{W}|}.
\end{align*}

\end{proof}

Summing up the equation in Proposition \ref{mass_fix} over $L_0 \in \gen(L_V)$, we have:
\begin{thm}\label{mass_vertex}
Let $(V,H_V)$ be a good VH pair.
If $L_V$ is positive-definite and $[\Aut (\widetilde{L_V}):G_{\Vt}]$ is finite,
then $$\mass(V,H_V)=\mass(L_V)[\Aut \widetilde{L_V} : G_{\Vt}].$$
\end{thm}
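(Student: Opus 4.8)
The plan is to obtain the formula by summing the identity of Proposition \ref{mass_fix} over the finitely many isomorphism classes in the genus $\gen(L_V)$. Since $L_V$ is positive-definite, $\gen(L_V)$ is a finite set and
\[
\mass(L_V)=\sum_{L_0\in\gen(L_V)}\frac{1}{|\Aut L_0|}
\]
is a finite sum; together with the hypothesis that $[\Aut\widetilde{L_V}:G_{\Vt}]$ is finite, this guarantees that all the quantities below are well defined.

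First I would observe that the genus $\gen(V)$ is partitioned according to the maximal lattice. By Corollary \ref{tensor_max} the maximal lattice of $\Vt=V\otimes V_\tw$ is $\widetilde{L_V}=L_V\oplus\tw$; hence if $W\in\gen(V)$, then $L_V\oplus\tw\cong L_W\oplus\tw$, and Lemma \ref{def_genus} gives $L_W\in\gen(L_V)$. Therefore
\[
\mass(V,H_V)=\sum_{W\in\gen(V)}\frac{1}{|G_W|}
=\sum_{L_0\in\gen(L_V)}\ \sum_{\substack{W\in\gen(V)\\ L_W\cong L_0}}\frac{1}{|G_W|},
\]
where the double sum is finite: $\gen(L_V)$ is finite, and by Lemma \ref{orbit_group} each inner sum has finitely many terms (it is indexed by a double coset of finite index).

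Next I would apply Proposition \ref{mass_fix} to each inner sum. For every $L_0\in\gen(L_V)$ one has
\[
\sum_{\substack{W\in\gen(V)\\ L_W\cong L_0}}\frac{1}{|G_W|}=\frac{[\Aut\widetilde{L_V}:G_{\Vt}]}{|\Aut L_0|}.
\]
Substituting this into the previous display and pulling out the index $[\Aut\widetilde{L_V}:G_{\Vt}]$, which does not depend on $L_0$, gives
\[
\mass(V,H_V)=\sum_{L_0\in\gen(L_V)}\frac{[\Aut\widetilde{L_V}:G_{\Vt}]}{|\Aut L_0|}
=[\Aut\widetilde{L_V}:G_{\Vt}]\sum_{L_0\in\gen(L_V)}\frac{1}{|\Aut L_0|}
=[\Aut\widetilde{L_V}:G_{\Vt}]\,\mass(L_V),
\]
as claimed.

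Since the substantive work --- converting the count of VH pairs with a prescribed maximal lattice into the double-coset index $[\Aut\widetilde{L_V}:G_{\Vt}]/|\Aut L_0|$ --- is already contained in Proposition \ref{mass_fix}, the remaining proof is essentially bookkeeping. The two points I would check carefully are that the partition of $\gen(V)$ by maximal lattice is exhaustive (which is exactly Corollary \ref{tensor_max} combined with Lemma \ref{def_genus}) and that every class $L_0\in\gen(L_V)$ indeed occurs as $S_0^\perp$ for some $S_0\in S_\tw(\widetilde{L_V})$ (which follows from the bijection between $\Aut\widetilde{L_V}\backslash S_\tw(\widetilde{L_V})$ and $\gen(L_V)$ recalled before Proposition \ref{coset_correspondence}). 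With these in hand the argument is a one-line rearrangement, so I do not expect any genuine obstacle beyond invoking the earlier results correctly.
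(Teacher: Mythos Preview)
Your argument is correct and is exactly the paper's approach: the theorem is obtained by summing the identity of Proposition \ref{mass_fix} over all $L_0\in\gen(L_V)$ and using the definition of $\mass(L_V)$. The additional bookkeeping you spell out (partitioning $\gen(V)$ by maximal lattice, finiteness of the sums) is implicit in the paper's one-line proof.
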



\section{VH pair with coformal vector}
In this section, VH pairs with a conformal vector are studied.
We first recall the definition of a conformal vertex algebra and a vertex operator 
algebra and its dual module.

A conformal vertex algebra is a vertex algebra $V$ with a vector 
$\omega \in V$ satisfying the following conditions:
\begin{enumerate}
\item
There exists a scalar $c \in \C$ such that 
$$
[L(m),L(n)]=(m-n)L(m+n)+\frac{m^3-m}{12}\delta_{m+n,0}\,c
$$
holds for any $n,m \in \Z$, where $L(n)=\omega(n+1)$;
\item
$L(-1)=T_V$;
\item
$L(0)$ is semisimple on $V$ and all its eigenvalues are integers.
\end{enumerate}
The scalar $c$ of a conformal vertex algebra is called the {\it central charge} of the conformal vertex algebra.
Let $(V,\omega)$ be a conformal vertex algebra and set $$V_n=\{v\in V\;|\; L(0)v=nv \}$$
for $n \in \Z$.
A conformal vertex algebra $(V,\omega)$ is called a {\it vertex operator algebra of CFT type} (hereafter called a {\it VOA}) if it satisfies the following conditions:
\begin{enumerate}
\item[VOA1)]
If $n \leq -1$, then $V_n=0$;
\item[VOA2)]
$V_0 = \C\1$;
\item[VOA3)]
$V_n$ is a finite dimensional vector space for any $n \geq 0$.
\end{enumerate}
In this paper, a conformal vertex algebra (resp. a VOA) of central charge $l$ is called a {\it $c=l$ conformal vertex algebra (resp. a $c=l$ VOA)}.

Let $(V,\omega)$ be a VOA.
Set $V^\vee=\bigoplus_{n \geq 0}V_n^\ast$, where $V_n^\ast$ is the dual vector space of $V_n$.
For $f \in V^\vee$ and $a \in V$, set $$Y_{V^\vee}(a,z)f(-) =  
f(Y(e^{L(1)z}(-z^{-2})^{L(0)}a,z^{-1})-).$$
The following theorem was proved in \cite[Theorem 5.2.1]{FHL}:

\begin{thm}
\label{def_dual}
$(V^\vee,Y_{V^\vee})$ is a $V$-module.
\end{thm}

The $V$-module $V^\vee$ in Theorem \ref{def_dual} is called a
dual module of $V$.
A VOA $(V,\omega)$ is said to be self-dual if $V$ is isomorphic to
$V^\vee$ as a $V$-module.
The following theorem was proved in \cite[Corollary 3.2]{Li}:
\begin{thm}
\label{Li}
Let $(V,\omega)$ be a simple VOA. Then, $(V,\omega)$ is self-dual if and only if $L(1)V_1=0$.
\end{thm}

If $(V,\omega)$ is self-dual, then 
there exists a non-degenerate bilinear form $(,):V \times V \rightarrow \C$
such that
$$(Y(a,z)b,c)=(b,Y(e^{L(1)z}(-z^{-2})^{L(0)}a,z^{-1})c) \label{invariance}$$ for any $a,b,c \in V$.
A bilinear form on $V$ with the above property is called an invariant bilinear form on
$(V,\omega)$.
We remark that $(V,\omega)$ is self-dual if and only if
it has a non-degenerate invariant bilinear form.
A subspace $W \subset V$ is called a subVOA of $(V,\omega)$ if $W$ is a vertex subalgebra and $\omega \in W$.

In this section, we will introduce some technical conditions for a 
conformal vertex algebra with a Heisenberg subspace $H$,
which is an analogue of a self-dual VOA (of CFT type).

Let $(V,\omega)$ be a conformal vertex algebra with a Heisenberg subspace $H \subset V$.
Assume that $L(n)h=0$ and $L(0)h=h$ and
$h(0)$ is semisimple on $V$ for any $h\in H$ and $n\geq 1$.
Then, $h(0)$ commutes with $L(0)$.
For $\al \in H$, set $$V_n^\al=\{v \in V_n\;|\; h(0)v=(h,\al)v \text{ for any }h \in H  \}$$ and $$M_{V,H} =\{\al \in H\;|\; V_n^\al \neq 0 \text{ for some }n \in \Z \}$$
and $$V_H^\vee=\bigoplus_{n \in \Z, \al \in M_{V,H}}(V_n^\al)^*, $$ which is a restricted dual space.
We would like to define a $V$-module structure on $V_H^\vee$ in analogy with Theorem \ref{def_dual}.
The difficulty in defining the dual module structure on a conformal vertex algebra lies in the fact
that it does not always satisfy the following condition:
\begin{itemize}
\item
$L(1)$ is a locally nilpotent operator.
\end{itemize}

We assume that, for any $\al \in M_{V,H}$, $V_n^\al=0$ for sufficiently small $n$.
Then, since $L(1)$ commutes with $h(0)$ for any $h \in H$,
$L(1)$ is locally nilpotent.
By using $V^\al (n) V^\be \subset V^{\al+\be}$,
we can define a $V$-module structure on $V_H^\vee$ similarly to Theorem \ref{def_dual}.
Hence, we have:
\begin{lem}
If a conformal vertex algebra $(V,\omega)$ and a Heisenberg subspace $H\subset V$ satisfy the above two assumptions,
then $V_H^\vee$ is a $V$-module by the formula for $Y_{V^\vee}$ given just before Theorem \ref{def_dual}.
\end{lem}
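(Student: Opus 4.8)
The plan is to reduce the statement to \cite[Theorem~5.2.1]{FHL}, verifying that its proof survives when the grading of a VOA by finite-dimensional weight spaces is replaced throughout by the decomposition $V=\bigoplus_{\al\in M_{V,H}}\bigoplus_{n\in\Z}V_n^\al$, with the two running assumptions supplying exactly what is needed. First I would record the consequences of the first assumption. Since $L(0)h=h$ and $L(n)h=0$ for $n\geq 1$, the commutator formula gives $[L(0),h(n)]=-n\,h(n)$ and $[L(1),h(n)]=-n\,h(n+1)$, so $L(0)$ and $L(1)$ commute with every $h(0)$ and hence preserve each homogeneous component $V^{\al}:=\bigoplus_n V_n^{\al}$ of the joint $h(0)$-eigenspace decomposition; since $H$ is finite dimensional with non-degenerate form, $V=\bigoplus_{\al\in H}V^{\al}$. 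The second assumption, that for each fixed $\al$ one has $V_n^{\al}=0$ for $n\ll 0$, then forces $L(1)^k a=0$ for $k\gg 0$ for every homogeneous $a$, i.e. $L(1)$ is locally nilpotent on $V$; and $(-z^{-2})^{L(0)}$ makes sense because $L(0)$ is semisimple with integer eigenvalues, acting on $V_n$ as multiplication by $(-1)^n z^{-2n}$.

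Next I would check that the formula defining $Y_{V^\vee}$ yields operators on $V_H^\vee$ and satisfies the remaining module axioms. For a homogeneous $a\in V_n^{\al}$, local nilpotence makes $e^{L(1)z}(-z^{-2})^{L(0)}a=\sum_{w}b_w\,z^{-n-w}$ a Laurent polynomial in $z$ with $b_w=\tfrac{(-1)^n}{(n-w)!}L(1)^{n-w}a\in V_w^{\al}$ and only finitely many $b_w\ne 0$; hence $Y\bigl(e^{L(1)z}(-z^{-2})^{L(0)}a,z^{-1}\bigr)=\sum_{w,m}b_w(m)\,z^{m+1-n-w}$ and $Y_{V^\vee}(a,z)f=\sum_{k}(a(k)f)\,z^{-k-1}$ with $(a(k)f)(v)=\sum_w f\bigl(b_w(n+w-k-2)v\bigr)$. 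Tracking $L(0)$- and $H$-weights and using $V^{\al}(m)V^{\be}\subset V^{\al+\be}$, one sees that for homogeneous $f\in(V_{l'}^{\ga})^*$ the functional $a(k)f$ vanishes off the single space $V_{l'+n-k-1}^{\ga-\al}$, so $a(k)f\in(V_{l'+n-k-1}^{\ga-\al})^*\subset V_H^\vee$; taking finite sums disposes of general $a$ and $f$. The truncation axiom is then immediate: as $k\to+\infty$ the index $l'+n-k-1$ tends to $-\infty$, so $V_{l'+n-k-1}^{\ga-\al}=0$ for $k\gg 0$ by the second assumption and therefore $a(k)f=0$; this is the one place where that assumption is used in an essential way beyond local nilpotence. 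The vacuum axiom follows from $L(0)\1=L(1)\1=0$ and $Y(\1,z^{-1})=\mathrm{id}$, which give $Y_{V^\vee}(\1,z)=\mathrm{id}$.

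It remains to verify the Borcherds identity for the $V$-module $V_H^\vee$, and here I would follow the proof of \cite[Theorem~5.2.1]{FHL} line by line, pairing each identity against a homogeneous $v\in V_l^{\be}$. That argument deduces the identity for the adjoint fields $Y\bigl(e^{L(1)z}(-z^{-2})^{L(0)}a,z^{-1}\bigr)$ purely from the vertex algebra axioms of $V$ together with the formal properties of the operator $A(z)=e^{L(1)z}(-z^{-2})^{L(0)}$ (its conjugation relations with $L(-1),L(0),L(1)$ and the identity realizing $Y(A(z)a,z^{-1})$ as a field on $V$), and none of those steps uses CFT type or finite dimensionality of the weight spaces. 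The only structural inputs are that $L(1)$ is locally nilpotent and that, after feeding a homogeneous vector and evaluating against some $f\in V_H^\vee$, every formal sum that occurs is finite; the latter holds because $V=\bigoplus_{\al,n}V_n^{\al}$ with $V_n^{\al}=0$ for $n\ll 0$ (for fixed $\al$) and because $V^{\al}(m)V^{\be}\subset V^{\al+\be}$, and these play exactly the role of the CFT-type grading in \cite{FHL}. The main obstacle is thus bookkeeping rather than any new idea: one must make sure that each truncation or convergence step in the FHL argument still follows from lower-boundedness of the $\Z$-grading \emph{within each fixed $H$-weight} rather than from a global lower bound. Once this is checked the identity transfers without change, and $(V_H^\vee,Y_{V^\vee})$ is a $V$-module.
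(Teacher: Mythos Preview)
Your proposal is correct and follows essentially the same approach as the paper, which does not give a separate proof but only the brief discussion preceding the lemma: $L(1)$ commutes with each $h(0)$ and is therefore locally nilpotent by the lower-boundedness assumption on each $V^\al$, and the inclusion $V^\al(n)V^\be\subset V^{\al+\be}$ then lets the argument of \cite[Theorem~5.2.1]{FHL} go through. You have simply spelled out the bookkeeping (well-definedness of $Y_{V^\vee}$ on $V_H^\vee$, truncation within each fixed $H$-weight, and the Borcherds identity) that the paper leaves implicit in the phrase ``similarly to Theorem~\ref{def_dual}.''
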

\begin{rem}
In \cite{HLZ}, Huang, Lepowsky and Zhang consider essentially the same condition for a module of a M{\"o}bius vertex algebra graded by an abelian group.
\end{rem}

\begin{dfn}
\label{Cartan}
Given a conformal vertex algebra $(V,\omega)$ and a Heisenberg subspace $H \subset V$, we say that the triple $(V,\omega,H)$ satisfies assumption (A) if the following conditions are satisfied:
%
\begin{enumerate}
\item[A1)]
$L(n)h=0$ and $L(0)h=h$ for any $n \geq 1$ and $h\in H$;
\item[A2)]
For any $h \in H$, $h(0)$ is semisimple on $V$;
\item[A3)]
$(\al,\be) \in \mathbb{R}$, for any $\al,\be \in M_{V,H}$;
\item[A4)]
If $\frac{(\al,\al)}{2}>n$, then $V_n^\al=0$;
\item[A5)]
$V_0^0=\C\1$ and $V_1^0=H$;
\item[A6)]
$V_n^\al$ is a finite dimensional vector space;
\item[A7)]
$V_H^\vee \cong V$ as a $V$-module.
\end{enumerate}
\end{dfn}
\begin{rem}
Let $(V,\omega,H)$ satisfy Assumption (A).
By (A7), there exists a non-degenerate invariant bilinear form.
Normalize the bilinear form so that $(\1,\1)=-1$.
Let $h,h' \in H$. By the invariance of the form, $(h,h')=(-1)(\1,h(1)h')$. 
Hence, our choice of normalization makes this bilinear form coincide with the form given in the definition of Heisenberg subspace.
\end{rem}

In this section, we study a conformal vertex algebra and a Heisenberg subspace satisfying 
Assumption (A), which is denoted by $(V,\omega,H)$.

Assumption (A) is used in order to show the following Theorems,
which are proved in Section \ref{sec_str}:
\begin{thm}
\label{Cartan_max}
Set $L'_{V,H}=\{\al \in H\;|\; V_{\al^2/2}^\al \neq 0 \}$.
Then, $(\Omega_{V,H})^\lat = \bigoplus_{\al \in L'_{V,H}} V_{\al^2/2}^\al$.
In particular, $L'_{V,H}$ is equal to the maximal lattice $L_{V,H}$ of the VH pair $(V,H)$.
\end{thm}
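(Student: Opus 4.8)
The plan is to prove the inclusions $(\Omega_{V,H})^\lat \subseteq \bigoplus_{\al \in L'_{V,H}} V_{\al^2/2}^\al$ and $\bigoplus_{\al \in L'_{V,H}} V_{\al^2/2}^\al \subseteq (\Omega_{V,H})^\lat$ separately; since both sides are $H$-graded, together they force $L_{V,H} = L'_{V,H}$ and $\Omega_{V,H}^\al = V_{\al^2/2}^\al$ for every $\al$ in this common lattice. Write $\omega' = \omega - \omega_H$ for the coset conformal vector and $L'(n) = \omega'(n+1)$. Using (A1) and skew-symmetry one checks that $\omega'$ is quasi-primary of conformal weight $2$ and commutes with $H$ (i.e. $h(m)\omega' = 0$ for $h \in H$, $m \geq 0$), and that $L'(-1) = T_V - \omega_H(0)$, so that condition (VS1) for a vector $v$ says exactly $L'(-1)v = 0$. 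Since $[L(0),h(n)] = -n\,h(n)$ and $[L(0),T_V - \omega_H(0)] = T_V - \omega_H(0)$, each $\Omega_{V,H}^\al$ is $L(0)$-stable and splits into $L(0)$-homogeneous pieces, which lie in the $V_n^\al$; and for any $H$-vacuum vector $v$ of weight $\al$ (i.e. satisfying (VS2)--(VS3)) the computation in the proof of Lemma \ref{vac_omega} still gives $\omega_H(0)v = \al(-1)v$ and $\omega_H(1)v = \tfrac{(\al,\al)}{2}v$.

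First I would do the inclusion $(\Omega_{V,H})^\lat \subseteq \bigoplus_{\al} V_{\al^2/2}^\al$, which is the easy one. Let $\al \in L_{V,H}$ and pick a nonzero $L(0)$-homogeneous $v \in \Omega_{V,H}^\al \cap V_n^\al$; by (A4), $n \geq \tfrac{(\al,\al)}{2}$. As $L_{V,H}$ is a group, also pick a nonzero $L(0)$-homogeneous $w \in \Omega_{V,H}^{-\al} \cap V_m^{-\al}$ with $m \geq \tfrac{(\al,\al)}{2}$. Since $(\Omega_{V,H})^\lat$ is a lattice pair (Proposition \ref{lattice_universal}), the AH pair $(\Omega_{V,H},H)$ is good, so (GAH2) and $\Omega_{V,H}^0 = \C\1$ give $0 \neq vw = v((\al,\al)-1)w \in \C\1$; comparing $L(0)$-degrees yields $n + m = (\al,\al)$, hence $n = m = \tfrac{(\al,\al)}{2}$. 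Thus $\Omega_{V,H}^\al \subseteq V_{\al^2/2}^\al$, and $\Omega_{V,H}^\al \neq 0$ forces $\al \in L'_{V,H}$; this gives the inclusion and $L_{V,H} \subseteq L'_{V,H}$. (Being a lattice pair, $(\Omega_{V,H})^\lat$ has $\dim \Omega_{V,H}^\al = 1$ for $\al \in L_{V,H}$, so the theorem will follow once we know in addition that every $\al \in L'_{V,H}$ lies in $L_{V,H}$ and that $V_{\al^2/2}^\al \subseteq \Omega_{V,H}^\al$.)

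For the reverse inclusion, fix $\al \in L'_{V,H}$. Then $\tfrac{(\al,\al)}{2} \in \Z$ (it is an integral $L(0)$-eigenvalue), and by the nondegenerate invariant bilinear form supplied by (A7) also $V_{\al^2/2}^{-\al} \neq 0$. Every $v \in V_{\al^2/2}^\al$ satisfies (VS3) by definition and (VS2) because $h(n)v \in V_{\al^2/2-n}^\al = 0$ for $n \geq 1$ by (A4). Hence the reverse inclusion is equivalent to the single assertion that $L'(-1)v = 0$ for all $\al \in L'_{V,H}$ and all $v \in V_{\al^2/2}^\al$: once this holds, $\Omega_{V,H}^\al \supseteq V_{\al^2/2}^\al \neq 0$ and likewise $\Omega_{V,H}^{-\al} \neq 0$, so $\al \in L_{V,H}$, and combining with the easy direction gives $\Omega_{V,H}^\al = V_{\al^2/2}^\al$. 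This vanishing --- property (VS1) for the bottom spaces --- is the main obstacle.

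To handle it I would use the coset structure. By the representation theory of Heisenberg Lie algebras (as in Lemma \ref{str_w}), $V^\al$ is, as an $M_H(0)$-module, a direct sum of Fock modules, so $V^\al \cong \pi_\al \otimes N^\al$ where $\pi_\al$ is a single such Fock module (momentum $\al$) and $N^\al$ is the space of $H$-vacuum vectors of weight $\al$, naturally a module over the coset vertex algebra $C = \{u \in V : h(n)u = 0 \text{ for all } h \in H,\ n \geq 0\}$ with conformal vector $\omega'$. Since $\omega_H(1)$ acts on $N^\al$ by $\tfrac{(\al,\al)}{2}$, the $L'(0)$-grading on $N^\al$ starts in degree $0$ (by (A4)) with bottom space $N^\al_0 = V_{\al^2/2}^\al$, and $\Omega_{V,H}^\al$ is exactly the subspace of $N^\al$ annihilated by $L'(-1)$; so it suffices to show $L'(-1)$ kills $N^\al_0$. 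The tools for this are: (A4), which already forces $\bigoplus_{\be \in L'_{V,H}} V_{\be^2/2}^\be$ to be closed under leading products, since for $v \in V_{\al^2/2}^\al$, $w \in V_{\be^2/2}^\be$ one has $v(k)w \in V_{(\al^2+\be^2)/2-k-1}^{\al+\be}$, which vanishes by (A4) once $k > -(\al,\be)-1$, leaving only $v(-(\al,\be)-1)w \in V_{(\al+\be)^2/2}^{\al+\be}$; and quasi-primarity of $\omega'$ together with the invariant form, which gives $(L'(-1)v,u) = (v,L'(1)u)$ for $u \in V_{\al^2/2+1}^{-\al}$ and, after discarding the Heisenberg descendants in $V_{\al^2/2+1}^{-\al}$, reduces the desired vanishing to that of $L'(1)$ on the degree-one $H$-vacuum vectors of weight $-\al$. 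Closing this loop --- equivalently, establishing the one-dimensionality $\dim V_{\al^2/2}^\al \leq 1$, which in view of the easy direction is itself equivalent to the theorem --- is the content of the structural analysis of Section \ref{sec_str}: an induction on the $L'(0)$-degree, analysing the action of the coset Virasoro $\omega'$ on $V^\al$ and showing that the leading products of the bottom spaces $V_{\be^2/2}^\be$ generate. This simultaneously identifies $\bigoplus_{\be \in L'_{V,H}} V_{\be^2/2}^\be$ with the maximal $H$-lattice vertex subalgebra $V_{L_{V,H},H}$ of $V$ and yields $\al \in L_{V,H}$, completing the proof.
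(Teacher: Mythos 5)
Your first inclusion $(\Omega_{V,H})^\lat \subseteq \bigoplus_{\al} V_{\al^2/2}^\al$ is correct and is essentially the paper's argument (the paper uses $\dim \Omega_{V,H}^\al=1$ to get homogeneity, you split into $L(0)$-homogeneous components directly; both work, and the degree count $n+m=(\al,\al)$ via $v((\al,\al)-1)w\in\C\1$ together with (A4) is exactly the paper's). The problem is the reverse inclusion, which you yourself identify as "the main obstacle": showing that every $v\in V_{\al^2/2}^\al$ satisfies (VS1), i.e. $T_Vv=\omega_H(0)v$. You correctly observe that (VS2) and (VS3) are immediate from (A4), you correctly reduce (VS1), via quasi-primarity of $\omega'=\omega-\omega_H$ and the invariant form, to the vanishing of $L'(1)$ on the degree-one $H$-vacuum vectors of weight $-\al$ — but that reduction merely restates the problem, and you never prove it. The closing appeal to "the structural analysis of Section \ref{sec_str}: an induction on the $L'(0)$-degree" is circular, since Theorem \ref{Cartan_max} \emph{is} the structural analysis of that section; and the claimed equivalence with $\dim V_{\al^2/2}^\al\leq 1$ is also unjustified (one-dimensionality of the bottom space does not by itself yield (VS1)). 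So the crucial half of the theorem is left as a plan rather than a proof.

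For comparison, the paper closes this gap by a short, concrete computation rather than a coset induction: for $0\neq v\in V_{\al^2/2}^\al$, the nondegenerate pairing from (A7) produces $v'\in V_{\al^2/2}^{-\al}$ with $v((\al,\al)-1)v'=\1$ and $v((\al,\al)-2)v'=\al$ (the second identity uses invariance and $V_1^0=H$ from (A5)); then the Borcherds identity with $(p,q,r)=(-(\al,\al),(\al,\al)-1,(\al,\al)-2)$, combined with the vanishings $v'((\al,\al)+k)v\in V_{-k-1}^0=0$ and $v(-(\al,\al)+k)v\in V_{2(\al,\al)-k-1}^{2\al}=0$ for $k\geq 0$ supplied by (A4)--(A5), collapses to $\al(-1)v=v(-2)\1$, which is exactly (VS1) by Lemma \ref{vac_omega}. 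With this in hand, $\pm\al\in L_{V,H}$, $\dim\Omega_{V,H}^\al=1$ by the lattice-pair property, and $\Omega_{V,H}^\al=V_{\al^2/2}^\al$ follows. If you want to salvage your outline, this dual-vector/Borcherds-identity step is the missing ingredient you would need to supply in place of the deferred induction.
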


\begin{thm}
\label{Cartan_Lie}
Let $\al \in M_{V,H}$ such that $\al^2 >0$ and $V_1^\al \neq 0$.
Then, $V_1^\al$ and $V_1^{-\al}$ generate the simple affine vertex algebra $L_{sl_2}(k,0)$
of level $k=2/(\al,\al) \in \Z_{>0}$ and $\dim V_1^{\pm\al}=1$, $\dim V_1^{k\al}=0$ for any
$k \not\in \{ -1,0,1\}$.
Furthermore, $\dim V_n^\be \in \{0,1\}$ for any $\be$ in $M_{V,H}$ with $n \geq \frac{(\be,\be)}{2} > n-1$.
\end{thm}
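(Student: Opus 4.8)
The plan is to fix $\al\in M_{V,H}$ with $\al^2>0$ and $V_1^\al\ne 0$, build an affine vertex algebra inside $V$ spanned by these two pieces, and then force its level to be a positive integer out of Assumption (A4). First note that (A4) with $n=1$ gives $\al^2\le 2$, so $k:=2/\al^2\ge 1$. By (A7) the restricted dual $V_H^\vee$ is isomorphic to $V$, so $V$ carries a non-degenerate invariant bilinear form $(\,\cdot\,,\,\cdot\,)$; since $h(0)$ is skew-adjoint and $L(0)$ self-adjoint for it, and since $V_0^{\pm\al}=0$ by (A4) (so $L(1)$ kills $V_1^{\pm\al}$), the form restricts to a perfect pairing $V_n^\al\times V_n^{-\al}\to\C$ for every $n$. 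Choose $e\in V_1^\al\setminus\{0\}$ and $f\in V_1^{-\al}$ with $(e,f)=k$, and put $h:=\tfrac{2}{\al^2}\al\in H$. Using skew-symmetry together with the vanishing $V_m^\be=0$ for $m<\tfrac{(\be,\be)}{2}$, one checks the relations $e(n)e=f(n)f=0$ for $n\ge 0$ (so all modes of $e$, resp.\ of $f$, commute among themselves), $e(0)f=h$, $e(1)f=k\1$, $e(n)f=0$ for $n\ge 2$, $h(0)e=2e$, $h(0)f=-2f$, $h(1)h=2k\1$, and $h(n)e=h(n)f=0$ for $n\ge 1$; these are precisely the defining relations of $\widehat{sl_2}$ at level $k$, so there is a vertex algebra homomorphism $V_{sl_2}(k,0)\to V$ sending the generators to $e,h,f$. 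Write $U$ for its image, which is the subalgebra generated by $e$ and $f$.

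The heart of the argument is to show $k\in\Z_{>0}$. The vector $e(-1)^m\1$ has $H$-charge $m\al$ and $L(0)$-degree $m$, and $\tfrac{(m\al,m\al)}{2}=m^2/k>m$ precisely when $m>k$; hence by (A4), $e(-1)^m\1\in V_m^{m\al}=0$ for every integer $m>k$. Since $e(-1)^0\1=\1\ne 0$ and $k\ge 1$, the integer $m_0:=\min\{m\ge 1\mid e(-1)^m\1=0\}$ exists. Because $e(0)e=0$ one has $e(n)e(-1)^j\1=0$ for $n\ge 0$, and a short induction using $[f(1),e(-1)]=k-h(0)$ and $h(0)e(-1)^j\1=2j\,e(-1)^j\1$ yields $f(1)\,e(-1)^j\1=j(k-j+1)\,e(-1)^{j-1}\1$ for all $j\ge 1$. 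Taking $j=m_0$: the left side vanishes while $e(-1)^{m_0-1}\1\ne 0$ by minimality, so $k-m_0+1=0$, i.e.\ $k=m_0-1\in\Z_{\ge 0}$; with $k\ge 1$ this gives $k=2/\al^2\in\Z_{>0}$ and $m_0=k+1$, so $e(-1)^{k+1}\1=0$ in $U$. For $k\in\Z_{>0}$ the vector $e(-1)^{k+1}\1$ generates the maximal ideal of $V_{sl_2}(k,0)$, with quotient the simple integrable algebra $L_{sl_2}(k,0)$; hence $U$ is a nonzero quotient of $L_{sl_2}(k,0)$, so $U\cong L_{sl_2}(k,0)$.

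For the dimension statements, view $V_n$ as a finite-dimensional module (by (A6)) over the Lie algebra $sl_2=U_1=\C e\oplus\C h\oplus\C f$ through the zero modes $e(0),h(0),f(0)$; the $h(0)$-eigenvalue on $V_n^{m\al}$ is $2m$, so the string $\bigoplus_{m\in\Z}V_n^{m\al}$ decomposes into irreducible $sl_2$-modules of even highest weight. A highest-weight vector of weight $2m$ with $2m\ge 4$ lies in $V_1^{m\al}$ with $m\ge 2$ and, by (A4), is annihilated by all positive modes of $e,h,f$, hence generates an integrable $L_{sl_2}(k,0)$-submodule of $V$; running the coset conformal-weight estimate (writing $\omega=\omega_U+(\omega-\omega_U)$, with $\omega-\omega_U$ a conformal vector of $C_V(U)$) together with Theorem \ref{Cartan_max} rules out all such vectors and, moreover, allows at most one string of length $3$. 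Thus $V_1^{m\al}=0$ for $m\notin\{-1,0,1\}$ and $\dim V_1^{\pm\al}\le 1$; since $e\in V_1^\al\setminus\{0\}$ we get $\dim V_1^{\pm\al}=1$, so $V_1^\al=\C e$, $V_1^{-\al}=\C f$, and $U$ coincides with $\langle V_1^\al,V_1^{-\al}\rangle$ as in the statement. The identical argument in degree $n$ for an arbitrary $\be\in M_{V,H}$ with $n\ge\tfrac{(\be,\be)}{2}>n-1$ (where again every $h(j)$, $j\ge 1$, already kills $V_n^\be$) gives $\dim V_n^\be\le 1$.

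The main obstacle is the integrality step: one must convert the purely ``size'' input of (A4) — the vanishing of $e(-1)^m\1$ for $m>k$ — into the arithmetic conclusion $k\in\Z_{>0}$, and the identity $f(1)e(-1)^j\1=j(k-j+1)e(-1)^{j-1}\1$ is exactly the bridge that achieves this. Identifying $U$ with the \emph{simple} algebra $L_{sl_2}(k,0)$ then rests on the classical description of the maximal submodule of the vacuum Verma module of $\widehat{sl_2}$ at positive integral level, and the dimension bounds additionally need the coset conformal-weight estimate for $\omega-\omega_U$ together with Theorem \ref{Cartan_max}.
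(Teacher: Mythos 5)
Your first half is essentially the paper's argument and is correct: you build the $sl_2$-triple from $e\in V_1^\al$, $f\in V_1^{-\al}$, $h=\tfrac{2}{\al^2}\al$ using skew-symmetry and (A4), get a map from the universal affine vertex algebra at level $k=2/\al^2$, and the identity $f(1)e(-1)^j\1=j(k-j+1)e(-1)^{j-1}\1$ combined with the (A4)-vanishing $e(-1)^m\1=0$ for $m>k$ gives $k\in\Z_{>0}$; this nicely makes explicit what the paper delegates to \cite[Theorem 3.1.2]{FZ}. The gap is in the dimension statements. Your key step there is the assertion that a ``coset conformal-weight estimate'' for $\omega-\omega_U$ together with Theorem \ref{Cartan_max} ``rules out all such vectors and allows at most one string of length $3$,'' but no such estimate is formulated, and it cannot rest on non-negativity of coset weights: Assumption (A) allows conformal vertex algebras with negative $L(0)$-spectrum (e.g.\ anything containing a $V_\tw$ tensor factor, which is exactly the situation in which the theorem must be applied), and even where coset weights are non-negative, an affine highest-weight vector of $sl_2$-weight $2m$ in $V_1$ has coset weight $1-\tfrac{m(m+1)}{k+2}$, which is $\geq 0$ whenever $m^2\leq k$ (the only case (A4) leaves open), so positivity yields no contradiction; nor does the sketch explain why only one length-$3$ string can occur. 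The paper's actual argument is elementary and different: any $f'\in V_1^{-\al}$ with $(e,f')=0$ satisfies $e(0)f'=0$ and is $f(0)$-nilpotent (since $V_1^{\pm n\al}=0$ for large $n$), hence vanishes by finite-dimensional $sl_2$-theory; this gives $\dim V_1^{-\al}=1$, and then any irreducible constituent of $\bigoplus_m V_1^{m\al}$ of highest weight $\geq 4$ would force $\dim V_1^{-\al}\geq 2$, a contradiction. You need this (or a genuine substitute); as written the step is a placeholder.

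The final claim also does not follow by ``the identical argument in degree $n$'': for an arbitrary $\be\in M_{V,H}$ with $n\geq\tfrac{(\be,\be)}{2}>n-1$ there is no $sl_2$-triple attached to $\be$ (nothing guarantees $V_1^{\pm\gamma}\neq 0$ for any $\gamma$ proportional to $\be$, and $V_n^\be$ does not lie in any $\Z\al$-string controlled by your $U$), so your degree-$1$ analysis has nothing to act on. The paper's proof of this part uses a reduction you are missing: by Lemma \ref{tensor_A}, $V\otimes V_\tw$ again satisfies Assumption (A); choosing $\gamma\in\tw$ with $(\gamma,\gamma)=2(1-n)$, one has $0\neq V_n^\be\otimes (V_\tw)_{1-n}^{\gamma}\subset (V\otimes V_\tw)_1^{(\be,\gamma)}$ with $0<\tfrac{(\be+\gamma,\be+\gamma)}{2}\leq 1$, so the degree-$1$ statement applied to $V\otimes V_\tw$ gives $\dim(V\otimes V_\tw)_1^{(\be,\gamma)}=1$ and hence $\dim V_n^\be=1$. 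Without this tensoring trick (or an equivalent device), the ``furthermore'' part of the theorem is unproved in your proposal.
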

Those theorems assert that the existence of non-zero vectors in $V_n^\al$ for $\al \in M_{V,H}$
and $n \in \Z$ with $1>n - \al^2/2 \geq 0$ is very important.
It determines the maximal lattice and the existence of affine vertex subalgebras.

Examples and applications of those results are studied in subsection \ref{sec_application};
There, we compute the maximal lattice for a VOA which is an extension of an affine VOA at positive integer level,
together with the mass of the genus for some $c=24$ holomorphic VOAs.

\subsection{Structure of $(V,\omega,H)$}\label{sec_str}
Let $(V,\omega,H)$ satisfy Assumption (A).
In this subsection, we study the structure of $(V,\omega,H)$.
\begin{lem}
\label{Cartan_simple}
If $(V,\omega,H)$ satisfies Assumption (A), then $V$ is a simple vertex algebra and $\ker T= \C\1$. In particular, $(V,H)$ is a good VH pair.
\end{lem}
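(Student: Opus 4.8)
The plan is to prove that Assumption (A) forces $V$ to be a simple vertex algebra with trivial center, and then invoke Lemma \ref{omega_simple} to conclude $(V,H)$ is a good VH pair. The key tool is the $L(0)$-grading together with condition (A4), which bounds the conformal weights from below in each $H$-eigenspace, and condition (A5), which pins down the bottom of the grading.

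First I would establish $\ker T_V = \C\1$. Since $T_V = L(-1)$ and $L(0)$ is semisimple with integer eigenvalues, any $v \in \ker L(-1)$ decomposes into $L(0)$-homogeneous pieces, each in $\ker L(-1)$; moreover, since $L(-1)$ commutes with every $h(0)$ (because $L(n)h = 0$ for $n \geq 1$, so $[L(-1), h(0)] = (L(-1)h)(0) = (T_Vh)(0)$ — wait, this needs care: $T_V h = h(-2)\mathbf{1}$, and one checks $[L(-1),h(0)] = -(L(0)h)(-1+0)\cdots$; more simply $h(0)$ commutes with $L(-1)$ because $h(0)$ is a derivation-like operator commuting with translation on $V$ as $[T_V, h(0)] = (T_V h)(0) = 0$ using $h(n)\mathbf 1 = 0$ for $n\ge 0$ and the formula $(Ta)(n) = -na(n-1)$ giving $(T_Vh)(0)=0$). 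Hence we may assume $v \in V_n^\al$ for some $n,\al$. If $v \neq 0$ and $L(-1)v = 0$, then $v(-2)\mathbf{1} = 0$; but $v = v(-1)\mathbf{1}$, and the field $Y(v,z)$ then has the property that $v(-1-k)\mathbf 1 = \tfrac{1}{k!}L(-1)^k v(-1)\mathbf 1$ type relations force $v \in V_0^0 = \C\mathbf{1}$ once we know $n = 0$ and $\al = 0$; the weight of $v(-2)\mathbf 1$ is $n+1$, and iterating, a lowest-weight argument using (A4) (which prevents negative or small weights) combined with (A5) forces $v \in \C\mathbf 1$. The cleanest route: $\ker T_V$ is a vertex subalgebra contained in $V_0$ (since $a \in \ker T_V$ implies $a(n) = 0$ for $n \geq 0$ by the formula $(Ta)(n) = -na(n-1)$ read backwards, hence $a(-1)$ is central and $L(0)a = L(1)a(-1)\mathbf 1 = (L(1)a)(-1)\mathbf 1 + a(-1)L(1)\mathbf 1$; using $L(0)$-gradation, $\ker T_V \subset \bigoplus_{n\le 0} V_n$, and (A4) with $\al = 0$ plus (A5) give $\ker T_V \subset V_0^0 = \C\mathbf 1$).

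Next, I would prove simplicity. Let $I \subset V$ be a nonzero ideal. Since $L(0)$ and all $h(0)$ act semisimply and commute, and preserve $I$ (an ideal is stable under all $a(n)$, in particular $\omega(1) = L(0)$ and $h(1)=h(0)$... more precisely $\omega(1)=L(0)$ acts on $I$ since $\omega \in V$), $I$ is graded: $I = \bigoplus_{n,\al} (I \cap V_n^\al)$. Pick a nonzero homogeneous $v \in I \cap V_n^\al$ with $n$ minimal among all nonzero homogeneous elements of $I$ (such minimal $n$ exists because $V_n^\al = 0$ once $n < (\al,\al)/2$ by (A4), but we need a global lower bound — here I would use that for the ideal to have elements, and (A4) together with (A7), the restricted-dual/self-duality, to control things; actually minimality of $n$ within $I$ alone may fail, so instead): use the invariant bilinear form from (A7). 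Since $V \cong V_H^\vee$, there is a nondegenerate invariant bilinear form $(\cdot,\cdot)$ on $V$; for a nonzero ideal $I$, its "radical" $I^\perp$ is also an ideal, and $V = I \oplus I^\perp$ would follow if the form restricted to $I$ is nondegenerate. The standard argument (as in the theory of self-dual VOAs) is: if $I \neq V$, then $I^\perp \neq 0$ is an ideal with $I \cdot I^\perp = 0$; taking $0 \neq a \in I$, $0 \neq b \in I^\perp$ homogeneous, $Y(a,z)b = 0$ — but by invariance and nondegeneracy, pairing $a(n)b$ against suitable vectors and using that $\ker T_V = \C\mathbf 1$ together with (A5) ($V_0^0 = \C\mathbf 1$) forces $a(n)b \neq 0$ for some $n$, a contradiction. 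Concretely, $(\mathbf 1, a(n)b)$ up to the $e^{L(1)z}$ twist relates to $(a', b)$ for $a'$ built from $a$; nondegeneracy of the form and $a \neq 0$ give a nonzero pairing, hence $Y(a,z)b \neq 0$.

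The main obstacle, and where I would spend the most effort, is the contradiction step in the simplicity argument: showing that $Y(a,z)b = 0$ for nonzero $a \in I$, $b \in I^\perp$ is impossible. This is exactly the content of Proposition \ref{simple_vertex} (from \cite[Proposition 11.9]{DL}) run in reverse — but that proposition \emph{assumes} simplicity. So I cannot cite it; instead I must reprove the relevant half: in a conformal vertex algebra with nondegenerate invariant bilinear form and $\ker T_V = \C\mathbf 1$, $Y(a,z)b \neq 0$ whenever $a, b \neq 0$. The proof uses the invariance identity $(Y(a,z)b, c) = (b, Y(e^{L(1)z}(-z^{-2})^{L(0)}a, z^{-1})c)$: choosing $c = \mathbf 1$ and using $Y(u, z)\mathbf 1 = e^{zT_V}u$, the right side becomes $(b, e^{z^{-1}T_V} e^{L(1)z}(-z^{-2})^{L(0)}a)$, which as a formal series in $z$ has leading coefficient a nonzero multiple of $(b, a')$ where $a'$ is the top $L(1)$-component of $a$; nondegeneracy of the form and the freedom to vary $b$ then show some $a(n)b \neq 0$. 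I would present this as a short lemma before the main proof, or fold it into the proof. Once $V$ is simple and $\ker T_V = \C\mathbf 1$, Lemma \ref{omega_simple} immediately gives that $(V,H)$ is a good VH pair, completing the argument.
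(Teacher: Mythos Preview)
Your argument for $\ker T_V = \C\1$ is right in spirit (elements of $\ker T_V$ are annihilated by every $a(n)$ with $n\ge 0$, hence by $L(0)$ and all $h(0)$, so they sit in $V_0^0 = \C\1$), and this matches the paper exactly. The presentation wanders, but the content is there.

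The simplicity argument, however, has a genuine gap. You correctly observe that $I^\perp$ is an ideal and that $a(n)b \in I\cap I^\perp$ for $a\in I$, $b\in I^\perp$; one can even push further and show $Y(a,z)b=0$ for such $a,b$ via invariance. The problem is the proposed contradiction. You want to prove, as a standalone lemma, that $Y(a,z)b\neq 0$ for \emph{all} nonzero $a,b$ in a self-dual conformal vertex algebra with $\ker T=\C\1$. But your method---pairing with $c=\1$ and using nondegeneracy with ``freedom to vary $b$''---only shows that for each nonzero $a$ there \emph{exists} some $b$ with $Y(a,z)b\neq 0$. It does not show this for every $b$, and in particular your $b\in I^\perp$ satisfies $(b,a')=0$ for every $a'\in I$ by definition, so the leading coefficient you compute is zero. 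The universal statement you aim for is in fact equivalent to simplicity, so you cannot prove it first and then deduce simplicity.

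The paper's route is the one you abandoned midway: fix $\alpha$ with $I\cap V^\alpha\neq 0$ and take $v\in I\cap V_n^\alpha$ with $n$ minimal \emph{for that} $\alpha$ (this exists by (A4) alone---no global lower bound is needed). Then $L(1)v=0$. By (A7) choose $v'\in V_n^{-\alpha}$ with $(v,v')=1$; invariance gives $v(2n-1)v'\in V_0^0=\C\1$ equal to a nonzero scalar, and since $v\in I$ this puts $\1\in I$. Your calculation with $c=\1$ is exactly this computation once you take $b=v'\in V$ rather than $b\in I^\perp$; the point is to land $\1$ in the ideal directly, not to derive a contradiction from $I^\perp$.
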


\begin{proof}
Let $I$ be a non-zero ideal of $V$. Set $I_n^\al=I \cap V_n^\al$. Then, $I=\bigoplus_{n \in \Z, \al \in M_{V,H}}I_n^\al$
since $I$ is a submodule for a diagonalizable action of an abelian Lie algebra.
Hence there exists $\al \in M_{V,H}$ and $n \in \Z$ such that $I_n^\al \neq 0$ and $I_{n-k}^\al=0$ for any $k \geq 1$. Let $v \in I_n^\al$ be a non-zero vector. 
Since $V$ is self-dual, there exists $v' \in V_n^{-\al}$ such that $(v,v')=1$.

Combining $L(1)v=0$ and $v(2n-1)v' \in V_0^0= \C\1$, $(v,v')=1$, we have $\1 \in I$.
Hence, $I=V$.
Since $v(n)a=0$ for any $a \in \ker T$ and $v \in V$ and $n \geq 0$,
$\ker T \subset V_0^0 = \C\1$.
\end{proof}

\begin{cor}
For any  $\al, \be \in M_{V,H}$,
 $\al+\be, -\al \in M_{V,H}$.
\end{cor}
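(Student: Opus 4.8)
For any $\al,\be \in M_{V,H}$, we have $\al + \be \in M_{V,H}$ and $-\al \in M_{V,H}$.

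The plan is to use simplicity of $V$ (Lemma \ref{Cartan_simple}) together with self-duality (Assumption (A7)) and the grading by $H\oplus\Z$ coming from $h(0)$ and $L(0)$. First I would take nonzero vectors $a \in V_m^\al$ and $b \in V_n^\be$ for suitable $m,n$; by Proposition \ref{simple_vertex}, since $V$ is simple, $Y(a,z)b \neq 0$, so there is some integer $k$ with $a(k)b \neq 0$. The commutation relation $[h(0),a(k)] = (h,\al)a(k)$ (a special case of the commutativity formula, exactly as computed in the proof of Lemma \ref{product}) shows that $a(k)b \in V_\ell^{\al+\be}$ for the appropriate $\ell$, so $V_\ell^{\al+\be}\neq 0$ and hence $\al+\be \in M_{V,H}$. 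This disposes of the first claim.

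For $-\al \in M_{V,H}$, the natural tool is self-duality: by Assumption (A7), $V \cong V_H^\vee$ as a $V$-module, and the restricted dual $V_H^\vee = \bigoplus_{n,\ga} (V_n^\ga)^*$ carries the $h(0)$-eigenvalue $-(h,\ga)$ on the summand $(V_n^\ga)^*$ (since $h(0)$ acts on the dual by minus its transpose, up to the sign conventions in the $Y_{V^\vee}$ formula — this is precisely the point where the $(-z^{-2})^{L(0)}$ and $e^{L(1)z}$ twists must be tracked). Concretely, if $V_n^\al \neq 0$ then $(V_n^\al)^* \neq 0$ is a nonzero $h(0)$-weight space of $V_H^\vee$ with weight $-\al$, and transporting through the isomorphism $V_H^\vee \cong V$ gives a nonzero weight space of $V$ of weight $-\al$, i.e. $-\al \in M_{V,H}$. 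Alternatively — and perhaps more cleanly — one can avoid the dual module and argue as in the proof of Lemma \ref{Cartan_simple}: pick a nonzero $v \in V_n^\al$; by non-degeneracy of the invariant bilinear form on $V$ restricted to $V_n^\al \times V_n^{-\al}$ (invariance forces the form to pair $V_n^\al$ only with $V_n^{-\al}$, and non-degeneracy of the global form then forces this pairing to be non-degenerate), there exists $v' \in V_n^{-\al}$ with $(v,v')\neq 0$; in particular $V_n^{-\al}\neq 0$, so $-\al \in M_{V,H}$.

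The main obstacle is bookkeeping the weight under duality/invariance: one must check that the invariant bilinear form on $V$ pairs $V_n^\al$ nontrivially with $V_n^{-\al}$ and with nothing else. The "nothing else" part is immediate from the invariance identity $(Y(a,z)b,c) = (b, Y(e^{L(1)z}(-z^{-2})^{L(0)}a, z^{-1})c)$ applied with $a = h \in H$, which gives $(h(0)u, u') + (u, h(0)u') = 0$ for all $u,u'$, forcing $V_n^\al \perp V_n^\ga$ unless $\ga = -\al$ (and $L(0)$-invariance similarly forces the second index to match); the "nontrivially" part is exactly non-degeneracy of the global form, which holds by (A7). Since this is the same mechanism already used in Lemma \ref{Cartan_simple} to produce $\1$, I would simply cite that argument. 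Everything else is the routine weight computation for $n$-th products that already appears verbatim in the proof of Lemma \ref{product}.
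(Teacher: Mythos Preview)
Your proposal is correct and follows essentially the same approach as the paper: the paper proves $-\al\in M_{V,H}$ via the non-degenerate pairing $V_n^\al\times V_n^{-\al}\to\C$ induced by the invariant bilinear form (your second alternative), and proves $\al+\be\in M_{V,H}$ by invoking simplicity through Proposition~\ref{simple_vertex} exactly as you do. Your write-up is more detailed than the paper's two-line proof, but the ideas coincide.
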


\begin{proof}
Let $\al, \be \in M_{V,H}$.
Since the invariant bilinear form on $V$ induces a non-degenerate pairing, $V_n^\al \times V_n^{-\al} \rightarrow \C$, we have $-\al \in M_{V,H}$.
Let $v \in V^\al$ and $w \in V^\be$ be nonzero elements.
Then, by Lemma \ref{simple_vertex}, we have $0 \neq v(k)w \in V^{\al+\be}$
for some $k \in \Z$. Thus, $\al+\be \in M_{V,H}$. 
\end{proof}

By Remark \ref{omega_simple} and Lemma \ref{Cartan_simple}, 
$\Omega_{V,H}$ is a good AH pair and $\Omega_{V,H}^\lat$ is a lattice
pair.
Set $L'_{V,H}=\{\al \in H\;|\; V_{\al^2/2}^\al \neq 0 \}$.
Before proving Theorem \ref{Cartan_max},
we show the following lemma:
\begin{lem}
For any $\al \in L'_{V,H}$, $\Omega_{V,H}^{\al} = V_{\al^2/2}^\al$.
\end{lem}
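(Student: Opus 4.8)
The goal is to show that for $\al \in L'_{V,H}$, the space $\Omega_{V,H}^\al$ of $H$-vacuum vectors of weight $\al$ coincides with the minimal-degree space $V_{\al^2/2}^\al$. The plan is to prove two inclusions. For the inclusion $\Omega_{V,H}^\al \subseteq V_{\al^2/2}^\al$: if $v \in \Omega_{V,H}^\al$, then by Lemma \ref{vac_omega} we have $\omega_H(1)v = \frac{(\al,\al)}{2}v$; on the other hand, conditions (A1) and the Sugawara relation should give $L(1)\omega_H = 0$ and $L(0)$-homogeneity, and one checks $L(0)v = \omega(1)v$ equals $\omega_H(1)v$ on $H$-vacuum vectors because $\omega - \omega_H$ is a conformal vector of a complementary vertex subalgebra that annihilates $v$ under all nonnegative modes — concretely, $(\omega - \omega_H)(1)v = 0$ since $v$ is killed by $h(n)$ for $n \geq 1$ and $h(0)$ acts as a scalar, so the coset Virasoro $L(0)$ vanishes on $v$. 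Hence $L(0)v = \frac{(\al,\al)}{2}v$, i.e. $v \in V_{\al^2/2}^\al$.

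For the reverse inclusion $V_{\al^2/2}^\al \subseteq \Omega_{V,H}^\al$: let $v \in V_{\al^2/2}^\al$ be nonzero. Conditions (VS2) and (VS3) in the definition of $\Omega_{V,H}^\al$ are immediate — $h(0)v = (h,\al)v$ by definition of $V_n^\al$, and $h(n)v = 0$ for $n \geq 1$ because $h(n)$ lowers $L(0)$-degree by $n \geq 1$ while preserving the $h(0)$-eigenvalue $\al$, landing in $V_{\al^2/2 - n}^\al$, which vanishes by (A4) since $\frac{(\al,\al)}{2} > \frac{(\al,\al)}{2} - n$. It remains to verify (VS1), namely $T_V v = \omega_H(0)v$. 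Equivalently, $(\omega - \omega_H)(0)v = 0$, i.e. $v$ is a vacuum vector for the coset Virasoro $L^{\mathrm{cos}} = L - L^H$. Now $L^{\mathrm{cos}}(0)v = (L(0) - \omega_H(1))v = (\frac{(\al,\al)}{2} - \frac{(\al,\al)}{2})v = 0$ by the same computation as above. Since $L^{\mathrm{cos}}$ is a Virasoro vector with $L^{\mathrm{cos}}(-1) = T_V - \omega_H(0)$, we want to conclude that $L^{\mathrm{cos}}(0)v = 0$ forces $L^{\mathrm{cos}}(-1)v = 0$. This follows from the standard fact that on the coset subalgebra $\ker(\omega_H(0)-T_V)$ (a vertex algebra by Remark \ref{rem_ker}) the vector $\omega - \omega_H$ is a conformal vector with nonnegative integer $L^{\mathrm{cos}}(0)$-grading — but more directly, one uses (A4): if $L^{\mathrm{cos}}(-1)v \neq 0$, then since $[L^{\mathrm{cos}}(0), L^{\mathrm{cos}}(-1)] = L^{\mathrm{cos}}(-1)$ and $L^{\mathrm{cos}}(-1) = T_V - \omega_H(0)$ preserves the $h(0)$-grading and raises $L(0)$-degree by $1$, we get $L^{\mathrm{cos}}(-1)v \in V_{\al^2/2+1}^\al$; I would instead argue that $v$ generates a positive-energy module for $L^{\mathrm{cos}}$ with lowest weight $0$, and a lowest-weight-$0$ vector of a Virasoro module is annihilated by $L^{\mathrm{cos}}(-1)$ only if... — this is exactly where care is needed.

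The cleaner route for (VS1), and the one I would actually write: by (A4), for every $\be \in M_{V,H}$ the $L(0)$-grading on $V^\be$ is bounded below by $\frac{(\be,\be)}{2}$, so the coset vertex algebra $C = \ker(\omega_H(0)-T_V)$ — which by the computation above contains $v$ precisely when $L(0)v = \omega_H(1)v$, automatic here — is $L^{\mathrm{cos}}(0)$-graded by nonnegative integers with $C_0 = V_0^0 = \C\1$ by (A5) (one checks $C \cap V_0^0$ using that $L^{\mathrm{cos}}(0)$ and $L(0)$ agree on weight-$\al$ vacuum candidates). A degree-$0$ vector $v$ in a vertex algebra graded by a Virasoro action with nonnegative grading satisfies $L^{\mathrm{cos}}(-1)v \in C_1$; but $v \in V_{\al^2/2}^\al$ and $T_V$ raises $L(0)$-degree, so I must show $T_V v$ itself lies in the coset, which circles back. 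The honest main obstacle is therefore this: showing that a vector of minimal $L(0)$-degree $\frac{(\al,\al)}{2}$ in the $\al$-graded piece is automatically a coset-Virasoro vacuum, i.e. $(T_V - \omega_H(0))v = 0$. I expect to resolve it by the following argument: $(T_V - \omega_H(0))v$ lies in $V_{\al^2/2 + 1}^\al$ and is killed by $L^{\mathrm{cos}}(1)$ (since $[L^{\mathrm{cos}}(1), L^{\mathrm{cos}}(-1)] = 2L^{\mathrm{cos}}(0)$ acts as $0$ on $v$ and $L^{\mathrm{cos}}(1)v = 0$ as $v$ has minimal degree); then invariance of the bilinear form from (A7) pairs $V_{\al^2/2+1}^\al$ with $V_{\al^2/2+1}^{-\al}$, and using $L^{\mathrm{cos}}(-1)$-self-adjointness up to sign together with minimality of degree on the $-\al$ side forces $(T_V-\omega_H(0))v$ to pair to zero with everything, hence vanish by non-degeneracy. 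I would present this pairing argument as the crux, citing (A4) for the $-\al$ side and (A7) for non-degeneracy.
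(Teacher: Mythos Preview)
Your plan diverges from the paper in two ways, and the crux of your argument for (VS1) has a real gap.

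\textbf{The paper's route.} The paper only proves the inclusion $V_{\al^2/2}^\al \subset \Omega_{V,H}^\al$; the reverse inclusion is obtained for free from $\dim \Omega_{V,H}^\al = 1$, which holds because once the forward inclusion is known for both $\al$ and $-\al$ (the latter via (A7)), we have $\al \in L_{V,H}$ and Lemma~\ref{lattice_pair_twisted} applies. For (VS1) itself the paper does \emph{not} argue via the coset Virasoro or any pairing abstraction: it picks $v' \in V_{\al^2/2}^{-\al}$ dual to $v$ under the invariant form, computes directly that $v((\al,\al)-1)v' = \1$ and $v((\al,\al)-2)v' = \al$ using (A5), and then applies the Borcherds identity with $(p,q,r)=(-(\al,\al),(\al,\al)-1,(\al,\al)-2)$ to obtain $\al(-1)v = v(-2)\1$. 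Since $\omega_H(0)v = \al(-1)v$ by Lemma~\ref{vac_omega}, this is exactly (VS1).

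\textbf{The gap in your pairing argument.} Your proposed proof of (VS1) sets $u = L^{\mathrm{cos}}(-1)v \in V_{\al^2/2+1}^\al$ and tries to show $(u,w)=0$ for all $w \in V_{\al^2/2+1}^{-\al}$. Using the adjoint relation you correctly get $(u,w) = (v, L^{\mathrm{cos}}(1)w)$. But $L^{\mathrm{cos}}(1)w$ lands in $V_{\al^2/2}^{-\al}$, which is \emph{nonzero} (indeed $-\al \in L'_{V,H}$ by (A7)); your invocation of ``minimality of degree on the $-\al$ side'' does not kill this term. Concretely, if $w \in V_{\al^2/2+1}^{-\al}$ is an $H$-vacuum vector (and such vectors may exist --- nothing in (A1)--(A7) prevents them), then $\omega_H(2)w=0$ and $(u,w) = (v, L(1)w)$ with $L(1)w \in V_{\al^2/2}^{-\al}$; there is no reason for this pairing to vanish. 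So the argument does not close.

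\textbf{On the other inclusion.} Your claim that $(\omega-\omega_H)(1)v = 0$ for $v \in \Omega_{V,H}^\al$ ``since $v$ is killed by $h(n)$ for $n\geq 1$'' is not a valid inference: being $H$-vacuum does not force the coset $L(0)$ to vanish. What (VS1) gives you is $L^{\mathrm{cos}}(-1)v=0$, and deducing $L^{\mathrm{cos}}(0)v=0$ from this would require knowing $L^{\mathrm{cos}}(1)v=0$, which in turn needs $v$ to sit in the bottom $L(0)$-degree --- exactly what you are trying to prove. The paper avoids this circularity entirely by using the dimension-one statement after establishing the forward inclusion.
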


\begin{proof}
Let $\al \in L'_{V,H}$ and $v \in V_{\al^2/2}^\al$ be a nonzero element.
Then $h(0)v=(\al,h)v$.
By (A4) $h(n)v=0$ and and $L(n)v=0$ for any $n \geq 1$ and $h \in H$. It suffices to show that $\omega_H(0)v=v(-2)\1$.
Since the invariant bilinear form on $V$ induces a non-degenerate pairing on $V_n^\al \times V_n^{-\al} \rightarrow \C$, there exists $v' \in V_{\al^2/2}^{-\al}$ such
that $(v,v')=(-1)^{\al^2/2+1}$.
By the invariance of the form, we have 
$-1=(v,v')(-1)^{\al^2/2}=(\1,v((\al,\al)-1)v')$.
Since $v((\al,\al)-1)v' \in V_0^0=\C\1$, we have $v((\al,\al)-1)v'=\1$.
Since $v((\al,\al)-2)v' \in V_1^0= H$ and
$(h,v((\al,\al)-2)v')=(-1)(\1,h(1)v((\al,\al)-2)v'
=(-1)(h,\al)(\1, v((\al,\al)-1)v')=(h,\al)$, we have $v((\al,\al)-2)v'=\al$.

Since $v'((\al,\al)+k)v \in V_{-k-1}^0=0$ and $v(-(\al,\al)+k)v \in V_{2(\al,\al)-k-1}^{2\al}=0$ for any $k \geq 0$,
by applying the Borcherds identity with $(p,q,r)=(-(\al,\al), (\al,\al)-1,(\al,\al)-2)$,
we have
\begin{align*}
\al(-1)v &=
\sum_{i \geq 0} \binom{-(\al,\al)}{i}(v((\al,\al)-2+i)v')(-1-i)v \nonumber \\
&=\sum_{i\geq 0} (-1)^i\binom{(\al,\al)-2}{i}(v(-2-i)v'((\al,\al)-1+i)v
-v'(2(\al,\al)-3-i)v(-(\al,\al)+i))v \nonumber \\
&= v(-2)\1. \nonumber
\end{align*}
Hence, we have $V_{\al^2/2}^\al \subset \Omega_{V,H}^\al$.
By Lemma \ref{lattice_pair_twisted}, $\dim \Omega_{V,H}^\al=1$.
Thus, we have $\Omega_{V,H}^{\al} = V_{\al^2/2}^\al$.
\end{proof}

\begin{proof}[proof of Theorem \ref{Cartan_max}]
Let $\al \in L_{V,H}$, that is, $\Omega_{V,H}^\al, \Omega_{V,H}^{-\al} \neq 0$.
By the above lemma, $L'_{V,H} \subset L_{V,H}$.
Thus, it suffices to show that $\al \in L'_{V,H}$.
Let $v \in \Omega_{V,H}^\al$ and $v' \in \Omega_{V,H}^{-\al}$ be nonzero elements. 
Then, $v=\sum_i v_i$ where $v_i \in V_i^\al$.
Since $h(n)V_i \subset V_{i-n}$ and $TV_i \subset V_{i+1}$,
we have $v_i \in \Omega_{V,H}^\al$.
Since $\dim \Omega_{V,H}^\al=1$, we have $\Omega_{V,H}^\al \subset V_k^\al$ and $\Omega_{V,H}^{-\al} \subset V_{k'}^{-\al}$ for some $k,k' \in \Z$.
By Lemma \ref{product}, we can assume that $\1=v((\al,\al)-1)v' \in \Omega_{V,H}^0=\C\1$. Since $v((\al,\al)-1)v' \in V_{k+k'-(\al,\al)}$ and $k,k' \geq \al^2/2$,
 we have $k=k'=\al^2/2$. Thus, $\al \in L'_{V,H}$.
\end{proof}
We now show that $(\al,\be) \in \Z$ and $(\al,\al)\in 2\Z$ for any $\al ,\be \in L'_{V,H}$.
Recall that $M_{V,H}=\{\al \in H\;|\; V_n^\al \neq 0 \text{ for some }n \in \Z \}$.
Let $\al \in L_{V,H}$ such that $\al^2 \neq 0$.
Then, $V_{\al^2/2}^{\pm \al}$ generates the lattice vertex algebra $V_{\Z\al}$
and $V$ is a direct sum of irreducible $V_{\Z\al}$-modules.
Hence, we have:
\begin{cor}
Let $\al \in L_{V,H}$ such that $\al^2 \neq 0$.
Then, $(\al,\be) \in \Z$ for any $\be \in M_{V,H}$.
\end{cor}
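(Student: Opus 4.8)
The plan is to read off the integrality of $(\al,\be)$ from the spectrum of the zero-mode operator $\al(0)$ acting on the irreducible $V_{\Z\al}$-modules occurring in $V$. Recall that, as explained just above the statement, $V_{\al^2/2}^{\pm\al}$ generates a copy of the lattice vertex algebra $V_{\Z\al}$ inside $V$ (this uses Lemma \ref{existence}, which applies since $\al^2\ne 0$ means $\Z\al$ spans the non-degenerate subspace $\C\al\subset H$, together with Theorem \ref{Cartan_max}), and that $V$ decomposes as a direct sum of irreducible $V_{\Z\al}$-modules. The key point to keep track of is that the Heisenberg subspace of this $V_{\Z\al}$ is exactly $\C\al\subset H$, so the operator $\al(0)$ used below is literally the one defining the grading $V=\bigoplus V_n^\gamma$ of $V$ by $H$.

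First I would fix $\be\in M_{V,H}$ and pick a nonzero vector $v\in V_n^\be$ for some $n\in\Z$. Since $V$ is a direct sum of irreducible $V_{\Z\al}$-modules, $v$ lies in one such submodule $N$. Next I would invoke the classification of irreducible modules of the rank-one lattice vertex algebra $V_{\Z\al}$: $N\cong V_{\Z\al+\lambda}$ for some $\lambda\in(\Z\al)^\vee$. On $V_{\Z\al+\lambda}$ the operator $\al(0)$ commutes with all the Heisenberg creation modes $\al(-m)$ with $m>0$, so its eigenvalue on the Fock-type piece built over $\C e^{\mu}$ depends only on $\mu$ and equals $(\al,\mu)$; hence the set of $\al(0)$-eigenvalues on $N$ is $(\al,\Z\al+\lambda)=\al^2\Z+(\al,\lambda)$. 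This set is contained in $\Z$, because $\al^2=(\al,\al)\in 2\Z$ (as $L_{V,H}$ is an even $H$-lattice, $\al$ being in it) and $(\al,\lambda)\in\Z$ (as $\lambda\in(\Z\al)^\vee$).

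Finally I would compare the two descriptions of $\al(0)v$: by the definition of $V_n^\be$ we have $\al(0)v=(\al,\be)v$, while $v\in N$ forces $(\al,\be)$ to be one of the eigenvalues computed above, hence $(\al,\be)\in\al^2\Z+(\al,\lambda)\subset\Z$. I do not expect a genuine obstacle: everything beyond the preceding paragraph is the standard description of the modules of a rank-one lattice vertex algebra and of the zero-mode action on a Fock module, and the proof is just a bookkeeping of $\al(0)$-weights. The only point requiring a little care is the identification, already flagged above, of the ambient operator $\al(0)$ with the Heisenberg zero-mode inside $V_{\Z\al}$, which is what makes the two computations of $\al(0)v$ comparable.
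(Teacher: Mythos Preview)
Your approach is essentially the paper's own: the paper also invokes the fact that $V_{\al^2/2}^{\pm\al}$ generates a copy of $V_{\Z\al}$ and that $V$ decomposes as a direct sum of irreducible $V_{\Z\al}$-modules, and then states the corollary without further detail; you have simply spelled out the bookkeeping of $\al(0)$-eigenvalues that the paper leaves implicit. One small imprecision worth tightening: a nonzero $v\in V_n^\be$ need not lie in a \emph{single} irreducible summand of the decomposition, but since $\al(0)$ preserves each summand and $v$ is an $\al(0)$-eigenvector, any nonzero component of $v$ in a summand $N$ is again an eigenvector with eigenvalue $(\al,\be)$, and your argument then applies verbatim to that component.
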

Set $L_{V,H}^\vee = \{\al \in H\;|\; (\al,\be) \in \Z \text{ for any }\be
\in L_{V,H} \}$.

\begin{cor}
If $L_{V,H}$ is non-degenerate, then $M_{V,H} \subset L_{V,H}^\vee$.
\end{cor}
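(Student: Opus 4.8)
The plan is to reduce the assertion to the preceding corollary, which already proves $(\al,\be)\in\Z$ whenever $\al\in L_{V,H}$ satisfies $\al^2\neq 0$ and $\be\in M_{V,H}$; what remains is to control the isotropic vectors of $L_{V,H}$, and this is exactly where the non-degeneracy hypothesis will be used.

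First I would treat the trivial case $L_{V,H}=0$, in which $L_{V,H}^\vee=H\supseteq M_{V,H}$ and there is nothing to prove, so I may assume $L_{V,H}$ has positive rank. Since $L_{V,H}$ is an even $H$-lattice, its bilinear form is $\Z$-valued; since it is moreover non-degenerate, not all of its vectors can be isotropic: if $(\al,\al)=0$ for every $\al\in L_{V,H}$, then $2(\al,\be)=(\al+\be,\al+\be)-(\al,\al)-(\be,\be)=0$ for all $\al,\be\in L_{V,H}$, contradicting non-degeneracy. So I would fix once and for all a vector $\al_0\in L_{V,H}$ with $\al_0^2\neq 0$.

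Now let $\be\in M_{V,H}$ and $\gamma\in L_{V,H}$ be arbitrary; I must show $(\gamma,\be)\in\Z$. Consider the function $N\mapsto (\gamma+N\al_0)^2=\gamma^2+2N(\gamma,\al_0)+N^2\al_0^2$ on $\Z$, which takes integer values and is a quadratic polynomial in $N$ with nonzero leading coefficient $\al_0^2$; hence it is nonzero for all but at most two integers $N$. Picking such an $N$, the vector $\gamma+N\al_0$ lies in $L_{V,H}$ and has nonzero norm, so the preceding corollary gives $(\gamma+N\al_0,\be)\in\Z$; applying the same corollary to $\al_0$ gives $(\al_0,\be)\in\Z$. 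Subtracting, $(\gamma,\be)=(\gamma+N\al_0,\be)-N(\al_0,\be)\in\Z$. Since $\gamma\in L_{V,H}$ was arbitrary this shows $(\be,L_{V,H})\subseteq\Z$, i.e.\ $\be\in L_{V,H}^\vee$, and since $\be\in M_{V,H}$ was arbitrary we conclude $M_{V,H}\subseteq L_{V,H}^\vee$. The computation itself is completely routine; the only substantive point, and the sole place where non-degeneracy enters, is the existence of the non-isotropic vector $\al_0$, which lets the preceding corollary be applied after an integral shift, so I do not anticipate any real difficulty.
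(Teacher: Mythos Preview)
Your argument is correct. The paper states this corollary without proof, evidently regarding it as immediate from the preceding corollary, and what you have written supplies exactly the details that make this inference rigorous: the non-isotropic vectors of a non-degenerate integral lattice generate it over $\Z$, since any isotropic $\gamma$ can be written as $(\gamma+N\al_0)-N\al_0$ with both summands non-isotropic. This is the natural way to bridge the gap, so your approach matches what the paper implicitly intends.

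One minor comment: the integrality of the coefficients of your quadratic in $N$ is not actually needed---a nonzero degree-two polynomial over $\C$ has at most two roots regardless---so you could omit the sentence invoking the even $H$-lattice structure of $L_{V,H}$ without loss.
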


We will use the following lemma:
\begin{lem}
\label{positive_VOA}
Suppose that $L_{V,H}$ is positive-definite and spans $H$.
Then, $V$ is a VOA (of CFT type) and $M_{V,H}$ is a subgroup of $L_{V,H}^\vee$.
\end{lem}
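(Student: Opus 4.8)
The plan is to reduce the assertion to the statement that $(V,\omega)$ is a VOA of CFT type, and then to verify the CFT axioms directly from the weight--charge bigrading. The claim about $M_{V,H}$ is quick: a positive-definite lattice is non-degenerate, so the Corollary immediately preceding the statement gives $M_{V,H}\subset L_{V,H}^\vee$, and since $0\in M_{V,H}$ (because $V_0^0=\C\1$) and $M_{V,H}$ is closed under $\al\mapsto-\al$ and under addition by the earlier Corollary, $M_{V,H}$ is a subgroup of the group $L_{V,H}^\vee$. Since the conformal vertex algebra structure of $(V,\omega)$ is part of the hypothesis, it remains to check axioms VOA1)--VOA3).

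First I would record the relevant lattice geometry. As $L_{V,H}$ is positive-definite and spans $H$ over $\C$, a $\C$-basis of $H$ contained in $L_{V,H}$ is automatically $\mathbb{R}$-linearly independent, whereas the real form $\mathrm{Re}(-,-)$ on $H$ has signature $(\dim_\C H,\dim_\C H)$; hence the $\mathbb{R}$-span $E$ of $L_{V,H}$ satisfies $\dim_{\mathbb{R}}E=\dim_\C H$ and $H=E\otimes_{\mathbb{R}}\C$. Therefore $L_{V,H}$ is a full-rank positive-definite lattice in $E$, and so is its dual $L_{V,H}^\vee\subset E$. In particular $(\al,\al)\ge0$ for every $\al\in M_{V,H}\subset L_{V,H}^\vee$, with equality only for $\al=0$, and for each real number $c$ the set $\{\al\in L_{V,H}^\vee\mid(\al,\al)\le c\}$ is finite.

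Next I would use the bigrading $V=\bigoplus_{n\in\Z,\ \al\in M_{V,H}}V_n^\al$, which is valid because the operators $h(0)$, $h\in H$, form a commuting semisimple family by Assumption (A2) and $L(0)$ acts semisimply with integer eigenvalues by the conformal axioms. Assumption (A4) states that $V_n^\al\ne0$ forces $n\ge(\al,\al)/2$, and the right-hand side is $\ge0$ by the previous paragraph. Hence $V_n=0$ for $n\le-1$, which is VOA1); for $n=0$ it forces $(\al,\al)=0$, i.e. $\al=0$, so $V_0=V_0^0=\C\1$ by Assumption (A5), which is VOA2); and for $n\ge0$ any $\al$ with $V_n^\al\ne0$ satisfies $\al\in L_{V,H}^\vee$ and $(\al,\al)\le2n$, so only finitely many such $\al$ occur, while each $V_n^\al$ is finite-dimensional by Assumption (A6), giving $\dim V_n<\infty$, which is VOA3). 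This proves that $V$ is a VOA of CFT type, and together with the first paragraph the lemma follows.

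The only point requiring care is the geometric reduction in the second paragraph: one must observe that a positive-definite lattice spanning the complex space $H$ is necessarily of full rank $\dim_\C H$ (its real span cannot be larger, since the real form $\mathrm{Re}(-,-)$ has signature $(\dim_\C H,\dim_\C H)$ and so carries no positive-definite subspace of larger dimension), which is what makes $L_{V,H}^\vee$ an honest full-rank positive-definite lattice and lets the finiteness-of-short-vectors argument run. Everything else is routine bookkeeping with the bigrading and Assumptions (A4)--(A6), so I anticipate no substantive obstacle.
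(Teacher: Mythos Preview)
Your proof is correct and follows essentially the same approach as the paper's: both use the inclusion $M_{V,H}\subset L_{V,H}^\vee$ from the preceding corollary, then deduce that the bilinear form is positive-definite on $M_{V,H}$, and finally verify VOA1)--VOA3) from (A4)--(A6) together with the finiteness of short vectors in a positive-definite lattice. The paper is terser, simply asserting $L_{V,H}^\vee\subset\mathrm{span}_{\mathbb{R}}L_{V,H}$ without argument, whereas you unpack carefully why the real span has dimension exactly $\dim_\C H$ and why $L_{V,H}^\vee$ is a genuine full-rank lattice there; this extra care is not strictly necessary but is a welcome clarification of a point the paper leaves implicit.
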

\begin{proof}
Since $L_{V,H}$ is non-degenerate and spans $H$,
$M_{V,H} \subset L_{V,H}^\vee \subset \mathrm{span}_{\mathbb{R}} L_{V,H}$, which implies that 
$M_{V,H}$ is positive-definite.
If $0 \neq \al \in M_{V,H}$, $V_n^\al =0$ for any $n \leq 0$.
Hence, $V=\bigoplus_{n \geq 0} V_n$ and $V_0=\C\1$.
Since $\{ \al \in M_{V,H} \,|\; \al^2 < n  \}$ is a finite set for any $n > 0$,
$\dim V_n$ is finite.
\end{proof}

Now, we will show Theorem \ref{Cartan_Lie}:
\begin{proof}[proof of Theorem \ref{Cartan_Lie}]
Let $0 \neq e \in V_1^\al$ and $f \in V_1^{-\al}$ such that $(e,f)=1$.
Since $\al^2 >0$, $L(n)e \in V_{1-n}^\al=0$ for any $n \geq 1$.
Since $e(0)f \in V_1^0=H$ and $(h,e(0)f)=(h,\al)(e,f)=(h,\al)$, we have $e(0)f=\al$.
Since $V_{-n}^{2\al}=0$ for any $n \geq 0$, by skew-symmetry of vertex algebras, we have $e(0)e=-e(0)e+Te(1)e+\cdots=-e(0)e$.
Thus, $\{ e, \frac{2}{\al^2}f,\frac{2}{\al^2}\al \}$ is a $sl_2$-triple under the $0$-th product.
Similarly, we have $e(n)e=f(n)f=0, h(n)e=h(n)f=0$ for any $n \geq 1$.
Since $1=(e,f)=-(\1,e(1)f)$, we have $e(1)f=\1$. 
Thus, $\{ e, \frac{2}{\al^2}f,\al\}$  generates an affine vertex algebra $W$ of level $k=2/(\al,\al)$.
Since $e(-1)^n\1 \in V_{n}^{n\al}=0$ for any $n^2\al^2 >n$,
$W$ is isomorphic to the simple affine vertex algebra $L_{sl_2}(k,0)$ and the level $k$ is a positive integer \cite[Theorem 3.1.2]{FZ}.
We will show that $\dim V_1^{-\al} =1$.
Let $f' \in V_1^{-\al}$ such that $(e,f')=0$. Then, $e(0)f'=0$ follows from the computation presented above.
Since $V_1^{\pm n\al}=0$ for sufficiently large $n$, 
$f(0)^n f'=0$. Hence, $f'=0$ follows from the representation theory of the Lie algebra $sl_2$.
Hence, $\dim V_1^{-\al}=1$.
Since $\dim V_1^{\pm \al}=1$, for any $k \in \Z$ with $k\neq \pm 1 ,0$, $\dim V_1^{k\al}=0$ follows from the representation theory of the Lie algebra $sl_2$ again.
Finally, suppose $V_n^\be \neq 0$ for $\be$ in $M_{V,H}$ with $n \geq \frac{(\be,\be)}{2} > n-1$.
Then, by Lemma \ref{tensor_A}, $V \otimes V_\tw$ satisfies Assumption (A).
Let $\gamma \in \tw$ such that $\frac{(\gamma,\gamma)}{2}=-n+1$.
Since $0 \neq V_n^\be \otimes {V_\tw}_{-n+1}^\gamma   \subset (V\otimes V_\tw)_1^{(\be,\gamma)}$
and $1\geq \frac{((\be,\gamma),(\be,\gamma))}{2}=\frac{(\be,\be)}{2}+\frac{(\gamma,\gamma)}{2} >0$,
by the above result, we have $\dim  (V\otimes V_\tw)_1^{(\be,\gamma)} =1$.
Thus, $\dim V_n^\be=1$.
\end{proof}

For $\al \in M_{V,H}$ with $\al^2 >0$ and $V_1^\al \neq 0$,
define $r_\al:H\rightarrow H$ by $$r_\al(h)=h-2\frac{(h,\al)}{\al^2}\al,$$
and
set $$\tilde{r}_\al=\exp(\frac{2}{\al^2}f(0))\exp(-e(0))\exp(\frac{2}{\al^2}f(0)),
$$
where $e$ and $f$ is the same in the proof of Theorem \ref{Cartan_Lie}.
The following Corollary follows from the representation theory of affine $sl_2$
and Theorem \ref{Cartan_Lie}:
\begin{cor}
\label{affine_reflection}
For $\al \in M_{V,H}$ with $\al^2>0$ and $V_1^\al \neq 0$,
 $\tilde{r}_\al$ is a vertex algebra automorphism of $V$ satisfying the following conditions:
\begin{enumerate}
\item
$\tilde{r}_\al(\omega)=\omega$;
\item
$\tilde{r}_\al(H)=H$;
\item
$\tilde{r}_\al|_H=r_\al$.
\end{enumerate}
In particular, the reflection $r_\al$ is an automorphism of the maximal lattice 
$L_{V,H}$. 
\end{cor}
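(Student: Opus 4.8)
The plan is to exhibit $\tilde{r}_\al$ as a composition of three vertex algebra automorphisms of exponential type and then to read off its action on $H$ from the $sl_2$-triple $\{e,\tfrac{2}{\al^2}f,\tfrac{2}{\al^2}\al\}$ produced in the proof of Theorem \ref{Cartan_Lie}. First I would check that $e(0)$ and $f(0)$ are locally nilpotent derivations of $V$. That each is a derivation of every $n$-th product is the $m=0$ case of the commutator formula, $[a(0),b(n)]=(a(0)b)(n)$, and both annihilate $\1$. For local nilpotence, note that $e\in V_1^\al$ forces $e(0)^m$ to map $V_n^\be$ into $V_n^{\be+m\al}$; since $\al^2>0$, the quantity $\tfrac{(\be+m\al)^2}{2}$ grows without bound in $m$, so by $(A4)$ this space vanishes for all large $m$, and symmetrically for $f(0)$. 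A locally nilpotent derivation of a vertex algebra exponentiates to an automorphism, so $\exp(\lambda e(0))$ and $\exp(\lambda f(0))$ are well-defined vertex algebra automorphisms for every $\lambda\in\C$, and hence so is $\tilde{r}_\al$. I expect this local-nilpotence claim to be the main point to get right, since it is precisely what makes the three exponentials defined in the first place; everything afterwards is a bounded computation.

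For $(1)$: from $L(0)e=e$ and $L(n)e=0$ for $n\geq 1$ (valid since $\al^2>0$), skew-symmetry gives $e(0)\omega=-\omega(0)e+T\,\omega(1)e=-Te+Te=0$, the higher terms vanishing as $L(n)e=0$ for $n\geq 1$; likewise $f(0)\omega=0$. Hence every exponential factor fixes $\omega$, so $\tilde{r}_\al(\omega)=\omega$.

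For $(2)$ and $(3)$: for $h\in H$, skew-symmetry together with $h(i)e=0$ for $i\geq 1$ gives $e(0)h=-h(0)e=-(h,\al)e$, and similarly $f(0)h=(h,\al)f$; combined with $e(0)e=0$, $e(0)f=\al$, $f(0)f=0$, $f(0)e=-\al$ coming from the $sl_2$-triple, the finite-dimensional subspace $U=H\oplus\C e\oplus\C f\subset V_1$ is stable under $e(0)$ and $f(0)$, hence under $\tilde{r}_\al$. Decompose $H=\C\al\oplus\al^\perp$ with $\al^\perp=\{h\in H\mid (h,\al)=0\}$. Both $e(0)$ and $f(0)$ kill $\al^\perp$, so $\tilde{r}_\al$ restricts to the identity there, agreeing with $r_\al$. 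On the $3$-dimensional adjoint module $\C e\oplus\C\al\oplus\C f$, a direct computation with $c=\tfrac{2}{\al^2}$ gives $\exp(cf(0))\al=\al+2f$, then $\exp(-e(0))(\al+2f)=-\al+2f$, then $\exp(cf(0))(-\al+2f)=-\al$, so $\tilde{r}_\al(\al)=-\al$. Therefore $\tilde{r}_\al(H)\subseteq H$, and $\tilde{r}_\al|_H$ acts as $-1$ on $\C\al$ and $+1$ on $\al^\perp$, i.e.\ $\tilde{r}_\al|_H=r_\al$; in particular $\tilde{r}_\al(H)=H$.

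For the final assertion, since $\tilde{r}_\al\in\Aut V$ with $\tilde{r}_\al(H)=H$, Lemma \ref{omega_auto} shows $\tilde{r}_\al$ induces an automorphism of the AH pair $\Omega(V,H)$, hence of the lattice pair $\Omega(V,H)^\lat$ and thus of its underlying even $H$-lattice $L_{V,H}\subset H$; this induced automorphism is the restriction of $\tilde{r}_\al|_H=r_\al$, so $r_\al(L_{V,H})=L_{V,H}$.
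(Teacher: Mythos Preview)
Your proof is correct and is essentially the approach the paper intends: the paper states only that the corollary ``follows from the representation theory of affine $sl_2$ and Theorem \ref{Cartan_Lie}'' without further detail, and you have unpacked exactly that---using the $sl_2$-triple from Theorem \ref{Cartan_Lie}, establishing local nilpotence of $e(0),f(0)$ (which you derive directly from (A4) rather than from integrability of the $\widehat{sl}_2$-module, a slightly more self-contained route), and computing the action of the exponentiated Weyl element on $H$. The content is the same; you have simply made explicit what the paper leaves as a reference to standard theory.
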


\begin{lem}
If $\al^2 >0$ and $V_1^\al \neq 0$, then $\frac{2}{\al^2}\al \in L_V$.
\end{lem}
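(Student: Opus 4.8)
The plan is to produce an explicit nonzero vector lying in $V_{(k\al)^2/2}^{k\al}$, where $k=\tfrac{2}{\al^2}$, and then to invoke Theorem \ref{Cartan_max}, which identifies $L_V=L_{V,H}$ with $L'_{V,H}=\{\be\in H\mid V_{\be^2/2}^\be\neq 0\}$. Since $\tfrac{2}{\al^2}\al=k\al$, once we know $V_{(k\al)^2/2}^{k\al}\neq 0$ we immediately conclude $\tfrac{2}{\al^2}\al=k\al\in L'_{V,H}=L_V$, which is the assertion.

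Concretely, I would start from Theorem \ref{Cartan_Lie}: since $\al^2>0$ and $V_1^\al\neq 0$, there is a nonzero $e\in V_1^\al$ which, together with the corresponding $f\in V_1^{-\al}$ and $\al\in H$, generates a vertex subalgebra $W\subset V$ isomorphic to the simple affine vertex algebra $L_{sl_2}(k,0)$ with $k=\tfrac{2}{\al^2}\in\Z_{>0}$. I then consider the vector $e(-1)^k\1\in W\subset V$. Since $e$ has $L(0)$-weight $1$ and $H$-charge $\al$ (that is, $h(0)e=(h,\al)e$), and each application of $e(-1)$ raises the $L(0)$-weight by $1$ and the $H$-charge by $\al$, this vector lies in $V_k^{k\al}$; and using $\al^2=\tfrac{2}{k}$ one computes $\tfrac{(k\al,k\al)}{2}=\tfrac{k^2\al^2}{2}=k$, so in fact $e(-1)^k\1\in V_{(k\al)^2/2}^{k\al}$. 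Since $W$ is a vertex subalgebra of $V$, this product computed in $V$ agrees with the one computed in $W$. It therefore remains only to check that $e(-1)^k\1\neq 0$ in $L_{sl_2}(k,0)$.

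For this nonvanishing I would use the structure of $L_{sl_2}(k,0)$. It is the quotient of the level-$k$ Weyl (vacuum) module $\widehat V$ of $\widehat{sl_2}$ by its maximal proper submodule $J$, and $J$ is generated by the singular vector $e(-1)^{k+1}\1$; since $L(0)$ acts semisimply and $J$ is generated by a vector of conformal weight $k+1$ annihilated by all positive modes, $J$ is concentrated in conformal weights $\geq k+1$. On the other hand $e(-1)^k\1$ is nonzero in $\widehat V$ by the Poincar\'e--Birkhoff--Witt theorem and has conformal weight $k$, hence it maps to a nonzero vector in $L_{sl_2}(k,0)\cong W$. This last point is the main obstacle: all the rest is weight bookkeeping combined with Theorems \ref{Cartan_max} and \ref{Cartan_Lie}, but the inequality $e(-1)^k\1\neq 0$ genuinely relies on knowing that the first singular vector of the level-$k$ vacuum module of $\widehat{sl_2}$ occurs at conformal weight $k+1$, which is precisely the input (cf.\ \cite{FZ}) already used in the proof of Theorem \ref{Cartan_Lie} to identify $W$ with $L_{sl_2}(k,0)$ at a positive integer level.
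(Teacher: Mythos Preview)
Your proposal is correct and follows essentially the same route as the paper: both arguments take a nonzero $e\in V_1^\al$, set $k=\tfrac{2}{\al^2}$, observe that $e(-1)^k\1\in V_k^{k\al}$ with $(k\al)^2/2=k$, verify that this vector is nonzero, and conclude via Theorem~\ref{Cartan_max}. The paper is terse (it simply writes ``by the representation theory of the Lie algebra $sl_2$, $0\neq e(-1)^k\1\in V_k^{k\al}$''), whereas you spell out the nonvanishing via the location of the first singular vector in the level-$k$ vacuum module of $\widehat{sl_2}$; this is exactly the input from \cite{FZ} already invoked in the proof of Theorem~\ref{Cartan_Lie}, so your elaboration is consistent with the paper's intent.
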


\begin{proof}
Let $e \in V_1^\al$ be a nonzero element
 and set $k=2/\al^2$.
By the representation theory of the Lie algebra $sl_2$,
$0 \neq e(-1)^k\1 \in V_k^{k\al}$.
Since $(k\al)^2/2=k$, we have $k\al \in L_V$.
\end{proof}

We end this subsection \ref{sec_str} by mentioning a generalization
of the theory of genera of VH pairs to the triples $(V,\omega,H)$.
The results in Section \ref{sec_genus} are also valid for the triple $(V,\omega,H)$ with minor changes.

\begin{lem}
\label{tensor_A}
Let $(V,\omega_V, H_V)$ and $(W,\omega_W,H_W)$ satisfy Assumption (A).
Then, $(V\otimes W , \omega_V+\omega_W,H_V\oplus H_W)$ satisfy Assumption (A).
\end{lem}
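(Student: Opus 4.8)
The plan is to verify the seven conditions (A1)--(A7) for the triple $(V\otimes W,\omega_V+\omega_W,H_V\oplus H_W)$ one by one, using that each factor satisfies Assumption (A) together with the standard compatibility of the Virasoro, Heisenberg, and grading structures with the tensor product. First I would fix notation: write $L^V,L^W,L$ for the Virasoro operators attached to $\omega_V,\omega_W,\omega_V+\omega_W$, so that $L(n)=L^V(n)\otimes\mathrm{id}+\mathrm{id}\otimes L^W(n)$; similarly the $\widehat{H_V\oplus H_W}$-action on $V\otimes W$ is the sum of the actions on the two factors, and the bilinear form on $H_V\oplus H_W$ is the orthogonal direct sum of the two non-degenerate forms, hence non-degenerate. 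Then (A1) is immediate from $L(n)(h\otimes\1)=L^V(n)h\otimes\1=0$ for $h\in H_V$, $n\geq 1$ (and symmetrically), and $L(0)(h\otimes\1)=h\otimes\1$; (A2) follows since $h(0)$ acts as $h(0)\otimes\mathrm{id}$ or $\mathrm{id}\otimes h(0)$, a tensor of a semisimple operator with the identity, hence semisimple, and these commute so the whole of $(H_V\oplus H_W)(0)$ acts semisimply.

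The key structural observation, which I would record explicitly, is the decomposition
\begin{equation}
(V\otimes W)_n^{\al\oplus\be}=\bigoplus_{\substack{p+q=n}}V_p^\al\otimes W_q^\be,\nonumber
\end{equation}
valid because $L(0)$ and $(H_V\oplus H_W)(0)$ act diagonally with the indicated eigenvalues on $V_p^\al\otimes W_q^\be$. From this, $M_{V\otimes W,H_V\oplus H_W}=M_{V,H_V}\oplus M_{W,H_W}$ inside $H_V\oplus H_W$, and (A3) follows since $(\al\oplus\be,\al'\oplus\be')=(\al,\al')+(\be,\be')\in\mathbb{R}$. For (A4): if $\frac{(\al\oplus\be,\al\oplus\be)}{2}=\frac{(\al,\al)}{2}+\frac{(\be,\be)}{2}>n$, then in every summand $V_p^\al\otimes W_q^\be$ with $p+q=n$ we must have either $p<\frac{(\al,\al)}{2}$ or $q<\frac{(\be,\be)}{2}$ (otherwise $p+q\geq\frac{(\al,\al)}{2}+\frac{(\be,\be)}{2}>n$), so by (A4) for each factor the summand vanishes, giving $(V\otimes W)_n^{\al\oplus\be}=0$. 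For (A5): the degree-zero, weight-zero space is $V_0^0\otimes W_0^0=\C\1\otimes\1$ by (A5) for each factor and the fact that summands with $p<0$ or $q<0$ vanish by (A4); and the weight-one space $(V\otimes W)_1^0=(V_1^0\otimes W_0^0)\oplus(V_0^0\otimes W_1^0)=H_V\otimes\C\1\oplus\C\1\otimes H_W$, which is exactly $H_V\oplus H_W$. Condition (A6) is clear: each $(V\otimes W)_n^{\al\oplus\be}$ is a finite direct sum of tensor products of finite-dimensional spaces by (A6) for each factor, using (A4) to bound the range of $p,q$.

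The main obstacle is (A7): self-duality of the tensor product as a module, i.e. $(V\otimes W)_{H_V\oplus H_W}^\vee\cong V\otimes W$. The plan is to show that the restricted dual of the tensor product is naturally the tensor product of the restricted duals, $(V\otimes W)_{H_V\oplus H_W}^\vee\cong V_{H_V}^\vee\otimes W_{H_W}^\vee$, as $V\otimes W$-modules, and then apply (A7) for each factor. The grading-space decomposition above shows the underlying vector spaces agree, since each graded piece is finite-dimensional (by (A6) and (A4) for each factor) so dualizing commutes with the finite direct sum and with the tensor product; the point requiring care is that the module structure $Y_{(V\otimes W)^\vee}$, defined via the formula $f\mapsto f(Y(e^{L(1)z}(-z^{-2})^{L(0)}(-),z^{-1})(-))$ with $L=L^V+L^W$, factors through the tensor product of the two module structures $Y_{V^\vee}$ and $Y_{W^\vee}$. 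This follows because $e^{L(1)z}(-z^{-2})^{L(0)}$ acting on $a\otimes b$ equals $\bigl(e^{L^V(1)z}(-z^{-2})^{L^V(0)}a\bigr)\otimes\bigl(e^{L^W(1)z}(-z^{-2})^{L^W(0)}b\bigr)$ since $L^V$ and $L^W$ commute, and $Y_{V\otimes W}(a\otimes b,z)=Y_V(a,z)\otimes Y_W(b,z)$; the local nilpotence of $L(1)$ needed to make the formula well-defined holds by the argument already given in the text before Definition \ref{Cartan} (namely $L(1)$ commutes with $(H_V\oplus H_W)(0)$ and each $M$-graded piece is, by (A4), bounded below in $L(0)$-degree). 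Once $(V\otimes W)_{H_V\oplus H_W}^\vee\cong V_{H_V}^\vee\otimes W_{H_W}^\vee\cong V\otimes W$ is established, (A7) is proved and the verification is complete.
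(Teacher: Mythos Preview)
Your proposal is correct and follows essentially the same approach as the paper: the paper records the decomposition $(V\otimes W)_n^{(\al,\al')}=\bigoplus_{k+k'=n}V_k^\al\otimes W_{k'}^{\al'}$, declares (A1)--(A6) easy from it, and for (A7) invokes finite-dimensionality of the graded pieces to identify $(V\otimes W)_{H_V\oplus H_W}^\vee$ with $V_{H_V}^\vee\otimes W_{H_W}^\vee$. Your write-up simply makes explicit the details the paper suppresses, including the compatibility of the contragredient module structure with the tensor factorization via $e^{L(1)z}(-z^{-2})^{L(0)}=e^{L^V(1)z}(-z^{-2})^{L^V(0)}\otimes e^{L^W(1)z}(-z^{-2})^{L^W(0)}$, which the paper leaves implicit.
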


\begin{proof}
Let $\al \in H_V$ and $\al' \in H_W$.
Then, $(V\otimes W)_n^{(\al,\al')}=\bigoplus_{n=k+k'} V_k^\al \otimes W_{k'}^{\al'}$.
Hence, Definition $(A1),\ldots, (A6)$ is easy to verify.
Since $(V\otimes W)_n^{(\al,\al')}$ is finite dimensional,
$(V\otimes W)_{H_V\oplus H_W}^* \cong V_{H_V}^* \otimes {W}_{H_W}^* \cong V \otimes W$.
\end{proof}

The following lemma is clear from the definition:
\begin{prop}
\label{lattice_vertex_Cartan}
Let $L$ be a non-degenerate even lattice and $V_L$ the lattice conformal vertex algebra. Set $H_L=L \otimes_\Z \C \subset (V_L)_1$. Then, $(V_L,\omega_{H_L},{H_L})$ satisfy
Assumption (A).
\end{prop}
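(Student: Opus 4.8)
Let $L$ be a non-degenerate even lattice and $V_L$ the lattice conformal vertex algebra. Set $H_L = L \otimes_\Z \C \subset (V_L)_1$. Then $(V_L, \omega_{H_L}, H_L)$ satisfies Assumption (A).

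The plan is to verify the seven conditions (A1)--(A7) directly from the standard description of the lattice vertex algebra $V_L = M_{H_L}(0) \otimes \C\{L\}$, using the explicit grading by $L(0)$-eigenvalue and by the $H_L(0)$-action. First I would recall that as a vector space $V_L = \bigoplus_{\al \in L} M_{H_L}(0) \otimes e_\al$, that the conformal vector is the Sugawara vector $\omega_{H_L}$ (this is the standard conformal vector on a lattice VOA since $L$ is non-degenerate, so the Heisenberg algebra on $H_L$ has a good conformal structure), and that $L(0)$ acts on $M_{H_L}(0)\otimes e_\al$ with eigenvalue (Heisenberg weight) $+\ (\al,\al)/2$. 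The element $h(0)$ for $h \in H_L$ acts on $M_{H_L}(0)\otimes e_\al$ by the scalar $(h,\al)$. With these two facts the subspaces $V_n^\al$ of Section~\ref{sec_str} are identified: $V_n^\al$ is the weight-$(n - (\al,\al)/2)$ subspace of $M_{H_L}(0)$ tensored with $e_\al$, so $M_{V_L,H_L} = L$.

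With this dictionary in hand the conditions become routine. For (A1): $H_L \subset (V_L)_1$ is built from Heisenberg generators $h(-1)\1$, on which $L(n) = \omega_{H_L}(n+1)$ acts as the Heisenberg grading shift, giving $L(0)h = h$ and $L(n)h = 0$ for $n\ge 1$; this is immediate from the Sugawara formulas already computed in Lemma~\ref{vac_omega}. For (A2): each $h(0)$ is diagonalized by the $L$-grading, with eigenvalues $(h,\al)$, $\al \in L$. For (A3): $(\al,\be)\in\Z\subset\mathbb{R}$ for $\al,\be\in L$ since $L$ is integral (even lattices are integral). For (A4): on $M_{H_L}(0)\otimes e_\al$ the lowest $L(0)$-weight is exactly $(\al,\al)/2$, so $V_n^\al = 0$ for $n < (\al,\al)/2$. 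For (A5): $V_0^0$ is the weight-$0$ part of $M_{H_L}(0)$, namely $\C\1$, and $V_1^0$ is the weight-$1$ part of $M_{H_L}(0)$, which is exactly $H_L = \{h(-1)\1\}$. For (A6): each $V_n^\al$ is a finite-dimensional weight space of a free-field Fock module, hence finite-dimensional.

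The only condition requiring genuine (though standard) work is (A7): $(V_L)_{H_L}^\vee \cong V_L$ as a $V_L$-module. The approach is to exhibit the non-degenerate invariant bilinear form on $V_L$ coming from the standard contravariant form on the Fock space $M_{H_L}(0)$ paired with the natural pairing $e_\al \leftrightarrow e_{-\al}$ on $\C\{L\}$ (normalized so that $(\1,\1)$ is fixed and compatible with the twisted-group-algebra cocycle). Concretely, one checks $L(1) H_L = 0$, which gives self-duality in the sense of Li's criterion (Theorem~\ref{Li}) for the underlying VOA, and then observes that the restricted dual $V_{H_L}^\vee$ — which is legitimate because, by (A4), each $V_n^\al$ vanishes for $n$ small so $L(1)$ is locally nilpotent on each $H_L(0)$-eigenspace — carries precisely the module structure induced by this form. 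The main obstacle, such as it is, will be bookkeeping with the twisted group algebra cocycle $f_L$ to confirm the form is genuinely non-degenerate and invariant (not merely on $M_{H_L}(0)$ but across the $e_\al$ sectors); this is exactly the computation underlying the well-known fact that lattice VOAs for non-degenerate $L$ are self-dual, so I would cite the relevant statements from \cite{FLM,LL} rather than redo it, and then invoke the lemma stated just before Definition~\ref{Cartan} to conclude $V_{H_L}^\vee$ is a $V_L$-module isomorphic to $V_L$.
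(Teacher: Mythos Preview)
Your verification is correct and considerably more thorough than the paper, which simply declares the proposition ``clear from the definition'' and gives no argument at all. One caveat on (A7): you invoke Theorem~\ref{Li}, but that theorem as stated applies only to VOAs of CFT type, which $V_L$ is not when $L$ is indefinite (negative $L(0)$-weights occur). Your fallback---citing the standard self-duality of lattice vertex algebras from \cite{FLM,LL} via the explicit contravariant form pairing the $e_\al$ sector with the $e_{-\al}$ sector, together with the lemma just before Definition~\ref{Cartan} ensuring $V_{H_L}^\vee$ is a well-defined $V_L$-module---is the correct route and suffices on its own; simply drop the appeal to Theorem~\ref{Li}.
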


Results in Section \ref{sec_genus} are obtained as follows.
Hereafter, we assume that any triple $(V,\omega,H)$ satisfy Assumption (A).
By Lemma \ref{tensor_A}, the category of triples $(V,\omega,H)$ forms a symmetric monoidal category,
where a morphism from $(V,\omega_V,H_V)$ to $(W,\omega_W,H_W)$ is a vertex algebra homomorphism $f:V\rightarrow W$
such that $f(\omega_V)=\omega_W$ and $f(H_V)=H_W$.
We also remark that Lemma \ref{lattice_correspondence} can be generalized to triples $(V,\omega,H)$.
Hence, we can define a genus of $(V,\omega,H)$ similarly to Section \ref{sec_genus}.
More precisely, 
triples $(V,\omega_V,H_V)$ and $(W,\omega_W,H_W)$ are in the same
genus if there exists a vertex algebra isomorphism $f:V \otimes V_\tw \rightarrow 
W \otimes V_\tw$ such that $f(\omega_{V\otimes V_\tw})=\omega_{W\otimes V_\tw}$ and $f(H_{V \otimes V_\tw})=H_{W \otimes V_\tw}$.
Denote by $\gen(V,\omega_V,H_V)$ the genus of $(V,\omega_V,H_V)$.
Denote by $G_{(V,\omega_V,H_V)}$ the image of automorphisms $f \in \Aut (V)$ such that $f(\omega_V)=\omega_V$ and $f(H_V)=H_V$ in $\Aut L_{V,H_V}$.
If $L_V$ is positive-definite and $[\Aut \widetilde{L_V} : G_{(\Vt,\tilde{\omega}),H_{\Vt}}]$ is finite, then the mass of $\gen(V,\omega_V,H_V)$ is defined to be
$$\mass(V,\omega_V,H_V) = \sum_{(V',{\omega'}_V',H_{V'}) \in \gen(V,\omega_V,H_V)} \frac{1}{|G_{(V',{\omega'}_{V'},H_{V'})}|}.$$

Then, we have:
\begin{thm}
\label{mass}
$\mass(L_V)[\Aut \widetilde{L_V} : G_{(\Vt,\tilde{\omega}),H_{\Vt}}]=\mass(V,
\omega_V, H_V)$.
\end{thm}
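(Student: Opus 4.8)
The plan is to transcribe the proof of Theorem~\ref{mass_vertex} to the present setting, replacing ``good VH pair'' throughout by ``triple satisfying Assumption (A)''. By Lemma~\ref{Cartan_simple}, every triple $(V,\omega,H)$ satisfying Assumption (A) is in particular a good VH pair, so the maximal lattice $L_{V,H}$, the subgroup $G_{(V,\omega,H)}\subset \Aut L_{V,H}$, and the genus $\gen(V,\omega,H)$ are defined exactly as in Section~\ref{sec_genus}. Since Lemma~\ref{tensor_A} shows that these triples form a symmetric monoidal category closed under $-\otimes V_\tw$ (note $V_\tw$ itself satisfies Assumption (A), being a self-dual lattice vertex algebra), the notion of genus behaves as before, and by Corollary~\ref{tensor_max} together with Lemma~\ref{def_genus} two triples in the same genus have maximal lattices in the same genus of lattices.

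The one genuinely new ingredient is the conformal refinement of Lemma~\ref{coset_lattice} and Lemma~\ref{lattice_correspondence}. I would prove: if $M$ is a unimodular sublattice of $L_{V,H}$, $H'=\C\otimes_{\Z}M\subset H$, $H''=(H')^\perp\cap H$, and $\omega''=\omega-\omega_{H'}$ with $\omega_{H'}$ the Sugawara vector of the sub-Heisenberg $H'$ (which is the conformal vector of $V_M$ since $M$ spans $H'$), then $\omega''$ is a conformal vector of $C_V(V_M)=R_{V_M}(V)$ and the triple $(C_V(V_M),\omega'',H'')$ again satisfies Assumption (A), with $V\cong V_M\otimes C_V(V_M)$ as triples. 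Conditions (A1)--(A6) follow from the tensor decomposition $V_n^\al=\bigoplus_{j+l=n}(V_M)_{j}^{0}\otimes (C_V(V_M))_{l}^{\al}$ for $\al\in H''$ together with $V_0^0=\C\1$ and $V_1^0=H$, exactly as in the analogous computation for cosets; on $C_V(V_M)$ one has $L(n)=\omega''(n+1)$ for all $n\geq 0$ since $\omega_{H'}(n+1)$ kills $C_V(V_M)$. Condition (A7) follows because the invariant form of $V$ restricts to a non-degenerate $\omega''$-invariant form on $C_V(V_M)$, equivalently because the restricted dual factorizes ($(V_M\otimes C_V(V_M))^\vee\cong V_M^\vee\otimes C_V(V_M)^\vee$, each graded piece being finite-dimensional) and $V_M$ is self-dual. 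Specializing to $M=S\in S_\tw(\widetilde{L_V})$ gives a one-to-one correspondence between rank-$2$ sublattices of $L_V$ isometric to $\tw$ and decompositions $V\cong V_\tw\otimes V'$ of triples.

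With this in place the remainder is a faithful replay of Section~\ref{sec_mass}. As in Proposition~\ref{coset_correspondence}, the assignment $S\mapsto C_S:=C_{\Vt}(V_S)$ induces a bijection between $G_{\Vt}\backslash S_\tw(\widetilde{L_V})$ and the set of isomorphism classes of triples in $\gen(V,\omega,H)$; the point to verify is that an automorphism of $\Vt$ fixing $H_{\Vt}$ and $\tilde\omega$ and taking $V_S$ to $V_{S'}$ restricts to an isomorphism $C_S\to C_{S'}$ of triples, which holds because such an automorphism preserves $H'=\C\otimes_{\Z}S$, hence fixes $\omega_{H'}$, hence fixes $\omega''=\tilde\omega-\omega_{H'}$. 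Lemma~\ref{group_stab_lattice} is unchanged, and in Lemma~\ref{group_stab} the same lifting argument gives $\mathrm{St}_{\Aut \widetilde{L_V}}(S)\cap G_{\Vt}\cong G_{(C_S,\omega'',H'')}\times \Aut \tw$. The elementary double-coset count of Proposition~\ref{mass_fix} then yields, for each $L_0\in\gen(L_V)$,
\begin{align*}
\frac{[\Aut \widetilde{L_V}:G_{\Vt}]}{|\Aut L_0|}=\sum_{\substack{(V',\omega',H')\in \gen(V,\omega,H)\\ L_{V'}\cong L_0}}\frac{1}{|G_{(V',\omega',H')}|},
\end{align*}
and summing over $L_0\in\gen(L_V)$ gives $\mass(L_V)[\Aut \widetilde{L_V}:G_{\Vt}]=\mass(V,\omega_V,H_V)$.

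The main obstacle is the new ingredient in the second paragraph, namely verifying that $(C_V(V_M),\omega'',H'')$ genuinely satisfies Assumption (A)---in particular the self-duality condition (A7) for its restricted dual, and the fact that $V\cong V_M\otimes C_V(V_M)$ is an isomorphism of triples and not merely of vertex algebras. Everything after that is a mechanical transcription of the arguments already established for good VH pairs, with the single extra bookkeeping item of carrying the splitting $\tilde\omega=\omega_{H'}+\omega''$ through the automorphism-lifting steps.
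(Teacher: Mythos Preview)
Your proposal is correct and follows precisely the paper's own approach: the paper does not give a separate proof of Theorem~\ref{mass} but simply states that the results of Section~\ref{sec_genus} carry over to triples satisfying Assumption~(A), noting that Lemma~\ref{lattice_correspondence} generalizes and that the category of such triples is symmetric monoidal by Lemma~\ref{tensor_A}. Your writeup fills in exactly the details the paper leaves implicit, including the verification that the coset $(C_V(V_M),\omega'',H'')$ again satisfies Assumption~(A), which the paper asserts without argument.
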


We remark on the genus of VOAs.
Let $V$ be a VOA. Assume that two Heisenberg subspaces $H$ and $H'$ satisfy
Assumption (A).
Then, $H$ and $H'$ are split Cartan subalgebras of Lie algebra $V_1$.
Hence, there exists $a_1, \ldots, a_k \in V_1$ such that $f(H)=H'$, where $f=\exp(a_1(0)) \cdots \exp(a_n(0))$ (see, for example, \cite{Hu}).
Since $\exp (a_i(0))$ is a VOA automorphism of $V$, we have:
\begin{lem}
\label{unique_Cartan}
All Heisenberg subspaces of a VOA which satisfies Assumption (A) are conjugate under the VOA automorphism group.
\end{lem}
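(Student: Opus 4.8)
The plan is to exhibit $H$ and $H'$ as split Cartan subalgebras of the finite-dimensional Lie algebra $V_1$ and then carry one onto the other by a product of exponentials of inner derivations, each of which I will lift to a VOA automorphism of $V$.

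First I would set up the Lie-theoretic picture. Since $V$ is a VOA, $V_1$ is a finite-dimensional Lie algebra under $[a,b]=a(0)b$, and the invariant bilinear form afforded by (A7) restricts to a non-degenerate invariant form on $V_1$ (distinct $L(0)$-eigenspaces are orthogonal and each $V_n$ is finite-dimensional). If $H\subset V$ is a Heisenberg subspace satisfying Assumption (A), then $H\subset V_1$ by (A1); the operators $\{h(n)\mid h\in H,\ n\in\Z\}$ span a Heisenberg Lie algebra, so $[h(0),h'(-1)]=0$ and hence $h(0)h'=h(0)h'(-1)\1=h'(-1)h(0)\1=0$, i.e. $H$ is abelian in $V_1$; by (A2) each $h(0)$ is semisimple on $V$, so $\mathrm{ad}\,H$ is simultaneously diagonalizable on $V_1$; and by (A5) the centralizer of $H$ in $V_1$ is $V_1^0=H$. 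An abelian, $\mathrm{ad}$-diagonalizable, self-centralizing subalgebra is self-normalizing (if $[x,H]\subset H$ then $x$ has no component in any non-zero $\mathrm{ad}\,H$-weight space), hence a split Cartan subalgebra. So $H$ and $H'$ are both split Cartan subalgebras of $V_1$.

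Next I would invoke the conjugacy of Cartan subalgebras over $\C$ (see \cite{Hu}): there are $a_1,\dots,a_k\in V_1$, with each $\mathrm{ad}\,a_i$ nilpotent on $V_1$, such that $g=\exp(\mathrm{ad}\,a_1)\cdots\exp(\mathrm{ad}\,a_k)$ satisfies $g(H)=H'$. Each $a_i(0)$ is a locally nilpotent derivation of $V$, so $\exp(a_i(0))$ is a well-defined vertex algebra automorphism extending $\exp(\mathrm{ad}\,a_i)$, and it fixes $\omega$: by Lemma \ref{Cartan_simple} $V$ is simple, and by (A7) it is self-dual (as $V_n$ is finite-dimensional, $V_H^\vee=V^\vee$), so $L(1)V_1=0$ by Theorem \ref{Li}, whence for $a\in V_1$ skew-symmetry gives
\[
a(0)\omega=-\omega(0)a+T\big(\omega(1)a\big)-\tfrac{1}{2}T^2\big(\omega(2)a\big)=-Ta+Ta-\tfrac{1}{2}T^2\big(L(1)a\big)=0 .
\]
Then $f=\exp(a_1(0))\cdots\exp(a_k(0))\in\Aut(V,\omega)$ and $f(H)=g(H)=H'$.

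The one genuinely delicate step is the local nilpotence of $a_i(0)$ on the infinite-dimensional $V$; I would derive it from the structure theory of Section \ref{sec_str}. Namely $V_1$ is reductive (a simple self-dual VOA of CFT type has reductive degree-one part), its roots relative to $H$ have positive square length by Theorem \ref{Cartan_Lie}, so each root vector $e$ generates a copy of $L_{\mathrm{sl}_2}(k,0)$ over which $V$ is an integrable module; hence $e(0)$ acts locally nilpotently and $\exp(e(0))$ is a VOA automorphism, exactly the mechanism behind $\tilde{r}_\al$ in Corollary \ref{affine_reflection}. Since such exponentials already suffice to conjugate any two split Cartan subalgebras of the reductive Lie algebra $V_1$, the $a_i$ above may be chosen among them and the argument closes. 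This local-nilpotence point is the main obstacle; the remainder is standard Lie theory together with the familiar fact that exponentials of $a(0)$, $a\in V_1$, are vertex algebra automorphisms.
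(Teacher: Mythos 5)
Your overall route is the same as the paper's: exhibit $H$ and $H'$ as split Cartan subalgebras of the finite-dimensional Lie algebra $V_1$, conjugate them by inner automorphisms of $V_1$, and lift the conjugating exponentials to VOA automorphisms $\exp(a_i(0))$ of $V$. Your first two paragraphs are essentially correct: the self-centralizing/self-normalizing argument is fine (though the parenthetical ``Heisenberg Lie algebra'' justification of $[h(0),h'(-1)]=0$ is circular -- $h(0)h'=0$ is immediate from (A5), since $H=V_1^0$), and the verification that $\exp(a(0))$ fixes $\omega$, via $L(1)V_1=0$ from Lemma \ref{Cartan_simple}, Theorem \ref{Li} and skew-symmetry, is a correct and worthwhile detail that the paper leaves implicit.

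The genuine gap is in your third paragraph, precisely the step you flag as the main obstacle. The assertion that ``the roots relative to $H$ have positive square length by Theorem \ref{Cartan_Lie}'' is backwards: that theorem \emph{assumes} $\al^2>0$ and derives the affine $\mathrm{sl}_2$ structure; neither it nor Assumption (A) (condition (A4) constrains $V_1^\al$ only when $(\al,\al)>2$) excludes roots with $(\al,\al)\le 0$. Likewise, reductivity of $V_1$ for a merely simple, self-dual, CFT-type VOA is asserted but not proved, and it is not established anywhere in the paper under Assumption (A); the reductivity theorems in the literature require much stronger hypotheses. Both claims carry the weight of your argument that exponentials of root-vector zero modes are locally nilpotent and suffice to conjugate $H$ to $H'$. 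The repair is that the whole local-nilpotence detour is unnecessary: since $V$ is a VOA of CFT type, each $V_n$ is finite dimensional and $a(0)V_n\subset V_n$ for every $a\in V_1$, so $\exp(a(0))$ converges on each homogeneous subspace and, being the exponential of a locally finite derivation, is a vertex algebra automorphism; by your computation it fixes $\omega$. This is what the paper's own short proof uses: take the $a_i$ supplied by the conjugacy theorem for Cartan subalgebras (no nilpotence of $a_i(0)$ on $V$ is needed) and exponentiate their zero modes.
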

The lemma asserts that the genus of a VOA is independent of the choice of a Heisenberg subspace $H$.

Similarly to the proof of Lemma \ref{positive_VOA} and by Theorem \ref{coset_equivalence}, we have:
\begin{lem}
\label{VOA_positive_genus}
Let $(V,\omega,H)$ satisfy Assumption (A).
Suppose that $L_{V,H}$ is positive-definite and spans $H$. Then, all conformal vertex algebras in $\gen(V,\omega, H_V)$ are VOAs.
\end{lem}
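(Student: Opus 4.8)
The plan is to reduce the statement to Lemma \ref{positive_VOA}, which already tells us that if $L_{W,H_W}$ is positive-definite and spans $H_W$ then $(W,\omega_W)$ is a VOA of CFT type. So the real content is to show that every triple $(W,\omega_W,H_W)$ in $\gen(V,\omega,H_V)$ has a maximal lattice $L_{W,H_W}$ that is again positive-definite and spans $H_W$. First I would recall that, by the generalization of Corollary \ref{tensor_max} to triples (which holds since $\Omega$ behaves the same way and $V_\tw$ is an $H$-lattice vertex algebra with $\omega_{H_\tw}(0)=T$), the maximal lattice of $\Vt = V\otimes V_\tw$ is $\widetilde{L_V}=L_V\oplus\tw$, and likewise the maximal lattice of $W\otimes V_\tw$ is $L_W\oplus\tw$. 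Since $(W,\omega_W,H_W)\in\gen(V,\omega,H_V)$ means there is a VH-pair (and conformal) isomorphism $\Vt\cong W\otimes V_\tw$, we get an isometry $L_V\oplus\tw\cong L_W\oplus\tw$ of lattices, hence by Lemma \ref{def_genus} that $L_W$ and $L_V$ are in the same genus of lattices. In particular $L_W$ is positive-definite (being in the genus of a positive-definite lattice) and has the same rank as $L_V$.

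Next I would argue that $L_W$ spans $H_W$. Since the VH-pair isomorphism $f:\Vt\to W\otimes V_\tw$ restricts to an isometry $H_V\oplus H_\tw \to H_W\oplus H_\tw$, and it carries the subspace spanned by $\widetilde{L_V}=L_V\oplus\tw$ onto the subspace spanned by $L_W\oplus\tw$, the hypothesis that $L_V$ spans $H_V$ (so $\widetilde{L_V}$ spans $H_V\oplus H_\tw$) forces $L_W\oplus\tw$ to span $H_W\oplus H_\tw$; comparing dimensions, $\dim_\C H_W = \operatorname{rank} L_W$, so $L_W$ spans $H_W$. At this point Lemma \ref{positive_VOA} applies verbatim to $(W,\omega_W,H_W)$ and shows $W$ is a VOA of CFT type.

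Finally, I would need to know that every conformal vertex algebra in the genus does in fact come (up to isomorphism) with a Heisenberg subspace satisfying Assumption (A), so that the previous paragraph covers all of $\gen(V,\omega,H_V)$; but this is exactly how the genus of triples $(V,\omega,H)$ was defined in the paragraph preceding the statement — every member of $\gen(V,\omega,H_V)$ is by definition such a triple. Moreover, by Lemma \ref{unique_Cartan} the choice of Heisenberg subspace is immaterial. The appeal to Theorem \ref{coset_equivalence} mentioned in the statement enters when one wants to see that $W$ is not merely a conformal vertex algebra but inherits good properties (e.g. it is genuinely realized as a coset $C_{\Vt}(V_\tw)$ and is simple with $\ker T_W=\C\1$, via Lemma \ref{Cartan_simple} applied through the isomorphism) — I would invoke it to upgrade "$W$ is a conformal vertex algebra satisfying (A)" to the conclusion. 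I expect the only mildly delicate point to be the bookkeeping that the conformal-triple isomorphism really does induce the claimed lattice isometry $L_V\oplus\tw\cong L_W\oplus\tw$, i.e. that the functor $\Omega$ and the passage to maximal lattices are compatible with the extra conformal structure; this is routine given Remark \ref{rem_ker}, Lemma \ref{omega_tensor}, and Theorem \ref{maximal_lattice}, but it is where the argument must be written carefully.
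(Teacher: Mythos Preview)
Your proposal is correct and follows essentially the same route as the paper: reduce to Lemma \ref{positive_VOA} by showing that any $(W,\omega_W,H_W)\in\gen(V,\omega,H_V)$ has $L_{W,H_W}$ positive-definite and spanning $H_W$, which follows from the isometry $L_V\oplus\tw\cong L_W\oplus\tw$ coming from the VH-pair isomorphism (Corollary \ref{tensor_max}) together with Lemma \ref{def_genus}. The paper's terse hint ``similarly to the proof of Lemma \ref{positive_VOA} and by Theorem \ref{coset_equivalence}'' points to exactly this argument, with Theorem \ref{coset_equivalence} implicitly underlying the coset realization of $W$ inside $\Vt$ (Lemma \ref{lattice_correspondence}) that makes the lattice comparison go through.
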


\subsection{Application to extensions of affine VOAs}\label{sec_application}
In this subsection, we prove that many important conformal vertex algebras satisfy
Assumption (A) and study the maximal lattices for those vertex algebras.

The following Proposition follows from Theorem \ref{Li}:
\begin{prop}
\label{zero_Cartan}
Let $(V,\omega)$ be a conformal vertex algebra.
Then, $(V,\omega,0)$ satisfies Assumption (A) if and only if 
$V$ is a simple self-dual VOA with $V_1=0$.
\end{prop}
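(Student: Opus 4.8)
The plan is to prove the proposition by specializing Assumption (A) to the case $H=0$ and matching its seven clauses against the VOA axioms one by one. First I would record the elementary observation that when $H=0$ the $H$-grading collapses completely: the only admissible weight is $\al=0$, so $M_{V,H}=\{0\}$ (nonempty since $\1\in V_0^0$), $V_n^0=V_n$ for every $n$, and the restricted dual $V_H^\vee=\bigoplus_{n}(V_n^0)^\ast$ is just the graded dual $V^\vee=\bigoplus_{n}V_n^\ast$. Consequently A1) and A2) are vacuous, and A3) reduces to $(0,0)=0\in\mathbb{R}$, which holds automatically.

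Next I would translate the remaining clauses. With $\al=0$, clause A4) reads ``$V_n=0$ whenever $0>n$'', i.e. VOA1); clause A5) reads ``$V_0=\C\1$'' (which is VOA2)) together with ``$V_1=V_1^0=H=0$''; and clause A6) reads ``$\dim V_n<\infty$ for all $n$'', i.e. VOA3). Thus A4)--A6) together say precisely that $V$ is a VOA (of CFT type) with $V_1=0$. Under these hypotheses the $V$-module structure on $V_H^\vee$ defined in this section coincides with that of the FHL dual module $V^\vee$ from Theorem \ref{def_dual}: one checks that $L(1)\colon V_n\to V_{n-1}$ is locally nilpotent and the grading is bounded below, both immediate consequences of CFT-type-ness. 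Hence A7) reads $V\cong V^\vee$ as a $V$-module, that is, $V$ is self-dual.

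It then remains to account for simplicity. In the forward direction this is free: by Lemma \ref{Cartan_simple}, any triple satisfying Assumption (A) has $V$ simple (with $\ker T_V=\C\1$), so if $(V,\omega,0)$ satisfies Assumption (A) then $V$ is a simple self-dual VOA with $V_1=0$. Conversely, if $V$ is a simple self-dual VOA with $V_1=0$, the translations above show A1)--A7) all hold for $H=0$, so $(V,\omega,0)$ satisfies Assumption (A). One may note in passing that, by Theorem \ref{Li}, for a simple VOA the condition $V_1=0$ already forces self-duality, since then $L(1)V_1=0$ trivially; the word ``self-dual'' is retained in the statement only because clause A7) supplies it directly.

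I do not expect any genuinely difficult step here; the argument is careful bookkeeping of the seven conditions. The one point that merits a little attention is the claim that the $V_H^\vee$-construction of this section specializes, when $H=0$, to the dual module $V^\vee$ of Theorem \ref{def_dual} --- i.e. that the technical hypotheses needed to build $V_H^\vee$ (local nilpotence of $L(1)$ and boundedness below of the $L(0)$-grading) are automatic for a CFT-type VOA --- so that clause A7) is literally the self-duality condition and not some a priori weaker statement.
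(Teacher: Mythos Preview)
Your proposal is correct and matches the paper's approach. The paper gives no detailed proof at all---it merely asserts that the proposition ``follows from Theorem~\ref{Li}''---so your bookkeeping of A1)--A7) in the degenerate case $H=0$, together with the appeal to Lemma~\ref{Cartan_simple} for simplicity, is exactly the argument one would write out, and your closing remark about Theorem~\ref{Li} captures precisely why the paper cites it.
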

In order to prove extensions of a simple affine VOAs at positive integer level satisfy
Assumption (A), we recall some results on simple affine VOA at positive integer level.
%
%
Let $\mathfrak{g}$ be a simple Lie algebra and $\mathfrak{h}$ a Cartan subalgebra
and $\Delta$ the root system of $\mathfrak{g}$.
Let $Q_\mathfrak{g}$ be the sublattice of the weight lattice spanned by long roots and $\{\al_1,\ldots,\al_l\}$ a set of simple roots. Let $\theta$ be the highest root
and $a_1,\ldots,a_l$ integers satisfying $\theta=\sum_{i=1}^l a_i \al_i$.
Let $\langle,\rangle$ be an invariant bilinear form on $\mathfrak{g}$, which
we normalize by $\langle\theta, \theta \rangle=2$.

The following lemma easily follows:
\begin{lem}
\label{long_lattice}
The sublattice $Q_\mathfrak{g} \subset \mathfrak{h}$ is generated
by $\{\frac{2}{\al^2}\al \;|\; \al \in \Delta \}$.
If $\mathfrak{g}$ is $A_n,D_n,E_n$ (resp. $B_n, C_n,F_4,G_2$),
then $Q_\mathfrak{g}$ is isomorphic to the root lattice
(resp.  $D_n$, $A_1^n$, $D_4$, $A_2$).
\end{lem}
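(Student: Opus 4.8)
The plan is to prove both assertions of the lemma at once by running through the Cartan types; once the normalization is pinned down the computation is short. The key point is that $\langle\theta,\theta\rangle=2$ and $\theta$ is long, so every long root $\al$ satisfies $\al^2=2$ and hence $\frac{2}{\al^2}\al=\al$, while a short root of $B_n$, $C_n$ or $F_4$ has $\al^2=1$ and thus $\frac{2}{\al^2}\al=2\al$, and a short root of $G_2$ has $\al^2=2/3$ and thus $\frac{2}{\al^2}\al=3\al$. Equivalently, $\frac{2}{\al^2}\al$ is the coroot $\al^\vee$, and the lattice $L:=\langle\,\frac{2}{\al^2}\al\mid\al\in\Delta\,\rangle$ is the coroot lattice in this normalization; the first assertion is then the classical identity that the coroot lattice equals the lattice spanned by the long roots.

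First I would note the easy inclusion: since $\frac{2}{\al^2}\al=\al$ for long $\al$, every long root lies in the generating set of $L$, and as the long roots span $Q_\mathfrak{g}$ by definition we get $Q_\mathfrak{g}\subseteq L$. For the reverse inclusion $L\subseteq Q_\mathfrak{g}$ there is nothing to do in the simply-laced cases $A_n,D_n,E_n$, where all roots are long, so $L=Q_\mathfrak{g}$ is just the full root lattice of $\mathfrak{g}$, as claimed. In the remaining four cases it suffices to check that $2\al\in Q_\mathfrak{g}$ (resp. $3\al\in Q_\mathfrak{g}$ for $G_2$) for every short root $\al$; since the Weyl group fixes $Q_\mathfrak{g}$ and acts transitively on short roots, it is enough to verify this for one short root per type, or simply to compute in a standard coordinate model.

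Carrying this out in the usual models: for $B_n$, with short roots $\pm e_i$ and long roots $\pm e_i\pm e_j$, one has $Q_\mathfrak{g}=\{x\in\Z^n\mid \sum_i x_i\in 2\Z\}$, which is the lattice $D_n$, and $2e_i=(e_i+e_j)+(e_i-e_j)\in Q_\mathfrak{g}$; for $C_n$, with long roots $\pm 2e_i$ (so $(e_i,e_i)=\tfrac12$) and short roots $\pm e_i\pm e_j$, one gets $Q_\mathfrak{g}=\bigoplus_i\Z(2e_i)\cong A_1^n$ and $\frac{2}{(e_i-e_j)^2}(e_i-e_j)=2e_i-2e_j\in Q_\mathfrak{g}$; for $F_4$, the long roots $\pm e_i\pm e_j$ span $D_4$, while twice a short root is either $\pm 2e_i\in D_4$ or $(\pm1,\pm1,\pm1,\pm1)\in D_4$; and for $G_2$, writing $\al_1$ (short) and $\al_2$ (long) for the simple roots, the long roots are $\pm\al_2$, $\pm(3\al_1+\al_2)$, $\pm(3\al_1+2\al_2)$ --- six equal-length vectors forming an $A_2$ subsystem --- so $Q_\mathfrak{g}=\Z\al_2+\Z(3\al_1)$ has $\{\al_2,3\al_1+\al_2\}$ as a $\Z$-basis with Gram matrix equal to the $A_2$ Cartan matrix, hence $Q_\mathfrak{g}\cong A_2$, and $3\al\in Q_\mathfrak{g}$ for each short root $\al\in\{\pm\al_1,\pm(\al_1+\al_2),\pm(2\al_1+\al_2)\}$. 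In every case this simultaneously gives $L=Q_\mathfrak{g}$ and identifies $Q_\mathfrak{g}$ with the asserted lattice.

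I do not expect a genuine obstacle here: the lemma is classical. The only place that needs a little care is the bookkeeping forced by the normalization $\langle\theta,\theta\rangle=2$ --- in particular the scalars $2$ versus $3$ attached to short roots --- together with recalling, or re-deriving from the coordinate models above, that the long roots of $B_n$, $C_n$, $F_4$, $G_2$ form root systems of types $D_n$, $A_1^n$, $D_4$, $A_2$ respectively.
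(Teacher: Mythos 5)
Your proof is correct: the identification $\frac{2}{\al^2}\al=\al^\vee$ under the normalization $\langle\theta,\theta\rangle=2$, the inclusion argument, and the coordinate checks for $B_n$, $C_n$, $F_4$, $G_2$ (including the $A_2$ Gram matrix for the long roots of $G_2$) are all accurate. The paper offers no proof — it states the lemma "easily follows" — so your case-by-case verification simply supplies the standard classical argument the author had in mind.
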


Let $L_\mathfrak{g}(k,0)$ denote the simple affine VOA of level $k\in \C$
associated with $\mathfrak{g}$ \cite{FZ}.
Then, the VOA $L_\mathfrak{g}(k,0)$ is completely reducible if and only if
$k$ is a non-negative integer \cite{FZ,DLM}.
We assume that $k$ is a positive integer.

Suppose that a dominant weight $\lambda$ of $\mathfrak{g}$ satisfies $\langle \theta,\lambda \rangle \leq k$.
Then, the highest weight module $L_\mathfrak{g}(k,\lambda)$ is an
 irreducible module of $L_\mathfrak{g}(k,0)$.
Conversely, any irreducible module of $L_\mathfrak{g}(k,0)$ is isomorphic to the module of this form. 
Let $P_{\mathfrak{g},k}^+$ be the set of all dominant weights $\lambda$ satisfying $\langle \theta,\lambda \rangle \leq k$.
For $\al \in \mathfrak{h}$ and $n \in \Z$,
set $L_\mathfrak{g}(k,\lambda)_n^\al=
\{v \in L_\mathfrak{g}(k,\lambda)\;|\; h(0)v=\langle h,\al \rangle v \text{ and } L(0)v=nv   \text{ for any }h\in \mathfrak{h}\}$.
Then, $L_\mathfrak{g}(k,\lambda)=\bigoplus_{\al \in \mathfrak{h}, n\in \Z} L_\mathfrak{g}(k,\lambda)_n^\al$.
%
%
Set $I_\mathfrak{g}=\{0\} \cup \{\Lambda_i \;|\;a_i=1 \text{ and } i \in\{1, \ldots, n \} \}$,
where $\{ \Lambda_i\}_{i \in \{1,\dots, n\}}$ is the fundamental weights of $\{ \mathfrak{g},\al_1,\dots,\al_l\}$.
The following lemma follows from \cite{DR}:
\begin{lem}
\label{affine_weight}
Let $\lambda \in P_{\mathfrak{g},k}^+$.
If $L_\mathfrak{g}(k,\lambda)_n^\al \neq 0$, then
$n \geq \langle \al, \al \rangle/2k$ and the equality holds if and only if 
 $\lambda \in kI_\mathfrak{g}$ and
$\al \in \lambda + kQ_\mathfrak{g}$.
\end{lem}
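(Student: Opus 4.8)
The plan is to prove Lemma~\ref{affine_weight} by a direct weight/energy estimate inside the generalized Verma module, followed by an analysis of when equality is attained. First I would work with the generalized Verma module $M_{\mathfrak g}(k,\lambda)$ (parabolically induced from the finite-dimensional $\mathfrak g$-module $V_\lambda$ placed in degree $0$), of which $L_{\mathfrak g}(k,\lambda)$ is the simple quotient; since $L_{\mathfrak g}(k,\lambda)_n^\al$ is a subquotient of $M_{\mathfrak g}(k,\lambda)_n^\al$, it suffices to bound $L(0)$-eigenvalues on the latter. A PBW basis of $M_{\mathfrak g}(k,\lambda)$ is given by monomials $x_1(-m_1)\cdots x_r(-m_r)\cdot u$ with $m_i\ge 1$, $x_i$ in a root-or-Cartan basis of $\mathfrak g$, and $u\in V_\lambda$; such a vector has $L(0)$-eigenvalue $\frac{\langle\lambda,\lambda+2\rho\rangle}{2(k+h^\vee)}+\sum_i m_i$ (the first term being $h_\lambda$, the conformal weight of the top) and $\mathfrak h$-weight $\al=\mu+\sum_i \beta_i$ where $\mu$ is a weight of $V_\lambda$ and $\beta_i$ is the $\mathfrak h$-weight of $x_i$. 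The key inequality to establish is then $n \ge \langle\al,\al\rangle/2k$, which I would reduce to a statement purely about the lattice $Q_{\mathfrak g}$ and the weights of $V_\lambda$, using the normalization $\langle\theta,\theta\rangle=2$ and Lemma~\ref{long_lattice}.

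The main step is the inequality itself. I would argue as follows: fix the monomial above; its $\mathfrak h$-weight is $\al$, and I want $\sum_i m_i + h_\lambda \ge \frac{\langle\al,\al\rangle}{2k}$. Because each $\beta_i$ is either $0$ or a root, and $m_i\ge 1$, we have $\sum_i m_i \ge (\text{number of root-type factors}) \ge$ (a lower bound coming from how many roots are needed to write $\al-\mu$ as a sum of roots). The cleanest route is to compare $\frac{\langle\al,\al\rangle}{2k}$ with $\frac{\langle\mu,\mu\rangle}{2k}$ plus a correction: writing $\al = \mu + \gamma$ with $\gamma\in Q_{\mathfrak g}$ (the root sublattice generated by long roots, which is where the $\beta_i$ live up to rescaling), one expands $\langle\al,\al\rangle = \langle\mu,\mu\rangle + 2\langle\mu,\gamma\rangle + \langle\gamma,\gamma\rangle$ and uses that $\langle\mu,\mu\rangle \le \langle\lambda,\lambda\rangle$ for weights $\mu$ of $V_\lambda$, together with $\langle\lambda,\lambda\rangle/2k \le h_\lambda$ (which holds when $\langle\theta,\lambda\rangle\le k$; this is the ``unitarity'' estimate $\langle\lambda,\lambda\rangle \le \langle\lambda,\lambda+2\rho\rangle \cdot \frac{k}{k+h^\vee}$ rearranged via $\langle\theta,\lambda\rangle\le k$ and standard root-system bookkeeping). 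The term $\langle\gamma,\gamma\rangle$ is $\ge$ twice the minimal number of long roots needed to express $\gamma$, and the cross term $2\langle\mu,\gamma\rangle$ together with the descent in level is absorbed; these combine to give $\sum m_i \ge \langle\al,\al\rangle/2k - h_\lambda$. This is the computational heart and I expect it to be the main obstacle --- getting the root-system combinatorics to close cleanly for all types simultaneously, which is presumably why the authors attribute it to \cite{DR}.

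Finally I would treat the equality case. Equality $n = \langle\al,\al\rangle/2k$ forces: (i) $h_\lambda = \langle\lambda,\lambda\rangle/2k$, which by the estimate above happens exactly when $\langle\theta,\lambda\rangle = k$ \emph{and} $\lambda$ is a multiple of a minuscule-type fundamental weight, i.e.\ $\lambda\in kI_{\mathfrak g}$ (here $I_{\mathfrak g}=\{0\}\cup\{\Lambda_i\mid a_i=1\}$, the level-one ``small'' weights); (ii) every intermediate inequality used is an equality, which pins down $\mu$ to be an extremal (Weyl-orbit) weight of $V_\lambda$ with $\langle\mu,\mu\rangle=\langle\lambda,\lambda\rangle$, and forces each $m_i=1$ with each $\beta_i$ a \emph{long} root, and moreover forces $\gamma=\al-\mu$ to be realized economically in $Q_{\mathfrak g}$. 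Unwinding (i) and (ii) gives exactly $\lambda\in kI_{\mathfrak g}$ and $\al\in\lambda + kQ_{\mathfrak g}$ (the shift by $\mu\in W\lambda$ versus $\lambda$ being absorbed into $kQ_{\mathfrak g}$ since for $\lambda\in kI_{\mathfrak g}$ one has $W\lambda\subset\lambda+kQ_{\mathfrak g}$). Conversely, for such $\lambda$ and $\al$ one exhibits an explicit nonzero vector in $L_{\mathfrak g}(k,\lambda)_n^\al$ with $n=\langle\al,\al\rangle/2k$ --- for $\lambda=0$ this is the relevant $e_\theta(-1)$-type monomial, in general a product of such --- completing the ``if and only if''. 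I would cite \cite{DR} for the detailed type-by-type verification and present only this structural outline.
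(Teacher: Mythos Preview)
There is a genuine gap in your reduction to the generalized Verma module. You write that since $L_{\mathfrak g}(k,\lambda)_n^\al$ is a subquotient of $M_{\mathfrak g}(k,\lambda)_n^\al$, it suffices to prove the bound $n\ge\langle\al,\al\rangle/2k$ on the latter. But this bound is \emph{false} in the Verma module. Take $\mathfrak g=\mathfrak{sl}_2$, $k=1$, $\lambda=0$: the vector $e(-1)^2\mathbf 1\in M_{\mathfrak{sl}_2}(1,0)$ is nonzero, lies in degree $n=2$, has $\mathfrak h$-weight $2\theta$, and $\langle 2\theta,2\theta\rangle/2k=4>2$. The inequality is a genuine feature of the \emph{simple} (integrable) quotient --- it encodes precisely the null-vector relation $e_\theta(-1)^{k+1}=0$ and its consequences --- and cannot be seen by a PBW count on $M$. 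Your subsequent estimates (bounding $\langle\gamma,\gamma\rangle$ by twice the number of roots, etc.) are correspondingly too weak to recover the factor $1/k$: they yield at best $\sum m_i\ge(\text{number of root factors})$, not $\sum m_i\ge\langle\al,\al\rangle/2k - h_\lambda$.

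The paper gives no argument of its own and simply cites \cite{DR}; the mechanism there is the parafermion (coset) decomposition. The Cartan subalgebra generates a Heisenberg sub-VOA of $L_{\mathfrak g}(k,0)$ with conformal vector $\omega_H$, and $L_{\mathfrak g}(k,\lambda)$ decomposes as a sum of Heisenberg Fock modules tensored with modules for the commutant. On a Heisenberg highest-weight vector of $\mathfrak h$-weight $\al$ at $L(0)$-degree $n$ one has $(\omega-\omega_H)(1)=n-\langle\al,\al\rangle/2k$, and the input from \cite{DR} (equivalently, unitarity of the integrable module) is that these commutant conformal weights are non-negative --- which is exactly the inequality. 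Equality then means the commutant piece is the vacuum, which singles out the simple currents $\lambda\in kI_{\mathfrak g}$ and the weights $\al\in\lambda+kQ_{\mathfrak g}$. Your analysis of the equality condition $h_\lambda=\langle\lambda,\lambda\rangle/2k\Leftrightarrow\lambda\in kI_{\mathfrak g}$ is correct and useful, but it must sit on top of this coset/unitarity argument rather than a Verma-module estimate.
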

\begin{rem}
\label{simple_current}
Non-zero weights in $I_\mathfrak{g}$ are called cominimal and $L_\mathfrak{g}(k,k\lambda)$
is known to be a simple current for $\lambda \in I_\mathfrak{g}$ (see \cite[Proposition 3.5]{Li2}).
In fact, $I_\mathfrak{g}$ form a group by the fusion product.
Let $R_\mathfrak{g}$ be a root lattice of $\mathfrak{g}$ and $R_\mathfrak{g}^\vee$ the dual lattice of the root lattice.
Then, there is a natural group isomorphism between $I_\mathfrak{g}$ and $R_\mathfrak{g}^\vee/R_\mathfrak{g}$.
\end{rem}

By Lemma \ref{affine_weight}, we have:
\begin{prop}
\label{affine_Cartan}
Let $(V,\omega)$ be a VOA.
If $L_\mathfrak{g}(k,0)$ is a subVOA of $V$ and $V_1=L_\mathfrak{g}(k,0)_1=\mathfrak{g}$, then
$(V,\omega,H)$ satisfy Assumption (A),
where $H$ is a Cartan subalgebra of $\mathfrak{g}=V_1$.
\end{prop}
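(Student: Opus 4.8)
The plan is to verify the seven conditions (A1)--(A7) of Definition \ref{Cartan} one at a time, using the representation theory of $L_{\mathfrak{g}}(k,0)$ at positive integer level. First I would observe that, since $L_{\mathfrak{g}}(k,0)$ is a \emph{subVOA} of $(V,\omega)$, the conformal vector $\omega$ lies in $L_{\mathfrak{g}}(k,0)$ and hence is its Sugawara conformal vector; taking $H=\mathfrak{h}\subset\mathfrak{g}=V_1$, the elements of $\mathfrak{g}$ are primary of conformal weight one, which gives $L(n)h=0$ for $n\geq 1$ and $L(0)h=h$, i.e.\ (A1). Condition (A6) is immediate because $V$ is of CFT type, so each $V_n$ is finite dimensional and $V_n^{\al}\subset V_n$. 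For (A5): $V_0=\C\1$ gives $V_0^0=\C\1$, while $h(0)$ acts on $V_1=\mathfrak{g}$ by the adjoint action of $\mathfrak{h}$, so $V_1^0$ is the centralizer of $\mathfrak{h}$ in $\mathfrak{g}$, which equals $\mathfrak{h}=H$.

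Next, since $k\in\Z_{>0}$, the VOA $L_{\mathfrak{g}}(k,0)$ is completely reducible \cite{FZ,DLM}, so $V$ decomposes as an $L_{\mathfrak{g}}(k,0)$-module into a direct sum of irreducible modules $L_{\mathfrak{g}}(k,\lambda)$ with $\lambda\in P_{\mathfrak{g},k}^+$, each carrying a semisimple $\mathfrak{h}$-action with finite-dimensional weight spaces; this gives (A2). Comparing the form on $H$ defined by $h(1)h'=(h,h')\1$ with the normalized form $\langle\,,\,\rangle$ on $\mathfrak{g}$ shows $(h,h')=k\langle h,h'\rangle$ on $\mathfrak{h}$, and a weight vector on which $h(0)$ acts by $(h,\al)$ corresponds to an $\widehat{\mathfrak{g}}$-weight $\mu=k\al$; hence $\tfrac{(\al,\al)}{2}=\tfrac{\langle\mu,\mu\rangle}{2k}$ and $(\al,\be)=\tfrac{\langle\mu,\nu\rangle}{k}\in\mathbb{Q}$, which yields (A3). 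For (A4): if $V_n^{\al}\neq 0$, then $L_{\mathfrak{g}}(k,\lambda)_n^{\mu}\neq 0$ for some $\lambda$ occurring in the decomposition, so Lemma \ref{affine_weight} gives $n\geq\tfrac{\langle\mu,\mu\rangle}{2k}=\tfrac{(\al,\al)}{2}$, which is exactly (A4).

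Finally, for (A7) I would show $V$ is self-dual. Since $x(-1)\1=x$ for $x\in\mathfrak{g}=V_1$ and $[L(1),x(-1)]=x(0)$, one gets $L(1)V_1=\mathfrak{g}(0)\1=0$; as $V$ is simple, Theorem \ref{Li} shows $(V,\omega)$ is self-dual, i.e.\ $V^\vee\cong V$. Because each $V_n$ is finite dimensional, $V_n^{\ast}=\bigoplus_{\al}(V_n^{\al})^{\ast}$, so the restricted dual $V_H^\vee$ equals $V^\vee\cong V$, giving (A7) (compare Proposition \ref{zero_Cartan}). The step requiring the most care is (A4): one must correctly translate the $\mathfrak{h}$-weight labelling used in Lemma \ref{affine_weight} (where $h(0)$ acts by $\langle h,\mu\rangle$) into the labelling by elements of $H$ used in Definition \ref{Cartan} (where $h(0)$ acts by $(h,\al)$), so that the a priori bound $n\geq\tfrac{\langle\mu,\mu\rangle}{2k}$ becomes $n\geq\tfrac{(\al,\al)}{2}$; once the normalization $(h,h')=k\langle h,h'\rangle$ is pinned down, everything else follows from the rationality of $L_{\mathfrak{g}}(k,0)$.
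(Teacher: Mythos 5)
Your verification follows precisely the route the paper intends: its own ``proof'' consists of the citation of Lemma \ref{affine_weight}, the substance being that one decomposes $V$ into irreducibles $L_\mathfrak{g}(k,\lambda)$ using complete reducibility at positive integral level, translates between the normalized form $\langle\,,\,\rangle$ and the VH-pair form via $(h,h')=k\langle h,h'\rangle$ (so an $\mathfrak{h}$-weight $\mu$ corresponds to $\al=\mu/k$ and $\langle\mu,\mu\rangle/2k=(\al,\al)/2$), and reads off (A4) from the bound of Lemma \ref{affine_weight}; the remaining conditions are routine, and your treatment of (A1)--(A6) and the identification $V_H^\vee=V^\vee$ is exactly right. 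Two points in your write-up need repair, however.

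First, your derivation of $L(1)V_1=0$ is circular: for $x\in V_1$ the correct commutator is $[L(1),x(-1)]=x(0)+(L(1)x)(-1)$, so applying it to $\1$ only yields $L(1)x=x(0)\1+L(1)x$, i.e.\ no information; the identity $[L(1),x(-1)]=x(0)$ already presupposes $L(1)x=0$. The fix is immediate and uses nothing new: since $\omega$ restricts to the Sugawara vector of the affine subVOA, every element of $\mathfrak{g}=V_1$ is a primary vector of weight one, which is the same fact you invoked for (A1) and gives $L(1)V_1=0$ directly. Second, Theorem \ref{Li} requires $V$ to be \emph{simple}, and simplicity is not among the stated hypotheses, nor can it be deduced from them: a square-zero extension $L_\mathfrak{g}(k,0)\oplus M$ by an irreducible module whose lowest conformal weight is an integer at least $2$ satisfies all the stated hypotheses but is not self-dual, so (A7) fails for it. Thus simplicity is an implicit hypothesis of the proposition (forced in any case by Lemma \ref{Cartan_simple}, and harmless in the paper's applications, where $V$ is holomorphic and hence simple), and your proof should say explicitly that it is being assumed rather than asserting ``as $V$ is simple'' without justification.
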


Combining Lemma \ref{affine_weight} with Theorem \ref{Cartan_max},
we have:
\begin{cor}
For $k \in \Z_{>0}$ and simple Lie algebra $\mathfrak{g}$,
the maximal lattice of the simple affine VOA 
$L_\mathfrak{g}(k,0)$ is $\sqrt{k}Q_\mathfrak{g}$.
\end{cor}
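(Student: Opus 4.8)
The plan is to combine Theorem \ref{Cartan_max}, which identifies the maximal lattice $L_{V,H}$ with $L'_{V,H}=\{\al\in H\,|\,V_{\al^2/2}^\al\neq 0\}$, with the weight estimate of Lemma \ref{affine_weight}. For the simple affine VOA $V=L_{\mathfrak g}(k,0)$, we take $H=\mathfrak h$ to be a Cartan subalgebra of $\mathfrak g=V_1$; by Proposition \ref{affine_Cartan} the triple $(L_{\mathfrak g}(k,0),\omega,\mathfrak h)$ satisfies Assumption (A), so Theorem \ref{Cartan_max} applies and it suffices to compute $L'_{V,H}$, i.e. to determine for which $\al\in\mathfrak h$ there is a nonzero vector of conformal weight exactly $\al^2/2$ in the $\al$-weight space.

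First I would apply Lemma \ref{affine_weight} with $\lambda=0$: if $L_{\mathfrak g}(k,0)_n^\al\neq 0$ then $n\geq\langle\al,\al\rangle/(2k)$, with equality if and only if $0\in kI_{\mathfrak g}$ (which always holds since $0\in I_{\mathfrak g}$) and $\al\in 0+kQ_{\mathfrak g}=kQ_{\mathfrak g}$. Now observe the bilinear form: on $L_{\mathfrak g}(k,0)$ the Heisenberg subspace $\mathfrak h$ carries the form coming from HS3), which is $k\langle-,-\rangle$ (the level-$k$ normalization), so the quantity ``$\al^2/2$'' appearing in $L'_{V,H}$ — computed with the VH-pair form — equals $k\langle\al,\al\rangle/2$, whereas the weight grading records $\langle h,\al\rangle$ with respect to $\langle-,-\rangle$. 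Rescaling $\al=\sqrt{k}\beta$ so that the VH form has $\al^2 = k\langle\beta,\beta\rangle$ is the bookkeeping that converts ``$\al\in kQ_{\mathfrak g}$ and $n=\langle\al,\al\rangle/2k$'' into ``$\beta\in\sqrt{k}Q_{\mathfrak g}$ and $n=\beta^2/2$ with respect to the VH form''. Carrying this through gives $L'_{L_{\mathfrak g}(k,0),\mathfrak h}=\sqrt{k}\,Q_{\mathfrak g}$ as a sublattice of $\mathfrak h$ with its VH-pair form, and then Theorem \ref{Cartan_max} yields $L_{V,H}=\sqrt{k}\,Q_{\mathfrak g}$, as claimed. (By Lemma \ref{long_lattice}, $\sqrt{k}Q_{\mathfrak g}$ can also be described concretely via the long roots, matching the earlier lemma that $Q_{\mathfrak g}$ is generated by $\{\tfrac{2}{\al^2}\al\}$.)

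I expect the main obstacle — really the only subtle point — to be the careful reconciliation of the two bilinear forms and the two normalizations at play: the form $\langle-,-\rangle$ on $\mathfrak g$ normalized by $\langle\theta,\theta\rangle=2$ in which Lemma \ref{affine_weight} is stated, versus the non-degenerate form on the Heisenberg subspace $\mathfrak h\subset V_1$ defined intrinsically by $h(1)h'=(h,h')\mathbf 1$, which for $L_{\mathfrak g}(k,0)$ is $k$ times the former. Keeping the factor of $k$ in the right place is what makes the scaling ``$\sqrt{k}$'' (rather than $k$ or $1/\sqrt k$) come out correctly, and one should double-check this against the lattice case $L_{\mathfrak g}(1,0)$ and against Proposition \ref{lattice_vertex_Cartan}. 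Everything else is a direct substitution into Theorem \ref{Cartan_max} together with the ``only if'' direction of Lemma \ref{affine_weight}, which guarantees that no $\al\notin\sqrt{k}Q_{\mathfrak g}$ contributes to $L'_{V,H}$.
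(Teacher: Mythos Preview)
Your proposal is correct and follows exactly the paper's approach: the paper states this corollary with the one-line justification ``Combining Lemma \ref{affine_weight} with Theorem \ref{Cartan_max}'', and you have spelled out precisely how these two results combine, including the one genuinely non-obvious point---the factor of $k$ relating the intrinsic Heisenberg form $(h,h')$ on $V_1$ (coming from $h(1)h'=(h,h')\mathbf 1$, hence equal to $k\langle h,h'\rangle$ at level $k$) to the normalized Killing form $\langle-,-\rangle$ used in Lemma \ref{affine_weight}---which the paper leaves implicit. Your invocation of Proposition \ref{affine_Cartan} to verify Assumption (A) is also appropriate and matches the paper's surrounding discussion.
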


\begin{rem}
This corollary implies that a level one simple affine vertex algebra associated with a simply laced simple Lie algebra is isomorphic to the lattice vertex algebra.
\end{rem}

Herein, we concentrate on the case that
a VOA $(V,\omega)$ is an extension of an affine VOA at positive integer level.
Let $(V,\omega)$ be a VOA and
$\mathfrak{g}_i$ be a finite dimensional semisimple Lie algebra and
$k_i \in \Z_{>0}$ for $i=1,\dots,N$.
Set $\mathfrak{g}=\bigoplus_{i=1}^N \mathfrak{g_i}$ and let 
$H$ be a Cartan subalgebra of $\mathfrak{g}$.
Assume that $\bigotimes_{i=1}^N L_{\mathfrak{g_i}}(k_i,0)$ is a subVOA of $(V,\omega)$ such that $V_1=\mathfrak{g}$.
Then, it is clear that $(V,\omega,H)$ satisfy Assumption (A). 
We will describe the maximal lattice of $(V,H)$ using Lemma \ref{affine_weight}.

We simply write $L_{\mathfrak{g}}(\vec{k},\vec{\lambda})$ for 
$\bigotimes_{i=1}^N L_{\mathfrak{g_i}}(k_i,\lambda_i)$, where $\vec{\lambda}=(\lambda_1,\dots,\lambda_N) \in P=\bigoplus_{i=1}^N P_{\mathfrak{g_i},k}^+$.
Since $L_{\mathfrak{g}}(\vec{k},0)$ is completely reducible,
$V =\bigoplus_{\vec{\lambda} \in P} L_{\mathfrak{g}}(\vec{k},\vec{\lambda})^{n_{\vec{\lambda}}}$,
where $n_{\vec{\lambda}} \in \Z_{\geq 0}$ is the multiplicity of the module $L_{\mathfrak{g}}(\vec{k},\vec{\lambda})$.
Let $Q_V$ be the sublattice of $H$ generated by $\bigoplus_{i=1}^N \sqrt{k_i}Q_\mathfrak{g_i}$
and all $\frac{1}{\sqrt{k}}\vec{\lambda}$ satisfying the following conditions:
\begin{enumerate}
\item
$(\lambda_1,\dots,\lambda_N) \in P$;
\item
$n_{\vec{\lambda}} >0$;
\item
$\lambda_i \in k_i I_{\mathfrak{g_i}}$ for any $i=1,\dots,N$.
\end{enumerate}
Then, we have:
\begin{prop}
\label{affine_max}
The maximal lattice $L_{V,H}$ is equal to $Q_V$.
\end{prop}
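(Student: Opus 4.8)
The plan is to reduce the statement, via Theorem~\ref{Cartan_max}, to identifying the set $\{\al\in H\mid V_{\al^2/2}^\al\neq 0\}$, and then to compute the spaces $V_n^\al$ from the decomposition of $V$ as a module over $\bigotimes_{i=1}^N L_{\mathfrak{g_i}}(k_i,0)$. First I would pin down the normalizations. Writing $\mathfrak{h}_i$ for a Cartan subalgebra of $\mathfrak{g_i}$, so $H=\bigoplus_i\mathfrak{h}_i$, the summands $\mathfrak{h}_i$ are mutually orthogonal in $H$, on $\mathfrak{h}_i$ the form of the VH pair equals $k_i\langle\,,\rangle$ where $\langle\,,\rangle$ is normalized by $\langle\theta_i,\theta_i\rangle=2$, and the conformal vector of $V$ is the sum of the Sugawara conformal vectors of the factors. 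Consequently, writing $\al=\sum_i\al_i$ with $\al_i\in\mathfrak{h}_i$, one has $\al^2=\sum_i(\al_i,\al_i)$, the $L(0)$-grading of $V$ is the sum of the $L(0)$-gradings of the tensor factors, and $\al_i$ records, inside a module $L_{\mathfrak{g_i}}(k_i,\lambda_i)$, the $\mathfrak{h}_i$-weight equal to $k_i\al_i$ under the identification of weights with elements of $\mathfrak{h}_i$ via $\langle\,,\rangle$. I will write $\tfrac1{\sqrt k}\vec\lambda\in H$ for the $H$-label of a highest weight vector of $L_{\mathfrak{g}}(\vec k,\vec\lambda)=\bigotimes_iL_{\mathfrak{g_i}}(k_i,\lambda_i)$, matching the notation of the statement, and I will read $\sqrt{k_i}Q_{\mathfrak{g_i}}$ as the subgroup $Q_{\mathfrak{g_i}}\subset\mathfrak{h}_i\subset H$.

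Next I would expand $V=\bigoplus L_{\mathfrak{g}}(\vec k,\vec\lambda)^{n_{\vec\lambda}}$, the sum over $\vec\lambda$ with $n_{\vec\lambda}>0$, so that $V_n^\al$ is the sum, over such $\vec\lambda$ and over decompositions $n=\sum_in_i$, $\al=\sum_i\al_i$, of the spaces $\bigotimes_iL_{\mathfrak{g_i}}(k_i,\lambda_i)_{n_i}^{\al_i}$. By Lemma~\ref{affine_weight}, if $L_{\mathfrak{g_i}}(k_i,\lambda_i)_{n_i}^{\al_i}\neq 0$ then $n_i\geq(\al_i,\al_i)/2$, with equality if and only if $\lambda_i\in k_iI_{\mathfrak{g_i}}$ and $k_i\al_i\in\lambda_i+k_iQ_{\mathfrak{g_i}}$. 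Summing over $i$ and using $\sum_i(\al_i,\al_i)/2=\al^2/2$, the hypothesis $V_{\al^2/2}^\al\neq 0$ forces equality in every factor. Hence $\al\in L_{V,H}$ precisely when there is a $\vec\lambda$ with $n_{\vec\lambda}>0$ and all $\lambda_i\in k_iI_{\mathfrak{g_i}}$ such that $\al_i-\tfrac1{k_i}\lambda_i\in Q_{\mathfrak{g_i}}$ for every $i$, that is, $\al-\tfrac1{\sqrt k}\vec\lambda\in\bigoplus_iQ_{\mathfrak{g_i}}$. Since such a $\vec\lambda$ is admissible in the definition of $Q_V$, this gives $L_{V,H}\subseteq Q_V$.

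For the reverse inclusion I would use that $L_{V,H}$ is a subgroup of $H$, so it suffices to check that each generator of $Q_V$ lies in $L_{V,H}$. If $\vec\lambda$ satisfies conditions (1)--(3) of the statement, then the equality clause of Lemma~\ref{affine_weight} applied to the highest weight vectors shows that the minimal conformal weight of $L_{\mathfrak{g_i}}(k_i,\lambda_i)$ equals $(\al_i,\al_i)/2$ where $\al_i=\tfrac1{k_i}\lambda_i$; hence the tensor product of the highest weight vectors of the factors is a non-zero element of $\bigl(L_{\mathfrak{g}}(\vec k,\vec\lambda)\bigr)_{\al^2/2}^{\al}\subseteq V_{\al^2/2}^{\al}$ for $\al=\tfrac1{\sqrt k}\vec\lambda$, so $\tfrac1{\sqrt k}\vec\lambda\in L_{V,H}$. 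For the summand $\bigoplus_i\sqrt{k_i}Q_{\mathfrak{g_i}}$, I would note that each root of $\mathfrak{g_i}$ provides a non-zero vector of $V_1$, lying in $V_1^{\be}$ for the corresponding $\be\in\mathfrak{h}_i$ with $\be^2>0$, so by the final lemma of Subsection~\ref{sec_str} ($\tfrac2{\be^2}\be\in L_V$ whenever $\be^2>0$ and $V_1^\be\neq 0$) the element $\tfrac2{\be^2}\be$ lies in $L_{V,H}$; by Lemma~\ref{long_lattice} the elements so obtained generate $\sqrt{k_i}Q_{\mathfrak{g_i}}$ (the factor $k_i$ cancels in $\tfrac2{\be^2}\be$). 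Alternatively one may invoke the corollary identifying the maximal lattice of $L_{\mathfrak{g_i}}(k_i,0)$ with $\sqrt{k_i}Q_{\mathfrak{g_i}}$ together with $\bigotimes_iL_{\mathfrak{g_i}}(k_i,0)\hookrightarrow V$. Combining the two inclusions gives $L_{V,H}=Q_V$.

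The step I expect to be the main obstacle is the bookkeeping of the two normalizations of the bilinear form---the form $\langle\,,\rangle$ on each $\mathfrak{g_i}$ versus the VH-pair form on $H$, which differ by the factor $k_i$---and its interaction with the conformal-weight shifts of the modules $L_{\mathfrak{g_i}}(k_i,\lambda_i)$; once these identifications are set up correctly, the additivity argument in the second paragraph is routine. A subsidiary point needing care is that $V_{\al^2/2}^\al\neq 0$ genuinely forces the minimal conformal weight in \emph{each} tensor factor, which is exactly the place where one uses that the per-factor bounds of Lemma~\ref{affine_weight} are additive and sum to $\al^2/2$.
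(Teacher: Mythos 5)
Your proof is correct and follows exactly the route the paper intends: the proposition is stated as an immediate consequence of Theorem \ref{Cartan_max} combined with Lemma \ref{affine_weight}, applied factorwise to the decomposition $V=\bigoplus_{\vec{\lambda}}L_{\mathfrak{g}}(\vec{k},\vec{\lambda})^{n_{\vec{\lambda}}}$, with the $k_i$-rescaling between the VH-pair form and the normalized form $\langle\,,\rangle$ accounting for the $\sqrt{k_i}$ and $\tfrac{1}{\sqrt{k}}\vec{\lambda}$ in the definition of $Q_V$. Your explicit bookkeeping of that rescaling, and the additivity argument forcing equality in each tensor factor, is precisely the content the paper leaves implicit.
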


We will consider two examples whose genera are non-trivial.
Set $A_{\mathfrak{g},\vec{k}}= \bigoplus_{i=1}^N I_\mathfrak{g_i}$, which is a group by Remark \ref{simple_current}.
Consider the quadratic form on $A_{\mathfrak{g},\vec{k}}$ defined by
$$A_{\mathfrak{g},\vec{k}} \rightarrow \mathbb{Q}/\Z,\; 
\vec{\lambda} \mapsto \frac{1}{2}\sum_{i=1}^N k_i(\lambda_i,\lambda_i),$$
for $\vec{\lambda} \in A_{\mathfrak{g},k}$.
A subgroup $N \subset A_{\mathfrak{g},\vec{k}}$ is called isotropic
if the square norm of any vector in $N$ is zero.
Let $N$ be an isotropic subgroup of $A_{\mathfrak{g},\vec{k}}$.

Set \begin{align}
L_\mathfrak{g}(\vec{k},N)=\bigoplus_{\vec{\lambda} \in N} L_\mathfrak{g}(\vec{k},k\vec{\lambda}).
\nonumber
\end{align}
Then, by \cite[Theorem 3.2.14]{Ca}, $L_\mathfrak{g}(\vec{k},N)$ inherits a unique simple vertex operator algebra structure.
The assumption of the evenness in the theorem follows from the fact that $L_\mathfrak{g}(\vec{k},0)$ is unitary \cite{CKL}.
\begin{prop}
The maximal lattice of $L_\mathfrak{g}(\vec{k},N)$ is generated by $\bigoplus_{i=1}^N \sqrt{k_i}Q_\mathfrak{g_i}$ and $N$.
\end{prop}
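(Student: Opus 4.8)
The plan is to apply Proposition \ref{affine_max} to the vertex operator algebra $V = L_\mathfrak{g}(\vec{k},N)$, which satisfies Assumption (A) by Proposition \ref{affine_Cartan} (applied componentwise, as in the discussion preceding Proposition \ref{affine_max}), since $V_1 = \bigoplus_i L_{\mathfrak{g}_i}(k_i,0)_1 = \mathfrak{g}$ by construction of $L_\mathfrak{g}(\vec{k},N)$ as a simple-current extension. Thus $L_{V,H}$ equals the lattice $Q_V$ of Proposition \ref{affine_max}, namely the sublattice of $H$ generated by $\bigoplus_{i=1}^N \sqrt{k_i}Q_{\mathfrak{g}_i}$ together with all $\frac{1}{\sqrt{k}}\vec\lambda$ such that $\vec\lambda = (\lambda_1,\dots,\lambda_N) \in P$, the multiplicity $n_{\vec\lambda}>0$, and $\lambda_i \in k_i I_{\mathfrak{g}_i}$ for every $i$. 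So the task reduces to identifying this generating set explicitly for the specific decomposition $V = \bigoplus_{\vec\lambda \in N} L_\mathfrak{g}(\vec{k},k\vec\lambda)$.

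The key step is to match the three conditions in Proposition \ref{affine_max} against the explicit module content of $L_\mathfrak{g}(\vec{k},N)$. Here $n_{\vec\mu}>0$ precisely when $\vec\mu = k\vec\lambda$ for some $\vec\lambda \in N$; writing $\vec\mu = (k_1\lambda_1,\dots,k_N\lambda_N)$ with $\lambda_i \in I_{\mathfrak{g}_i}$, each component $\mu_i = k_i\lambda_i$ automatically lies in $k_i I_{\mathfrak{g}_i}$, so condition (3) is automatic, and condition (1) holds since $I_{\mathfrak{g}_i} \subset P_{\mathfrak{g}_i,k_i}^+$ (cominimal weights have level pairing $1 \le k_i$). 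Therefore the extra generators $\frac{1}{\sqrt k}\vec\mu$ of $Q_V$ are exactly the vectors $\frac{1}{\sqrt k}(k_1\lambda_1,\dots,k_N\lambda_N)$ for $\vec\lambda \in N$; I should be careful about the normalization of $\sqrt{k}$ versus the individual $\sqrt{k_i}$, using that the $i$-th component of such a vector, in the metric induced from $\langle,\rangle$ on $\mathfrak{g}_i$, is $\sqrt{k_i}\,\lambda_i \in \sqrt{k_i}\,I_{\mathfrak{g}_i} \subset \sqrt{k_i} R_{\mathfrak{g}_i}^\vee$ (using Remark \ref{simple_current}: $I_{\mathfrak{g}_i} \cong R_{\mathfrak{g}_i}^\vee/R_{\mathfrak{g}_i}$ as a set of coset representatives). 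Combining with the generators $\bigoplus_i \sqrt{k_i}Q_{\mathfrak{g}_i}$ already present, one concludes that $L_{V,H} = Q_V$ is the lattice generated by $\bigoplus_{i=1}^N \sqrt{k_i}Q_{\mathfrak{g}_i}$ and (the image of) $N$, which is the assertion.

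I expect the main obstacle to be purely bookkeeping: reconciling the global normalization $\frac{1}{\sqrt{k}}$ appearing in the definition of $Q_V$ in Proposition \ref{affine_max} with the component-wise metrics $\langle,\rangle$ on each $\mathfrak{g}_i$ normalized so that $\langle\theta,\theta\rangle = 2$, and making precise what "$N$" means as a subset of $H$ (it is a subgroup of $A_{\mathfrak{g},\vec k} = \bigoplus_i I_{\mathfrak{g}_i}$, which must be embedded into $H$ via $\vec\lambda \mapsto (\sqrt{k_1}\lambda_1,\dots,\sqrt{k_N}\lambda_N)$, consistently with the quadratic form $\vec\lambda \mapsto \frac12\sum_i k_i(\lambda_i,\lambda_i) \bmod \Z$ being the one that makes the extension even). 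Once the embedding and normalizations are pinned down, the isotropy of $N$ guarantees the resulting lattice is even, and the argument is a direct unwinding of Proposition \ref{affine_max}; no further vertex-algebra input is needed.
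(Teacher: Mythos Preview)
Your proposal is correct and follows exactly the approach the paper intends: the proposition is stated immediately after Proposition~\ref{affine_max} as a direct consequence, and your argument---identifying the modules $\vec\mu$ with $n_{\vec\mu}>0$ as precisely the $k\vec\lambda$ with $\vec\lambda\in N\subset\bigoplus_i I_{\mathfrak{g}_i}$, so that condition (3) is automatic and $Q_V$ is generated by $\bigoplus_i\sqrt{k_i}Q_{\mathfrak{g}_i}$ together with $N$---is exactly this unwinding. Your caution about the normalization $\frac{1}{\sqrt{k}}$ versus the componentwise $\sqrt{k_i}$ is well placed (the paper's notation is slightly loose here), but you have the right resolution.
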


For example, the maximal lattice of $L_{B_{12}}(2,0)$ is $\sqrt{2}D_{12}$, where $D_{12}$ is a root lattice of type $D_{12}$.
The $\gen(\sqrt{2}D_{12})=\genus_{12,0}(2_\genus^{-10}4_\genus^{-2})$ (see Section \ref{subsec_lattice}) contains two isomorphism classes
of lattices, $\sqrt{2}D_{12}$ and $\sqrt{2}E_8D_4$.
Hence, the genus of a VOA $\gen(L_{B_{12}}(2,0))$ contains at least two non-isomorphic VOAs.

Hereafter, we give another class of examples for which we can completely determine the genera of VOAs.
We first recall $c=24$ holomorphic VOAs.
\begin{rem}
Our definition of ``holomorphic'' is slightly different from the usual one.
The usual definition of a holomorphic VOA requires completely reducibility for admissible modules, rather than all weak modules.  
Our definition together with the assumption that a VOA is of CFT type corresponds to strongly holomorphic in the literature.
\end{rem}
For a VOA $V$, the $0$-th product gives a Lie algebra structure on $V_1$.
The rank of a finite dimensional Lie algebra is the dimension of a Cartan subalgebra
of the Lie algebra.
The following theorem was proved in \cite{DM1,Sch,EMS,DM2}:
\begin{thm}
\label{holomorphic}
Let $V$ be a holomorphic $c=24$ VOA.
Then, the Lie algebra $V_1$ is reductive, and exactly one of the following holds:
\begin{enumerate}
\item
$V_1 = 0$;
\item 
$V_1$ is abelian of rank $24$ and $V$ is isomorphic to the lattice VOA of the Leech lattice;
\item
 $V_1$ is one of $69$ semisimple Lie algebras listed in the table below.
\end{enumerate}
Furthermore, if $V_1 \neq 0$, then the vertex subalgebra generated by $V_1$ is a subVOA of $V$.
\end{thm}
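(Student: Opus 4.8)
The plan is to assemble the statement from the analyses of \cite{DM1,Sch,EMS,DM2}, organized around the structure theory recalled in the excerpt. First I would record that a holomorphic VOA $V$ is simple and self-dual: its unique irreducible module is $V$ itself, so $V^\vee\cong V$, and by Theorem \ref{Li} this gives $L(1)V_1=0$ together with a non-degenerate invariant bilinear form on $V$. Restricting the form to $V_1$ makes $(V_1,a(0)b)$ a Lie algebra with a non-degenerate symmetric invariant form; combining this with the complete reducibility of $V$ over the affinization of $V_1$ — which forces each simple ideal to act at a positive integer level and excludes a non-abelian solvable radical — yields, as in \cite{DM1}, that $V_1=\mathfrak{h}\oplus\bigoplus_i\mathfrak{g}_i$ is reductive with $\mathfrak{h}$ abelian and $\mathfrak{g}_i$ simple. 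Hence the vertex subalgebra generated by $V_1$ is $U\cong M_{\mathfrak{h}}(0)\otimes\bigotimes_i L_{\mathfrak{g}_i}(k_i,0)$ with $k_i\in\Z_{>0}$, equipped with a Sugawara conformal vector $\omega_U$ of central charge $c_U=\dim\mathfrak{h}+\sum_i\frac{k_i\dim\mathfrak{g}_i}{k_i+h_i^\vee}$, and $V$ is a module over $U\otimes C_V(U)$.

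Next I would invoke modular invariance of the graded character $Z_V(\tau)=\mathrm{tr}_V\,q^{L(0)-1}$, which is a weight-zero modular function for $\mathrm{SL}_2(\Z)$ that is holomorphic on the upper half-plane with a simple pole at the cusp; hence $Z_V=J+\dim V_1$ with $J$ normalized to have vanishing constant term, so all higher graded dimensions are the same for every holomorphic $c=24$ VOA. Expanding $Z_V$ through the decomposition of $V$ over $U$ and over the coset $C_V(U)$ of central charge $24-c_U$, and demanding that the resulting finite sum of (vector-valued) affine and Heisenberg characters be $\mathrm{SL}_2(\Z)$-invariant and equal to $J+\dim V_1$, one is led to Schellekens' constraints: when $V_1\neq 0$ is semisimple the ratios $h_i^\vee/k_i$ all equal the common value $h=\tfrac{1}{24}(\dim V_1-24)$, so the levels $k_i=h_i^\vee/h$ are positive integers, $c_U=24$, and $\dim V_1$ together with the list of types and levels satisfies a small system of Diophantine relations. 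The core of the proof — and the step I expect to be the main obstacle — is exactly the exhaustive resolution of these relations over all reductive Lie algebras, carried out in \cite{Sch} and made rigorous in \cite{EMS} (complemented by \cite{DM2}): it produces precisely the $69$ semisimple Lie algebras of the table when $V_1$ is semisimple, and shows that the only remaining values of $\dim V_1$ are $0$ and $24$, with $24$ realized only by an abelian $V_1$. The three cases are visibly mutually exclusive.

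Finally I would treat the two degenerate cases and the ``Furthermore'' clause. When $V_1=0$ there is nothing more to show. When $V_1$ is semisimple, $c_U=24$ forces $\omega-\omega_U$ to be a central-charge-zero Virasoro element commuting with $\omega_U$; since the simple $c=0$ Virasoro vertex algebra is $\C\1$, this element vanishes, so $\omega=\omega_U\in U$ and $V_1$ generates a subVOA. When $V_1$ is abelian of rank $24$, the Heisenberg subalgebra $M_H(0)$ with $H=V_1$ has central charge $24=c_V$, so again $\omega=\omega_H$; decomposing $V$ over $M_H(0)$ into its irreducible modules, which are Fock spaces $M_H(0,\al)$, and using holomorphicity of $V$ to see that the set of occurring $\al$ is an even unimodular lattice $L$ of rank $24$ with each module appearing once, identifies $V$ with the lattice VOA $V_L$; then $V_1=H$ forces $L$ to contain no norm-$2$ vectors, so $L$ is the Leech lattice by the classification of Niemeier lattices, and $V_1$ generates $V_L$. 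This exhausts the possibilities and completes the proof.
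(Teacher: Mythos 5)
The paper offers no proof of this theorem at all: it is imported verbatim from \cite{DM1,Sch,EMS,DM2}, and your outline is a faithful reconstruction of precisely those references' argument (reductivity of $V_1$ after Dong--Mason, modular invariance of the character, Schellekens' relation $h_i^\vee/k_i=(\dim V_1-24)/24$, and the exhaustive Diophantine classification correctly deferred to \cite{Sch,EMS}), so you and the paper rest on the same sources and the same approach. The one soft spot is your claim that $\omega-\omega_U$ vanishes ``since the simple $c=0$ Virasoro vertex algebra is $\C\1$''---the Virasoro subalgebra it generates inside $V$ need not be simple, and the literature instead obtains $\omega=\omega_U$ from self-duality and the CFT-type/nondegenerate-form hypotheses---but this is a standard citable fact rather than a strategic gap.
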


\begin{minipage}{0.3\hsize}
\begin{center}
\begin{tabular}{|c|c|}
\hline
rank & Lie algebra\\
\hline
\hline

$24$ &  $U(1)^{24},(D_{4,1})^6, (A_{7,1})^2(D_{5,1})^2, 
(A_{1,1})^{24},(A_{3,1})^8, (A_{4,1})^6, (A_{6,1})^4, (A_{8,1})^3, (D_{6,1})^4$\\ \hline

&$(E_{6,1})^4,A_{11,1}D_{7,1}E_{6,1}, (A_{12,1})^2, A_{15,1}D_{9,1},D_{10,1}(E_{7,1})^2,A_{17,1}E_{7,1},
(D_{12,1})^2,A_{24,1}$\\ \hline
&$(E_{8,1})^3,D_{16,1}E_{8,1}, (A_{5,1})^4D_{4,1},(A_{9,1})^2D_{6,1},
(D_{8,1})^3,D_{24,1},A_{2,1}^{12}
$  \\ \hline \hline

$16$ & $ (A_{3,2})^4(A_{1,1})^4,(D_{4,2})^2(C_{2,1})^4,(A_{5,2})^2C_{2,1}(A_{2,1})^2, (D_{5,2})^2(A_{3,1})^2, A_{7,2}(C_{3,1})^2A_{3,1}$\\ \hline
&$(C_{4,1})^4,D_{6,2}C_{4,1}(B_{3,1})^2,A_{9,2}A_{4,1}B_{3,1},
E_{6,2}C_{5,1}A_{5,1},C_{10,1}B_{6,1},E_{8,2}B_{8,1}$ \\ \hline
& $D_{8,2}(B_{4,1})^2,(C_{6,1})^2B_{4,1}, D_{9,2}A_{7,1}, C_{8,1}(F_{4,1})^2, E_{7,2}B_{5,1}F_{4,1}, A_{1,2}^{16}$\\ \hline \hline
$12$ &  $(A_{1,4})^{12},B_{12,2}, (B_{6,2})^2,B_{4,2}^3, (B_{3,2})^4,(B_{2,2})^6,A_{8,2}F_{4,2}, C_{4,2}(A_{4,2})^2, D_{4,4}(A_{2,2})^4$
\\ \hline

$12$ & $(A_{2,3})^6, E_{7,3}A_{5,1}, A_{5,3}D_{4,3}(A_{1,1})^3,A_{8,3}(A_{2,1})^2, E_{6,3}(G_{2,1})^3,D_{7,3}A_{3,1}G_{2,1}
    $\\ \hline \hline
$10$ & $A_{7,4}(A_{1,1})^3, D_{5,4}C_{3,2}(A_{1,1})^2,
E_{6,4}C_{2,1}A_{2,1}, C_{7,2}A_{3,1},A_{3,4}^3A_{1,2}
$ \\ \hline \hline
$8$ & $A_{4,5}^2, D_{6,5}(A_{1,1})^2$ \\ \hline 
$8$ & $A_{5,6}C_{2,3}A_{1,2}, C_{5,3}G_{2,2}A_{1,1}$ \\ \hline \hline
$6$ & $ A_{6,7}$ \\ \hline 
$6$ & $ F_{4,6}A_{2,2}, D_{4,12}A_{2,6}$ \\ \hline
$6$ & $ D_{5,8}A_{1,2}$ \\ \hline \hline
$4$ & $ C_{4,10}$ \\ \hline \hline
\end{tabular}
\end{center}
\end{minipage}
\\

For a Lie algebra $\mathfrak{g}$ listed above,
let $V_{\mathfrak{g}}^{hol}$ denote a holomorphic VOA whose Lie algebra is $\mathfrak{g}$.
Recently, the existence and uniqueness of those holomorphic VOAs are proved except for the case that $V_1=0$.
In this paper, we will use the following result:
\begin{thm}[\cite{LS,LS2}]
\label{unique_hol}
There exists a unique holomorphic VOA of central charge $24$ whose Lie algebra is $E_{8,2}B_{8,1}$.
\end{thm}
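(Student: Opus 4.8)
The plan is to establish existence and uniqueness separately, along the lines of the cited papers \cite{LS2} (existence) and \cite{LS} (uniqueness).

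\emph{Existence.} I would exhibit an explicit model and verify its properties. A convenient route is to realize $V_{E_{8,2}B_{8,1}}^{hol}$ as a $\Z_2$-orbifold of a suitable Niemeier lattice VOA: starting from $V_{E_8}^{\otimes 3}$ one uses that the diagonally embedded $\mathfrak{g}_{E_8}$ sits at level $2$ with commutant an Ising model, together with the conformal embedding $\mathfrak{so}_{16}\subset\mathfrak{g}_{E_8}$, to produce the $E_{8,2}$- and $B_{8,1}$-parts. One then checks, via Zhu's modular-invariance theory and the established orbifold theory for lattice VOAs, that the resulting VOA is simple, self-dual, of CFT type, rational and $C_2$-cofinite and has a unique irreducible module; hence it is holomorphic in the sense of this paper, and by construction its weight-one space is $\mathfrak{g}_{E_8}\oplus\mathfrak{g}_{B_8}$ with the stated levels.

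\emph{Uniqueness.} Let $V$ be any holomorphic $c=24$ VOA of CFT type with $V_1\cong\mathfrak{g}_{E_8}\oplus\mathfrak{g}_{B_8}$ at levels $2$ and $1$. By Theorem \ref{holomorphic}, the subalgebra $U$ of $V$ generated by $V_1$ is a subVOA, and it is forced to be the affine VOA $L_{\mathfrak{g}_{E_8}}(2,0)\otimes L_{\mathfrak{g}_{B_8}}(1,0)$ (the levels being compatible with $\dim V_1$ and Schellekens' constraints). Since $U$ is rational, $V=\bigoplus_{\mu,\nu} m_{\mu,\nu}\,L_{\mathfrak{g}_{E_8}}(2,\mu)\otimes L_{\mathfrak{g}_{B_8}}(1,\nu)$ with finitely many non-negative multiplicities. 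Computing the conformal weights of these $U$-modules via Lemma \ref{affine_weight}, and using that the graded character of $V$ must equal $J(\tau)+\dim V_1=J(\tau)+384$ (as $V$ is holomorphic of central charge $24$), one solves a finite linear system: all $m_{\mu,\nu}$ are then uniquely determined, so $V$ and $V_{E_{8,2}B_{8,1}}^{hol}$ agree as $U$-modules. It remains to see that the VOA structure extending $U$ on this fixed module is unique up to isomorphism; for this I would pass to the maximal $H$-lattice subVOA of $V$, whose lattice is $\sqrt{2} E_8\oplus D_8$ by Theorem \ref{Cartan_max} and the maximal-lattice computation of Section \ref{sec_application}, and over this lattice VOA the extension becomes of (abelian-intertwining) simple-current type, hence classified by a compatible quadratic form on the attached discriminant group, of which only one occurs.

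\emph{Main obstacle.} The delicate point is the final rigidity step. Since $\mathfrak{g}_{E_8}$ has trivial centre, the modules $L_{\mathfrak{g}_{E_8}}(2,\mu)$ occurring in $V$ are not all simple currents, so one cannot directly apply the standard uniqueness of simple-current extensions of $U$ itself; the argument must either analyse the full fusion category of $L_{\mathfrak{g}_{E_8}}(2,0)$-modules or, as above, be reorganised around a lattice subVOA over which the gluing data are genuinely of simple-current type and hence determined by a discriminant form — and then one must still check that the prescribed form is the only admissible one.
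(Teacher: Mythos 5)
You are proving something this paper never proves: Theorem \ref{unique_hol} is quoted as an external input from Lam--Shimakura \cite{LS2} (existence, via framed VOAs built from a quadratic space/code) and \cite{LS} (uniqueness), and the paper contains no argument for it. So there is no internal proof to compare with, and any attempted proof must not lean on the later results of Section \ref{sec_application} — Theorem \ref{char_18}, Lemma \ref{18_auto} and the genus correspondence are all deduced \emph{from} Theorem \ref{unique_hol}, so routing the uniqueness through the paper's genus machinery for $\sqrt{2}E_8\oplus D_8$ would be circular inside this paper (using only Theorem \ref{Cartan_max}, Lemma \ref{affine_weight} and Proposition \ref{affine_max} is fine).

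Judged on its own terms, your text is a plan with genuine gaps rather than a proof. For existence, the sketch starting from $V_{E_8}^{\otimes 3}$, the level-$2$ diagonal $E_8$ and the conformal embedding $\mathfrak{so}_{16}\subset \mathfrak{g}_{E_8}$ specifies no actual extension, orbifold datum or code, and verifies nothing; in \cite{LS2} the VOA is constructed explicitly as a framed VOA and its holomorphicity is proved there, not by a routine appeal to Zhu's theory. For uniqueness, two steps are open. First, the assertion that matching the character against $J(\tau)+384$ "uniquely determines all $m_{\mu,\nu}$" needs the linear independence (or a modular-invariance refinement) of the characters of the finitely many modules $L_{\mathfrak{g}_{E_8}}(2,\mu)\otimes L_{\mathfrak{g}_{B_8}}(1,\nu)$ of appropriate conformal weight; this is a nontrivial computation, not a formality. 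Second — and this is the heart of the theorem, which you yourself flag as the "main obstacle" — the rigidity of the VOA structure on the fixed $U$-module is not established, and the proposed fix is not correct as stated: over the maximal lattice subVOA $V_{\sqrt{2}E_8\oplus D_8}$ the algebra $V$ is \emph{not} an abelian-intertwining (simple-current) extension governed by a quadratic form on the discriminant group, because the isotypic components carry large multiplicity spaces that are modules over the commutant $C_V(V_{\sqrt{2}E_8\oplus D_8})$, so the gluing data involve that coset and its representation category, not just a discriminant form. In \cite{LS} this difficulty is resolved by entirely different means (showing any such $V$ admits a frame and classifying framed structures), so the proposal identifies the right difficulty but does not close it.
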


By Theorem \ref{holomorphic} and Proposition \ref{zero_Cartan} and Proposition \ref{affine_Cartan},
we have:
\begin{cor}
\label{hol_Cartan}
Any $c=24$ holomorphic VOA satisfy Assumption (A).
\end{cor}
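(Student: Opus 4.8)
The plan is to run through the three cases of Theorem \ref{holomorphic}. I would begin by recording two standard facts about a holomorphic VOA $V$ (which in our convention is of CFT type): it is simple and self-dual. For simplicity, write $V$ as a direct sum of irreducible $V$-modules; since there is a unique irreducible module up to isomorphism and $V$ is of CFT type, comparing lowest weights and the dimensions of the lowest-weight spaces forces $V$ itself to be that irreducible module, i.e.\ $V$ is simple. For self-duality, the dual module $V^\vee$ of Theorem \ref{def_dual} is then an irreducible $V$-module, hence isomorphic to $V$ by uniqueness. With these facts in hand, Theorem \ref{holomorphic} splits the argument into three cases according to the Lie algebra $V_1$.

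If $V_1=0$, then $V$ is a simple self-dual VOA with $V_1=0$, so Proposition \ref{zero_Cartan} gives that $(V,\omega,0)$ satisfies Assumption (A). If $V_1$ is abelian of rank $24$, then $V\cong V_\Lambda$ as a VOA for the Leech lattice $\Lambda$; since $\Lambda$ is a non-degenerate even lattice, and since $\Lambda$ has no vectors of norm $2$ (so $(V_\Lambda)_1=H_\Lambda$ with $H_\Lambda=\Lambda\otimes_\Z\C$, and the conformal vector of $V_\Lambda$ is the Sugawara vector $\omega_{H_\Lambda}$), Proposition \ref{lattice_vertex_Cartan} shows that $(V_\Lambda,\omega_{H_\Lambda},H_\Lambda)$ satisfies Assumption (A). Finally, if $V_1$ is one of the $69$ semisimple Lie algebras of Theorem \ref{holomorphic}, write $V_1=\bigoplus_{i=1}^N\mathfrak{g}_i$ as a sum of simple ideals with corresponding levels $k_i\in\Z_{>0}$ as displayed in the table. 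By Theorem \ref{holomorphic} the vertex subalgebra $U$ generated by $V_1$ is a subVOA of $V$, and $U\cong\bigotimes_{i=1}^N L_{\mathfrak{g}_i}(k_i,0)$; then the semisimple version of Proposition \ref{affine_Cartan} recorded just before Proposition \ref{affine_max} shows that $(V,\omega,H)$ satisfies Assumption (A), where $H$ is a Cartan subalgebra of $V_1$.

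The main obstacle is the identification $U\cong\bigotimes_i L_{\mathfrak{g}_i}(k_i,0)$ in the third case: one must know that each simple ideal $\mathfrak{g}_i\subset V_1$ generates a \emph{simple} affine vertex operator algebra whose level $k_i$ is a \emph{positive integer}. This is precisely the input already built into the statement of Theorem \ref{holomorphic} (the integer levels displayed in its table), coming from the standard integrality results for affine structures inside a VOA of CFT type; once this is granted, the Corollary reduces to direct citations of Propositions \ref{zero_Cartan}, \ref{lattice_vertex_Cartan} and \ref{affine_Cartan}. The only other point that needs a word is the mild check that the conformal vector carried by the Leech lattice VOA coincides with $\omega_{H_\Lambda}$, which is immediate from $(V_\Lambda)_1=H_\Lambda$.
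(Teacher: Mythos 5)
Your proposal is correct and follows essentially the same route as the paper, which simply deduces the corollary from the case division of Theorem \ref{holomorphic} together with Proposition \ref{zero_Cartan} and Proposition \ref{affine_Cartan} (in its semisimple form stated before Proposition \ref{affine_max}). Your extra care — justifying simplicity and self-duality, handling the Leech lattice case explicitly via Proposition \ref{lattice_vertex_Cartan} (which the paper's citation list leaves implicit), and flagging that the identification of the subVOA generated by $V_1$ with $\bigotimes_i L_{\mathfrak{g}_i}(k_i,0)$ at positive integer levels is the input packaged into Theorem \ref{holomorphic} — fills in details the paper takes for granted rather than changing the argument.
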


The following lemma follows from Theorem \ref{coset_equivalence}:
\begin{lem}
Let $(V,\omega,H)$ be a $c=l$ holomorphic conformal vertex algebra satisfying Assumption (A).
Then, any $W \in \gen(V,\omega,H)$ is a $c=l$ holomorphic conformal vertex algebra satisfying Assumption (A).
\end{lem}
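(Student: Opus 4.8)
The plan is to use the isomorphism $f\colon V\otimes V_\tw\xrightarrow{\ \sim\ }W\otimes V_\tw$ of conformal vertex algebras carrying $H_V\oplus H_\tw$ onto $H_W\oplus H_\tw$ that witnesses $W\in\gen(V,\omega,H)$, and to transfer each of the three required properties — central charge $l$, holomorphicity, and Assumption (A) — from $V\otimes V_\tw$ down to $W$. For the central charge, recall that the scalar $c$ of a conformal vertex algebra is determined by $\omega(3)\omega=\frac{c}{2}\1$ and hence is an isomorphism invariant, while central charges add under tensor products; thus $l+2=c(V\otimes V_\tw)=c(W\otimes V_\tw)=c(W)+2$, so $c(W)=l$. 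For holomorphicity, $V_\tw$ is holomorphic with $C_{V_\tw}(V_\tw)=\C\1$, so by the corollary following Theorem \ref{coset_equivalence} the hypothesis that $V$ is holomorphic gives that $V\otimes V_\tw$ is holomorphic; transporting along $f$, $W\otimes V_\tw$ is holomorphic; applying the same corollary once more, now with $W$ in the role of $V$, yields that $W$ is holomorphic.

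It remains to verify Assumption (A) for $(W,\omega_W,H_W)$. First I would note that $(V_\tw,\omega_{H_\tw},H_\tw)$ satisfies Assumption (A) by Proposition \ref{lattice_vertex_Cartan}, hence so does $(V\otimes V_\tw,\omega_V+\omega_{H_\tw},H_V\oplus H_\tw)$ by Lemma \ref{tensor_A}; since conditions A1)--A7) are intrinsic to a conformal vertex algebra together with its Heisenberg subspace, transporting along $f$ shows that $(W\otimes V_\tw,\omega_W+\omega_{H_\tw},H_W\oplus H_\tw)$ satisfies Assumption (A) as well. I would then descend the conditions using $(W\otimes V_\tw)_n^{(\al,\be)}=\bigoplus_{n=k+k'}W_k^\al\otimes (V_\tw)_{k'}^\be$ together with the facts that the $H_\tw$-neutral part of $V_\tw$ is just the Heisenberg vacuum module, so $(V_\tw)_{k'}^0\ne 0$ exactly when $k'\ge 0$ and $(V_\tw)_0^0=\C\1$: A1) and A2) follow by restricting the relevant operators to $W\otimes\C\1\cong W$; A3), A4), A6) follow because $W_n^\al\otimes\C\1$ is a direct summand of $(W\otimes V_\tw)_n^{(\al,0)}$, which is finite-dimensional when A6) is in question; and A5) follows from the dimension count that $(W\otimes V_\tw)_0^{(0,0)}=\bigoplus_{k\le 0}W_k^0\otimes (V_\tw)_{-k}^0$ has dimension $\dim(W\otimes V_\tw)_0^0=1$, forcing $W_0^0=\C\1$ and $W_k^0=0$ for $k<0$, after which a comparison of the degree-$1$, $H_W$-neutral subspaces yields $W_1^0=H_W$.

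The genuinely non-formal step is A7), the self-duality of $W$, and this is where I would feed back in the holomorphicity of $W$ — which is why the holomorphicity part is settled first. By Lemma \ref{Cartan_simple} the algebra $W\otimes V_\tw$ is simple, and since a nonzero proper ideal $I\subsetneq W$ would produce the nonzero proper ideal $I\otimes V_\tw$ of $W\otimes V_\tw$, the algebra $W$ is simple; combined with the holomorphicity of $W$, this means $W$ is (up to isomorphism) the unique irreducible $W$-module and every $W$-module is a direct sum of copies of $W$. Since A1), A2), A4) hold for $W$ — so that $W_n^\al$ is bounded below in $n$ — the restricted dual $W^\vee_{H_W}$ is a well-defined $W$-module, hence $W^\vee_{H_W}\cong\bigoplus_{i\in I}W$ for some index set $I$; a module isomorphism commutes with $\omega(1)$ and with each $h(0)$ for $h\in H_W$, so this is an isomorphism of $(L(0),H_W)$-graded vector spaces, and comparing the one-dimensional degree-$0$, $H_W$-weight-$0$ pieces (using A5) for $W$) forces $|I|=1$, i.e. $W^\vee_{H_W}\cong W$, which is A7). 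The main obstacle is therefore A7): it cannot be obtained by weight bookkeeping alone, and the clean argument relies on having first established that $W$ is simple and holomorphic; the two small points to watch are that $W^\vee_{H_W}$ really is a $W$-module (guaranteed once A4) for $W$ makes $W_n^\al$ bounded below in $n$) and that the identification $W^\vee_{H_W}\cong W$ respects the $L(0)$- and $H_W$-gradings (automatic).
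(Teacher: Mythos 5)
Your proposal is correct and follows the paper's intended route: the paper's entire proof is the citation of Theorem \ref{coset_equivalence}, and you transfer the central charge, holomorphicity (via the corollary to that theorem applied to the holomorphic factor $V_\tw$ with $C_{V_\tw}(V_\tw)=\C\1$), and conditions A1)--A6) through the isomorphism $W\otimes V_\tw\cong V\otimes V_\tw$ exactly as intended, using Proposition \ref{lattice_vertex_Cartan} and Lemma \ref{tensor_A} together with routine graded bookkeeping. The only place you go beyond what the paper leaves implicit is A7), where your argument (simplicity and holomorphicity of $W$ give $W^\vee_{H_W}\cong\bigoplus_{i\in I}W$ as $W$-modules, and comparing the one-dimensional degree-$0$, weight-$0$ pieces forces $|I|=1$) is a valid alternative to the more direct option of restricting the non-degenerate invariant form of the tensor product to the coset factor.
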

Since the maximal lattice of any $c=24$ holomorphic VOA is positive-definite,
we have:
\begin{prop}
\label{coset_VOA}
If $(V,\omega)$ is a $c=24$ holomorphic VOA,
then all conformal vertex algebras in $\gen(V,\omega)$ are $c=24$ holomorphic VOAs.
\end{prop}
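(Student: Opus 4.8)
If $(V,\omega)$ is a $c=24$ holomorphic VOA, then all conformal vertex algebras in $\gen(V,\omega)$ are $c=24$ holomorphic VOAs.

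The plan is to reduce the statement to two facts already established in this section: the lemma immediately preceding this proposition (which propagates holomorphicity and Assumption (A) along a genus of conformal vertex algebras) and Lemma \ref{VOA_positive_genus} (which upgrades the members of such a genus to VOAs of CFT type once the maximal lattice is positive-definite and spans the Heisenberg subspace). So the whole proof is really just the verification that these two results apply to $(V,\omega)$.

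First I would fix a Heisenberg subspace. By Corollary \ref{hol_Cartan} the VOA $(V,\omega)$ satisfies Assumption (A): there is a Heisenberg subspace $H\subset V$ — a Cartan subalgebra of the Lie algebra $V_1$, or $H=0$ in the case $V_1=0$ — such that $(V,\omega,H)$ satisfies (A). By Lemma \ref{unique_Cartan} the genus $\gen(V,\omega)$ does not depend on this choice, so it suffices to prove that every conformal vertex algebra occurring in $\gen(V,\omega,H)$ is a $c=24$ holomorphic VOA.

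Next I would check the hypotheses of Lemma \ref{VOA_positive_genus}, namely that $L_{V,H}$ is positive-definite and spans $H$. Positive-definiteness is the known fact about $c=24$ holomorphic VOAs recalled just before the statement. For the spanning property I would invoke the classification in Theorem \ref{holomorphic}: if $V_1=0$ then $H=0$ and the claim is vacuous; if $V_1$ is abelian of rank $24$ then $V$ is the lattice VOA of the Leech lattice, $L_{V,H}$ is the Leech lattice, which spans $H$; and if $V_1$ is one of the $69$ semisimple Lie algebras, then the subVOA generated by $V_1$ is a tensor product $\bigotimes_i L_{\mathfrak{g}_i}(k_i,0)$ of simple affine VOAs at positive integer level, so by Proposition \ref{affine_max} (together with the computation of the maximal lattice of $L_{\mathfrak{g}}(k,0)$ following Proposition \ref{affine_Cartan}) $L_{V,H}$ contains $\bigoplus_i \sqrt{k_i}Q_{\mathfrak{g}_i}$, a sublattice of rank $\sum_i \operatorname{rank}\mathfrak{g}_i=\dim H$; in every case $L_{V,H}$ spans $H$. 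Hence Lemma \ref{VOA_positive_genus} gives that all conformal vertex algebras in $\gen(V,\omega,H)$ are VOAs of CFT type.

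Finally I would apply the lemma preceding this proposition: since $(V,\omega)$ is a $c=24$ holomorphic conformal vertex algebra satisfying Assumption (A), every $W\in\gen(V,\omega,H)$ is again a $c=24$ holomorphic conformal vertex algebra satisfying Assumption (A); combined with the previous step, every such $W$ is a $c=24$ holomorphic VOA, which completes the proof. The only point requiring genuine care — the ``hard part'', such as it is — is the verification that $L_{V,H}$ spans $H$: this is not formal, since positive-definiteness alone does not force full rank, and it really uses the structure theory of $V_1$ via Theorem \ref{holomorphic} and the identification of maximal lattices of affine VOAs.
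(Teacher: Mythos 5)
Your proof is correct and follows essentially the same route the paper intends: combine the lemma preceding the proposition (holomorphicity, central charge, and Assumption (A) propagate through the genus) with Lemma \ref{VOA_positive_genus}, using positive-definiteness of the maximal lattice. Your explicit verification that $L_{V,H}$ spans $H$ via Theorem \ref{holomorphic} and Proposition \ref{affine_max} fills in a hypothesis the paper leaves implicit, but it is the same argument.
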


The above proposition gives us a effective method to construct holomorphic VOAs.
In the rest of this paper, we calculate the mass of the VOA $V_{E_{8,2}B_{8,1}}^{hol}$ in Theorem \ref{unique_hol}.
By Proposition \ref{affine_max}, the maximal lattice of $V_{E_{8,2}B_{8,1}}^{hol}$ is
$\sqrt{2}E_8 D_8$ and its genus is $\genus_{16,0}(2_\genus^{+10})$.
Since $\genus_{16,0}(2_\genus^{+10})$ contains $17$ isomorphism classes of lattices,
by lemma \ref{coset_VOA} and Theorem \ref{mass},
there exist at least $17$ $c=24$ holomorphic VOAs whose maximal lattices are
in $\genus_{16,0}(2_{\genus}^{+10})$.

The following lemma follows from Lemma \ref{Cartan_Lie}:
\begin{lem}
\label{lem_level1}
Let $(V,\omega,H)$ satisfy Assumption (A).
If $V$ is a VOA and $V_1$ is a semisimple Lie algebra, then there is a one-to-one correspondence between norm $2$ vectors in $L_{V,H}$ and long roots of a level $1$ component of $V_1$.
\end{lem}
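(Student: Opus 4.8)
The plan is to read off the norm-$2$ vectors of $L_{V,H}$ from Theorem \ref{Cartan_max} and then use Theorem \ref{Cartan_Lie} to pin down which simple ideal of $V_1$ each of them comes from.

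First I would note that, by Theorem \ref{Cartan_max}, $L_{V,H}=\{\al\in H\mid V_{(\al,\al)/2}^\al\neq 0\}$, so a nonzero $\al\in H$ is a norm-$2$ vector of $L_{V,H}$ exactly when $(\al,\al)=2$ and $V_1^\al\neq 0$. By (A5) we have $V_0^0=\C\1$ and $V_1^0=H$, so $H$ is a Cartan subalgebra of the semisimple Lie algebra $V_1$ and, for $\al\neq 0$, the space $V_1^\al$ is the corresponding root space; hence the norm-$2$ vectors of $L_{V,H}$ are precisely the roots $\al$ of $V_1$ (with respect to $H$) of squared length $2$ in the invariant form. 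Writing $V_1=\bigoplus_{i=1}^N\mathfrak g_i$ for the decomposition into simple ideals, the vertex subalgebra generated by $\mathfrak g_i$ is the simple affine VOA $L_{\mathfrak g_i}(k_i,0)$ at a positive integer level $k_i$ (a standard fact about VOAs with semisimple degree-one part, already invoked in Section \ref{sec_application}), whose maximal lattice is $\sqrt{k_i}\,Q_{\mathfrak g_i}$.

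Next I would fix a root $\al$ of $V_1$ with $(\al,\al)=2$, lying in $\mathfrak g_i$, and choose $e\in V_1^\al$, $f\in V_1^{-\al}$ with $(e,f)=1$. By Theorem \ref{Cartan_Lie} the triple $\{e,\tfrac{2}{(\al,\al)}f,\tfrac{2}{(\al,\al)}\al\}$ generates the simple affine vertex algebra $L_{sl_2}(k,0)$ with $k=2/(\al,\al)=1$; this is nothing but the $sl_2$ attached to the root $\al$ of $\mathfrak g_i$, so $L_{sl_2}(1,0)$ sits inside $L_{\mathfrak g_i}(k_i,0)$. On the other hand, the level of this root-$sl_2$ inside $L_{\mathfrak g_i}(k_i,0)$ is $k_i$ times its Dynkin index $j_\al$ in $\mathfrak g_i$, and $j_\al=1$ when $\al$ is long while $j_\al\in\{2,3\}$ when $\al$ is short. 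Comparing the two descriptions gives $k_i j_\al=1$, which forces $k_i=1$ and $j_\al=1$: $\al$ is a long root of the level-$1$ component $\mathfrak g_i$. Conversely, if $\al$ is a long root of a level-$1$ component $\mathfrak g_i$, then, since the maximal lattice of $L_{\mathfrak g_i}(1,0)$ is $Q_{\mathfrak g_i}$ and (by Lemma \ref{long_lattice}) the long roots are exactly its norm-$2$ vectors, we get $(\al,\al)=2$ and $V_1^\al\supseteq L_{\mathfrak g_i}(1,0)_1^\al\neq 0$, so $\al$ is a norm-$2$ vector of $L_{V,H}$.

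Finally, the desired correspondence is the identity $\al\mapsto\al$: roots of distinct simple ideals lie in mutually orthogonal subspaces of $H$, so distinct long roots of level-$1$ components are distinct vectors of $L_{V,H}$, and the two directions above show that the image of this map is exactly the set of norm-$2$ vectors of $L_{V,H}$. The delicate step will be the middle one: identifying the copy of $L_{sl_2}(1,0)$ produced by Theorem \ref{Cartan_Lie} with the root-$sl_2$ of $\mathfrak g_i$ and relating its level to $k_i$ through the Dynkin index — equivalently, keeping track of how the invariant form of $V$ restricts to each $\mathfrak g_i$ relative to the standard normalization, which is exactly what the formula $\sqrt{k_i}\,Q_{\mathfrak g_i}$ for the maximal lattice of $L_{\mathfrak g_i}(k_i,0)$ records.
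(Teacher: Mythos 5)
Your argument is essentially the one the paper intends: the paper gives no written proof of Lemma \ref{lem_level1} beyond saying it follows from Theorem \ref{Cartan_Lie}, and your route --- Theorem \ref{Cartan_max} to identify norm-$2$ vectors with roots $\al$ of $V_1$ having $(\al,\al)=2$, then Theorem \ref{Cartan_Lie} plus the normalization bookkeeping ($(\cdot,\cdot)|_{\mathfrak h_i}=k_i\langle\cdot,\cdot\rangle_i$, i.e.\ the Dynkin-index computation) to see that such $\al$ are exactly the long roots of level-$1$ ideals --- is that intended argument, spelled out.

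The one place your write-up leans on something not actually available at this level of generality is the ``standard fact'' that each simple ideal $\mathfrak g_i$ generates the \emph{simple} affine VOA $L_{\mathfrak g_i}(k_i,0)$ at a \emph{positive integer} level. In Section \ref{sec_application} this is either a hypothesis (Proposition \ref{affine_Cartan}) or imported from the classification Theorem \ref{holomorphic}; for an arbitrary VOA satisfying Assumption (A) with $V_1$ semisimple it is not proved in the paper, and the usual integrality results (Dong--Mason) need extra hypotheses such as rationality or $C_2$-cofiniteness. Moreover this step is load-bearing: from $k_ij_\al=1$ you can only conclude $k_i=1$, $j_\al=1$ if you know $k_i\geq 1$, otherwise a component of level $\tfrac12$ (resp.\ $\tfrac13$) with a short root would also produce a norm-$2$ vector of $L_{V,H}$. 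Fortunately the gap closes with the paper's own assumptions: for the component containing your norm-$2$ root, $k_i>0$ because that root has positive norm, and a long root $\be$ of the same component satisfies $V_1^{\be}\neq 0$ and $(\be,\be)=2/k_i$ in the VH form, so (A4) forces $2/k_i\leq 2$, i.e.\ $k_i\geq 1$ (alternatively, apply Theorem \ref{Cartan_Lie} to $\be$ to get $k_i=2/(\be,\be)\in\Z_{>0}$). With that substitution your comparison of levels goes through, and note also that the converse direction needs neither simplicity of the affine subVOA nor the $\sqrt{k_i}Q_{\mathfrak g_i}$ corollary: a long root $\al$ of a level-$1$ component has $(\al,\al)=2$ by the definition of the level and $V_1^\al\neq 0$ because it contains the root space, so $\al\in L_{V,H}$ directly by Theorem \ref{Cartan_max}.
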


Hence, the level $1$ components of $V_1$ are determined by the maximal lattice $L_{V,H}$.
Set $$V_{\genus_{17,1}(2_{\genus}^{+10})}^{hol} = V_{E_{8,2}B_{8,1}}^{hol} \otimes V_\tw,$$
and set $$\genus_{17,1}(2_{\genus}^{+10})=\tw\oplus \sqrt{2}E_8D_8.$$
We prove the following characterization result for $V_{\genus_{17,1}(2_{\genus}^{+10})}^{hol}$.
\begin{thm}
\label{char_18}
Let $(V,\omega,H)$ satisfy the following conditions:
\begin{enumerate}
\item
$(V,\omega,H)$ satisfy Assumption (A);
\item
$(V,\omega)$ is a $c=26$ holomorphic conformal vertex algebra;
\item
The maximal lattice of $(V,H)$ is isomorphic to $\genus_{17,1}(2_{\genus}^{+10})$.
\end{enumerate}
Then,  $(V,\omega,H)$ is isomorphic to $V_{\genus_{17,1}(2_{\genus}^{+10})}^{hol}$.
\end{thm}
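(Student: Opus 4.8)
The plan is to split off a holomorphic copy of a rank two lattice vertex algebra and identify the complementary tensor factor with $V_{E_{8,2}B_{8,1}}^{hol}$ via the classification of holomorphic vertex operator algebras of central charge $24$. We may assume that $L_{V,H}$ spans $H$ (this holds automatically since $V$ is holomorphic: a transverse Heisenberg direction would split off a nontrivial free Heisenberg factor of $V$, which is incompatible with having a unique irreducible module). Fix an isometry $\tw\oplus\sqrt{2}E_8 D_8\cong L_{V,H}$ and let $S\subset L_{V,H}$ be the image of $\tw\oplus 0$, so $S\cong\tw$ is unimodular (hence a direct summand of $L_{V,H}$) and $S^\perp\cong\sqrt{2}E_8 D_8$. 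By the triple version of Lemma \ref{lattice_correspondence}, $S$ produces a decomposition $(V,\omega,H)\cong (V_S,\omega_S,H_S)\otimes (W,\omega_W,H_W)$ of triples with $W=C_V(V_S)$ and $V_S\cong V_\tw$; here $\omega_W=\omega-\omega_S$ has central charge $26-2=24$, $\dim H_W=16$, and by Corollary \ref{tensor_max} the maximal lattice of $(W,H_W)$ is the orthogonal complement of $S$ inside $L_{V,H}$, i.e. $L_{W,H_W}\cong\sqrt{2}E_8 D_8$, which is positive-definite and spans $H_W$.

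Next I would check that $W$ is a holomorphic $c=24$ VOA of CFT type. Since $V_S\cong V_\tw$ is holomorphic with $C_{V_S}(V_S)=\C\1$ (its center is $\C\1$ because $V_S$ is simple of countable dimension), the corollary to Theorem \ref{coset_equivalence} shows $W$ is holomorphic. The triple $(W,\omega_W,H_W)$ satisfies Assumption (A): conditions A1–A6 follow directly from $V=V_S\otimes W$ together with Assumption (A) for $(V_S,\omega_S,H_S)$ (obtained from Proposition \ref{lattice_vertex_Cartan}), while for A7 one notes $(V_S\otimes W)_H^\vee\cong V_S^\vee\otimes W_{H_W}^\vee\cong V_S\otimes W_{H_W}^\vee$ as $V_S\otimes W$-modules, and since $V_S$ is holomorphic the equivalence $R_{V_S}$ of Theorem \ref{coset_equivalence} cancels the factor $V_S$, yielding $W_{H_W}^\vee\cong W$ from $(V_S\otimes W)_H^\vee\cong V_H^\vee\cong V\cong V_S\otimes W$. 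Now Lemma \ref{positive_VOA} applies to $(W,\omega_W,H_W)$, because $L_{W,H_W}$ is positive-definite and spans $H_W$; hence $W$ is a VOA of CFT type, and therefore a holomorphic $c=24$ VOA.

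Finally I would pin down $W$. By A5, $H_W=W_1^0$ is a Cartan subalgebra of the reductive Lie algebra $W_1$, so $W_1$ is semisimple of rank $16$, hence by Theorem \ref{holomorphic} it is one of the rank $16$ entries in the table. The lattice $\sqrt{2}E_8 D_8$ has exactly $112$ vectors of norm $2$, all lying in the $D_8$ summand and forming a root system of type $D_8$ (the $\sqrt{2}E_8$ summand has minimal norm $4$); by Lemma \ref{lem_level1} these are in bijection with the long roots of the level-one part of $W_1$, so that part is a single simple factor of type $B_{8,1}$ or $D_{8,1}$. Comparing with the maximal lattices computed via Proposition \ref{affine_max}, the only rank $16$ entry in the table compatible with $L_{W,H_W}\cong\sqrt{2}E_8 D_8$ is $E_{8,2}B_{8,1}$, so $W_1\cong\mathfrak{g}_{E_8}\oplus\mathfrak{g}_{B_8}$. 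By Theorem \ref{unique_hol} there is a unique holomorphic $c=24$ VOA with this Lie algebra, so $W\cong V_{E_{8,2}B_{8,1}}^{hol}$ as VOAs; by Lemma \ref{unique_Cartan} this isomorphism may be composed with an automorphism of $V_{E_{8,2}B_{8,1}}^{hol}$ carrying $H_W$ onto a Heisenberg subspace, so it is an isomorphism of triples. Tensoring with $(V_S,\omega_S,H_S)$ gives $(V,\omega,H)\cong V_{E_{8,2}B_{8,1}}^{hol}\otimes V_\tw=V_{\genus_{17,1}(2_{\genus}^{+10})}^{hol}$.

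The main obstacle is this last identification: the purely lattice-theoretic datum $L_{W,H_W}\cong\sqrt{2}E_8 D_8$ must be converted into the Lie-algebraic datum $W_1\cong\mathfrak{g}_{E_8}\oplus\mathfrak{g}_{B_8}$, which genuinely relies on the classification of holomorphic $c=24$ VOAs (Theorem \ref{holomorphic}), the description of their maximal lattices (Proposition \ref{affine_max}, Lemma \ref{lem_level1}), and the uniqueness statement Theorem \ref{unique_hol}; without these one cannot exclude the possibility that some other holomorphic $c=24$ VOA shares the maximal lattice $\sqrt{2}E_8 D_8$. A subsidiary technical point is verifying that the coset $W=C_V(V_S)$ inherits Assumption (A), in particular self-duality, so that Lemma \ref{positive_VOA} can be invoked to guarantee that $W$ is of CFT type rather than merely a conformal vertex algebra.
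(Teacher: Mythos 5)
Your proposal is correct and follows essentially the same route as the paper's own proof: split off a copy of $V_\tw$ along the unimodular $\tw$ summand of the maximal lattice via the coset construction, show the coset is a $c=24$ holomorphic VOA with maximal lattice $\sqrt{2}E_8D_8$, and identify it with $V_{E_{8,2}B_{8,1}}^{hol}$ using Theorem \ref{holomorphic}, Lemma \ref{lem_level1}, Theorem \ref{unique_hol} and Lemma \ref{unique_Cartan}, then tensor back with $V_\tw$. The only difference is that you spell out verifications (Assumption (A) for the coset, CFT type via Lemma \ref{positive_VOA}, the spanning of $H$ by the maximal lattice) that the paper leaves implicit.
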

\begin{proof}
Consider an isomorphism of lattices $\genus_{17,1}(2_{\genus}^{+10})$ and $\sqrt{2}E_8D_8\oplus \tw$. Similarly to the proof of Proposition \ref{coset_correspondence}, let $C$ be the coset vertex algebra correspondence to this decomposition.
The vertex algebra $C$ is a $c=24$ holomorphic VOA and its maximal lattice is $\sqrt{2}E_8D_8$.
By Theorem \ref{holomorphic} and Lemma \ref{lem_level1} and Theorem \ref{unique_hol},
$C$ is isomorphic to $V_{E_{8,2}B_{8,1}}^{hol}$ as a VOA and also as a VH pair, by Lemma \ref{unique_Cartan}.
Hence, $V \cong V_{E_{8,2}B_{8,1}}^{hol} \otimes V_\tw$ as a conformal vertex algebra and VH pair.
\end{proof}

\begin{lem}
\label{18_auto}
$G_{V_{\genus_{17,1}(2_{\genus}^+10)}^{hol}}=\Aut\,\genus_{17,1}(2_{\genus}^{+10})
$ and 
$\#genus(V_{E_{8,2}B_{8,1}}^{hol})=17$ holds.
\end{lem}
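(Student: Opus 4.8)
The claim has two parts: first that $G_{V^{hol}_{\genus_{17,1}(2_\genus^{+10})}} = \Aut \genus_{17,1}(2_\genus^{+10})$, i.e.\ every lattice automorphism of $\widetilde{L} = \tw \oplus \sqrt{2}E_8 D_8$ is induced by a VH pair automorphism of $\Vt = V^{hol}_{E_{8,2}B_{8,1}} \otimes V_\tw$; and second that $\#\gen(V^{hol}_{E_{8,2}B_{8,1}}) = 17$. The plan is to deduce the count from the first statement: by Lemma \ref{orbit_group} (the case $\Aut \widetilde{L_V} = G_{\Vt}$), once we know $G_{\Vt} = \Aut \widetilde{L_V}$ the correspondence $\gen(L_V) \leftrightarrow \gen(V,H_V)$ becomes a bijection, so $\#\gen(V^{hol}_{E_{8,2}B_{8,1}})$ equals the number of isomorphism classes in $\gen(\sqrt{2}E_8D_8) = \genus_{16,0}(2_\genus^{+10})$, which is $17$ by the classical classification cited in Section \ref{subsec_lattice}. (Alternatively one can combine Proposition \ref{mass_fix} with $[\Aut\widetilde{L_V}:G_{\Vt}]=1$ to get the same conclusion from the mass formula.) So the entire content is the surjectivity of $\Aut(\Vt, H_{\Vt}) \to \Aut \widetilde{L_V}$.

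For that surjectivity, I would argue as follows. Take any $g \in \Aut \widetilde{L}$. The key structural input is Theorem \ref{char_18}: it tells us that $(\Vt, \tilde\omega, H_{\Vt})$ is, up to isomorphism, the \emph{unique} triple satisfying Assumption (A) which is $c=26$ holomorphic with maximal lattice $\genus_{17,1}(2_\genus^{+10})$. Now use $g$ to transport structure: the twisted-group-algebra/lattice-pair formalism (Proposition \ref{lattice_pair_invariant}, Lemma \ref{sub_Cartan}, Theorem \ref{maximal_lattice}) shows that the $H$-lattice vertex subalgebra $V_{\widetilde{L}, H_{\Vt}} = \mathcal V(\Omega(\Vt,H_{\Vt})^\lat) \subset \Vt$ is canonically attached to the lattice, and $g$ induces an isometry of $H_{\Vt}$ carrying $\widetilde{L}$ to itself, hence an automorphism of the AH pair $(A_{\widetilde{L}}, H_{\Vt})$ — here one uses that for a lattice pair the automorphism group surjects onto $\Aut$ of the underlying lattice, because $A_{\widetilde L}$ is the \emph{unique} twisted group algebra with the given commutator map and any isometry preserving $\widetilde L$ lifts (scaling basis vectors appropriately; this is exactly why AH pairs rather than $\C\{L\}$ were introduced, cf.\ Remark \ref{lattice_functor}). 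This gives an automorphism of the subVH pair $V_{\widetilde L, H_{\Vt}}$.

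The remaining and main obstacle is extending this automorphism of the lattice-subVH-pair to all of $\Vt$. This is where Theorem \ref{char_18} does the work: decompose $\widetilde L = \sqrt{2}E_8D_8 \oplus \tw$ in two ways — the original one and its image under $g$ — giving two sublattices $S_0, gS_0 \in S_\tw(\widetilde L)$, hence (Lemma \ref{lattice_correspondence}) two decompositions $\Vt \cong V_{\tw} \otimes C$ and $\Vt \cong V_\tw \otimes C'$ with $C = C_{\Vt}(V_{S_0})$, $C' = C_{\Vt}(V_{gS_0})$ each a $c=24$ holomorphic VOA with maximal lattice $\sqrt 2 E_8 D_8$. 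By Theorem \ref{holomorphic}, Lemma \ref{lem_level1} (the level-$1$ components, hence by inspection the full Lie algebra $V_1 = \mathfrak g_{E_{8,2}} \oplus \mathfrak g_{B_{8,1}}$, are determined by the maximal lattice) and Theorem \ref{unique_hol}, both $C$ and $C'$ are isomorphic to $V^{hol}_{E_{8,2}B_{8,1}}$, and by Lemma \ref{unique_Cartan} the isomorphism can be taken as VH pairs. Then, exactly as in the proof of Proposition \ref{coset_correspondence}, tensoring a chosen VH-pair isomorphism $C \to C'$ with a chosen isomorphism $V_\tw \to V_\tw$ and composing with the two identifications with $\Vt$ produces an automorphism $f' \in \Aut(\Vt, H_{\Vt})$; one checks that its induced action on $\widetilde{L_V}$ sends $S_0$ to $gS_0$ with the prescribed action on $S_0^\perp$ and on $S_0$, and by adjusting the $V_\tw$-factor isomorphism (whose effect on $\tw$ realizes all of $\Aut \tw$) and the $C\to C'$ isomorphism (whose effect realizes $G_C = G_{V^{hol}_{E_{8,2}B_{8,1}}}$) one sees that the induced automorphisms exhaust $\mathrm{St}_{\Aut\widetilde L}(S_0) \cdot (\text{transitivity on } S_\tw(\widetilde L))$; since $\Aut \widetilde L$ acts transitively on $S_\tw(\widetilde L)$ with stabilizer $\mathrm{St}_{\Aut\widetilde L}(S_0) \cong \Aut(\sqrt 2 E_8 D_8) \times \Aut \tw$ (Lemma \ref{group_stab_lattice}), it remains only to know that $\Aut(\sqrt 2 E_8 D_8) = G_{V^{hol}_{E_{8,2}B_{8,1}}}$ — but $\sqrt 2 E_8 D_8$ is positive-definite of rank $16 < 24$, so applying the very argument we are running one dimension down (or invoking that $C = C_{\Vt}(V_{S_0})$ with $S_0 \cong \tw$ and the genus of $C$ already understood) closes the induction. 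I expect the genuinely delicate point to be this last self-referential step — verifying $\mathrm{St}_{\Aut\widetilde L}(S_0) \subset G_{\Vt}$, equivalently $G_C = \Aut(\sqrt 2 E_8 D_8)$ — which should follow because every automorphism of $\sqrt 2 E_8 D_8$ is generated by reflections $r_\al$ in norm-$2$ and norm-$4$ vectors together with sign changes, each of which is realized on the VOA side by Corollary \ref{affine_reflection} applied to the affine $\mathfrak{sl}_2$ subalgebras attached to the long roots of $E_{8,2}$ and $B_{8,1}$.
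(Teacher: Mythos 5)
Your overall route is the paper's: use the coset argument from the proof of Theorem \ref{char_18} together with Lemma \ref{orbit_group}, Lemma \ref{group_stab_lattice} and Lemma \ref{group_stab} to reduce everything to the single inclusion $\Aut\,(\sqrt{2}E_8D_8)\subset G_{V_{E_{8,2}B_{8,1}}^{hol}}$, prove that inclusion with Corollary \ref{affine_reflection}, and then read off $\#\gen(V_{E_{8,2}B_{8,1}}^{hol})=17$ from the bijection with $\gen(\sqrt{2}E_8D_8)$. That reduction is carried out correctly. However, the step you yourself single out as the delicate one is not justified as written: you claim that the reflections in norm-$2$ and norm-$4$ vectors \emph{and the sign changes} are all realized by Corollary \ref{affine_reflection} applied to the \emph{long} roots of $E_{8,2}$ and $B_{8,1}$. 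Long roots only produce $W_{E_8}\times W_{D_8}$, which has index $2$ in $\Aut\,(\sqrt{2}E_8D_8)\cong W_{E_8}\times W_{B_8}$; the sign changes (equivalently the order-$2$ diagram automorphism of $D_8$) do not lie in it. They are realized instead by Corollary \ref{affine_reflection} applied to the \emph{short} roots $\al$ of the $B_{8,1}$ component: these satisfy $\al^2=1>0$ and $V_1^\al\neq 0$, which is all the corollary requires ($\al$ need not be long, nor lie in the maximal lattice $L_{V,H}$). This is precisely how the paper obtains the full $W_{B_8}$, and with this correction your argument closes.

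A second, harmless slip: $\Aut\,\widetilde{L_V}$ does not act transitively on $S_\tw(\widetilde{L_V})$ — its orbits are indexed by the $17$ isomorphism classes of the complements $S^\perp$, which is exactly the count you are proving. Fortunately your construction never needs that claim: for the fixed $g\in \Aut\,\widetilde{L_V}$ the sublattices $S_0$ and $gS_0$ automatically have isometric complements (both $\cong\sqrt{2}E_8D_8$), which is all that is used to produce $f'\in\Aut\,(\Vt,H_{\Vt})$ carrying $V_{S_0}$ to $V_{gS_0}$; then $h^{-1}g\in \mathrm{St}_{\Aut \widetilde{L_V}}(S_0)$ for the induced $h\in G_{\Vt}$, and the corrected reflection argument gives $\mathrm{St}_{\Aut \widetilde{L_V}}(S_0)\subset G_{\Vt}$. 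So delete the transitivity remark and phrase the exhaustion this way; the rest matches the paper's proof.
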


\begin{proof}
By Lemma \ref{orbit_group} and the proof of Theorem \ref{char_18},
it suffices to show that\\ $\Aut\,(\sqrt{2}E_8D_8) \subset G_{V_{E_{8,2}B_{8,1}}^{hol}}$.
Clearly, $\Aut\,(\sqrt{2}E_8D_8)$ is a semidirect product of Weyl group $W_{E_8} \times W_{D_8}$ and the order $2$ Dynkin diagram automorphism of $D_8$, which is isomorphic to the Weyl group $W_{E_8} \times W_{B_8}$.
By Corollary \ref{affine_reflection}, $G_{V_{E_{8,2}B_{8,1}}^{hol}}$ contains it.
Hence, the assertion holds.
\end{proof}

Let $V$ be a $c=24$ holomorphic VOA such that the maximal lattice is contained in
 $\genus_{16,0}(2_{\genus}^{+10})$.
Since the maximal lattice of $V \otimes V_\tw$ is $\genus_{17,1}(2_{\genus}^{+10})$,
by Theorem \ref{char_18}, we have $V_{\genus_{17,1}(2_{\genus}^+10)}^{hol} \cong V \otimes V_\tw$.
Hence, $V$ is contained in $genus(V_{E_{8,2}B_{8,1}}^{hol})$.
By Lemma \ref{18_auto}, we have:

\begin{prop}
Let $V$ be a $c=24$ holomorphic VOA.
If the maximal lattice $L_V$ of $V$ is contained in $\genus_{16,0}(2_{\genus}^{+10})$,
then $V$ is uniquely determined as a VOA by $L_V$ and $G_{V,\omega}=\Aut L_V$.
In particular, there is a one-to-one correspondence between $\gen(V_{E_{8,2}B_{8,1}}^{hol})$ and $\gen(\sqrt{2}E_8D_8)$.
\end{prop}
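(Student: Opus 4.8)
The plan is to deduce the statement formally from Theorem \ref{char_18} and Lemma \ref{18_auto} together with the orbit/stabilizer bookkeeping of Section \ref{sec_genus}. Fix a $c=24$ holomorphic VOA $V$ and a Cartan subalgebra $H$ of $V_1$ with $(V,\omega,H)$ satisfying Assumption (A), which exists by Corollary \ref{hol_Cartan}; by Lemma \ref{unique_Cartan} the isomorphism class of the maximal lattice $L_V=L_{V,H}$ is independent of the choice of $H$, so the hypothesis $L_V\in\genus_{16,0}(2_{\genus}^{+10})$ is unambiguous. First I would identify the genus of $V$: by Corollary \ref{tensor_max} the maximal lattice of $V\otimes V_\tw$ is $\widetilde{L_V}=L_V\oplus\tw$, and Lemma \ref{def_genus}, applied to $L_V$ and $\sqrt{2}E_8D_8$ (which lie in the same genus of lattices by hypothesis), gives $\widetilde{L_V}\cong\sqrt{2}E_8D_8\oplus\tw=\genus_{17,1}(2_{\genus}^{+10})$.

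Next I would check that $(V\otimes V_\tw,\omega,H\oplus H_\tw)$ meets the hypotheses of Theorem \ref{char_18}: it satisfies Assumption (A) by Lemma \ref{tensor_A} and Proposition \ref{lattice_vertex_Cartan}; it has central charge $24+2=26$; it is holomorphic because $V$ is holomorphic and $V_\tw$ is holomorphic with $C_{V_\tw}(V_\tw)=\C\1$ (the corollary to Theorem \ref{coset_equivalence}); and its maximal lattice is $\genus_{17,1}(2_{\genus}^{+10})$ by the previous step. Theorem \ref{char_18} then gives $V\otimes V_\tw\cong V_{\genus_{17,1}(2_{\genus}^{+10})}^{hol}=V_{E_{8,2}B_{8,1}}^{hol}\otimes V_\tw$ as VH pairs, so $V\in\gen(V_{E_{8,2}B_{8,1}}^{hol})$; transporting the invariant $G$ along this isomorphism and invoking Lemma \ref{18_auto} yields $G_{V\otimes V_\tw}=\Aut\widetilde{L_V}$.

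With $G_{V\otimes V_\tw}=\Aut\widetilde{L_V}$ established, Lemma \ref{orbit_group} shows that a VH pair in $\gen(V,H)=\gen(V_{E_{8,2}B_{8,1}}^{hol})$ is determined up to isomorphism by its maximal lattice, and that $W\mapsto L_W$ is a one-to-one correspondence between $\gen(V_{E_{8,2}B_{8,1}}^{hol})$ and $\gen(\sqrt{2}E_8D_8)$; Lemma \ref{unique_Cartan} upgrades ``VH pair'' to ``VOA'' and Proposition \ref{coset_VOA} ensures every member of the genus is again a $c=24$ holomorphic VOA, which together give the uniqueness assertion and the claimed bijection. For the equality $G_{V,\omega}=\Aut L_V$, I would take $S$ to be the canonical copy of $\tw$ inside $\widetilde{L_V}$, so that the coset VH pair $C_S$ of Proposition \ref{coset_correspondence} is $V$ itself; then Lemma \ref{group_stab_lattice} gives $\mathrm{St}_{\Aut\widetilde{L_V}}(S)\cong\Aut L_V\times\Aut\tw$, Lemma \ref{group_stab} gives $\mathrm{St}_{\Aut\widetilde{L_V}}(S)\cap G_{V\otimes V_\tw}\cong G_V\times\Aut\tw$, and since $G_{V\otimes V_\tw}=\Aut\widetilde{L_V}$ these two groups coincide; hence $|G_V|=|\Aut L_V|$, and since $G_V\subseteq\Aut L_V$ with both finite ($L_V$ being positive-definite), $G_V=\Aut L_V$. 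Alternatively this can be read off from Proposition \ref{mass_fix} taken with $L_0=L_V$ and index $1$.

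I do not expect a genuine obstacle: given Theorem \ref{char_18} and Lemma \ref{18_auto}, the proposition is essentially bookkeeping. The only point needing care is the bridge between the VOA and VH pair languages, which is exactly what Lemma \ref{unique_Cartan} supplies; it is also worth noting in passing that, since $\genus_{16,0}(2_{\genus}^{+10})$ has rank $16$, the hypothesis excludes the $V_1=0$ and Leech cases of Theorem \ref{holomorphic}, so $V_1$ is semisimple of rank $16$ and $L_V$ spans $H$, placing $(V,\omega,H)$ squarely in the setting where the mass formula of Theorem \ref{mass} applies.
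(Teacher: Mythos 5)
Your proposal is correct and follows essentially the same route as the paper: compute the maximal lattice of $V\otimes V_\tw$, apply Theorem \ref{char_18} to identify $V\otimes V_\tw$ with $V_{E_{8,2}B_{8,1}}^{hol}\otimes V_\tw$, and then combine Lemma \ref{18_auto} with the genus bookkeeping (Lemma \ref{orbit_group}, Lemmas \ref{group_stab_lattice}--\ref{group_stab}, or equivalently Proposition \ref{mass_fix}). You merely spell out the stabilizer argument for $G_{V,\omega}=\Aut L_V$ and the VH-pair-to-VOA bridge via Lemma \ref{unique_Cartan}, which the paper leaves implicit.
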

\begin{rem}
The one-to-one correspondence between $genus(V_{E_{8,2}B_{8,1}}^{hol})$ and $genus(\sqrt{2}E_8D_8)$ first appeared in
\cite{HS1}, as it was pointed out in \cite{HS2}. Our approach is motivated by their results.
\end{rem}

\section*{Acknowledgements}
The author would like to express his gratitude to Professor Atsushi Matsuo, for his encouragement throughout this work and numerous advices to improve this paper.
He is also grateful to Hiroki Shimakura and Shigenori Nakatsuka for careful reading of this manuscript and their valuable comments. This work was supported by the Program for Leading Graduate Schools, MEXT, Japan.

%

\end{document}